\theoremstyle{definition}
\newtheorem{definition}{Definition}[section]
\newtheorem{theorem}[definition]{Theorem}
\newtheorem{lemma}[definition]{Lemma}
\newtheorem{proposition}[definition]{Proposition}
\newtheorem{remark}[definition]{Remark}
\begin{document}

\title[On the rational EF 3-folds discovered by Fano]{On the rational Enriques-Fano threefolds discovered by Fano}
\author[V. Martello]{Vincenzo Martello}
\address{Dipartimento di Matematica, Universit\`{a} della Calabria, Arcavacata di Rende (CS)}
\email{vincenzomartello93@gmail.com}


\begin{abstract}
It was Fano who first classified Enriques-Fano threefolds. However his arguments appear to contain several gaps. In this paper, we will verify some of his assertions through the use of modern techniques.
\end{abstract}

\maketitle

\section{Introduction}

An \textit{Enriques-Fano threefold} is a normal threefold $W$ endowed with a complete linear system $\mathcal{L}$ of ample Cartier divisors such that the general element $S\in \mathcal{L}$ is an Enriques surface and such that $W$ is not a \textit{generalized cone} over $S$, i.e., $W$ is not obtained by contraction of the negative section on the $\mathbb{P}^1$-bundle $\mathbb{P}(\mathcal{O}_{S}\oplus \mathcal{O}_{S}(S))$ over $S$. The linear system $\mathcal{L}$ defines a rational map $\phi_{\mathcal{L}} : W \dashrightarrow \mathbb{P}^{p}$, where $p:=\frac{S^3}{2}+1$ is called the \textit{genus} of $W$ and $2\le p\le 17$ (see \cite{KLM11} and \cite{Pro07}). 
A non-degenerate threefold $W'\subset \mathbb{P}^{N}$, whose general hyperplane section $S'$ is an Enriques surface and such that $W'$ is not a cone over $S'$, is called Enriques-Fano threefold, too: indeed, in this case, it is enough to take its normalization $\nu : W \to W'$ to obtain an Enriques-Fano threefold in the general sense, that is $(W, \mathcal{L}:=|\mathcal{O}_{W}(\nu^* S')|)$.
It is known that any Enriques-Fano threefold is singular with isolated canonical singularities (see \cite[Lemma 3.2]{CoMu85} and \cite{Ch96}). 
Though the classification of Enriques-Fano threefolds still accounts for an open question, some examples have been found by several authors. 
Under the assumption that the singularities are terminal cyclic quotients, Enriques-Fano threefolds were classified by Bayle and Sano (see \cite{Ba94} and \cite{Sa95}): they are fourteen and they have genus $2\le p \le 10$ or $p=13$. More generally, an Enriques-Fano threefold with only terminal singularities is a limit of some found by Bayle and Sano (see \cite[Main Theorem 2]{Mi99}). Instead, only a few examples of Enriques-Fano threefolds with non-terminal canonical singularities are known: two of genus $p=13, 17$ found by Prokhorov (see \cite[Proposition 3.2, Remark 3.3]{Pro07}) and one of genus $p=9$ found by Knutsen, Lopez and Mu\~{n}oz (see \cite[\S 13]{KLM11}).

The first to deal with the classification problem was Fano (see \cite{Fa38}).
He found five Enriques-Fano threefolds: one of genus $4$ (see \cite[\S 10]{Fa38}), which is non-rational (see \cite{P-BV83}), and four of genus $p=6,7,9,13$ (see \cite[\S 3,4,7,8]{Fa38}), which are rational. However, in his paper there are many gaps, as Conte and Murre showed in \cite{CoMu85}. Indeed, Fano often stated some of his results while failing to provide a real proof. 

By using blow-ups techniques, we will verify that the images of the rational map defined by the following linear systems on $\mathbb{P}^3$ actually are rational Enriques-Fano threefolds with eight quadruple points, as Fano said:
\begin{itemize}
\item[(i)] the linear system $\mathcal{S}$ of the sextic surfaces having double points along the six edges of a tetrahedron; 
\item[(ii)] the linear system $\mathcal{K}$ of the septic surfaces having double points along the six edges of two trihedra; 
\item[(iii)] the linear system $\mathcal{X}$ of the sextic surfaces having double points along the six edges of a tetrahedron and containing a plane cubic curve intersecting each edge at one point; 
\item[(iv)] the linear system $\mathcal{P}$ of the septic surfaces with double points along three twisted cubics having five points in common.
\end{itemize}
We will start with the classical case, i.e. the one of $\mathcal{S}$, in order to have a model to refer to, and then we will continue with the lesser known ones. 
We will also verified that the singular points of these threefolds are associated in the way imposed by Fano: two distinct singular points of an Enriques-Fano threefold $W$ are said to be \textit{associated} if the line joining them is contained in $W$.
We will work over the field $\mathbb{C}$ of the complex numbers. 
For some results we will use the software Macaulay2: in these cases we will work over a finite field (we will choose $\mathbb{F}_n := \mathbb{Z}/n\mathbb{Z}$ with $n=10000019$). In Appendix~\ref{app:code} we will collect the input codes used in Macaulay2.

\subsection*{Acknowledgment}
The results of this paper are contained in my PhD-thesis. I would like to thank my main advisors C. Ciliberto and C. Galati and my co-advisor A.L. Knutsen for our stimulating conversations and for providing me very useful suggestions.

\section{Terminology}

In the next sections we will use some known facts about the blow-ups of threefolds.
We refer to \cite[Chap 4, \S 6]{GH} and \cite[Lemma 2.2.14]{IsPro99} for more details. 
Furthermore, let us give some notation. 
If $p$ is a smooth point of a projective variety $X$, we will denote \textit{the tangent space to $X$ at $p$} by the symbol $T_p X$; if $p$ is a singular point of a projective variety $X$, we will denote \textit{the tangent cone to $X$ at $p$} by the symbol $TC_p X$. Finally, we recall the following definition.

\begin{definition}\label{def:ordinarysingularities}
A surface in $\mathbb{P}^3$ has \textit{ordinary singularities} if it has at most the following singularities: a curve $\gamma$ of double points (that are generically the transverse intersection of two branches), with at most finitely many pinch points and with $\gamma$ having at most finitely many triple points as singularities, with three independent tangent lines, which are triple points also for the surface. 
\end{definition}

\section{The Enriques-Fano threefold of genus 13}\label{subsec:Fano13}

%

Let us take a tetrahedron $T\subset \mathbb{P}^3$ with vertices $v_0$, $v_1$, $v_2$, $v_3$. Let $f_i$ be the face of $T$ opposite to the vertex $v_i$ and let us denote the edges of $T$ by $l_{ij}:=f_i\cap f_j$, for $0\le i<j\le 3$.
Let $\mathcal{S}$ be the linear system of the sextic surfaces of $\mathbb{P}^{3}$ double along the six edges of $T$.
Up to a change of coordinates, we can consider in $\mathbb{P}^3_{\left[ s_0:s_1:s_2:s_3 \right]}$ the tetrahedron $T=\{s_0s_1s_2s_3=0\}$ with faces $f_i = \{s_i=0\}$, for $0\le i\le 3$. The linear system $\mathcal{S}$ is defined by the zero locus of the following homogeneous polynomial
$$\lambda_0s_1^2s_2^2s_3^2+ \lambda_1s_0^2s_2^2s_3^2+\lambda_2s_0^2s_1^2s_3^2+\lambda_3s_0^2s_1^2s_2^2+ s_0s_1s_2s_3Q(s_0,s_1,s_2,s_3),$$
where $\lambda_0,\lambda_1,\lambda_2,\lambda_3\in \mathbb{C}$ and $Q(s_0,s_1,s_2,s_3)=\sum_{i\le j} q_{ij}s_is_j$ is a quadratic form (see \cite[p.635]{GH}).
Since $\dim H^0(\mathbb{P}^3, \mathcal{O}_{\mathbb{P}^3}(2)) = \binom{3+2}{2}$, then $\dim \mathcal{S} = 13$. 

\begin{remark}\label{rem:variabeleTC13}
Let $\Sigma$ be a general element of $\mathcal{S}$. By looking locally at the equation of $\mathcal{S}$, then we obtain the following two assertions, for distinct indices $i,j,k,h\in\{0,1,2,3\}$:
\begin{itemize}
\item[(i)] $\Sigma$ has triple points at the vertices of $T$ and $TC_{v_i}\Sigma=f_j\cup f_k \cup f_h$;
\item[(ii)] if $p\in l_{ij}$ with $p\ne v_k$ and $p\ne v_h$, then $TC_{p}\Sigma$ is the union of two variable planes containing $l_{ij}$, depending on the choice of the point $p$ and of the surface $\Sigma$, and coinciding for finitely many points $p$.
\end{itemize}
\end{remark}

\begin{lemma}\label{lem:birationalitanu13}
The rational map $\nu_{\mathcal{S}} : \mathbb{P}^{3} \dashrightarrow \mathbb{P}^{13}$ defined by $\mathcal{S}$ is birational onto the image.
\end{lemma}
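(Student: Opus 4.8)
The plan is to prove birationality in the most direct possible way, namely by exhibiting an explicit rational inverse; once such an inverse is found on a dense open set, generic injectivity and hence birationality onto the image follow immediately.

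First I would fix the monomial basis of $\mathcal{S}$ read off directly from its defining equation. The linear system is spanned by the four ``corner'' monomials $s_1^2s_2^2s_3^2,\ s_0^2s_2^2s_3^2,\ s_0^2s_1^2s_3^2,\ s_0^2s_1^2s_2^2$ together with the ten monomials $s_0s_1s_2s_3\cdot s_is_j$ for $0\le i\le j\le 3$. These fourteen degree-six monomials are pairwise distinct, hence linearly independent, and since $\dim\mathcal{S}=13$ they form a basis of $H^0(\mathbb{P}^3,\mathcal{S})$. Thus, up to the choice of basis, $\nu_{\mathcal{S}}$ is the monomial map sending $[s_0:s_1:s_2:s_3]$ to the point of $\mathbb{P}^{13}$ whose coordinates are exactly these fourteen monomials.

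Next I would single out, among these coordinates, the four coming from $s_0s_1s_2s_3\cdot s_0s_i$ for $i=0,1,2,3$, namely $s_0^3s_1s_2s_3,\ s_0^2s_1^2s_2s_3,\ s_0^2s_1s_2^2s_3,\ s_0^2s_1s_2s_3^2$. The key observation is that all four share the common factor $s_0^2s_1s_2s_3$, and dividing it out leaves precisely $s_0,s_1,s_2,s_3$. Consequently the linear projection $\pi:\mathbb{P}^{13}\dashrightarrow\mathbb{P}^3$ onto these four coordinates satisfies $\pi\circ\nu_{\mathcal{S}}=\mathrm{id}$ on the dense open set $\{s_0s_1s_2s_3\ne 0\}$, since there $[s_0^3s_1s_2s_3:s_0^2s_1^2s_2s_3:s_0^2s_1s_2^2s_3:s_0^2s_1s_2s_3^2]=[s_0:s_1:s_2:s_3]$. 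Hence $\pi$ restricts to a rational inverse of $\nu_{\mathcal{S}}$ on a dense open subset of the image, so $\nu_{\mathcal{S}}$ is generically injective and therefore birational onto its image.

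There is no serious obstacle here; the argument is elementary once the map is written in coordinates. The only point requiring care---and the actual crux---is the choice of the four monomials whose ratios reconstruct $[s_0:s_1:s_2:s_3]$: a different quadruple (for instance the four corner monomials) would not recover the point, so one genuinely has to verify that the chosen four do, which is the short computation above.
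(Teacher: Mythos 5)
Your proof is correct, and it takes a genuinely different and more elementary route than the paper. The paper reduces to a general member $\Sigma\in\mathcal{S}$ and observes that $\mathcal{S}$ contains the sublinear system $\overline{\mathcal{S}}$ with fixed part $T$ and movable part the quadrics of $\mathbb{P}^3$, so that $\overline{\mathcal{S}}|_{\Sigma}$ is cut out by quadrics and hence defines a birational (indeed, an embedding-type) map on $\Sigma$; birationality of $\nu_{\mathcal{S}}$ then follows. You instead work directly on $\mathbb{P}^3$: you write $\nu_{\mathcal{S}}$ in the explicit monomial basis and exhibit a left inverse as the linear projection onto the four coordinates $s_0^2s_1s_2s_3\cdot s_i$, $i=0,\dots,3$. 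Your four chosen monomials are exactly $s_0s_1s_2s_3$ (the fixed tetrahedron) times the four quadrics $s_0s_i$, so the underlying mechanism is the same one the paper invokes --- the presence of $T+|\mathcal{O}_{\mathbb{P}^3}(2)|$ inside $\mathcal{S}$ --- but your version makes it completely explicit and avoids both the restriction to $\Sigma$ and the (unstated in the paper) argument that birationality on a general member implies birationality of the whole map. What the paper's formulation buys is information reused later (Remark~\ref{rem:veryample13} extracts very ampleness of $\mathcal{S}$ off $T$ from the same sublinear system, which your weaker conclusion of generic injectivity does not immediately give); what your version buys is a self-contained, computation-free-of-generality argument with an explicit rational inverse. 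Both are valid proofs of the lemma as stated.
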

\begin{proof}
It is sufficient to verify that the map defined by $\mathcal{S}$ on a general $\Sigma \in \mathcal{S}$ is birational onto the image, and this actually happens because $\mathcal{S}|_{\Sigma}$ contains a sublinear system which defines a birational map. Indeed, $\mathcal{S}$ contains a sublinear system $\overline{\mathcal{S}} \subset \mathcal{S}$ 
whose fixed part is given by the tetrahedron $T$ and such that $\overline{\mathcal{S}}|_{\Sigma}$ coincides with the linear system on $\Sigma$ cut out by the quadric surfaces of $\mathbb{P}^3$.
\end{proof} 

\begin{remark}\label{rem:veryample13}
The proof of Lemma~\ref{lem:birationalitanu13} tells us that the linear system $\mathcal{S}$ is very ample outside the tetrahedron $T$.
So $\nu_{\mathcal{S}} : \mathbb{P}^{3} \dashrightarrow  \nu_{\mathcal{S}}(\mathbb{P}^3)\subset \mathbb{P}^{13}$ is an isomorphism outside $T$.
\end{remark}

\begin{theorem}\cite[\S 8]{Fa38}\label{thm:WF13 isEF}
Let $W_{F}^{13}$ be the image of the map $\nu_{\mathcal{S}} : \mathbb{P}^{3} \dashrightarrow \mathbb{P}^{13}$. Then $W_F^{13}$ is an Enriques-Fano threefold of genus $p=13$.
\end{theorem}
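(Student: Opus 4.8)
The plan is to resolve the indeterminacy of $\nu_{\mathcal{S}}$ and to study the induced morphism. By Remark~\ref{rem:variabeleTC13} a general $\Sigma\in\mathcal S$ is singular exactly along the tetrahedron $T$, with triple points at the vertices $v_i$ and double points along the edges $l_{ij}$; accordingly I would let $\sigma\colon X\to\mathbb P^3$ be the blow-up of the four vertices followed by the blow-up of the strict transforms of the six edges. Writing $H=\sigma^*\mathcal O_{\mathbb P^3}(1)$, $F_i$ for the exceptional divisor over $v_i$ and $E_{ij}$ for the one over $l_{ij}$, the strict transform $\tilde{\mathcal S}$ of $\mathcal S$ has class $6H-3\sum_i F_i-2\sum_{i<j}E_{ij}$, and the first task is to check that $\tilde{\mathcal S}$ is base-point-free. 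Then $\phi:=\nu_{\mathcal S}\circ\sigma\colon X\to\mathbb P^{13}$ is a morphism onto $W_F^{13}$, and by Remark~\ref{rem:veryample13} it is an isomorphism over $W_F^{13}\setminus\nu_{\mathcal S}(T)$.

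The heart of the argument is to show that a general hyperplane section $S'$ of $W_F^{13}$ is an Enriques surface. Such an $S'$ is the isomorphic image $\phi(\tilde\Sigma)$ of the (now smooth) strict transform $\tilde\Sigma\in\tilde{\mathcal S}$ of a general $\Sigma$. Adjunction on $X$, together with $K_X=-4H+2\sum_i F_i+\sum_{i<j}E_{ij}$, gives $K_{\tilde\Sigma}=(K_X+\tilde\Sigma)|_{\tilde\Sigma}=\bigl(2H-\sum_i F_i-\sum_{i<j}E_{ij}\bigr)|_{\tilde\Sigma}$. I would then prove that this class is numerically trivial and of order exactly two: numerical triviality and $2K_{\tilde\Sigma}\sim 0$ should come from the tetrahedral symmetry of the configuration (so that $2K_{\tilde\Sigma}$ is cut by a divisor built from the faces $\{s_i=0\}$), whereas $K_{\tilde\Sigma}\not\sim 0$ follows from $p_g(\tilde\Sigma)=h^0(K_{\tilde\Sigma})=0$. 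Combined with $q(\tilde\Sigma)=0$ and $\chi(\mathcal O_{\tilde\Sigma})=1$, the Enriques--Kodaira classification then identifies $\tilde\Sigma\cong S'$ as an Enriques surface.

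Granting this, the remaining assertions follow quickly. For the genus, $S'$ is a non-degenerate Enriques surface in $\mathbb P^{12}$ polarised by $D=\tilde{\mathcal S}|_{\tilde\Sigma}$; Riemann--Roch on an Enriques surface yields $\dim|D|=D^2/2$, and since $S'$ spans $\mathbb P^{12}$ one gets $D^2=24$, hence $S^3=\deg W_F^{13}=D^2=24$ and $p=S^3/2+1=13$, consistently with $\dim\mathcal S=13$. For the cone condition, I would describe the contracted locus of $\phi$: on each face $f_i$ only the section $\prod_{j\ne i}s_j^2$ survives, so $\nu_{\mathcal S}$ contracts the four faces to four distinct points of $W_F^{13}$, and a finer analysis over the vertices recovers in total the eight quadruple points announced in the introduction, whose ``associated'' incidences can then be verified. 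In particular $W_F^{13}$ carries more than one isolated singular point while a general $S'$ is smooth, so $W_F^{13}$ is not a cone over $S'$; this completes the proof that it is an Enriques--Fano threefold of genus $13$.

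I expect the main obstacle to lie in the canonical bundle computation on $\tilde\Sigma$. On the one hand one must carry out the two successive blow-ups carefully enough that $\tilde{\mathcal S}$ really becomes smooth and base-point-free, in particular at the vertices, where three edges and three tangent planes meet and the simple class bookkeeping above must be corrected. On the other hand one must pin down that $K_{\tilde\Sigma}$ is genuinely $2$-torsion rather than trivial, i.e. separate the Enriques case from the $K3$ case; this rests entirely on the vanishing $p_g(\tilde\Sigma)=0$, which is the most delicate numerical input and the point where I would expect to rely on an explicit (possibly computer-assisted) verification.
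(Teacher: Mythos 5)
Your overall strategy---resolve the indeterminacy, identify the general hyperplane section as an Enriques surface, compute the degree, rule out the cone---is the paper's, but two steps as written do not work. First, after blowing up the four vertices and the strict transforms of the six edges the transformed system is \emph{not} base point free: on each exceptional plane $E_i$ over a vertex, the strict transform of a general $\Sigma$ cuts out the three lines $\gamma_{ij}=E_i\cap\widetilde{f}_j$, and these twelve lines remain base curves (this is exactly what Remark~\ref{rem:variabeleTC13} records). The paper needs a third round of blow-ups, $bl'''\colon Y\to Y''$ along these twelve curves, before $\widetilde{\mathcal S}$ is base point free; you flag that the ``bookkeeping at the vertices must be corrected'', but this is not a correction of coefficients---it changes the model on which everything downstream is computed.

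Second, and more seriously, the ``heart'' of your argument rests on a false identification. The morphism from the resolved surface to the hyperplane section $S$ is not an isomorphism: it contracts twelve $(-1)$-curves (the traces of the vertex exceptional divisors, cf.\ Remark~\ref{rem:-1curvesSigma}), so the strict transform $\widetilde{\Sigma}$ is a twelve-point blow-up of $S$. Hence $K_{\widetilde{\Sigma}}^2=-12$, the adjunction class $(K_X+\widetilde{\Sigma})|_{\widetilde{\Sigma}}$ is not numerically trivial, and $2K_{\widetilde{\Sigma}}\not\sim 0$; the proposed verification of $2$-torsion on the resolution cannot succeed. What must be proved is $2K_S\sim 0$ on the \emph{contracted} surface: either show $2K_{\widetilde{\Sigma}}$ is linearly equivalent to a divisor supported on the contracted curves and push forward (this is how the paper argues in genus $9$ and $6$), or, as the paper does here, recognize $S$ as the minimal desingularization of the Enriques sextic $\Sigma$ and cite the classical fact that this is an Enriques surface. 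Your cone argument (several isolated singular points) and your degree computation via Riemann--Roch are workable alternatives to the paper's (rationality of $W_F^{13}$ versus unirationality of $S$; direct computation of $\widetilde{\Sigma}^3=24$), though the Riemann--Roch route additionally requires that the hyperplanes cut out a complete linear system on $S$.
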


\begin{proof}
The idea of the proof is to blow-up $\mathbb{P}^3$ along the base locus of $\mathcal{S}$, until we obtain a smooth rational threefold $Y$ and a base point free linear system $\widetilde{\mathcal{S}}$ on $Y$. By Lemma~\ref{lem:birationalitanu13}, the new linear system $\widetilde{\mathcal{S}}$ will define a birational morphism $\nu_{\widetilde{\mathcal{S}}}: Y \to W_F^{13}\subset \mathbb{P}^{13}$. So as to obtain that $W_F^{13}$ is an Enriques-Fano threefold, it will be sufficient to verify that the general hyperplane section $S$ is an Enriques surface and that $W_F^{13}$ is not a cone on $S$. Furthermore, to obtain the genus $p=13$ of $W_F^{13}$ we will compute the degree of the threefold, which is $24 = \widetilde{\Sigma}^3 = \deg W_F^{13} = 2p-2$ for 
$\widetilde{\Sigma}\in \widetilde{\mathcal{S}}$.
The proof is divided into several steps, given by the Remarks~\ref{rem:alphap13},$\dots$,~\ref{rem:noContractionSpigoli13} and the Theorem~\ref{thm:WF13moderno} below.

We blow-up first $\mathbb{P}^3$ at the vertices of $T$, obtaining a smooth threefold $Y'$ and a birational morphism $bl' : Y'\to \mathbb{P}^{3}$ with exceptional divisors
$E_i := (bl')^{-1}(v_i)$, for $0\le i \le 3$. 
Let $\mathcal{S}'$ be the strict transform of $\mathcal{S}$ and let us denote by $H$ the pullback on $Y'$ of the hyperplane class on $\mathbb{P}^{3}$. Then an 
element of $\mathcal{S}'$ is linearly equivalent to $6H-3\sum_{i=0}^{3}E_i$.
Let $\widetilde{f}_i$ be the strict transform of the face $f_i$, for $0\le i \le 3$. We denote by $\gamma_{ij}:=E_i\cap \widetilde{f}_j$ the line cut out by $\widetilde{f}_j$ on $E_i$, for $0\le i<j \le 3$. We have that $\gamma_{ij}$ is a $(-1)$-curve on $\widetilde{f}_j$. If $\Sigma'$ is the strict transform of a general $\Sigma\in\mathcal{S}$, then $\Sigma'\cap E_i = \bigcup_{\substack{j=0 \\ j\ne i}}^3 \gamma_{ij}$, for all $0\le i \le 3$, and $\Sigma'$ is smooth at a general point of $\gamma_{ij}$ (see Remark~\ref{rem:variabeleTC13}). 
The base locus of $\mathcal{S}'$ is now given by the union of the strict transforms $\widetilde{l}_{ij}$ of the six edges of $T$ (along which a general $\Sigma'\in \mathcal{S}'$ has double points) and the $12$ lines $\gamma_{ij}$ (see Remark~\ref{rem:variabeleTC13}).
Let us blow-up the strict transforms of the edges of $T$: we obtain a smooth threefold $Y''$ and a birational morphism $bl'' : Y'' \to Y'$ with exceptional divisors 
$$(bl'')^{-1}(\widetilde{l}_{ij})=: F_{ij}\cong \mathbb{P}(\mathcal{N}_{\widetilde{l}_{ij}|Y'}) \cong \mathbb{P}(\mathcal{O}_{\mathbb{P}^1}(-1)\oplus \mathcal{O}_{\mathbb{P}^1}(-1))\cong \mathbb{F}_0,$$ for $0\le i<j \le 3$.
This blow-up has no effect on $\widetilde{f}_i$, for $0\le i \le 3$, so, by abuse of notation, we will use the same symbol to indicate its strict transform on $Y''$. 

\begin{remark}\label{rem:alphap13}
Let $\widetilde{E}_i$ be the strict transform of $E_i$ and let us consider the curve $\alpha_{kij}:=\widetilde{E}_k\cap F_{ij}$, where $i,j,k$ are distinct indices in $\{0,1,2,3\}$ and $i<j$. Since $\alpha_{kij}$ is a $(-1)$-curve on $\widetilde{E}_k$ and it is a fibre on $F_{ij}$, then we have that
$F_{ij}^2\cdot \widetilde{E}_k=\alpha_{kij}^2|_{\widetilde{E}_k} = -1$ and $\widetilde{E}_k^2\cdot F_{ij}=\alpha_{kij}^2|_{F_{ij}} = 0$.
\end{remark}

Let $\mathcal{S}''$ be the strict transform of $\mathcal{S}'$: an 
element of $\mathcal{S}''$ is linearly equivalent to $6H-3\sum_{i=0}^3 \widetilde{E}_i-2\sum_{0\le i < j \le 3} F_{ij}$, where $H$ denotes the pullback $bl''^* H$, by abuse of notation. The base locus of $\mathcal{S}''$ is given by the disjoint union of the strict transforms $\widetilde{\gamma}_{ij}$ of the 12 lines $\gamma_{ij}$, for $i,j\in\{0,1,2,3\}$ and $i\ne j$ (see Remark~\ref{rem:variabeleTC13}).

\begin{remark}\label{rem:primeE^3p13}
Let $l_k$ be the linear equivalence class of the lines of $E_k\cong \mathbb{P}^2$: then $E_k|_{E_k}\sim -l_k$ (see \cite[Chap 4, \S 6]{GH} and \cite[Lemma 2.2.14]{IsPro99}). Let $L_k$ be the strict transform of $l_k$ via $bl'' |_{\widetilde{E}_k} : \widetilde{E}_k \to E_k$. 
Since $bl''^* (E_k)=\widetilde{E}_k$, then $\widetilde{E}_k|_{\widetilde{E}_k}\sim -L_k$ and $\widetilde{E}_k^3 = 1$.
\end{remark}

\begin{remark}\label{rem:-1curvesSigma}
By construction we have that $\widetilde{\gamma}_{ij}^2|_{\widetilde{E}_{i}}=-1$ and $\widetilde{\gamma}_{ij}^2|_{\widetilde{f}_{j}}=-1$, for $i,j\in\{0,1,2,3\}$ and $i\ne j$. We also have that $\widetilde{\gamma}_{ij}|_{\Sigma''}=-1$, where $\Sigma''$ is the strict transform on $Y''$ of a general element $\Sigma \in \mathcal{S}$. Indeed,
considering that these twelve curves are disjoint, it follows that
$(\Sigma''\cap \widetilde{E}_i)^2|_{\Sigma''}= \sum_{\substack{j=0 \\ j\ne i}}^3 \widetilde{\gamma}_{ij}^2|_{\Sigma''}$, for all $0\le i \le 3$. On the other hand we have that $(\Sigma''\cap \widetilde{E}_i)^2|_{\Sigma''} = \widetilde{E}_i^2\cdot \Sigma'' = -3$ (see Remarks~\ref{rem:alphap13},~\ref{rem:primeE^3p13}). Thus, $(\widetilde{\gamma}_{ij})^2|_{\Sigma''}=-1$, since the curves $\widetilde{\gamma}_{ij}$ behave in the same way.
\end{remark}

Finally let us consider $bl''' : Y \to Y''$ the blow-up of $Y''$ along the twelve curves $\widetilde{\gamma}_{ij}$, for $i,j\in\{0,1,2,3\}$ and $i\ne j$, with exceptional divisors $\Gamma_{ij}:=bl'''^{-1}(\widetilde{\gamma}_{ij})$. 
We denote by $\mathcal{E}_i$ the strict transform of $\widetilde{E}_i$, by $\mathcal{F}_{ij}$ the strict transform of $F_{ij}$ and by $\mathcal{H}$ the pullback of $H$, for $0\le i<j \le 3$. 

\begin{remark}\label{rem:Gamma^3p13}
We have that
$$\Gamma_{ij}=\mathbb{P}(\mathcal{N}_{\widetilde{\gamma}_{ij}|Y''})\cong \mathbb{P}(\mathcal{O}_{\widetilde{\gamma}_{ij}}(E_{i})\oplus \mathcal{O}_{\widetilde{\gamma}_{ij}}(\widetilde{f}_{j})) \cong \mathbb{P}(\mathcal{O}_{\mathbb{P}^1}(-1)\oplus \mathcal{O}_{\mathbb{P}^1}(-1))\cong \mathbb{F}_0$$
and that $\Gamma_{ij}^{3}=-\deg (\mathcal{N}_{\widetilde{\gamma}_{ij}|Y''})=2$ (see \cite[Chap 4, \S 6]{GH} and \cite[Lemma 2.2.14]{IsPro99}).
\end{remark}

\begin{remark}\label{rem:intersectionYp13}
Let us take three distinct indices $i,j,k\in\{0,1,2,3\}$: if $j<k$, then $\Gamma_{ij}$ intersects $\mathcal{F}_{jk}$ along a $\mathbb{P}^1$, which is a fibre on $\Gamma_{ij}$ and a $(-1)$-curve on $\mathcal{F}_{jk}$. Similarly $\mathcal{F}_{kj}^2 \cdot \Gamma_{ij}=0$ and $\Gamma_{ij}^2\cdot \mathcal{F}_{kj}=-1$ if $k<j$. 
We also observe that $\Gamma_{ij}$ intersects $\mathcal{E}_i$ along a $\mathbb{P}^1$ belonging to the other ruling of $\Gamma_i$, so we have $\mathcal{E}_i^2\cdot\Gamma_{ij}=0$. Furthermore, we still have $\Gamma_{ij}^2\cdot \mathcal{E}_i = -1$, since $bl''' : Y \to Y''$ has no effect on $\widetilde{E}_i$. For this reason we will denote $\Gamma_{ij}\cap \mathcal{E}_{i}$ by $\widetilde{\gamma}_{ij}$, by abuse of notation.
Let us suppose now $i<j$ and let us consider the strict transforms $\widetilde{\alpha}_{kij}$ of the curves $\alpha_{kij}$ defined in Remark~\ref{rem:alphap13}. Then we have that $\mathcal{F}_{ij}^2\cdot \mathcal{E}_k = \widetilde{\alpha}_{kij}^2|_{\mathcal{E}_{k}}=-1$ and $\mathcal{E}_k^2\cdot \mathcal{F}_{ij} = \widetilde{\alpha}_{kij}^2|_{\mathcal{F}_{ij}}=-2$.
Finally we recall that a general line of $\mathbb{P}^3$ does not intersect the edges of $T$ and that a general plane of $\mathbb{P}^3$ intersects each one of them at one point. Hence we have that $\mathcal{H}^2\cdot \mathcal{F}_{ij}=0$ and $\mathcal{F}_{ij}^2\cdot\mathcal{H}=-1$.
\end{remark}

\begin{remark}\label{rem:Ek^3p13}
By construction we have 
${bl'''}^*(\widetilde{E}_k) = \mathcal{E}_k + \sum_{\substack{i=0 \\ i\ne k} }^3\Gamma_{ki},$
for $0\le k \le 3$.
If $\mathcal{L}_k$ is the strict transform of $L_k$ via $bl''' |_{\mathcal{E}_k} : \mathcal{E}_k \to \widetilde{E}_k$, then we have that $-\mathcal{E}_{k}|_{\mathcal{E}_{k}} \sim \mathcal{L}_k + \sum_{\substack{i=0 \\ i\ne k}}^3\widetilde{\gamma}_{ki} \sim 4\mathcal{L}_k-2\sum_{\substack{0\le i<j\le 3 \\ i,j\ne k}}\widetilde{\alpha}_{kij}$  and $\mathcal{E}_k^3=4$ (see Remark~\ref{rem:primeE^3p13}).
\end{remark}

\begin{remark}\label{rem:Fij^3p13}
Let us fix four distinct indices $i,j,k,h\in\{0,1,2,3\}$ with $i<j$. 
By \cite[Lemma 2.2.14]{IsPro99} we have that 
$F_{ij}^3 = - \deg (\mathcal{N}_{\widetilde{l}_{ij}|Y'})=2$ (see also \cite[Chap 4, \S 6]{GH}). 
Since $bl'''^*(F_{ij})=\mathcal{F}_{ij}$, 
then we still have $\mathcal{F}_{ij}^3=2$. 
\end{remark}

Let $\widetilde{\Sigma}$ be the strict transform on $Y$ of an 
element of $\mathcal{S}''$: then
$$\widetilde{\Sigma}\sim 6\mathcal{H}-\sum_{i=0}^3 3\mathcal{E}_k-\sum_{0\le i< j\le 3}2\mathcal{F}_{ij}-\sum_{ \substack{ i,j=0 \\ i\ne j } }^3 4\Gamma_{ij}.$$
Let us take the linear system $\widetilde{\mathcal{S}}:=|\mathcal{O}_{Y}(\widetilde{\Sigma})|$ on $Y$. It is base point free and it defines a morphism $\nu_{\widetilde{\mathcal{S}}}: Y \to \mathbb{P}^{13}$ birational onto the image $W_F^{13}:= \nu_{\widetilde{\mathcal{S}}} (Y)$, which is a threefold of degree $\deg W_F^{13} = 24$. This follows by Lemma~\ref{lem:birationalitanu13} and by the fact that $\widetilde{\Sigma}^3=24$  
(use Remarks~\ref{rem:Gamma^3p13},~\ref{rem:intersectionYp13},~\ref{rem:Ek^3p13},~\ref{rem:Fij^3p13}).
Then we have the following diagram:

$$\begin{tikzcd}
Y \arrow[d, "bl'''"] \arrow[drrr, "\nu_{\widetilde{\mathcal{S}}}"] & & & \\
Y''  \arrow{r}{bl''} & Y' \arrow{r}{bl'} & \mathbb{P}^3 \arrow[dashrightarrow]{r}{\nu_{\mathcal{S}}} & W_F^{13} \subset \mathbb{P}^{13}.
\end{tikzcd}$$

\begin{remark}\label{rem:contractionInpoints13}
Since $bl''' : Y \to Y''$ has no effect on the divisor $\widetilde{f}_i$, for $0\le i \le 3$, we will continue to use the same notation to denote its strict transform. The eight divisors $\mathcal{E}_0$, $\mathcal{E}_1$, $\mathcal{E}_2$, $\mathcal{E}_3$, $\widetilde{f}_0$, $\widetilde{f}_1$, $\widetilde{f}_2$, $\widetilde{f}_3$ are contracted by $\nu_{\widetilde{\mathcal{S}}} : Y \to W_F^{13} \subset \mathbb{P}^{13}$ to points of $W_F^{13}$. Indeed, if $\widetilde{\Sigma}$ is a general element of $\widetilde{\mathcal{S}}$, then by construction we have $\widetilde{\Sigma}\cdot \mathcal{E}_i = 0 = \widetilde{\Sigma}\cdot \widetilde{f}_i$ for all $0\le i \le 3$.
\end{remark}

\begin{remark}\label{rem:nu13blowdown}
The morphism $\nu_{\widetilde{\mathcal{S}}} : Y \to W_{F}^{13} \subset \mathbb{P}^{13}$ blows-down the twelve exceptional divisors $\Gamma_{ij}$ to twelve curves of $W_{F}^{13}$.
This follows by the fact that $\widetilde{\Sigma}\cdot \Gamma_{ij} \ne 0$ and 
$\widetilde{\Sigma}^2\cdot \Gamma_{ij} = 0$ for a general element $\widetilde{\Sigma}\in \widetilde{\mathcal{S}}$ and for all $i,j\in\{0,1,2,3\}$ with $i\ne j$ (use Remarks~\ref{rem:Gamma^3p13},~\ref{rem:intersectionYp13}).
\end{remark}

\begin{remark}\label{rem:noContractionSpigoli13}
Let 
$0\le i<j\le 3$
and let $\widetilde{\Sigma}$ be a general element of $\widetilde{\mathcal{S}}$. 
By Remarks~\ref{rem:intersectionYp13},~\ref{rem:Fij^3p13} we obtain
$\widetilde{\Sigma}^2\cdot \mathcal{F}_{ij} = 4 >0$.
Thus,
the curve $\widetilde{\Sigma}\cap \mathcal{F}_{ij}$ is not contracted by the rational map defined by $\widetilde{\mathcal{S}}|_{\widetilde{\Sigma}}$. 
\end{remark}

\begin{theorem}\label{thm:WF13moderno}
Let $S$ be a general hyperplane section of the threefold $W_F^{13}\subset \mathbb{P}^{13}$. Then $S$ is an Enriques surface and $W_F^{13}$ is not a cone over $S$.
\end{theorem}
\begin{proof}
A general hyperplane section $S$ of $W_F^{13}$ is the image of a general element $\widetilde{\Sigma}\in \widetilde{\mathcal{S}}$ via the morphism $\nu_{\widetilde{\mathcal{S}}} : Y \to W_{F}^{13}\subset \mathbb{P}^{13}$. Let us take $\Sigma'':=bl'''(\widetilde{\Sigma})\in\mathcal{S}''$. Since $bl''' : Y \to Y''$ has no effect on $\Sigma''$, then $\widetilde{\Sigma}\cap \Gamma_{ij}$ is still a $(-1)$-curve on $\widetilde{\Sigma}$, for all $i,j\in\{0,1,2,3\}$ and $i\ne j$ (see Remark~\ref{rem:-1curvesSigma}). 
Since $\nu_{\widetilde{\mathcal{S}}} |_{\widetilde{\Sigma}} : \widetilde{\Sigma} \to S$ is the blow-down of these twelve $(-1)$-curves (see Remarks~\ref{rem:veryample13},~\ref{rem:contractionInpoints13},~\ref{rem:nu13blowdown},~\ref{rem:noContractionSpigoli13}), then $S$ is the minimal desingularization of the corresponding $\Sigma:=bl'(bl''(bl'''(\widetilde{\Sigma})))\in \mathcal{S}$ (see \cite[p.621]{GH}). It is known that the minimal desingularization of a sextic surface $\Sigma \in \mathcal{S}$ is an Enriques surface (see \cite[p.275]{CoDo89}). It remains to show that $W_F^{13}$ is not a cone over $S$.
Since $Y$ is rational by construction, then $W_F^{13}$ is rational too. If $W_F^{13}$ were a cone, then it would be birational to $S\times \mathbb{P}^1$, for a general hyperplane section $S$ of $W_F^{13}$. Thus, $S$ would be unirational, which is a contradiction because $S$ is an Enriques surface.
\end{proof} 

By Theorem~\ref{thm:WF13moderno} we have that $W_F^{13}\subset \mathbb{P}^{13}$ is an Enriques-Fano threefold of genus $p=\frac{S^3}{2}+1 = \frac{\widetilde{\Sigma}^3}{2}+1=13$. 
\end{proof}


\begin{proposition}\label{prop:quadruplePointsp13}
The points $P_{i+1} : = \nu_{\widetilde{\mathcal{S}}}(\mathcal{E}_i)$ and $P_{i+1}' : = \nu_{\widetilde{\mathcal{S}}}(\widetilde{f}_{i})$, $0\le i \le 3$,
are quadruple points of $W_F^{13}$ whose tangent cone is a cone over a Veronese surface.
\end{proposition}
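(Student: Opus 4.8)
The plan is to show that each of the eight contracted divisors gives rise to a quadruple point whose tangent cone is the projectivized cone over a Veronese surface. I would treat the two types of divisors separately but in parallel, since $\mathcal{E}_i \cong \mathbb{P}^2$ and $\widetilde{f}_i \cong \mathbb{P}^2$ (blown up at three points in the latter case, as the three edges meet each face) are the divisors contracted to the points $P_{i+1}$ and $P'_{i+1}$. The multiplicity of $W_F^{13}$ at such a contracted point should equal the intersection number of $\widetilde{\Sigma}$ with the generic fibre direction of the contraction, or more precisely the degree of the image of the contracted surface under the tangent-cone projection; so the first step is to compute the relevant intersection numbers that detect the multiplicity.

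First I would recall, from Remark~\ref{rem:contractionInpoints13}, that $\widetilde{\Sigma}\cdot\mathcal{E}_i = 0 = \widetilde{\Sigma}\cdot\widetilde{f}_i$, which is exactly the condition that these divisors collapse to points. To read off the multiplicity of the singularity, the key computation is $\widetilde{\Sigma}^2\cdot\mathcal{E}_i$ and $\widetilde{\Sigma}^2\cdot\widetilde{f}_i$: for a divisor contracted to a point $P$ by a morphism defined by $|\widetilde{\Sigma}|$, this self-intersection-type number computes the degree of the tangent cone $TC_P W_F^{13}$, which equals the multiplicity of $W_F^{13}$ at $P$. Using Remarks~\ref{rem:Ek^3p13} together with \ref{rem:intersectionYp13} and \ref{rem:Gamma^3p13}, I would evaluate $\widetilde{\Sigma}^2\cdot\mathcal{E}_i$ by expanding $\widetilde{\Sigma}$ in the basis and using the known triple intersections; I expect the answer to be $4$, which establishes that $P_{i+1}$ is a quadruple point. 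The analogous computation on $\widetilde{f}_i$ (using that $\widetilde{f}_i\cong\mathbb{P}^2$ blown up at the three points where the edges meet, and the relation $\widetilde{\Sigma}|_{\widetilde{f}_i}$ in its Picard group) should likewise yield $4$.

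To identify the tangent cone, I would argue that the restriction $\nu_{\widetilde{\mathcal{S}}}|_{\mathcal{E}_i} : \mathcal{E}_i \to P_{i+1}$ factors through the linear system cut out on $\mathcal{E}_i\cong\mathbb{P}^2$ by $\widetilde{\Sigma}$; that is, the projectivized tangent cone at $P_{i+1}$ is the image of $\mathcal{E}_i$ under the map associated to $\widetilde{\Sigma}|_{\mathcal{E}_i}$. So the concrete task is to determine the linear system $\widetilde{\Sigma}|_{\mathcal{E}_i}$ on $\mathbb{P}^2$. From Remark~\ref{rem:Ek^3p13} one has $-\mathcal{E}_k|_{\mathcal{E}_k}\sim 4\mathcal{L}_k - 2\sum\widetilde{\alpha}_{kij}$, i.e.\ the class of conics on $\mathbb{P}^2$ (degree $4$ minus the three double-point-absorbing curves, after blowing down, recovers the full $\mathcal{O}_{\mathbb{P}^2}(2)$). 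Hence the map contracting $\mathcal{E}_i$ is, up to the blow-up bookkeeping, the one given by the complete linear system of conics $|\mathcal{O}_{\mathbb{P}^2}(2)|$, which is precisely the Veronese embedding $\mathbb{P}^2\hookrightarrow\mathbb{P}^5$. Therefore $TC_{P_{i+1}}W_F^{13}$ is the affine cone over a Veronese surface, a degree-$4$ surface in $\mathbb{P}^5$, consistent with the multiplicity $4$ just computed.

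The main obstacle will be making precise the identification of $\widetilde{\Sigma}|_{\mathcal{E}_i}$ with the conic system on $\mathbb{P}^2$ and hence of the tangent cone with the Veronese cone, because $\mathcal{E}_i$ and $\widetilde{f}_i$ have been modified by the successive blow-ups and carry the exceptional curves $\widetilde{\gamma}_{ki}$, $\widetilde{\alpha}_{kij}$ which must be correctly accounted for. The cleanest route is to use that $\nu_{\widetilde{\mathcal{S}}}$ contracts these exceptional curves too (they lie in the intersections with the $\Gamma_{ij}$ and $\mathcal{F}_{ij}$), so that the induced map on $\mathcal{E}_i$ descends to the blow-down $\mathbb{P}^2$ and is there given by $|{-}\mathcal{E}_i|_{\mathcal{E}_i}| = |\mathcal{O}_{\mathbb{P}^2}(2)|$; I would verify this by checking that $\widetilde{\Sigma}|_{\mathcal{E}_i}$ pulls back from this conic system, using the intersection relations in Remarks~\ref{rem:intersectionYp13} and \ref{rem:Ek^3p13}. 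For the $\widetilde{f}_i$ case the same conclusion should follow once one verifies that the restricted system again realizes the Veronese, possibly via a Macaulay2 check as the paper does elsewhere. Establishing that the image is genuinely a Veronese surface (and not merely a degree-$4$ surface) is the crux; everything else is intersection-theoretic bookkeeping already assembled in the preceding remarks.
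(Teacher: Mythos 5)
Your overall strategy for the points $P_{i+1}$ is the paper's: the projectivized tangent cone at the contracted point is the image of $\mathcal{E}_i$ under the system cut out by hyperplanes through $P_{i+1}$, which is $|\mathcal{O}_{\mathcal{E}_i}(-\mathcal{E}_i)| = |4\mathcal{L}_i - 2\sum\widetilde{\alpha}_{kij}|$, i.e.\ plane quartics with three nodes, and a quadratic (Cremona) transformation identifies this with $|\mathcal{O}_{\mathbb{P}^2}(2)|$, so the image is a Veronese surface. Two points need repair, however. First, your proposed multiplicity computation is wrong: you claim that $\widetilde{\Sigma}^2\cdot\mathcal{E}_i$ ``computes the degree of the tangent cone'' and that it equals $4$. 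Since $\widetilde{\Sigma}\cdot\mathcal{E}_i = 0$ (this is exactly why $\mathcal{E}_i$ is contracted to a point, Remark~\ref{rem:contractionInpoints13}), one has $\widetilde{\Sigma}^2\cdot\mathcal{E}_i = 0$, not $4$. The number that detects the multiplicity is $\bigl(-\mathcal{E}_i|_{\mathcal{E}_i}\bigr)^2 = \mathcal{E}_i^3 = 4$ from Remark~\ref{rem:Ek^3p13} (equivalently, the degree of the Veronese surface you exhibit as the projectivized tangent cone). Also, the parenthetical ``after blowing down, recovers the full $\mathcal{O}_{\mathbb{P}^2}(2)$'' is not accurate: $4\mathcal{L}-2\sum\widetilde{\alpha}$ does not blow down to the conic class; it is Cremona-equivalent to it, which is what the paper invokes.

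Second, the case of the points $P_{i+1}' = \nu_{\widetilde{\mathcal{S}}}(\widetilde{f}_i)$ is left essentially unproved: you say ``the same conclusion should follow \dots possibly via a Macaulay2 check.'' The paper gives a genuine geometric argument here, and it is not a formal repetition of the $\mathcal{E}_i$ case, because the relevant class $-\widetilde{f}_i|_{\widetilde{f}_i}$ is not among the data assembled in the preceding remarks. One argues instead that the hyperplane sections of $W_F^{13}$ through $P_{i+1}'$ correspond to the sextics of $\mathcal{S}$ containing the face $f_i$; their movable part consists of quintic surfaces containing the three edges of $T$ lying in $f_i$ and double along the other three edges, and such a quintic cuts on $f_i$ those three fixed edges plus a \emph{variable conic}. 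Hence $\widetilde{\mathcal{S}}_i|_{\widetilde{f}_i} \cong |\mathcal{O}_{\mathbb{P}^2}(2)|$ and the image is again a Veronese surface. Without this (or an explicit computation of $\widetilde{f}_i|_{\widetilde{f}_i}$ in $\operatorname{Pic}(\widetilde{f}_i)$, as the paper carries out in the genus~$9$ case), half of the statement is unestablished.
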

\begin{proof}
We recall that $\nu_{\widetilde{\mathcal{S}}}(\mathcal{E}_i)$ and $\nu_{\widetilde{\mathcal{S}}}(\widetilde{f}_{i})$ actually are points of $W_F^{13}$, for $0\le i \le 3$ (see Remark~\ref{rem:contractionInpoints13}).
Let us consider the sublinear system 
$(\mathcal{S}-\mathcal{E}_k)\subset \widetilde{\mathcal{S}}$ for a fixed $0\le k \le 3$. It corresponds to taking the hyperplane sections of $W_F^{13}\subset \mathbb{P}^{13}$ passing through the point $P_{k+1}$. The linear system 
$(\mathcal{S}-\mathcal{E}_k)|_{\mathcal{E}_k}$ coincides with $|\mathcal{O}_{\mathcal{E}_k}(-\mathcal{E}_k)|$, which is isomorphic to the linear system of the quartic plane curves on $E_k$ with nodes at the three points $E_k\cap \widetilde{l}_{ij}$ for $0\le i<j \le 3$ and $i,j\ne k$ (see Remark~\ref{rem:Ek^3p13}). By applying a quadratic transformation, we obtain the linear system of the conics, whose image is the Veronese surface.
Let us now focus on the hyperplane sections of $W_F^{13}\subset \mathbb{P}^{13}$ passing through $P_{i+1}'$, for a fixed $0\le i \le 3$. It corresponds to taking the sublinear system $\mathcal{S}_{i}$ of the sextic surfaces of $\mathcal{S}$ containing the face $f_i$. The movable part of $\mathcal{S}_{i} $ is given by the quintic surfaces $Q_i$ of $\mathbb{P}^3$ containing the three edges of $T$ contained in $f_i$ and with double points along the other three edges of $T$. Such a surface $Q_i$ cuts on $f_i$ a quintic curve given by the three edges of $T$ contained in $f_i$ and a variable conic. Let us denote by $\widetilde{\mathcal{S}}_i$ the strict transform on $Y$ of $\mathcal{S}_i$ and let $\widetilde{Q}_i$ be the strict transform on $Y$ of $Q_i$. Then $\widetilde{\mathcal{S}}_i|_{\widetilde{f}_i} \cong |\mathcal{O}_{\widetilde{f}_i}(\widetilde{Q}_i)| \cong |\mathcal{O}_{\mathbb{P}^2}(2)|$, whose image is the Veronese surface.
\end{proof} 

Since $\nu_{\mathcal{S}} : \mathbb{P}^{3} \dashrightarrow  W_F^{13} \subset \mathbb{P}^{13}$ is an isomorphism outside $T$ (see Remark~\ref{rem:veryample13}), then $P_1$, $P_2$, $P_3$, $P_4$, $P_1'$, $P_2'$, $P_3'$ and $P_4'$ are the only singular points of $W_F^{13}$ (see Remarks~\ref{rem:contractionInpoints13},~\ref{rem:nu13blowdown},~\ref{rem:noContractionSpigoli13}).

\begin{theorem}\label{thm:associatedo13}
Each singular point of $W_F^{13}$ is associated with \textit{at least} $m=3$ of the other singular points.
\end{theorem}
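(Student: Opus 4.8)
The plan is to make precise the combinatorial notion of ``association'' in terms of the blow-up geometry and then exhibit, for each of the eight singular points, at least three lines on $W_F^{13}$ joining it to other singular points. Recall that the eight singular points come in two families: the four points $P_{i+1}=\nu_{\widetilde{\mathcal{S}}}(\mathcal{E}_i)$ (images of the contracted divisors $\mathcal{E}_i$ coming from the vertices $v_i$ of the tetrahedron) and the four points $P_{i+1}'=\nu_{\widetilde{\mathcal{S}}}(\widetilde{f}_i)$ (images of the contracted strict transforms of the faces $f_i$). The key observation is that two singular points are associated exactly when the line of $\mathbb{P}^3$ naturally joining the corresponding geometric data (vertices, edges, or faces of $T$) maps into $W_F^{13}$ under $\nu_{\mathcal{S}}$; since $\nu_{\mathcal{S}}$ is an isomorphism away from $T$ (see Remark~\ref{rem:veryample13}), I would track each candidate line via its strict transform on $Y$ and check that it is \emph{not} contracted, so that its image is a genuine line on $W_F^{13}$ meeting two of the eight points.

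First I would set up the correspondence between the edges $l_{ij}$ of $T$ and the divisors $\mathcal{F}_{ij}$. Each edge $l_{ij}=f_i\cap f_j$ joins the two vertices $v_i$ and $v_j$, and on the other hand it lies on the two faces $f_i$ and $f_j$. The crucial input is Remark~\ref{rem:noContractionSpigoli13}: since $\widetilde{\Sigma}^2\cdot \mathcal{F}_{ij}=4>0$, the divisor $\mathcal{F}_{ij}$ is not contracted to a point but maps to a surface (in fact the general fibre of $\mathcal{F}_{ij}\to\widetilde{l}_{ij}$ survives). I would argue that the image of $\mathcal{F}_{ij}$ under $\nu_{\widetilde{\mathcal{S}}}$ is a ruled surface whose rulings are lines on $W_F^{13}$, and that these rulings sweep out lines passing through the singular points corresponding to the vertices and faces incident to the edge $l_{ij}$. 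Using the incidence relations recorded in Remark~\ref{rem:intersectionYp13} ($\Gamma_{ij}\cdot\mathcal{F}_{jk}$, $\mathcal{E}_k\cdot\mathcal{F}_{ij}$, etc.), I would identify which of the eight contracted divisors $\mathcal{E}_\ast,\widetilde{f}_\ast$ meet a given $\mathcal{F}_{ij}$, and hence which pairs of singular points are joined by a line lying in the image of $\mathcal{F}_{ij}$.

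The counting then proceeds combinatorially. Fix a singular point, say $P_{k+1}$ coming from the vertex $v_k$. The three edges $l_{ki}$ ($i\neq k$) emanate from $v_k$; each provides, through the non-contracted divisor $\mathcal{F}_{ki}$, a line on $W_F^{13}$ joining $P_{k+1}$ to the singular point associated with the opposite endpoint or incident face. Since $v_k$ lies on exactly three edges of $T$, this yields at least $m=3$ associated points for each vertex-type singularity. Dually, for a face-type singular point $P_{i+1}'$ coming from $f_i$: the face $f_i$ contains exactly three edges $l_{ij}$ ($j\neq i$), and the same divisors $\mathcal{F}_{ij}$ furnish three lines joining $P_{i+1}'$ to three other singular points. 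So in both cases the tetrahedral incidence geometry — three edges per vertex, three edges per face — delivers exactly the required lower bound $m=3$.

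The main obstacle will be verifying rigorously that the lines produced really pass through the \emph{correct} singular points and really lie in $W_F^{13}$, rather than merely abutting the exceptional loci. Concretely, I must show that a ruling of the image of $\mathcal{F}_{ij}$ limits to the point $\nu_{\widetilde{\mathcal{S}}}(\mathcal{E}_i)$ (resp.\ $\nu_{\widetilde{\mathcal{S}}}(\widetilde{f}_j)$) at the two ends of the edge, which amounts to checking how $\mathcal{F}_{ij}$ meets $\mathcal{E}_i$, $\mathcal{E}_j$, $\widetilde{f}_i$, $\widetilde{f}_j$ and the curves $\Gamma_{\ast}$, all of which is encoded in Remark~\ref{rem:intersectionYp13}. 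Equivalently, one should confirm via the definition of association that the line through the two singular points is contained in $W_F^{13}$; the cleanest route is to exhibit explicitly, back on $\mathbb{P}^3$, the line (an edge of $T$, or a line joining a vertex to a face barycentre) whose general point maps isomorphically into $W_F^{13}$ and whose two limiting directions hit the prescribed pair of contracted loci. Once this incidence bookkeeping is settled, the bound $m\ge 3$ is immediate from the fact that every vertex and every face of a tetrahedron is incident to precisely three of its six edges.
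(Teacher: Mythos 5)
Your route through the edge divisors $\mathcal{F}_{ij}$ is genuinely different from the paper's, which works instead with the twelve divisors $\Gamma_{ij}$ lying over the curves $\widetilde{\gamma}_{ij}=E_i\cap\widetilde{f}_j$: each $\Gamma_{ij}$ is blown down to a single line (since $\widetilde{\Sigma}\cap\Gamma_{ij}$ is a ruling of $\Gamma_{ij}\cong\mathbb{F}_0$, so $\widetilde{\mathcal{S}}|_{\Gamma_{ij}}\cong\mathbb{P}^1$), and that line joins $P_{i+1}=\nu_{\widetilde{\mathcal{S}}}(\mathcal{E}_i)$ to $P_{j+1}'=\nu_{\widetilde{\mathcal{S}}}(\widetilde{f}_j)$ because $\Gamma_{ij}$ meets both $\mathcal{E}_i$ and $\widetilde{f}_j$. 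Counting the three choices of $j\ne i$ (resp.\ $i\ne j$) gives the bound at once.

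Unfortunately your mechanism has a genuine gap: the rulings of $\nu_{\widetilde{\mathcal{S}}}(\mathcal{F}_{ij})$ are \emph{not} lines. A general $\Sigma\in\mathcal{S}$ is double along $l_{ij}$, so $\widetilde{\Sigma}$ meets a general fibre of $\mathcal{F}_{ij}\to\widetilde{l}_{ij}$ in two points; computing $\Sigma''|_{F_{ij}}$ on $F_{ij}\cong\mathbb{F}_0$ (using $H|_{F_{ij}}\sim F$, $\widetilde{E}_k|_{F_{ij}}\sim F$ and $F_{ij}|_{F_{ij}}\sim -C-F$, with $(F_{ij}|_{F_{ij}})^2=F_{ij}^3=2$) gives bidegree $(2,2)$, so \emph{both} rulings map to conics, and $\widetilde{\Sigma}^2\cdot\mathcal{F}_{ij}=4$ is the degree of an image surface ruled by conics, not by lines. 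The only curves of $\mathcal{F}_{ij}$ that do map to lines through two singular points are the four $(-1)$-curves $\Gamma_{ab}\cap\mathcal{F}_{ij}$, i.e.\ you are forced back to the divisors $\Gamma_{ab}$ and hence to the paper's argument. Relatedly, your combinatorics is off: the hedge ``opposite endpoint or incident face'' matters, because the correct associations are exclusively vertex--incident-face ($P_{i+1}$ with $P_{j+1}'$ for $j\ne i$); vertex--vertex and face--face pairs are \emph{not} associated (the general section of $\mathcal{F}_{ij}$ over the edge joining $v_k$ and $v_h$ maps to a conic through $P_{k+1}$ and $P_{h+1}$, not a line, and the Macaulay2 check in \S~\ref{code:fano13} confirms $\left\langle P_1,P_2\right\rangle\not\subset W_F^{13}$). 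Note also that in the paper's convention $l_{ij}=f_i\cap f_j$ does not pass through $v_i$ or $v_j$, so ``the three edges $l_{ki}$ emanate from $v_k$'' misidentifies which $\mathcal{F}$'s are incident to which contracted divisors; the edges through $v_k$ are the $l_{ij}$ with $i,j\ne k$. The count of three per point is correct, but the lines realizing it must be produced from the $\Gamma_{ij}$, not from the rulings of the $\mathcal{F}_{ij}$.
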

\begin{proof}
We know that the twelve exceptional divisors of $bl''' : Y \to Y''$ are mapped by $\nu_{\widetilde{\mathcal{S}}} : Y \to W_F^{13}\subset \mathbb{P}^{13}$ to curves of $W_F^{13}$ (see Remark~\ref{rem:nu13blowdown}). 
Let us fix
two indices $i,j\in\{0,1,2,3\}$ with $j\ne i$.
Let $\widetilde{\Sigma}$ be a general element of $\widetilde{\mathcal{S}}$: by construction we have that $\widetilde{\Sigma}\cap \Gamma_{ij}$ belongs to one of the two rulings of $\Gamma_{ij}\cong \mathbb{F}_0$. Then $\widetilde{\mathcal{S}}|_{\Gamma_{ij}}\cong \mathbb{P}^1$ and so $\nu_{\widetilde{\mathcal{S}}}(\Gamma_{ij})\subset W_F^{13}$ is a line. In particular $\nu_{\widetilde{\mathcal{S}}}(\Gamma_{ij})$ joins the points $P_{i+1}=\nu_{\widetilde{\mathcal{S}}}(\mathcal{E}_i)$ and $P_{j+1}'=\nu_{\widetilde{\mathcal{S}}} (\widetilde{f}_j)$, since $\Gamma_{ij}\cap \mathcal{E}_i \ne \emptyset$ and $\Gamma_{ij}\cap \widetilde{f}_j \ne \emptyset$.
\end{proof}

\begin{remark}
Thanks to a computational anaylisis with Macaulay2, we see that each singular point of $W_F^{13}$ is associated with \textit{exactly} $m=3$ of the other singular points, and that the lines joining them and contained in $W_F^{13}$ are just the ones of proof of Theorem~\ref{thm:associatedo13} (see \S~\ref{code:fano13} of Appendix~\ref{app:code}).
\end{remark}

\section{The Enriques-Fano threefold of genus 9}\label{subsec:Fano9}


Let us take the trihedron $T\subset \mathbb{P}^3$ with vertex $v$, faces $f_i$ and edges $l_{ij} := f_i \cap f_j$, and the trihedron $T'\subset \mathbb{P}^3$ with vertex $v'$, faces $f_i'$ and edges $l_{ij}' := f_i' \cap f_j'$, for $1\le i <j \le 3$. 
Let us consider the linear system $\mathcal{K}$ of the septic surfaces of $\mathbb{P}^{3}$ double along the six edges of the two trihedra $T$ and $T'$.

\begin{remark}\label{rem:rij contained in K}
A septic surface $K\in \mathcal{K}$ contains the nine lines $r_{ij}:=f_i\cap f_j'$, for $i,j\in\{1,2,3\}$. Assume the contrary: then, by Bezout's Theorem, $K\cap r_{ij}$ is given by $7$ points. Furthermore, each line $r_{ij}$ intersects two edges of $T$ contained in $f_i$ and two edges of $T'$ contained in $f_j'$. Hence $r_{ij}$ is a line through four double points of $K$.
We obtain that $K\cap r_{ij}$ contains at least $8$ points, counted with multiplicity, which is a contradiction. Thus, it must be $r_{ij} \subset K$. 
\end{remark}

\begin{proposition}\label{prop:dimK=9}
The linear system $\mathcal{K}$ is defined by the zero locus of the following homogeneous polynomial of degree seven
$$F(s_0,s_1,s_2,s_3) = f_1f_2f_3f_1'f_2'f_3'(\lambda_0s_0+\lambda_1s_1+\lambda_2s_2+\lambda_3s_3)+$$
$$+f_1'f_2'f_3'(\lambda_4 f_3^2f_2^2+\lambda_5 f_1^2f_3^2+\lambda_6 f_1^2f_2^2)+f_1f_2f_3(\lambda_7 {f_3'}^2{f_2'}^2+\lambda_8 {f_1'}^2{f_3'}^2+\lambda_9 {f_1'}^2{f_2'}^2),$$
where $\lambda_0,\dots ,\lambda_9\in \mathbb{C}$ and where $f_i$ and $f_i'$ denote, by abuse of notation, the linear homogeneous polynomials defining, respectively, the faces $f_i$ and $f_i'$, for $1\le i \le 3$. The linear system $\mathcal{K}$ therefore has $\dim \mathcal{K} = 9$.
\end{proposition}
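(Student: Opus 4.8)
The plan is to determine $\dim\mathcal K$ by explicitly describing the space of degree-$7$ homogeneous polynomials that are singular (double) along all six edges of the two trihedra, and to show that this space is exactly the one spanned by the ten displayed generators. First I would set up coordinates so that the faces of $T$ are $f_1,f_2,f_3$ and those of $T'$ are $f_1',f_2',f_3'$, each a linear form; then the edges of $T$ are the six lines $\{f_i=f_j=0\}$ (for $T$) and $\{f_i'=f_j'=0\}$ (for $T'$). The condition ``$K$ is double along an edge $l_{ij}$'' means that along the codimension-$2$ locus $f_i=f_j=0$ both partial derivatives of $F$ in the two transverse directions vanish, equivalently $F\in(f_i,f_j)^2=(f_i^2,f_if_j,f_j^2)$ locally; imposing this simultaneously for the three edges of $T$ forces $F$ to lie in the intersection of the three ideals $(f_i,f_j)^2$ over the three pairs in $\{1,2,3\}$, and similarly for $T'$.

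The key step is therefore the ideal-theoretic computation: I would compute, inside the polynomial ring, the intersection $I_T:=\bigcap_{1\le i<j\le 3}(f_i,f_j)^2$ describing surfaces double along the three edges of $T$, and likewise $I_{T'}$ for $T'$, and then extract the degree-$7$ graded piece of $I_T\cap I_{T'}$. Since $f_1,f_2,f_3$ are independent linear forms (the trihedron is nondegenerate), I can assume after a change of coordinates that $f_i=s_i$ for $i=1,2,3$; the monomials in $s_1,s_2,s_3$ that are double along all three coordinate-edges are exactly those divisible by at least two squares among $s_1^2,s_2^2,s_3^2$, i.e. multiples of $s_2^2s_3^2,\ s_1^2s_3^2,\ s_1^2s_2^2$, so $I_T$ is generated by these three quartics. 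The analogous statement holds for $I_{T'}$ with the three quartics in the $f_i'$. I then intersect the two monomial-type ideals and read off the degree-$7$ part.

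The final step is bookkeeping: a degree-$7$ form in $I_T\cap I_{T'}$ must be a combination of products of a generator of $I_T$ (degree $4$) with a factor making up the remaining degree $3$ that also lies in $I_{T'}$, and symmetrically. Matching this against the three natural families in the displayed $F$ — the term $f_1f_2f_3f_1'f_2'f_3'\cdot(\text{linear})$, which accounts for $4$ parameters $\lambda_0,\dots,\lambda_3$, together with the two batches $f_1'f_2'f_3'\cdot(\text{quartic in }f_i)$ and $f_1f_2f_3\cdot(\text{quartic in }f_i')$, each contributing $3$ parameters — gives a total of $10$ parameters and hence $\dim\mathcal K=9$. I expect the main obstacle to be verifying that there are no further independent degree-$7$ generators, i.e. that the ten listed families already span the whole degree-$7$ component of $I_T\cap I_{T'}$; this amounts to checking that every monomial divisible by two squares from each trihedron is captured, and that Remark~\ref{rem:rij contained in K} (forcing the nine lines $r_{ij}=f_i\cap f_j'$ into every $K$) is automatically consistent with, and does not further constrain, this span. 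This completeness check is most safely confirmed by the explicit Macaulay2 computation referenced in the paper.
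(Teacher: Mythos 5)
Your overall strategy (compute the degree-$7$ graded piece of the ideal of septics double along all six edges) is reasonable in principle, but both of the steps you single out as key contain genuine problems, and the paper proceeds quite differently. First, your description of $I_T=\bigcap_{1\le i<j\le 3}(f_i,f_j)^2$ is incorrect: this ideal is not generated by the three quartics $f_i^2f_j^2$. Taking $f_i=s_i$, the monomial $s_1s_2s_3$ lies in each $(s_i,s_j)^2$ (it is a multiple of $s_is_j$), so $f_1f_2f_3\in I_T$; the minimal generators are $f_1f_2f_3$ together with the three quartics. This is not a cosmetic slip: the cubic generator is exactly what produces the four-parameter family $f_1f_2f_3f_1'f_2'f_3'(\lambda_0s_0+\lambda_1s_1+\lambda_2s_2+\lambda_3s_3)$ in the statement, a family which your own final bookkeeping uses even though it does not lie in the ideal you claim to have computed.

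Second, and more seriously, the step ``intersect the two monomial-type ideals and read off the degree-$7$ part'' cannot be carried out as described: there are six linear forms $f_1,f_2,f_3,f_1',f_2',f_3'$ in only four variables, so $I_T$ and $I_{T'}$ cannot simultaneously be made monomial, and your final paragraph implicitly replaces $I_T\cap I_{T'}$ by the product-type expressions $(\text{generator of }I_T)\cdot(\text{degree-}3\text{ factor in }I_{T'})$, i.e.\ it conflates the intersection of the two ideals with their product. You correctly identify the completeness check as the main obstacle, but you defer it to a Macaulay2 computation that the paper does not in fact perform for this proposition. The paper's proof is instead geometric: for a general $K\in\mathcal{K}$ the plane septic $K\cap f_i$ is forced, by the two double edges in $f_i$ and by the fact that $K$ contains the nine lines $r_{ij}$ (Remark~\ref{rem:rij contained in K}), to equal $2l_{ik}+2l_{ih}+\sum_{j}r_{ij}$; this pins down $F$ modulo $f_i$ up to one scalar, and peeling off the six faces one after another yields exactly the displayed expression for $F$. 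The value $\dim\mathcal{K}=9$ then follows by checking that containing the six faces imposes independent conditions and that the residual system $\{K\in\mathcal{K}\mid K\supset T\cup T'\}$ is the $3$-dimensional family $T\cup T'\cup\pi$. To rescue your route you would have to actually compute $(I_T\cap I_{T'})_7$ for two general trihedra, which is the whole content of the proposition; as written, the proposal asserts the answer rather than proving it.
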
 
\begin{proof}
Let $F\in \mathbb{C}\left[ s_0:s_1:s_2:s_3 \right]$ be the homogeneous polynomial of degree $7$ defining a general element $K$ of $\mathcal{K}$. We recall that the intersection of an irreducible septic surface of $\mathbb{P}^3$ with a plane is a septic curve: in particular, $K$ intersects each face $f_i$ of $T$ along the septic curve given by the two double edges contained in that face plus the three lines $r_{ij}$, for $1\le j \le 3$. The same happens with the faces of $T'$. 
This implies that it must be $K\cap f_i=\{f_1'f_2'f_3'f_k^2f_h^2=0,\, f_i = 0\}=2l_{ik}+2l_{ih}+\sum_{j=1}^3r_{ij}$ and $K\cap f_i'=\{f_1f_2f_3{f_k'}^2{f_h'}^2=0,\, f_i' = 0\}= 2l_{ik}'+2l_{ih}'+\sum_{j=1}^3r_{ji}$, for distinct indices $i,k,h\in\{1,2,3\}$. 
Then it must be
$$F(s_0,s_1,s_2,s_3) = f_1g_6(s_0,s_1,s_2,s_3)+\lambda_4 f_1'f_2'f_3'f_3^2f_2^2,$$
where $\lambda_4\in \mathbb{C}$ and $g_6$ is a homogeneous polynomial of degree $6$ such that
$$g_6(s_0,s_1,s_2,s_3) = f_2g_5(s_0,s_1,s_2,s_3)+\lambda_5 f_1'f_2'f_3'f_1f_3^2,$$
where $\lambda_5\in \mathbb{C}$ and $g_5$ is a homogeneous polynomial of degree $5$ such that
$$g_5(s_0,s_1,s_2,s_3) = f_3g_4(s_0,s_1,s_2,s_3)+\lambda_6 f_1'f_2'f_3'f_1f_2,$$
where $\lambda_6\in \mathbb{C}$ and $g_4$ is a homogeneous polynomial of degree $4$ such that
$$g_4(s_0,s_1,s_2,s_3) = f_1'g_3(s_0,s_1,s_2,s_3)+\lambda_7 f_2'^2f_3'^2,$$
where $\lambda_7\in \mathbb{C}$ and $g_3$ is a homogeneous polynomial of degree $3$ such that
$$g_3(s_0,s_1,s_2,s_3) = f_2'g_3(s_0,s_1,s_2,s_3)+\lambda_8 f_1'f_3'^2,$$
where $\lambda_8\in \mathbb{C}$ and $g_2$ is a homogeneous polynomial of degree $2$ such that
$$g_2(s_0,s_1,s_2,s_3) = f_3'(\lambda_0s_0+\lambda_1s_1+\lambda_2s_2+\lambda_3s_3)+\lambda_9 f_1'f_2',$$
where $\lambda_0, \lambda_1, \lambda_2, \lambda_3, \lambda_9\in \mathbb{C}$.
So $F$ has the expression of the statement. Since $\{K\in \mathcal{K}| K\supset f_1\} = \{F=0 | \lambda_4 = 0\}$, then $\operatorname{codim}\left(\{K\in \mathcal{K}| K\supset f_1\},\mathcal{K}\right)=1$. 
Let us see that containing the six faces $f_1$, $f_2$, $f_3$, $f_1'$, $f_2'$, $f_3'$ imposes independent conditions: there exists a septic surface in $\mathcal{K}$ containing $f_1$ but not $f_2$, that is
$\{F=0 | \lambda_4 = 0, \lambda_5 \ne 0\}$;
there exists a septic surface in $\mathcal{K}$ containing $f_1$ and $f_2$ but not $f_3$, that is
$\{F=0 | \lambda_4 = \lambda_5 = 0, \lambda_6\ne 0\}$;
there exists a septic surface in $\mathcal{K}$ containing $f_1$, $f_2$ and $f_3$ but not $f_1'$, that is
$\{F=0 | \lambda_4 = \lambda_5 = \lambda_6 = 0, \lambda_7 \ne 0\}$;
there exists a septic surface in $\mathcal{K}$ containing $f_1$, $f_2$, $f_3$ and $f_1'$ but not $f_2'$, that is
$\{F=0 | \lambda_4 = \lambda_5 = \lambda_6 = \lambda_7 = 0, \lambda_8 \ne 0\}$;
there exists a septic surface in $\mathcal{K}$ containing $f_1$, $f_2$, $f_3$, $f_1'$, and $f_2'$ but not $f_3'$, that is
$\{F=0 | \lambda_4 = \lambda_5 = \lambda_6 = \lambda_7 = \lambda_8 = 0, \lambda_9 \ne 0\}.$
Thus, we obtain $\operatorname{codim}(\{K\in \mathcal{K}| K\supset T\cup T'\},\mathcal{K})=6$.
Furthermore, each element of $\{K\in \mathcal{K}| K\supset T\cup T'\}$ is of the form $T\cup T'\cup \pi$, where $\pi$ is a general plane of $\mathbb{P}^3$. Thus, we have $\dim \{K\in \mathcal{K}| K\supset T\cup T'\}= \dim |\mathcal{O}_{\mathbb{P}^3}(1)|=3$ and finally $\dim \mathcal{K}=3+6=9$.
\end{proof}

Let us consider the points mentioned in Remark~\ref{rem:rij contained in K}: they are
$q_{ijk}:=l_{ij}\cap r_{ik} = l_{ij}\cap r_{jk}$ and $q_{ijk}':=l_{ij}'\cap r_{ki}=l_{ij}'\cap r_{kj}$ for $i,j,k\in\{1,2,3\}$ with $i<j$. 
These points also represent the intersection points between the faces of a trihedron and the edges of the other trihedron. Indeed, we have that $q_{ijk} = l_{ij}\cap f_k'$ and $q_{ijk}' =l_{ij}'\cap f_k$.

\begin{remark}\label{rem:variabeleTC9}
Let $K$ be a general element of $\mathcal{K}$. By looking locally at the equation of $\mathcal{K}$ (see Proposition~\ref{prop:dimK=9}), then we find that:
\begin{itemize}
\item[(i)] $K$ has triple points at the vertices of $T$ and $T'$ and $TC_{v}K=\bigcup_{i=1}^3f_i$ and $TC_{v'}K=\bigcup_{i=1}^3f_i'$;
\item[(ii)] $TC_{q_{ijk}}K = f_i\cup f_j$ and $TC_{q_{ijk}'}=f_i'\cup f_j'$, for $i,j,k\in\{1,2,3\}$ with $i<j$;
\item[(iii)] if $p\in l_{ij}$, with $p\ne v$ and $p\ne q_{ijk}$, then $TC_{p}K$ is the union of two variable planes containing $l_{ij}$, depending on the choice of the point $p$ and of the surface $K$, and coinciding for finitely many points $p$. Similarly if $p\in l_{ij}'$, with $p\ne v'$ and $p\ne q_{ijk}'$, then $TC_{p}K$ is the union of two elements of $|\mathcal{I}_{l_{ij}'|\mathbb{P}^3}(1)|$ that depend on the choice of $p$ and $K$ and that can also coincide for finitely many points $p$;
\item[(iv)] $K$ is smooth along $r_{ik}$, except at the points contained in the edges of the two trihedra.
\end{itemize}
\end{remark}

\begin{lemma}\label{lem:birationalitanu9}
The rational map $\nu_{\mathcal{K}} : \mathbb{P}^{3} \dashrightarrow \mathbb{P}^{9}$ defined by $\mathcal{K}$ is birational onto the image.
\end{lemma}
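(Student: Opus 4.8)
The plan is to mimic the proof of Lemma~\ref{lem:birationalitanu13}: it suffices to check that the restriction of $\mathcal{K}$ to a general member $K\in\mathcal{K}$ defines a birational map onto its image. Indeed, $\nu_{\mathcal{K}}$ sends a general $K$ to a general hyperplane section of the image $\nu_{\mathcal{K}}(\mathbb{P}^3)\subset\mathbb{P}^9$, and the degree of $\nu_{\mathcal{K}}$ coincides with the degree of its restriction $\nu_{\mathcal{K}}|_K : K\dashrightarrow \nu_{\mathcal{K}}(K)$, since a general fibre of $\nu_{\mathcal{K}}$ lies in the preimage of a hyperplane and hence in a member of $\mathcal{K}$; thus birationality of the restriction forces birationality of $\nu_{\mathcal{K}}$.

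First I would read off a convenient sublinear system directly from the explicit equation of Proposition~\ref{prop:dimK=9}. Setting $\lambda_4=\dots=\lambda_9=0$ leaves the family $\{f_1f_2f_3f_1'f_2'f_3'(\lambda_0s_0+\lambda_1s_1+\lambda_2s_2+\lambda_3s_3)=0\}$, a $3$-dimensional sublinear system $\overline{\mathcal{K}}\subset\mathcal{K}$ whose fixed part is the degree-six surface $T\cup T'=\{f_1f_2f_3f_1'f_2'f_3'=0\}$ and whose movable part is the complete system $|\mathcal{O}_{\mathbb{P}^3}(1)|$ of planes.

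Next I would restrict $\overline{\mathcal{K}}$ to a general $K\in\mathcal{K}$: the fixed part contributes only the curve $K\cap (T\cup T')$, while the movable part cuts out on $K$ the linear system of plane sections $|\mathcal{O}_K(1)|$. Since $K$ is a non-degenerate surface of $\mathbb{P}^3$, its plane sections separate general points, so $\overline{\mathcal{K}}|_K$ already defines a birational map onto its image (it is essentially the inclusion $K\hookrightarrow\mathbb{P}^3$ up to the fixed curve). A fortiori the larger system $\mathcal{K}|_K$ defines a birational map, which concludes the proof.

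I do not expect a genuine obstacle here: the explicit shape of the defining polynomial makes the choice of $\overline{\mathcal{K}}$ transparent, exactly as the term $s_0s_1s_2s_3\,Q$ did in the genus $13$ case. The one point to be checked is that, after removing the fixed curve, the movable part of $\overline{\mathcal{K}}|_K$ carries no extra base locus on $K$; this is immediate because the linear forms $\lambda_0s_0+\dots+\lambda_3s_3$ already exhaust $|\mathcal{O}_{\mathbb{P}^3}(1)|$, so no further conditions are imposed on $K$.
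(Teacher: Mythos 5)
Your proposal is correct and follows essentially the same route as the paper: both exhibit the sublinear system $\overline{\mathcal{K}}\subset\mathcal{K}$ with fixed part $T\cup T'$ whose movable part cuts out the plane sections on a general $K$, and conclude birationality of $\nu_{\mathcal{K}}|_K$ and hence of $\nu_{\mathcal{K}}$. The extra details you supply (reading $\overline{\mathcal{K}}$ off the explicit equation of Proposition~\ref{prop:dimK=9} and justifying the reduction to the restriction) are consistent with, and merely elaborate on, the paper's argument.
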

\begin{proof}
It is sufficient to prove that the map defined by $\mathcal{K}$ on a general $K \in \mathcal{K}$ is birational onto the image. 
This actually happens because $\mathcal{K}|_{K}$ contains a sublinear system that defines a birational map. Indeed, $\mathcal{K}$ contains a sublinear system $\overline{\mathcal{K}} \subset \mathcal{K}$ 
whose fixed part is given by the two trihedra $T$ and $T'$ and such that $\overline{\mathcal{K}}|_{K}$ coincides with the linear system on $K$ cut out by the planes of $\mathbb{P}^3$.
\end{proof} 

\begin{remark}\label{rem:veryample9}
The proof of Lemma~\ref{lem:birationalitanu9} tells us that the linear system $\mathcal{K}$ is very ample outside the two trihedra $T$ and $T'$. So $\nu_{\mathcal{K}} : \mathbb{P}^{3} \dashrightarrow  \nu_{\mathcal{K}}(\mathbb{P}^3)\subset \mathbb{P}^{9}$ is an isomorphism outside $T\cup T'$.
\end{remark}

\begin{theorem}\cite[\S 7]{Fa38}\label{thm:WF9 isEF}
The image of $\nu_{\mathcal{K}} : \mathbb{P}^{3} \dashrightarrow \mathbb{P}^{9}$ is an Enriques-Fano threefold $W_{F}^{9}$ of genus $p=9$.
\end{theorem}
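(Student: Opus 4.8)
The plan is to mirror the proof of Theorem~\ref{thm:WF13 isEF}: resolve the base locus of $\mathcal{K}$ by successive blow-ups of $\mathbb{P}^3$ until one reaches a smooth rational threefold $Y$ carrying a base-point-free linear system $\widetilde{\mathcal{K}}$. By Lemma~\ref{lem:birationalitanu9} this system then defines a birational morphism $\nu_{\widetilde{\mathcal{K}}} : Y \to W_F^9 \subset \mathbb{P}^9$, and it remains only to check that a general hyperplane section $S$ is an Enriques surface, that $W_F^9$ is not a cone over $S$, and that $\widetilde{K}^3 = 2p-2 = 16$, which gives the genus $p=9$.

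Guided by Remark~\ref{rem:variabeleTC9}, the base locus consists of the two vertices $v, v'$ (triple points, with tangent cones $\bigcup_i f_i$ and $\bigcup_i f_i'$), the six edges $l_{ij}, l_{ij}'$ (along which a general $K$ is double), and the nine lines $r_{ij} = f_i \cap f_j'$, which lie on every $K$ (Remark~\ref{rem:rij contained in K}) but along which $K$ is generically smooth (Remark~\ref{rem:variabeleTC9}(iv)). I would therefore blow up in the order: (1) the two vertices, producing exceptional divisors $E \cong E' \cong \mathbb{P}^2$ and reducing the strict transform to the class $7H - 3E - 3E'$; (2) the strict transforms of the six edges, producing divisors $F_{ij}, F_{ij}' \cong \mathbb{F}_0$ and subtracting twice each; and (3) the strict transforms of the nine lines $r_{ij}$ together with the residual curves cut on $E, E'$ by the strict transforms of the faces (the analogs of the curves $\gamma_{ij}, \widetilde{\gamma}_{ij}$ of the genus $13$ case). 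At each stage I would record the class of $\widetilde{K}$ and the geometry of the exceptional divisors exactly as in Remarks~\ref{rem:alphap13}--\ref{rem:Fij^3p13}, so as to be able to compute the needed triple intersection numbers, culminating in $\widetilde{K}^3 = 16$.

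Once $\widetilde{\mathcal{K}}$ is base-point-free I would determine the fibres of $\nu_{\widetilde{\mathcal{K}}}$ by the intersection tests of Remarks~\ref{rem:contractionInpoints13}--\ref{rem:nu13blowdown}. I expect the two vertex divisors $E, E'$ and the strict transforms of the six faces $f_i, f_i'$ to be contracted to the eight quadruple points predicted by Fano (those satisfying $\widetilde{K}\cdot D = 0$), while the edge divisors are blown down to lines joining these points, as in Theorem~\ref{thm:associatedo13}. Restricting the blow-down to a general $\widetilde{K}$ then exhibits $S$ as obtained from a general septic $K\in\mathcal{K}$ by contracting $(-1)$-curves, so $S$ is the minimal desingularization of $K$, and I would conclude that $S$ is an Enriques surface from the surface classification, as was done for the sextics via \cite[p.275]{CoDo89}. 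Non-conicality is then identical to the argument of Theorem~\ref{thm:WF13moderno}: $Y$ is rational, hence so is $W_F^9$, and a cone over $S$ would force the Enriques surface $S$ to be unirational, a contradiction.

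The main obstacle is the nine lines $r_{ij}$, which have no counterpart in the genus $13$ tetrahedron. They meet the edges exactly at the points $q_{ijk}, q_{ijk}'$, so their strict transforms interact with the vertex and edge exceptional divisors in a more tangled incidence pattern; tracking these intersections, the multiplicities they induce in the class of $\widetilde{K}$, and the resulting normal bundles is the delicate part of the bookkeeping, and it is precisely what determines whether $\widetilde{K}^3$ equals the required value $16$. I would expect to confirm the most intricate intersection numbers, as well as the fact that the minimal resolution of $K$ carries the Enriques invariants $p_g = q = 0$ and $2K_S \sim 0$, with a Macaulay2 computation as elsewhere in the paper.
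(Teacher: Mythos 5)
Your overall strategy (resolve the base locus by successive blow-ups, compute $\widetilde{K}^3=16$, identify the contracted divisors, and rule out the cone case by rationality of $Y$) is the same as the paper's, but there are two genuine gaps. The most serious is the final step: you propose to conclude that $S$ is an Enriques surface because it is ``the minimal desingularization of $K$'', invoking the classification ``as was done for the sextics via \cite[p.275]{CoDo89}''. No such off-the-shelf statement exists for septics double along the edges of two trihedra, and the paper has to prove the Enriques property from scratch: Proposition~\ref{prop:tildeKSmoothePa0} computes $p_a(K'')=0$ by an Euler-characteristic calculation on $Y''$, Theorem~\ref{thm:S9isEnriques} obtains $q=p_g=0$ from Kawamata--Viehweg vanishing, and --- the key idea your proposal lacks --- $2K_S\sim 0$ is deduced from the relation $(\widetilde{T}+\widetilde{T}')|_{\widetilde{K}}\sim 0$, where $T\cup T'$ is the \emph{unique} sextic singular along all six edges (Remark~\ref{rem:uniqueTT'}); this lets one rewrite $2K_{\widetilde{K}}$ via adjunction as an effective sum of curves killed by $(\nu_{\widetilde{\mathcal{K}}})_*$. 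Deferring ``$p_g=q=0$ and $2K_S\sim 0$'' to a Macaulay2 computation is not a substitute: these are statements over $\mathbb{C}$, and the paper's finite-field computations are used only for auxiliary verifications such as the associated-points count. Note also that $\nu_{\widetilde{\mathcal{K}}}|_{\widetilde{K}}$ is not merely the contraction of $(-1)$-curves over the double edges: it also contracts the nine $(-5)$-curves $\widetilde{K}\cap\mathcal{R}_{ik}$ together with four infinitely near $(-1)$-curves each (Remark~\ref{rem:-1curvenuove9}), so the identification of $S$ with a minimal desingularization of $K$ is not immediate and is in fact not how the paper argues.

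The second gap is in the resolution itself. The paper's first blow-up is at the two vertices \emph{and} at the eighteen points $q_{ijk}=l_{ij}\cap f_k'$, $q_{ijk}'=l_{ij}'\cap f_k$, where the tangent cone of a general $K$ is the fixed pair of planes $f_i\cup f_j$ (Remark~\ref{rem:variabeleTC9}(ii)); these are exactly the points where the double edges meet the nine lines $r_{ij}$. Your proposed order (vertices, then edges, then the $r_{ij}$ together with the residual curves on $E,E'$) skips these points, so the strict transforms of the edges and of the $r_{ij}$ still intersect when you blow up the former, the exceptional divisors are no longer the stated $\mathbb{F}_0$'s with normal bundle $\mathcal{O}_{\mathbb{P}^1}(-3)\oplus\mathcal{O}_{\mathbb{P}^1}(-3)$, and the intersection numbers feeding into $\widetilde{K}^3=16$ come out differently. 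You correctly flag this incidence pattern as the delicate part of the bookkeeping, but you leave it unresolved, and it is precisely where the construction must be set up correctly for the rest of the argument to go through.
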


\begin{proof}

We aim at proving the theorem by means of using the approaches of the Theorem~\ref{thm:WF13 isEF}. In particular the proof is divided into several steps, given by the Remark~\ref{rem:selfalphap9}, the Proposition~\ref{prop:tildeKSmoothePa0}, the Remarks~\ref{rem:-1curvesK},$\dots$,~\ref{rem:uniqueTT'} and the Theorem~\ref{thm:S9isEnriques} below.

We blow-up first the vertices of the trihedra and the $18$ points $q_{ijk}$ and $q_{ijk}'$ for $i,j,k\in\{1,2,3\}$ and $i<j$. We obtain a smooth threefold $Y'$ and a birational morphism $bl' : Y '\to \mathbb{P}^{3}$ with exceptional divisors $E := (bl')^{-1}(v)$, $E' := (bl')^{-1}(v')$, $E_{ijk}:=(bl')^{-1}(q_{ijk})$, $E_{ijk}':=(bl')^{-1}(q_{ijk}')$. 
Let $\mathcal{K}'$ be the strict transform of $\mathcal{K}$ and let us denote by $H$ the pullback on $Y'$ of the hyperplane class on $\mathbb{P}^{3}$. Then an 
element of $\mathcal{K}'$ is linearly equivalent to $7H-3E-3E'-2\sum_{\substack{i,j,k=1 \\ i<j }}^3(E_{ijk}+E_{ijk}')$. Let $\widetilde{f}_i$ and $\widetilde{f}_i'$ be the strict transforms of the faces $f_i$ and $f_i'$, for $1\le i \le 3$. 
We denote by $\gamma_{i}:=E\cap \widetilde{f}_i$ the line cut out by $\widetilde{f}_i$ on $E$ and by $\gamma_{i}':=E'\cap \widetilde{f}_i'$ the one cut out by $\widetilde{f}_i'$ on $E'$. By construction, the curves $\gamma_{i}$ and $\gamma_{i}'$ are $(-1)$-curves respectively on $\widetilde{f}_i$ and $\widetilde{f}_i'$. If $K'$ is the strict transform of a general $K\in\mathcal{K}$, then $K'\cap E = \bigcup_{i= 0}^3 \gamma_{i}$ and $K'\cap E' = \bigcup_{i=0}^3 \gamma_{i}'$ and $K'$ is smooth at a general point of $\gamma_{i}$ and of $\gamma_{i}'$ (see Remark~\ref{rem:variabeleTC9}).
We also consider the lines $\lambda_{ijk,h}:=E_{ijk}\cap \widetilde{f}_h$ and $\lambda_{ijk,h}':=E_{ijk}'\cap \widetilde{f}_h'$, where $i,j,k\in\{1,2,3\}$ with $i<j$ and $h\in\{i,j\}$. They are $(-1)$-curves on the strict transforms of the faces containing them. Furthermore, we have that $K'\cap E_{ijk} =\bigcup_{h=i,j} \lambda_{ijk,h}$ and $K'\cap E_{ijk}' = \bigcup_{h=i,j}\lambda_{ijk,h}'$ (see Remark~\ref{rem:variabeleTC9}). 
Let us consider the strict transforms $\widetilde{l}_{ij}$, $\widetilde{l}_{ij}'$ and $\widetilde{r}_{ij}$ of the lines $l_{ij}$, $l_{ij}'$ and $r_{ik}$, for $i,j,k\in\{1,2,3\}$ and $i<j$. Then the base locus of $\mathcal{K}'$ is given by the union of the six curves $\widetilde{l}_{ij}$, $\widetilde{l}_{ij}'$ (along which a general $K'\in\mathcal{K}'$ has double points), of the nine curves $\widetilde{r}_{ik}$, of the six lines $\gamma_i$, $\gamma_i'$, and of the $36$ lines $\lambda_{ijk,h}$, $\lambda_{ijk,h}'$ (see Remark~\ref{rem:variabeleTC9}).
Let us blow-up $Y'$ along the strict transforms of the edges of the trihedra and of the nine lines $r_{ij}$. We obtain a smooth threefold $Y''$ and a birational morphism $bl'' : Y'' \to Y'$ with exceptional divisors 
$$(bl'')^{-1}(\widetilde{l}_{ij})=: F_{ij}\cong \mathbb{P}(\mathcal{N}_{\widetilde{l}_{ij}|Y'}) \cong \mathbb{P}(\mathcal{O}_{\mathbb{P}^1}(-3)\oplus \mathcal{O}_{\mathbb{P}^1}(-3))\cong \mathbb{F}_0,$$
$$(bl'')^{-1}(\widetilde{l}_{ij}')=: F_{ij}'\cong \mathbb{P}(\mathcal{N}_{\widetilde{l}_{ij}'|Y'}) \cong \mathbb{P}(\mathcal{O}_{\mathbb{P}^1}(-3)\oplus \mathcal{O}_{\mathbb{P}^1}(-3))\cong \mathbb{F}_0,$$
$$(bl'')^{-1}(\widetilde{r}_{ij})=: R_{ij}\cong \mathbb{P}(\mathcal{N}_{\widetilde{r}_{ij}|Y'}) \cong \mathbb{P}(\mathcal{O}_{\mathbb{P}^1}(-3)\oplus \mathcal{O}_{\mathbb{P}^1}(-3))\cong \mathbb{F}_0.$$
This blow-up has no effect on $\widetilde{f}_i$ and $\widetilde{f}_i'$, for $1\le i \le 3$, so, by abuse of notation, we use the same symbols to indicate their strict transforms on $Y''$. 
Let us denote by $\widetilde{E}$, $\widetilde{E}'$, $\widetilde{E}_{ijk}$ and $\widetilde{E}_{ijk}'$ respectively the strict transforms of $E$, $E'$, $E_{ijk}$ and $E_{ijk}'$. 

\begin{remark}\label{rem:selfalphap9}
Let us take the curves
$\alpha_{ij}:=\widetilde{E}\cap F_{ij}$, $\alpha_{ij}':=\widetilde{E}'\cap F_{ij}'$, $\alpha_{ijk}:=\widetilde{E}_{ijk}\cap F_{ij}$, $\alpha_{ijk}':=\widetilde{E}_{ijk}'\cap F_{ij}'$, $\alpha_{ijk,h}:=\widetilde{E}_{ijk}\cap R_{hk}$, $\alpha_{ijk,h}':=\widetilde{E}_{ijk}\cap R_{kh}$, where $i,j,k\in\{1,2,3\}$ with $i<j$ and $h\in\{i,j\}$.
By construction, $\alpha_{ij}$ and $\alpha_{ij}'$ are $(-1)$-curves respectively on $\widetilde{E}$ and $\widetilde{E}'$; $\alpha_{ijk}$ and $\alpha_{ijk,h}$ are $(-1)$-curves on $\widetilde{E}_{ijk}$; $\alpha_{ijk}'$ and $\alpha_{ijk,h}'$ are $(-1)$-curves on $\widetilde{E}_{ijk}'$; $\alpha_{ij}$ and $\alpha_{ijk}$ are fibres on $F_{ij}$; $\alpha_{ij}'$ and $\alpha_{ijk}'$ are fibres on $F_{ij}'$; $\alpha_{ijk,h}$ and $\alpha_{ijk,h}'$ are fibres respectively on $R_{hk}$ and $R_{kh}$.
\end{remark}

Let $\mathcal{K}''$ be the strict transform of $\mathcal{K}'$: an 
element of $\mathcal{K}''$ is linearly equivalent to 
$7H-3\widetilde{E}-3\widetilde{E}'-2\sum_{\substack{i,j,k=1 \\ i<j }}^3(\widetilde{E}_{ijk}+\widetilde{E}_{ijk}')-2\sum_{1\le i < j \le 3} (F_{ij}+F_{ij}')-\sum_{i,j=1}^3R_{ij},$
where, by abuse of notation, $H$ also denotes the pullback $bl''^* H$.

\begin{proposition}\label{prop:tildeKSmoothePa0}
A general element $K''\in\mathcal{K}''$ is a smooth surface with zero arithmetic genus $p_a(K'')=0$.
\end{proposition}
\begin{proof}
The smoothness of $K''$ is shown in \cite[p.620-621]{GH}, since $K''$ is the blow-up of a surface $K\in\mathcal{K}$ with ordinary singularities along its singular curves (see Definition~\ref{def:ordinarysingularities} and Remark~\ref{rem:variabeleTC9}).
We have to compute the arithmetic genus $p_a(K'')= \chi (\mathcal{O}_{K''})-1$.
By Serre Duality, we have that $p_a(K'')=\chi (\mathcal{O}_{K''}(K_{K''}))-1$.
By the adjunction formula, we have the following exact sequence
$$0 \to \mathcal{O}_{Y''}(K_{Y''}) \to \mathcal{O}_{Y''}(K_{Y''}+K'') \to \mathcal{O}_{K''} (K_{K''}) \to 0.$$
Since $Y''$ is a smooth rational threefold, then we have that
$$p_a(K'')= \chi ( \mathcal{O}_{Y''}(K_{Y''}+K''))- \chi ( \mathcal{O}_{Y''}(K_{Y''}))- 1 = \chi ( \mathcal{O}_{Y''}(K_{Y''}+K'')).$$
Since the canonical divisor of $Y''$ is linearly equivalent to
$$-4H+2\widetilde{E}+2\widetilde{E}'+2\sum_{\substack{i,j,k=1 \\ i<j }}^3(\widetilde{E}_{ijk}+\widetilde{E}_{ijk}')+\sum_{1\le i < j \le 3} (F_{ij}+F_{ij}')+\sum_{i,j=1}^3R_{ij}$$
(see \cite[p.187]{GH}), then we have
$K_{Y''}+K'' \sim 3H - \widetilde{E}-\widetilde{E}'-\sum_{1\le i < j \le 3} (F_{ij}+F_{ij}')$. Let us denote by $f_{ij}$ and $f_{ij}'$ respectively the fibre class of $F_{ij}$ and $F_{ij}'$. Then we have the following two exact sequences
$$0 \to \mathcal{O}_{Y''}(3H-\widetilde{E}-\widetilde{E}') \to \mathcal{O}_{Y''}(3H) \to \mathcal{O}_{\widetilde{E}}\oplus \mathcal{O}_{\widetilde{E}'} \to 0,$$
$$0 \to \mathcal{O}_{Y''}(K_{Y''}+K'') \to \mathcal{O}_{Y''}(3H-\widetilde{E}-\widetilde{E}') \to \oplus_{1\le i<j\le 3}\mathcal{O}_{F_{ij}}(2f_{ij}) \oplus \mathcal{O}_{F_{ij}'}(2f_{ij}')\to 0,$$
and we obtain $\chi (\mathcal{O}_{Y''}(K_{Y''}+K'')) = \binom{3+3}{3}-2-6\cdot 3=0$.
\end{proof}

By Remark~\ref{rem:variabeleTC9} 
we have that the base locus of $\mathcal{K}''$ is given by the disjoint union of the strict transforms $\widetilde{\gamma}_{i}$, $\widetilde{\gamma}_{i}'$, $\widetilde{\lambda}_{ijk,h}$, $\widetilde{\lambda}_{ijk,h}'$ of the 42 lines defined as above. 

\begin{remark}\label{rem:-1curvesK}
We have 
$\widetilde{\gamma}_{i}^2|_{\widetilde{E}}=\widetilde{\gamma}_{i}^2|_{\widetilde{f}_{i}}=-1$, $\widetilde{\gamma}_{i}'^2|_{\widetilde{E}'}=\widetilde{\gamma}_{i}'^2|_{\widetilde{f}_{i}'}=-1$,
$\widetilde{\lambda}_{ijk,h}^2|_{\widetilde{E}_{ijk}}=\widetilde{\lambda}_{ijk,h}^2|_{\widetilde{f}_{h}}=-1$, $\widetilde{\lambda}_{ijk,h}'^2|_{\widetilde{E}_{ijk}'}=\widetilde{\lambda}_{ijk,h}^2|_{\widetilde{f}_{h}'}=-1$.  
Furthermore, by using similar arguments to the ones in Remark~\ref{rem:-1curvesSigma} we also have that $\widetilde{\gamma}_{i}$, $\widetilde{\gamma}_{i}'$, $\widetilde{\lambda}_{ijk,h}$, $\widetilde{\lambda}_{ijk,h}'$ are $(-1)$-curves on the strict transform $K''$ of a general $K'\in\mathcal{K}'$.
\end{remark}

Finally let us consider the blow-up of $Y''$ along the above $42$ curves, which is the map $bl''' : Y \to Y''$ with exceptional divisors $\Gamma_{i}:=bl'''^{-1}(\widetilde{\gamma}_{i})$, $\Gamma_{i}':=bl'''^{-1}(\widetilde{\gamma}_{i}')$, $\Lambda_{ijk,h}:=bl'''^{-1}(\widetilde{\lambda}_{ijk,h})$, $\Lambda_{ijk,h}':=bl'''^{-1}(\widetilde{\lambda}_{ijk,h}')$.  
We denote by $\mathcal{E}$, $\mathcal{E}'$, $\mathcal{E}_{ijk}$, $\mathcal{E}_{ijk}'$, respectively, the strict transform of $\widetilde{E}$, $\widetilde{E}'$, $\widetilde{E}_{ijk}$, $\widetilde{E}_{ijk}'$; by $\mathcal{F}_{ij}$ the strict transform of $F_{ij}$; by $\mathcal{R}_{ik}$ the strict transform of $R_{ik}$; by $\mathcal{H}$ the pullback of $H$, for $i,j,k\in\{1,2,3\}$ with $i< j$ and $h\in\{i,j\}$. 

\begin{remark}\label{rem:Gamma^3p9}
We have that
$$\Gamma_{i}=\mathbb{P}(\mathcal{N}_{\widetilde{\gamma}_{i}|Y''})\cong \mathbb{P}(\mathcal{O}_{\widetilde{\gamma}_{i}}(\widetilde{E})\oplus \mathcal{O}_{\widetilde{\gamma}_{i}}(\widetilde{f}_{i})) \cong \mathbb{P}(\mathcal{O}_{\mathbb{P}^1}(-1)\oplus \mathcal{O}_{\mathbb{P}^1}(-1))\cong \mathbb{F}_0,$$
$$\Gamma_{i}'=\mathbb{P}(\mathcal{N}_{\widetilde{\gamma}_{i}'|Y''})\cong \mathbb{P}(\mathcal{O}_{\widetilde{\gamma}_{i}'}(\widetilde{E}')\oplus \mathcal{O}_{\widetilde{\gamma}_{i}'}(\widetilde{f}_{i}')) \cong \mathbb{P}(\mathcal{O}_{\mathbb{P}^1}(-1)\oplus \mathcal{O}_{\mathbb{P}^1}(-1))\cong \mathbb{F}_0,$$
\begin{small}
$$\Lambda_{ijk,h}=\mathbb{P}(\mathcal{N}_{\widetilde{\lambda}_{ijk,h}|Y''})\cong \mathbb{P}(\mathcal{O}_{\widetilde{\lambda}_{ijk,h}}(\widetilde{E}_{ijk})\oplus \mathcal{O}_{\widetilde{\lambda}_{ijk,h}}(\widetilde{f}_{h})) \cong \mathbb{P}(\mathcal{O}_{\mathbb{P}^1}(-1)\oplus \mathcal{O}_{\mathbb{P}^1}(-1))\cong \mathbb{F}_0,$$
\end{small}
\begin{small}
$$\Lambda_{ijk,h}'=\mathbb{P}(\mathcal{N}_{\widetilde{\lambda}_{ijk,h}'|Y''})\cong \mathbb{P}(\mathcal{O}_{\widetilde{\lambda}_{ijk,h}'}(\widetilde{E}_{ijk}')\oplus \mathcal{O}_{\widetilde{\lambda}_{ijk,h}'}(\widetilde{f}_{h}')) \cong \mathbb{P}(\mathcal{O}_{\mathbb{P}^1}(-1)\oplus \mathcal{O}_{\mathbb{P}^1}(-1))\cong \mathbb{F}_0.$$
\end{small}
Furthermore, we have $\Gamma_{i}^{3}=-\deg (\mathcal{N}_{\widetilde{\gamma}_{i}|Y''})=2$, $ \Gamma_{i}'^{3}=-\deg (\mathcal{N}_{\widetilde{\gamma}_{i}'|Y''})=2$,
$\Lambda_{ijk,h}^{3}=-\deg (\mathcal{N}_{\widetilde{\lambda}_{ijk,h}|Y''})=2$, $\Lambda_{ijk,h}'^{3}=-\deg (\mathcal{N}_{\widetilde{\lambda}_{ijk,h}'|Y''})=2$
(see \cite[Chap 4, \S 6]{GH} and \cite[Lemma 2.2.14]{IsPro99}).
\end{remark}

\begin{remark}\label{rem:intersectionYp9}
Let us take $i,j,k\in\{1,2,3\}$ with $i< j$ and $h\in\{i,j\}$. The divisor $\mathcal{F}_{ij}$ intersects $\Gamma_i$, $\Gamma_j$, $\Lambda_{ijk,h}$ each along a $\mathbb{P}^1$, which is a $(-1)$-curve on $\mathcal{F}_{ij}$ and a fibre on $\Gamma_i$, $\Gamma_j$, $\Lambda_{ijk,h}$. The same happens with $\mathcal{F}_{ij}'$ and $\Gamma_i'$, $\Gamma_j'$, $\Lambda_{ijk,h}'$. Similarly we have
$\Lambda_{ijk,h}^2\cdot\mathcal{R}_{hk}=\Lambda_{ijk,h}'^2\cdot\mathcal{R}_{kh}=-1$ and 
$\Lambda_{ijk,h}\cdot\mathcal{R}_{hk}^2=\Lambda_{ijk,h}'\cdot\mathcal{R}_{kh}^2=0.$
Let us consider the strict transforms $\widetilde{\alpha}_{ij}$, $\widetilde{\alpha}_{ij}'$, $\widetilde{\alpha}_{ijk}$, $\widetilde{\alpha}_{ijk}'$, $\widetilde{\alpha}_{ijk,h}$, $\widetilde{\alpha}_{ijk,h}'$ of the curves defined in Remark~\ref{rem:selfalphap9}. Then we have 
$$\widetilde{\alpha}_{ij}^2|_{\mathcal{E}}=\mathcal{F}_{ij}^2\cdot \mathcal{E}=-1, \quad \widetilde{\alpha}_{ij}^2|_{\mathcal{F}_{ij}}=\mathcal{E}^2\cdot \mathcal{F}_{ij}=-2,$$
$$\widetilde{\alpha}_{ij}'^2|_{\mathcal{E}'}={\mathcal{F}_{ij}'}^2\cdot \mathcal{E}'=-1,\quad \widetilde{\alpha}_{ij}'^2|_{\mathcal{F}_{ij}}=\mathcal{E}'^2\cdot \mathcal{F}_{ij}'=-2,$$
$$\widetilde{\alpha}_{ijk}^2|_{\mathcal{E}_{ijk}}=\mathcal{F}_{ij}^2\cdot\mathcal{E}_{ijk}=-1, \quad \widetilde{\alpha}_{ijk}^2|_{\mathcal{F}_{ij}^2}=\mathcal{E}_{ijk}^2\cdot \mathcal{F}_{ij}=-2,$$
$$\widetilde{\alpha}_{ijk}'^2|_{\mathcal{E}_{ijk}'}=\mathcal{F}_{ij}^2\cdot\mathcal{E}_{ijk}'=-1,\quad \widetilde{\alpha}_{ijk}'^2|_{\mathcal{F}_{ij}}=\mathcal{E}_{ijk}'^2\cdot \mathcal{F}_{ij}=-2,$$
$$\widetilde{\alpha}_{ijk,h}^2|_{\mathcal{E}_{ijk}}=\mathcal{R}_{hk}^2\cdot\mathcal{E}_{ijk}=-1,\quad \widetilde{\alpha}_{ijk,h}^2|_{\mathcal{R}_{hk}}=\mathcal{E}_{ijk}^2\cdot \mathcal{R}_{hk}=-1,$$
$$\widetilde{\alpha}_{ijk,h}'^2|_{\mathcal{E}_{ijk}'}=\mathcal{R}_{kh}^2\cdot\mathcal{E}_{ijk}'=-1,\quad \widetilde{\alpha}_{ijk,h}'^2|_{\mathcal{R}_{kh}}=\mathcal{E}_{ijk}'^2\cdot \mathcal{R}_{kh}=-1.$$
Finally we recall that a general line of $\mathbb{P}^3$ does not intersect the edges of the trihedra and the nine lines $r_{ij}$, while a general plane of $\mathbb{P}^3$ intersects each of these lines at one point. Hence we have that $\mathcal{H}^2\cdot \mathcal{F}_{ij}=\mathcal{H}^2\cdot \mathcal{F}_{ij}'= \mathcal{H}^2\cdot \mathcal{R}_{ik}=0$ and $\mathcal{F}_{ij}^2\cdot\mathcal{H}=\mathcal{F}_{ij}'^2\cdot\mathcal{H}=\mathcal{R}_{ik}^2\cdot\mathcal{H}=-1$.
\end{remark}

\begin{remark}\label{rem:Ek^3p9}
By construction we have that
$${bl'''}^*(\widetilde{E}) = \mathcal{E}+\sum_{1\le x<y\le 3}\Gamma_{xy},\quad
{bl'''}^*(E_{ijk}) =\mathcal{E}_{ijk}+\Lambda_{ijk,i}+\Lambda_{ijk,j},$$
$${bl'''}^*(\widetilde{E}') = \mathcal{E}'+\sum_{1\le x<y\le 3}\Gamma_{xy}',\quad
{bl'''}^*(E_{ijk}')= \mathcal{E}_{ijk}'+\Lambda_{ijk,i}'+\Lambda_{ijk,j}',$$
where $i,j,k\in\{1,2,3\}$ and $i<j$. By abuse of notation, we denote $\mathcal{E}\cap \Gamma_{ij}$, $\mathcal{E}'\cap \Gamma_{ij}'$, $\mathcal{E}_{ijk}\cap \Lambda_{ijk,h}$, $\mathcal{E}_{ijk}'\cap \Lambda_{ijk,h}'$, respectively, by $\widetilde{\gamma}_{ij}$, $\widetilde{\gamma}_{ij}'$, $\widetilde{\lambda}_{ijk,h}$, $\widetilde{\lambda}_{ijk,h}'$, where $h\in\{i,j\}$. 
Let $\mathcal{L}$, $\mathcal{L}'$, $\mathcal{L}_{ijk}$, $\mathcal{L}_{ijk}'$ be the strict transforms on $Y$ of a general line respectively of $E$, $E'$, $E_{ijk}$, $E_{ijk}'$.   
By using similar arguments to the ones in Remark~\ref{rem:Ek^3p13} we obtain
$$\mathcal{E}|_{\mathcal{E}} \sim - ( \mathcal{L} + \sum_{t=1}^3\widetilde{\gamma}_{i} ) \sim -(4\mathcal{L}-2\sum_{0\le x<y\le 3}\widetilde{\alpha}_{xy}),$$
$$\mathcal{E}'|_{\mathcal{E}'} \sim - ( \mathcal{L}' + \sum_{t=1}^3\widetilde{\gamma}_{i}' ) \sim -(4\mathcal{L}'-2\sum_{0\le x<y\le 3}\widetilde{\alpha}_{xy}'),$$
$$\mathcal{E}_{ijk}|_{\mathcal{E}_{ijk}} \sim - ( \mathcal{L}_{ijk} + \widetilde{\lambda}_{ijk,i}+\widetilde{\lambda}_{ijk,j} ) \sim -(3\mathcal{L}_{ijk}-2\widetilde{\alpha}_{ijk}-\widetilde{\alpha}_{ijk,i}-\widetilde{\alpha}_{ijk,j}),$$
$$\mathcal{E}_{ijk}'|_{\mathcal{E}_{ijk}'} \sim - ( \mathcal{L}_{ijk}' + \widetilde{\lambda}_{ijk,i}'+\widetilde{\lambda}_{ijk,j}' ) \sim -(3\mathcal{L}_{ijk}'-2\widetilde{\alpha}_{ijk}'-\widetilde{\alpha}_{ijk,i}'-\widetilde{\alpha}_{ijk,j}'),$$
so we have $\mathcal{E}^3=4$, $\mathcal{E}'^3=4$, $\mathcal{E}_{ijk}^3=3$ and $\mathcal{E}_{ijk}'^3=3$.
\end{remark}

\begin{remark}\label{rem:Fij^3p9}
With similar arguments to the ones in Remark~\ref{rem:Fij^3p13}, we have 
$\mathcal{F}_{ij}^3=-\deg (\mathcal{N}_{\widetilde{l}_{ij}|Y'})=6$, $\mathcal{F}_{ij}'^3=-\deg (\mathcal{N}_{\widetilde{l}_{ij}'|Y'}) = 6$, $\mathcal{R}_{ki}^3=-\deg (\mathcal{N}_{\widetilde{r}_{ki}|Y'})=6$, for $i,j,k\in\{1,2,3\}$ with $i<j$.
\end{remark}

Let $\widetilde{K}$ be the strict transform on $Y$ of an 
element of $\mathcal{K}''$: then
$$\widetilde{K} \sim 7\mathcal{H}-3\mathcal{E}-3\mathcal{E}'-2\sum_{\substack{i,j,k=1 \\ i<j }}^3(\mathcal{E}_{ijk}+\mathcal{E}_{ijk}')-2\sum_{1\le i < j \le 3}(\mathcal{F}_{ij}+\mathcal{F}_{ij}')-\sum_{i,j=1}^{3}\mathcal{R}_{ij}+$$
$$-4\sum_{i=1}^{3}(\Gamma_i+\Gamma_i')-3\sum_{ \substack{i,j,k=1 \\ i<j,\, h=i,j }}^3 (\Lambda_{ijk,h}+\Lambda_{ijk,h}').$$

Let us take the linear system $\widetilde{\mathcal{K}}:=|\mathcal{O}_{Y}(\widetilde{K})|$ on $Y$. It is base point free and it defines a morphism $\nu_{\widetilde{\mathcal{K}}}: Y \to \mathbb{P}^{9}$ birational onto the image $W_F^{9}:= \nu_{\widetilde{\mathcal{K}}} (Y)$, which is a threefold of degree $\deg W_F^{9} = 16$. This follows by Lemma~\ref{lem:birationalitanu9} and by the fact that $\widetilde{K}^3=16$ (use Remarks~\ref{rem:Gamma^3p9},~\ref{rem:intersectionYp9},~\ref{rem:Ek^3p9},~\ref{rem:Fij^3p9}).
Then we have the following diagram:

$$\begin{tikzcd}
Y \arrow[d, "bl'''"] \arrow[drrr, "\nu_{\widetilde{\mathcal{K}}}"] & & & \\
Y''  \arrow{r}{bl''} & Y' \arrow{r}{bl'} & \mathbb{P}^3 \arrow[dashrightarrow]{r}{\nu_{\mathcal{K}}} & W_F^{9} \subset \mathbb{P}^{9}.
\end{tikzcd}$$

It remains to show that the general hyperplane section of the threefold $W_F^9$ is an Enriques surface. 

\begin{remark}\label{rem:K|E=0}
By construction we have
$\widetilde{K}\cdot \mathcal{E} = \widetilde{K}\cdot \mathcal{E} = \widetilde{K}\cdot \mathcal{E}_{ijk} = \widetilde{K}\cdot \mathcal{E}_{ijk}' = 0$, for all $i,j,k\in\{1,2,3\}$ with $i<j$. 
\end{remark}

\begin{remark}\label{rem:contractionInpoints9}
Let $1\le i \le 3$.
Since $bl''' : Y \to Y''$ has no effect on the divisors $\widetilde{f}_i$ and $\widetilde{f}_i'$, we will continue to use the same notations to denote their strict transforms. By construction we have $\widetilde{K}\cdot \widetilde{f}_i=\widetilde{K}\cdot \widetilde{f}_i'=0$ for a general $\widetilde{K}\in\widetilde{\mathcal{K}}$.
\end{remark}

\begin{remark}\label{rem:nu9blowdown}
The morphism $\nu_{\widetilde{\mathcal{K}}} : Y \to W_{F}^{9} \subset \mathbb{P}^{9}$ blows-down the 42 exceptional divisors of $bl''':Y \to Y''$ and the nine divisors $\mathcal{R}_{ik}$ to curves of $W_{F}^{9}$.
This follows by the fact that $\widetilde{K}\cdot \Gamma_{i}$, $\widetilde{K}\cdot \Gamma_{i}'$, $\widetilde{K}\cdot \Lambda_{ijk,h}$, $\widetilde{K}\cdot \Lambda_{ijk,h}'$, $\widetilde{K}\cdot \mathcal{R}_{ik} \ne 0$ and 
$\widetilde{K}^2\cdot \Gamma_{i} = \widetilde{K}^2\cdot \Gamma_{i}' = \widetilde{K}^2\cdot \Lambda_{ijk,h}= \widetilde{K}^2\cdot \Lambda_{ijk,h}'=\widetilde{K}^2\cdot \mathcal{R}_{ik} = 0$, for all $i,j,k\in\{1,2,3\}$ with $i< j$ and $h\in\{i,j\}$ (use Remarks~\ref{rem:Gamma^3p9},~\ref{rem:intersectionYp9}).
\end{remark}

\begin{remark}\label{rem:nocontractionSpigoli9}
Let $\widetilde{K}$ be a general element of $\widetilde{\mathcal{K}}$. 
By Remarks~\ref{rem:intersectionYp9},~\ref{rem:Fij^3p9} we have
that $\widetilde{K}^2\cdot \mathcal{F}_{ij}=\widetilde{K}^2\cdot \mathcal{F}_{ij}'=8>0$ for $0\le i<j\le 3$. Thus,
the curves $\widetilde{K}\cap \mathcal{F}_{ij}$ and $\widetilde{K}\cap \mathcal{F}_{ij}'$ are not contracted by the rational map defined by $\widetilde{\mathcal{K}}|_{\widetilde{K}}$. 
\end{remark}

\begin{remark}\label{rem:-1curvenuove9}
Let us fix a general element $\widetilde{K}\in \widetilde{\mathcal{K}}$ and let us take $S:=\nu_{\widetilde{\mathcal{K}}}(\widetilde{K})$ and $K'':=bl'''(\widetilde{K})\in\mathcal{K}''$. Since $bl''' : Y \to Y''$ has no effect on $K''$, then $\widetilde{K}\cap \Gamma_{i}$, $\widetilde{K}\cap \Gamma_{i}'$, $\widetilde{K}\cap \Lambda_{ijk,h}$, $\widetilde{K}\cap \Lambda_{ijk,h}'$ are still $(-1)$-curves on $\widetilde{K}$, for all $i,j,k\in\{1,2,3\}$ with $i< j$ and $h\in\{i,j\}$ (see Remark~\ref{rem:-1curvesK}). 
By Remarks~\ref{rem:intersectionYp9},~\ref{rem:Fij^3p9}, we also have that 
$(\widetilde{K}\cap \mathcal{R}_{ik})|_{\widetilde{K}}^2 = \mathcal{R}_{ik}^2\cdot \widetilde{K}=\mathcal{R}_{ik}^2 \cdot (7\mathcal{H}-2\sum_{\substack{ 1\le a<b\le 3, \, i\in\{a,b\} \\ 1\le x<y\le 3,\, k\in\{x,y\}}}(\mathcal{E}_{abk}+\mathcal{E}_{xyi}')-\mathcal{R}_{ik}) = -5.$
Furthermore,
we have $\mathcal{R}_{ik}\cdot_{\widetilde{K}} \Lambda_{abk,i} = 1$ and $ \mathcal{R}_{ik}\cdot_{\widetilde{K}} \Lambda_{xyi,k}' = 1$ for $1\le a<b\le 3$ and $1\le x<y\le 3$ with $i\in\{a,b\}$ and $k\in\{x,y\}$ (use Remark~\ref{rem:intersectionYp9}).
Thus, we can see the map $\nu_{\widetilde{\mathcal{K}}} |_{\widetilde{K}} : \widetilde{K} \to S$ as the blow-up of $S$ at the six points $\nu_{\widetilde{\mathcal{K}}}(\widetilde{K}\cap \Gamma_i)$ and $\nu_{\widetilde{\mathcal{K}}}(\widetilde{K}\cap \Gamma_i')$, at the nine points $\nu_{\widetilde{\mathcal{K}}}(\widetilde{K}\cap \mathcal{R}_{ik})$ and at the four points $\nu_{\widetilde{\mathcal{K}}}(\widetilde{K}\cap \Lambda_{abk,i})$, $\nu_{\widetilde{\mathcal{K}}}(\widetilde{K}\cap \Lambda_{xyi,k}')$ which are infinitely near to each $\nu_{\widetilde{\mathcal{K}}}(\widetilde{K}\cap \mathcal{R}_{ik})$ (see Remarks~\ref{rem:veryample9},~\ref{rem:K|E=0},~\ref{rem:contractionInpoints9},~\ref{rem:nu9blowdown},~\ref{rem:nocontractionSpigoli9}). Then $S$ is a smooth surface.
\end{remark}

\begin{remark}\label{rem:uniqueTT'}
The surface $T\cup T'$ is the only sextic surface of $\mathbb{P}^3$ which is singular along the edges of the two trihedra. Let us consider the strict transforms $\widetilde{T}$ and $\widetilde{T}'$ on $Y$ of the trihedra: 
$$\widetilde{T}\sim 3\mathcal{H}-3\mathcal{E}-\sum_{\substack{i,j,k=1 \\ i<j}}^{3}(2\mathcal{E}_{ijk}+\mathcal{E}_{ijk}')-\sum_{1\le i<j\le 3}2\mathcal{F}_{ij}-\sum_{i,j=1}^{3}\mathcal{R}_{ij}+$$
$$-\sum_{i=1}^{3}4\Gamma_i-\sum_{ \substack{ i,j,k\in \{1,2,3\} \\ i< j,\,\, h=i,j } } (3\Lambda_{ijk,h}+\Lambda_{ijk,h}'),$$
$$\widetilde{T}'\sim 3\mathcal{H}-3\mathcal{E}'-\sum_{\substack{i,j,k=1 \\ i<j}}^{3}(\mathcal{E}_{ijk}+2\mathcal{E}_{ijk}')-\sum_{i=1}^{3}2\mathcal{F}_{ij}'-\sum_{i,j=1}^{3}\mathcal{R}_{ij}+$$
$$-\sum_{i=1}^{3}4\Gamma_i'-\sum_{ \substack{ i,j,k\in \{1,2,3\} \\ i< j,\,\, h=i,j } }(\Lambda_{ijk,h}+3\Lambda_{ijk,h}').$$
Let $\widetilde{K}$ be a general element of $\widetilde{\mathcal{K}}$. Then we have that 
$$0\sim (\widetilde{T}+\widetilde{T}')|_{\widetilde{K}} \sim
\Big(6\mathcal{H}-\sum_{1\le i<j\le 3}(2\mathcal{F}_{ij}+2\mathcal{F}_{ij}')-\sum_{i,j=1}^{3}2\mathcal{R}_{ij}+$$
$$-\sum_{i=1}^{3}(4\Gamma_i+4\Gamma_i')-\sum_{ \substack{ i,j,k\in \{1,2,3\} \\ i< j,\,\, h=i,j } }( 4\Lambda_{ijk,h}+4 \Lambda_{ijk,h}')\Big)|_{\widetilde{K}}.$$
\end{remark}

\begin{theorem}\label{thm:S9isEnriques}
Let $S$ be a general hyperplane section of the threefold $W_F^{9}\subset \mathbb{P}^{9}$. Then $S$ is an Enriques surface.
\end{theorem} 
\begin{proof}
We recall that $S$ is the image of a general element $\widetilde{K}\in \widetilde{\mathcal{K}}$ via the birational morphism $\nu_{\widetilde{\mathcal{K}}} : Y \to W_{F}^{9} \subset \mathbb{P}^{9}$. Furthermore, $S$ is smooth (see Remark~\ref{rem:-1curvenuove9}).
By Proposition~\ref{prop:tildeKSmoothePa0} we have that $p_g(\widetilde{K})-q(\widetilde{K}) = p_a(\widetilde{K}) = 0.$ Let us consider the following exact sequence
$$0 \to \mathcal{O}_{Y}(-\widetilde{K}) \to \mathcal{O}_{Y}\to \mathcal{O}_{\widetilde{K}} \to 0.$$
Since $Y$ is a smooth rational threefold and $\widetilde{K}$ is a big and nef divisor on $Y$, by Serre Duality and by Kawamata-Viehweg vanishing theorem we have
$h^i(Y, \mathcal{O}_{Y}(-\widetilde{K}))=0$ for  $i=1,2,$ 
and so $q(\widetilde{K})=h^1(\widetilde{K},\mathcal{O}_{\widetilde{K}}) = h^1(Y,\mathcal{O}_{Y})=0.$
Thus, we also obtain $p_g(\widetilde{K})=0.$
It remains to prove that $2K_{S} \sim 0$.
Since
$$K_{Y} =  {bl'''}^*(K_{Y''})+\sum_{i=1}^{3}(\Gamma_i+\Gamma_i')+\sum_{ \substack{ i,j,k =1 \\ i<j,\,\, h=i,j}}^3(\Lambda_{ijk,h}+\Lambda_{ijk,h}') \sim$$
$$\sim -4\mathcal{H}+2\mathcal{E}+2\mathcal{E}'+2\sum_{\substack{i,j,k=1 \\ i<j }}^3(\mathcal{E}_{ijk}+\mathcal{E}_{ijk}')+$$
$$+\sum_{\\le i<j\le 3}(\mathcal{F}_{ij}+\mathcal{F}_{ij}')+\sum_{i,j=1}^{3}\mathcal{R}_{ij}+\sum_{i=1}^{3}3(\Gamma_i+\Gamma_i')+\sum_{ \substack{ i,j,k =1 \\ i<j,\,\, h=i,j}}^33(\Lambda_{ijk,h}+\Lambda_{ijk,h}')$$
(see \cite[p.187]{GH}), then, by the adjunction formula, we obtain that
$$2K_{\widetilde{K}} = 2(K_{Y}+\widetilde{K})|_{\widetilde{K}}\sim \Big(6\mathcal{H}-\sum_{1\le j\le 3}2(\mathcal{F}_{ij}+\mathcal{F}_{ij}')- \sum_{i=1}^{3}2(\Gamma_i+\Gamma_i') \Big)|_{\widetilde{K}}.$$
Furthermore, by Remark~\ref{rem:uniqueTT'}, we have
$$2K_{\widetilde{K}} \sim \Big(\widetilde{T}+\widetilde{T}'+\sum_{i,j=1}^{3}2\mathcal{R}_{ij}+\sum_{i=1}^{3}2(\Gamma_i+\Gamma_i')+\sum_{ \substack{ i,j,k =1 \\ i<j,\,\, h=i,j}}^34(\Lambda_{ijk,h}+\Lambda_{ijk,h}')\Big)|_{\widetilde{K}}\sim$$
$$\sim \Big(\sum_{i,j=1}^{3}2\mathcal{R}_{ij}+\sum_{i=1}^{3}2(\Gamma_i+\Gamma_i')+\sum_{ \substack{ i,j,k =1 \\ i<j,\,\, h=i,j}}^34(\Lambda_{ijk,h}+\Lambda_{ijk,h}')\Big)|_{\widetilde{K}}=$$
$$=\sum_{i,k=1}^{3}2\Big(\mathcal{R}_{ik}+\sum_{\substack{a,b,x,y\in\{1,2,3\} \\ a<b,\, x<y \\ i\in\{a,b\},\,\, k\in\{x,y\}}}(\Lambda_{abk,i}+\Lambda_{xyi,k}')\Big)|_{\widetilde{K}}+$$
$$+\sum_{i=1}^{3}2(\Gamma_i+\Gamma_i')|_{\widetilde{K}}+\sum_{ \substack{ i,j,k =1 \\ i<j,\,\, h=i,j}}^32(\Lambda_{ijk,h}+\Lambda_{ijk,h}')|_{\widetilde{K}}.$$
Finally, by Remark~\ref{rem:-1curvenuove9}, we obtain
$2K_{S}\sim (\nu_{\widetilde{\mathcal{S}}})_{*}(2K_{\widetilde{K}})\sim 0$.
\end{proof}

One can prove that $W_F^{9}\subset \mathbb{P}^{9}$ is not a cone over a general hyperplane section, as in the proof of Theorem~\ref{thm:WF13moderno}. So $W_F^9\subset \mathbb{P}^9$ is an Enriques-Fano threefold of genus $p=\frac{S^3}{2}+1=\frac{\widetilde{K}^3}{2}+1=9$. 
\end{proof}


We recall that the eight divisors $\mathcal{E}$, $\mathcal{E}'$, $\widetilde{f}_1$, $\widetilde{f}_2$, $\widetilde{f}_3$, $\widetilde{f}_1'$, $\widetilde{f}_2'$, $\widetilde{f}_3'$ are contracted by $\nu_{\widetilde{\mathcal{K}}} : Y \to W_F^{9} \subset \mathbb{P}^{9}$ to points of $W_F^{9}$ (see Remarks~\ref{rem:K|E=0},~\ref{rem:contractionInpoints9}).
Let us define
$$P_1 : = \nu_{\widetilde{\mathcal{K}}}(\mathcal{E}'), \, P_2 : = \nu_{\widetilde{\mathcal{K}}}(\widetilde{f}_1),\, P_3 : = \nu_{\widetilde{\mathcal{K}}}(\widetilde{f}_2), \,  P_4 : = \nu_{\widetilde{\mathcal{K}}}(\widetilde{f}_3),$$
$$P_1' : = \nu_{\widetilde{\mathcal{K}}}(\mathcal{E}), \, P_2' : = \nu_{\widetilde{\mathcal{K}}}(\widetilde{f}_1'), \, P_3' : = \nu_{\widetilde{\mathcal{K}}}(\widetilde{f}_2'), \, P_4' : = \nu_{\widetilde{\mathcal{K}}}(\widetilde{f}_3').$$

\begin{lemma}\label{lem:36to6p9}
The $18$ divisors $\mathcal{E}_{ijk}$ and $\mathcal{E}_{ijk}'$ are mapped by $\nu_{\widetilde{\mathcal{K}}} : Y \to W_F^{9} \subset \mathbb{P}^{9}$ to the six points $P_2$, $P_3$, $P_4$, $P_2'$, $P_3'$ and $P_4'$ of $W_F^{9}$ in the following way: 
$$P_{i+1} = \nu_{\widetilde{\mathcal{K}}}(\widetilde{f}_i)=\nu_{\widetilde{\mathcal{K}}}(\mathcal{E}_{rsi}'), \quad 
P_{i+1}' = \nu_{\widetilde{\mathcal{K}}}(\widetilde{f}_i')=\nu_{\widetilde{\mathcal{K}}}(\mathcal{E}_{rsi}),$$
for all $i,r,s\in\{1,2,3\}$ and $r<s$. 
\end{lemma}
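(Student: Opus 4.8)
The plan is to reduce the statement to a single incidence fact: each divisor $\mathcal{E}_{rsi}'$ meets $\widetilde{f}_i$ on $Y$, and dually each $\mathcal{E}_{rsi}$ meets $\widetilde{f}_i'$. Granting this, the lemma is immediate. Indeed, all $18$ divisors $\mathcal{E}_{ijk},\mathcal{E}_{ijk}'$ are contracted by $\nu_{\widetilde{\mathcal{K}}}$ to points of $W_F^9$ (Remark~\ref{rem:K|E=0}), and so are the six face divisors $\widetilde{f}_i,\widetilde{f}_i'$ (Remark~\ref{rem:contractionInpoints9}). The elementary observation I would use is that if two divisors $D_1,D_2$ are both contracted to points and $D_1\cap D_2\ne\emptyset$, then the image of any point of $D_1\cap D_2$ forces $\nu_{\widetilde{\mathcal{K}}}(D_1)=\nu_{\widetilde{\mathcal{K}}}(D_2)$. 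Applying this with $D_1=\mathcal{E}_{rsi}'$, $D_2=\widetilde{f}_i$ and recalling $P_{i+1}=\nu_{\widetilde{\mathcal{K}}}(\widetilde{f}_i)$ (and symmetrically $P_{i+1}'=\nu_{\widetilde{\mathcal{K}}}(\widetilde{f}_i')$) yields the two chains of equalities. Note that for a fixed $i$ all three divisors $\mathcal{E}_{rsi}'$ (as $r<s$ varies) land on the same point $P_{i+1}$, which accounts for the $18\to 6$ assignment in the statement.

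To prove $\mathcal{E}_{rsi}'\cap\widetilde{f}_i\ne\emptyset$, I would start from the crucial observation that $q_{rsi}'=l_{rs}'\cap f_i$ lies on $f_i$. Hence, already after the first blow-up $bl'$, the exceptional plane $E_{rsi}'\cong\mathbb{P}^2$ meets the strict transform $\widetilde{f}_i$ along a line $m$, namely the projectivised tangent plane to $f_i$ at $q_{rsi}'$. Working in local coordinates at $q_{rsi}'$ with $f_i=\{z=0\}$, $f_r'=\{x=0\}$, $f_s'=\{y=0\}$, so that $l_{rs}'$ is the $z$-axis and the two lines $r_{ir}=f_i\cap f_r'$, $r_{is}=f_i\cap f_s'$ are the $y$- and $x$-axes, one sees that $m=\{Z=0\}$ in $\mathbb{P}^2_{[X:Y:Z]}$, and that the centres of $bl''$ and $bl'''$ meeting $E_{rsi}'$ --- namely $\widetilde{l}_{rs}'$, the lines $\widetilde{r}_{ir},\widetilde{r}_{is}$, and the curves $\widetilde{\lambda}_{rsi,r}'=\{X=0\}$, $\widetilde{\lambda}_{rsi,s}'=\{Y=0\}$ --- cut $m$ only in the three coordinate points $[1:0:0]$, $[0:1:0]$, $[0:0:1]$. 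Crucially, $m$ itself is \emph{not} one of the $42$ blown-up curves. Therefore a general point of $m$ lies in the locus where $bl''$ and $bl'''$ are isomorphisms, lifts uniquely to $Y$, and belongs to $\mathcal{E}_{rsi}'\cap\widetilde{f}_i$. The incidence $\mathcal{E}_{rsi}\cap\widetilde{f}_i'\ne\emptyset$ follows in exactly the same way from $q_{rsi}=l_{rs}\cap f_i'\in f_i'$.

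The main obstacle is precisely this local bookkeeping across the second and third blow-ups: one must be sure that no blow-up centre contains the whole of $m$, but only isolated points of it, so that $\mathcal{E}_{rsi}'$ and $\widetilde{f}_i$ remain genuinely incident on $Y$ and are not separated. Once this is verified the statement follows with no further computation, the fact that $m$ escapes being a centre (while the neighbouring curves $\widetilde{\lambda}_{rsi,r}'$, $\widetilde{\lambda}_{rsi,s}'$ are centres) being the delicate point to record carefully.
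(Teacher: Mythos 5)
Your proposal is correct and follows essentially the same route as the paper: both arguments combine the fact that the $18$ divisors and the six face divisors are contracted to points (Remarks~\ref{rem:K|E=0} and~\ref{rem:contractionInpoints9}) with the incidence $\widetilde{f}_i\cap\mathcal{E}_{rsi}'\ne\emptyset$ (coming from $q_{rsi}'\in f_i$) to conclude the images coincide. The only difference is that you verify explicitly that this incidence survives the blow-ups $bl''$ and $bl'''$ (i.e.\ that the line $E_{rsi}'\cap\widetilde{f}_i$ is not a blow-up centre), a point the paper simply asserts.
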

\begin{proof}
By Remark~\ref{rem:K|E=0} we have that $\nu_{\widetilde{\mathcal{K}}}(\mathcal{E}_{ijk})$ and $\nu_{\widetilde{\mathcal{K}}}(\mathcal{E}_{ijk}')$ are points of $W_F^{9}$ for all $i,j,k\in\{1,2,3\}$ and $i<j$. Since $\widetilde{f}_i\cap \mathcal{E}_{rsi}'\ne \emptyset$ for all $i,r,s\in\{1,2,3\}$ and $r<s$, then the three divisors $\mathcal{E}_{rsi}'$ are mapped to the same point $\nu_{\widetilde{\mathcal{K}}}(\widetilde{f}_i)=P_{i+1}$. Similarly the three divisors $\mathcal{E}_{rsi}$ are mapped to the same point $\nu_{\widetilde{\mathcal{K}}}(\widetilde{f}_i')=P_{i+1}'$.
\end{proof}

\begin{proposition}\label{prop:quadruplePointsp9}
The points $P_1, \dots , P_4, P_1', \dots , P_4'$ are eight quadruple points of $W_F^{9}$ whose tangent cone is a cone over a Veronese surface.
\end{proposition}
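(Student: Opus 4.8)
The plan is to treat the eight points in two groups, according to which contracted divisors map to them, and in each case to identify the projectivized tangent cone as a Veronese surface; since the Veronese surface has degree $4$, this will simultaneously give that the point is quadruple (multiplicity $4$) and that its tangent cone is a cone over a Veronese surface. The whole argument runs parallel to Proposition~\ref{prop:quadruplePointsp13}. The guiding principle is that, when a divisor $D\subset Y$ is contracted by $\nu_{\widetilde{\mathcal{K}}}$ to a point $P$ (so that $\widetilde{K}|_{D}\sim 0$), the hyperplane sections of $W_F^9$ through $P$ pull back to the members of $\widetilde{\mathcal{K}}$ containing $D$, and their residual parts restrict on $D$ to the linear system $|\mathcal{O}_{D}(-D)|$; the image of $D$ under this system is exactly $\mathbb{P}(TC_P W_F^9)$.

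By Lemma~\ref{lem:36to6p9}, the points $P_1=\nu_{\widetilde{\mathcal{K}}}(\mathcal{E}')$ and $P_1'=\nu_{\widetilde{\mathcal{K}}}(\mathcal{E})$ are each the image of a single contracted divisor, so they can be handled exactly as the points $P_{k+1}$ of Proposition~\ref{prop:quadruplePointsp13}. For $P_1'$ I would restrict to $\mathcal{E}$: by Remark~\ref{rem:Ek^3p9} one has $-\mathcal{E}|_{\mathcal{E}}\sim 4\mathcal{L}-2\sum\widetilde{\alpha}_{xy}$, that is the linear system of plane quartics with nodes at three points of $\mathcal{E}$. Applying the quadratic transformation centred at these three nodes turns this system into $|\mathcal{O}_{\mathbb{P}^2}(2)|$, whose image is the Veronese surface; the point $P_1$ is treated symmetrically using $\mathcal{E}'$.

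The six points $P_2,\dots,P_4,P_2',\dots,P_4'$ are the more delicate ones, because by Lemma~\ref{lem:36to6p9} each of them is the image not only of a face $\widetilde{f}_i$ (or $\widetilde{f}_i'$) but also of the three divisors $\mathcal{E}'_{rsi}$ (or $\mathcal{E}_{rsi}$). Following the classical description used in Proposition~\ref{prop:quadruplePointsp13}, for $P_{i+1}=\nu_{\widetilde{\mathcal{K}}}(\widetilde{f}_i)$ I would take the sub-linear system $\mathcal{K}_i\subset\mathcal{K}$ of septics containing the face $f_i$, which corresponds to the hyperplane sections of $W_F^9$ through $P_{i+1}$. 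Writing such a septic as $f_i\cdot G$ with $G$ a sextic, the residual curve $G\cap f_i$ splits into a fixed part---made up of the two edges $l_{ij},l_{ik}$ lying on $f_i$ together with the base points forced by the double line $l_{jk}$ and by the three edges of $T'$ meeting $f_i$ at the points $q'_{rsi}$---and a variable conic. Hence the moving system restricts on $\widetilde{f}_i$ to a linear system isomorphic to $|\mathcal{O}_{\mathbb{P}^2}(2)|$, whose image is again the Veronese surface; the three divisors $\mathcal{E}'_{rsi}$ collapse onto the base points of this net of conics, compatibly with the cone-over-Veronese picture. The points $P_{i+1}'$ are handled in the same way with the roles of $T$ and $T'$ exchanged.

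The hard part is precisely the reducibility of the exceptional fibre over the six points of type $P_{i+1}$: one has to separate cleanly the fixed locus of the residual system $G\cap f_i$ from the moving conic, keeping track of the double lines and of the eighteen points $q_{ijk},q'_{ijk}$ blown up on $Y'$ (see Remarks~\ref{rem:rij contained in K},~\ref{rem:variabeleTC9}), in order to certify that the moving part is \emph{exactly} a net of conics and that the full tangent cone is the cone over the Veronese surface with multiplicity exactly $4$. Once this is verified for one representative point of each type, the symmetry of the configuration yields the statement for all eight points.
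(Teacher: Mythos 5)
Your overall strategy is the paper's: restrict the hyperplane sections through a singular point to the divisors contracted to it, identify the image with $\mathbb{P}(TC_P W_F^9)$, and recognize a Veronese surface. Your treatment of $P_1=\nu_{\widetilde{\mathcal{K}}}(\mathcal{E}')$ and $P_1'=\nu_{\widetilde{\mathcal{K}}}(\mathcal{E})$ via $-\mathcal{E}|_{\mathcal{E}}\sim 4\mathcal{L}-2\sum\widetilde{\alpha}_{xy}$ and a quadratic transformation coincides with the paper's. For the six points $P_{i+1},P_{i+1}'$, however, the step you yourself flag as ``the hard part'' is exactly where your sketch fails, in two places. First, the degree count in your decomposition of $G\cap f_i$ does not close: $G\cap f_i$ is a plane \emph{sextic}, your fixed curve part $l_{ij}+l_{ik}$ has degree $2$, so the movable part has degree $4$, not $2$. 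It is not a net of conics on $f_i$; it is the system of plane quartics with nodes at the three points $q_{rsi}'=l_{rs}'\cap f_i$ (this is what the paper's divisor computation $\widetilde{\mathcal{K}}_i|_{\widetilde{f}_i}=|\mathcal{O}_{\widetilde{f}_i}(4\mathcal{L}_i-2\sum_{r<s}\epsilon_{rsi}')|$, obtained from $\widetilde{f}_i|_{\widetilde{f}_i}\sim -4\mathcal{L}_i+\sum_{r<s}\epsilon_{rsi}'$, says). Only after the quadratic transformation centred at those three nodes --- which you invoke for $P_1,P_1'$ but omit here --- does it become $|\mathcal{O}_{\mathbb{P}^2}(2)|$ with image a Veronese surface $V_i$.

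Second, and more seriously, the three divisors $\mathcal{E}_{rsi}'$ do not ``collapse onto the base points of the net'': they are themselves contracted to $P_{i+1}$ (Lemma~\ref{lem:36to6p9}), so their images under $\widetilde{\mathcal{K}}_i$ are a priori further components of $\mathbb{P}(TC_{P_{i+1}}W_F^9)$. The paper computes $\widetilde{\mathcal{K}}_i|_{\mathcal{E}_{rsi}'}=|\mathcal{O}_{\mathcal{E}_{rsi}'}(2\mathcal{L}_{rsi}'-2\widetilde{\alpha}_{rsi}')|$, finds that each $\mathcal{E}_{rsi}'$ maps to a conic $C_{rsi}'$, and then uses $V_i\cup C_{12i}'\cup C_{13i}'\cup C_{23i}'=\mathbb{P}(TC_{P_{i+1}}W_F^9)$ to force $C_{rsi}'\subset V_i$. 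Without this step you cannot conclude that the projectivized tangent cone has degree exactly $4$, i.e.\ that $P_{i+1}$ is a quadruple point rather than one of higher multiplicity; this is precisely the feature that distinguishes the genus~$9$ case from Proposition~\ref{prop:quadruplePointsp13}, where each singular point is the image of a single contracted divisor.
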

\begin{proof}
The analysis of the points $P_1'$ and $P_1$ follows by Remark~\ref{rem:Ek^3p9} as in the proof of Proposition~\ref{prop:quadruplePointsp13}. Let us fix now $1\le i \le 3$. 
Let us find the tangent cone to $W_F^9$ at $P_{i+1}$. Similarly one can study the tangent cone to $W_F^{9}$ at $P_{i+1}'$.
The hyperplane sections of $W_F^{9}\subset \mathbb{P}^{9}$ passing through $P_{i+1}$ correspond to the elements of $\widetilde{\mathcal{K}}$ containing $\widetilde{f}_i\cup \mathcal{E}_{12i}'\cup \mathcal{E}_{13i}'\cup \mathcal{E}_{23i}'$ (see Lemma~\ref{lem:36to6p9}). Let $\widetilde{\mathcal{K}}_i:=\widetilde{\mathcal{K}}-\widetilde{f}_i-\mathcal{E}_{12i}'-\mathcal{E}_{13i}'-\mathcal{E}_{23i}'$ be the sublinear system of $\widetilde{\mathcal{K}}$ defined by these elements.
Let us study 
$$\widetilde{\mathcal{K}}_i|_{\widetilde{f}_{i}} = |\mathcal{O}_{\widetilde{f}_{i}}(-\widetilde{f}_i-\mathcal{E}_{12i}'-\mathcal{E}_{13i}'-\mathcal{E}_{23i}')|.$$
Let us consider the case $i=1$. Since 
$$\widetilde{f}_1 \sim_{Y} \mathcal{H}-\mathcal{E}_{v}-\sum_{j=1}^3\mathcal{E}_{13j}-\sum_{j=1}^3\mathcal{E}_{12j}-\sum_{1\le r<s\le 3}\mathcal{E}_{rs1}'-\mathcal{F}_{13}-\mathcal{F}_{12}-\sum_{j=1}^{3}\mathcal{R}_{1j}-2\Gamma_1-\Gamma_2-\Gamma_3+$$
$$-\sum_{j=1}^3(2\Lambda_{13j,1}+\Lambda_{13j,3})-\sum_{j=1}^3(2\Lambda_{12j,1}+\Lambda_{12j,2})-\sum_{1\le r<s \le 3 }(\Lambda_{rs1,r}'+\Lambda_{rs1,s}'),$$ 
we have that
$$\widetilde{f}_1|_{\widetilde{f}_1} \sim_{\widetilde{f}_1} ( \mathcal{H}-\sum_{1\le r<s\le 3}\mathcal{E}_{rs1}'-\mathcal{F}_{13}-\mathcal{F}_{12}-\sum_{j=1}^{3}\mathcal{R}_{1j}-2\Gamma_1-\sum_{j=1}^3 2\Lambda_{13j,1}-\sum_{j=1}^32\Lambda_{12j,1} )|_{\widetilde{f}_1}.$$
Let $\mathcal{L}_1$ be the pullback on $\widetilde{f}_1$ of the linear equivalence class of the lines of the face $f_1\cong \mathbb{P}^2$. By abuse of notation, let us denote by $\widetilde{\gamma}_1$, $\widetilde{\lambda}_{131,1}$, $\widetilde{\lambda}_{132,1}$, $\widetilde{\lambda}_{133,1}$, $\widetilde{\lambda}_{121,1}$, $\widetilde{\lambda}_{122,1}$, $\widetilde{\lambda}_{123,1}$ the $(-1)$-curves on $\widetilde{f}_1$ given by $\Gamma_1|_{\widetilde{f}_1}$, $\Lambda_{131,1}|_{\widetilde{f}_1}$, $\Lambda_{132,1}|_{\widetilde{f}_1}$, $\Lambda_{133,1}|_{\widetilde{f}_1}$, $\Lambda_{121,1}|_{\widetilde{f}_1}$, $\Lambda_{122,1}|_{\widetilde{f}_1}$, $\Lambda_{123,1}|_{\widetilde{f}_1}$.
Let us also consider the $(-1)$-curves on $\widetilde{f}_1$ defined by $\epsilon_{rs1}:=\mathcal{E}_{rs1}'|_{\widetilde{f}_1}$ for $1\le r<s \le 3$. Then we have
\begin{center}
$\widetilde{f}_1|_{\widetilde{f}_1} \sim \mathcal{L}_1-\sum_{1\le r<s\le 3}\epsilon_{rs1}'-(\mathcal{L}_1-\widetilde{\gamma}_1-\sum_{j=1}^3 \widetilde{\lambda}_{13j,1})-(\mathcal{L}_{1}-\widetilde{\gamma}_1-\sum_{j=1}^3 \widetilde{\lambda}_{12j,1})+$

$-(3\mathcal{L}_1-\sum_{1\le r<s 3}2\epsilon_{rs1}'-\sum_{j=1}^3 \widetilde{\lambda}_{13j,1}-\sum_{j=1}^3\widetilde{\lambda}_{12j,1})-2\widetilde{\gamma}_1-\sum_{j=1}^3 2\widetilde{\lambda}_{13j,1}+$

$-\sum_{j=1}^32\widetilde{\lambda}_{12j,1}=-4\mathcal{L}_1+\sum_{1\le r<s\le 3}\epsilon_{rs1}'$.
\end{center}
Similarly $\widetilde{f}_i|_{\widetilde{f}_i} \sim-4\mathcal{L}_i+\sum_{1\le r<s\le 3}\epsilon_{rsi}'$, for $i=2,3$.
Thus, we obtain $\widetilde{\mathcal{K}}_i|_{\widetilde{f}_{i}} = |\mathcal{O}_{\widetilde{f}_{i}}( 4\mathcal{L}_i-\sum_{1\le r<s\le 3}2\epsilon_{rsi}')|$, which is isomorphic to the linear system of the quartic plane curves on $f_i$ with double points at the three points $q_{rsi}'=l_{rs}'\cap f_i$ for $1\le r<s \le 3$. 
By applying a quadratic transformation, we obtain that $\widetilde{\mathcal{K}}_i|_{\widetilde{f}_i} \cong |\mathcal{O}_{\mathbb{P}^2}(2)|$, 
whose image is a Veronese surface $V_i$. Furthermore, we have that $\widetilde{\mathcal{K}}_i|_{\mathcal{E}_{rsi}'} = |\mathcal{O}_{\mathcal{E}_{rsi}'}(-\widetilde{f}_i-\mathcal{E}_{rsi}')|=|\mathcal{O}_{\mathcal{E}_{rsi}'}(2\mathcal{L}_{rsi}'-2\widetilde{\alpha}_{rsi}')|\cong \mathbb{P}^2$ for $1\le r<s \le 3$ (see Remark~\ref{rem:Ek^3p9}). Since $\widetilde{\mathcal{K}}_i|_{\mathcal{E}_{rsi}'}$ is isomorphic to the linear system of the conics of $E_{rsi}'$ with node at the point $E_{rsi}'\cap \widetilde{l}_{ij}'$, then its image is a conic $C_{rsi}'$. 
Since $V_i\cup C_{12i}'\cup C_{13i}'\cup C_{23i}' = \mathbb{P}(TC_{P_{i+1}}W_F^9)$, then it must be $C_{12i}',C_{13i}',C_{23i}' \subset V_i = \mathbb{P}(TC_{P_{i+1}}W_F^9)$.
Therefore $\widetilde{f}_i$ is contracted by $\nu_{\widetilde{\mathcal{K}}}$ to the point $P_{i+1}$, which is a quadruple point whose tangent cone tangent is a cone over a Veronese surface, and the divisors $\mathcal{E}_{12i}'$, $\mathcal{E}_{13i}'$, $\mathcal{E}_{23i}'$ are contracted in three conics contained in the Veronese surface given by the exceptional divisor of the minimal resolution of $P_{i+1}$. 
\end{proof} 

We recall that $\nu_{\mathcal{K}} : \mathbb{P}^{3} \dashrightarrow  W_F^{9} \subset \mathbb{P}^{9}$ is an isomorphism outside $T\cup T'$ (see Remark~\ref{rem:veryample9}). Then $P_1$, $P_2$, $P_3$, $P_4$, $P_1'$, $P_2'$, $P_3'$ and $P_4'$ are the only singular points of $W_F^{9}$ (see Remarks~\ref{rem:contractionInpoints9},~\ref{rem:nu9blowdown},~\ref{rem:nocontractionSpigoli9}). Furthermore, $\nu_{\widetilde{\mathcal{K}}} : Y \to W_F^9$ is a desingularization of $W_F^9$ but it is not the minimal one: indeed, the proof of Proposition~\ref{prop:quadruplePointsp9} says us that $\nu_{\widetilde{\mathcal{K}}} : Y \to W_F^9$ is the blow-up of the minimal desingularization of $W_F^9$ along curves (conics) contained in the minimal resolutions of $P_2$, $P_3$, $P_4$, $P_2'$, $P_3'$ and $P_4'$. Finally we have the following result.

\begin{theorem}\label{thm:associatedo9}
Each singular point of $W_F^{9}$ is associated with \textit{at least} $m=4$ of the other singular points.
\end{theorem}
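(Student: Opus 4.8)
The plan is to argue exactly as in Theorem~\ref{thm:associatedo13}: through each singular point I will produce four lines contained in $W_F^9$ that join it to four other singular points. Two sources of such lines are available. The first is the family of exceptional divisors of $bl''' : Y \to Y''$ that are blown down to curves by $\nu_{\widetilde{\mathcal{K}}}$ (Remark~\ref{rem:nu9blowdown}); the second is the line $\ell=vv'$ joining the two vertices of the trihedra. Throughout I will use the identifications of the contracted divisors with the eight points recorded after Theorem~\ref{thm:S9isEnriques} and in Lemma~\ref{lem:36to6p9}, namely $\nu_{\widetilde{\mathcal{K}}}(\mathcal{E}) = P_1'$, $\nu_{\widetilde{\mathcal{K}}}(\mathcal{E}') = P_1$, $\nu_{\widetilde{\mathcal{K}}}(\widetilde{f}_h) = P_{h+1}$, $\nu_{\widetilde{\mathcal{K}}}(\widetilde{f}_h') = P_{h+1}'$, $\nu_{\widetilde{\mathcal{K}}}(\mathcal{E}_{ijk}) = P_{k+1}'$ and $\nu_{\widetilde{\mathcal{K}}}(\mathcal{E}_{ijk}') = P_{k+1}$.

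First I would treat the exceptional divisors. Fix a general $\widetilde{K} \in \widetilde{\mathcal{K}}$. For each of $\Gamma_i$, $\Gamma_i'$, $\Lambda_{ijk,h}$, $\Lambda_{ijk,h}'$ — all isomorphic to $\mathbb{F}_0$ by Remark~\ref{rem:Gamma^3p9} — the curve cut by $\widetilde{K}$ is a single fibre of one ruling (by construction; recall $\widetilde{K}^2\cdot(\,\cdot\,)=0$ while $\widetilde{K}\cdot(\,\cdot\,)\neq 0$ in Remark~\ref{rem:nu9blowdown}), so $\widetilde{\mathcal{K}}$ restricted to each such divisor is a $\mathbb{P}^1$ and its image is a line. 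Since a divisor $D\cong\mathbb{F}_0$ meets, along the complementary ruling, the two divisors over whose defining curves it sits, the corresponding line joins the two image points. Concretely, $\nu_{\widetilde{\mathcal{K}}}(\Gamma_i)$ joins $P_1'$ and $P_{i+1}$; $\nu_{\widetilde{\mathcal{K}}}(\Gamma_i')$ joins $P_1$ and $P_{i+1}'$; $\nu_{\widetilde{\mathcal{K}}}(\Lambda_{ijk,h})$ joins $P_{k+1}'$ and $P_{h+1}$ (with $h\in\{i,j\}$); and $\nu_{\widetilde{\mathcal{K}}}(\Lambda_{ijk,h}')$ joins $P_{k+1}$ and $P_{h+1}'$. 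Letting the indices vary, the $\Lambda$-divisors realise \emph{every} join $P_a'P_b$ with $a,b\in\{2,3,4\}$ (for fixed $k$ the index $h$ runs over all of $\{1,2,3\}$ as the pair $(i,j)$ varies), while the $\Gamma$-divisors give the joins $P_1'P_b$ and $P_1P_a'$ for $a,b\in\{2,3,4\}$.

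It remains to connect the two vertex-points $P_1$ and $P_1'$, for which no exceptional divisor of $bl'''$ is available. Here I would use the line $\ell=vv'$. Its strict transform $\ell_Y$ on $Y$ meets $\mathcal{E}$ and $\mathcal{E}'$ each in one point and, for $T\cup T'$ in general position, is disjoint from the remaining base locus; since $\widetilde{K}\sim 7\mathcal{H}-3\mathcal{E}-3\mathcal{E}'-\cdots$ with $\mathcal{H}\cdot\ell_Y=\mathcal{E}\cdot\ell_Y=\mathcal{E}'\cdot\ell_Y=1$ and $\ell_Y$ meeting none of the other components, one computes $\widetilde{K}\cdot\ell_Y = 7-3-3 = 1$. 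Thus $\nu_{\widetilde{\mathcal{K}}}(\ell_Y)$ is a line, and because $\ell_Y$ meets $\mathcal{E}$ and $\mathcal{E}'$ it joins $P_1'=\nu_{\widetilde{\mathcal{K}}}(\mathcal{E})$ and $P_1=\nu_{\widetilde{\mathcal{K}}}(\mathcal{E}')$.

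Collecting the lines: $P_1$ is joined to $P_1',P_2',P_3',P_4'$; $P_1'$ to $P_1,P_2,P_3,P_4$; each $P_b$ ($b=2,3,4$) to $P_1'$ and to $P_2',P_3',P_4'$; each $P_a'$ ($a=2,3,4$) to $P_1$ and to $P_2,P_3,P_4$. Hence every singular point is associated with at least four of the others (in fact the association relation is precisely the complete bipartite graph on the unprimed and the primed points, so $m=4$ exactly). I expect the main obstacle to be the vertex--vertex association $P_1P_1'$: unlike the other fifteen it is not produced by the blow-up combinatorics but only by the auxiliary line $vv'$, so one must verify both that its image has degree one and that $\ell_Y$ meets the exceptional locus only in $\mathcal{E}$ and $\mathcal{E}'$, i.e. that the relevant general-position hypotheses on $T\cup T'$ indeed hold.
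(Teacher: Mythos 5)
Your proposal is correct and follows essentially the same route as the paper's proof: the $\Gamma$- and $\Lambda$-divisors are shown to map to lines joining the corresponding contracted points (giving the nine joins $\left\langle P_{a}, P_{b}'\right\rangle$ with $a,b\in\{2,3,4\}$ plus the joins through $P_1$ and $P_1'$), and the remaining association $P_1P_1'$ is obtained from the strict transform of the line $vv'$, whose image the paper likewise checks to have degree one via the intersection $\widetilde{K}\cdot (\mathcal{H}-\mathcal{E}-\mathcal{E}'-\sum_{i}\Gamma_i-\sum_i\Gamma_i')^2=1$, which is your computation $7-3-3=1$ in pullback form.
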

\begin{proof}
We know that the $42$ exceptional divisors of $bl''' : Y \to Y''$ are mapped by $\nu_{\widetilde{\mathcal{K}}} : Y \to W_F^{9}\subset \mathbb{P}^{9}$ to curves of $W_F^{9}$ (see Remark~\ref{rem:nu9blowdown}). In particular they are mapped to lines of $W_F^{9}$ (use similar arguments to the ones in the proof of Theorem~\ref{thm:associatedo13}).
Since $\Gamma_i\cap \mathcal{E} \ne \emptyset$ and $\Gamma_i\cap \widetilde{f}_i \ne \emptyset$ for $1\le i \le 3$, we have that $P_1'$ is associated with $P_{i+1}$. Similarly $P_1$ is associated with $P_{i+1}'$. 
One can verify that the other $36$ exceptional divisors are mapped to nine lines in the following way:
$$\left\langle P_{i+1},P_{j+1}' \right\rangle = \left\langle \nu_{\widetilde{\mathcal{K}}}(\mathcal{E}_{rsi}'), \nu_{\widetilde{\mathcal{K}}}(\mathcal{E}_{khj}) \right\rangle = \nu_{\widetilde{\mathcal{K}}}(\Lambda_{rsi,j}')=\nu_{\widetilde{\mathcal{K}}}(\Lambda_{khj,i})$$
for $i,j,r,s,k,h\in\{1,2,3\}$ and $r<s$ and $k<h$. 
It remains to show that $P_1= \nu_{\widetilde{\mathcal{K}}}(\mathcal{E}')$ is associated with $P_1'=\nu_{\widetilde{\mathcal{K}}}(\mathcal{E})$.
Let us consider the line $l_{vv'}:= \left\langle v,v' \right\rangle \subset \mathbb{P}^3$ joining the two vertices of the trihedra $T$ and $T'$. Let $\widetilde{l}_{vv'}$ be its strict transform on $Y$. We obtain that $\nu_{\widetilde{\mathcal{K}}}(\widetilde{l}_{vv'}) = \left\langle P_1, P_1' \right\rangle \subset W_F^{9}$, since $\widetilde{l}_{vv'} \cap \mathcal{E} \ne \emptyset$, $\widetilde{l}_{vv'} \cap \mathcal{E}' \ne \emptyset$ and $\deg (\nu_{\widetilde{\mathcal{K}}}(\widetilde{l}_{vv'}))=\widetilde{K}\cdot (\mathcal{H}-\mathcal{E}-\mathcal{E}'-\sum_{i=1}^3\Gamma_i-\sum_{i=1}^3\Gamma_i')^2=1$.
\end{proof}

\begin{remark}
Thanks to a computational anaylisis with Macaulay2, we can say that each singular point of $W_F^{9}$ is associated with \textit{exactly} $m=4$ of the other singular points, and that the lines joining them and contained in $W_F^{9}$ are just the ones of proof of Theorem~\ref{thm:associatedo9} (see \S~\ref{code:fano9} of Appendix~\ref{app:code}).
\end{remark}

\section{The Enriques-Fano threefold of genus 7}\label{subsec:Fano7}


Let us take a tetrahedron $T=\bigcup_{i=0}^3f_i\subset \mathbb{P}^3$ and let us denote by $v_i$ the vertex opposite to the face $f_i$, for $0\le i \le 3$. Let $l_{ij}$ be the edge $f_i\cap f_j$, for $0\le i<j \le 3$. Furthermore, let us fix a general plane $\pi$ of $\mathbb{P}^{3}$. The plane $\pi$ intersects each edge $l_{ij}$ of $T$ at one point, which is denoted by $p_{ij} := l_{ij}\cap \pi$. In the plane $\pi$ there is a $3$-dimensional linear system of cubic curves passing through the six points $p_{ij}$ (see \cite[\S 9.2.2]{Dolg12}). Let us fix a general element $\delta$ of this linear system: it is an elliptic smooth cubic plane curve. Let us consider the linear system $\mathcal{X}$ of the sextic surfaces in $\mathbb{P}^{3}$ double along the six edges of the tetrahedron $T$
and containing the cubic plane curve $\delta$. 

\begin{proposition}\label{prop:dimX=7}
The linear system $\mathcal{X}$ defined as above has $\dim \mathcal{X} = 7$.
\end{proposition}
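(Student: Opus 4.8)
The plan is to realise $\mathcal{X}$ as the kernel of a single restriction map and then to compute the rank of that map by squeezing it between an upper and a lower bound. Recall from Section~\ref{subsec:Fano13} that, up to coordinates with $T=\{s_0s_1s_2s_3=0\}$, the sextics double along the six edges of $T$ form the $13$-dimensional system $\mathcal{S}$, cut out by the $14$-dimensional vector space $V$ of forms $\sum_{i=0}^{3}\lambda_i\prod_{j\ne i}s_j^2+s_0s_1s_2s_3\,Q$. Since the condition $\delta\subset\Sigma$ is equivalent to the vanishing of $\Sigma|_{\delta}$, and this is linear in $\Sigma$, the system $\mathcal{X}$ is the projectivisation of $\ker r$, where $r\colon V\to H^0(\delta,\mathcal{O}_{\delta}(6))$ denotes restriction to $\delta$. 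Thus $\dim\mathcal{X}=13-\dim\operatorname{im}r$, and everything reduces to proving $\dim\operatorname{im}r=6$.

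First I would record the geometry of the six points inside $\pi$. The line $m_i:=f_i\cap\pi$ contains the three points $p_{ij},p_{ik},p_{ih}$, so $p_{01},\dots,p_{23}$ are the six vertices of the complete quadrilateral $m_0\cup m_1\cup m_2\cup m_3$, and $\delta$ meets $m_i$ exactly in these three points. To bound the image from above, observe that every $\Sigma\in\mathcal{S}$ has multiplicity at least $2$ at each $p_{ij}$, being double along $l_{ij}\ni p_{ij}$, while $\delta$ is smooth there; hence $(\Sigma\cdot\delta)_{p_{ij}}\ge 2$ and $\Sigma|_{\delta}$ vanishes on the degree-$12$ divisor $D:=2\sum_{i<j}p_{ij}$. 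Therefore $\operatorname{im}r\subseteq H^0(\delta,L)$ with $L:=\mathcal{O}_{\delta}(6)(-D)$; as $\deg L=18-12=6>0=2g-2$, Riemann--Roch on the elliptic curve $\delta$ gives $h^0(L)=6$, so $\dim\operatorname{im}r\le 6$.

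The main point is the reverse inequality, namely surjectivity of $r$ onto $H^0(\delta,L)$, and for it I would use only the subsystem $\{\,s_0s_1s_2s_3\,Q\,\}\subset V$. The section $\overline{s_0s_1s_2s_3}\in H^0(\delta,\mathcal{O}_{\delta}(4))$ has divisor $\sum_i\operatorname{div}(s_i|_{\delta})=\sum_i\sum_{j\ne i}p_{ij}=2\sum_{i<j}p_{ij}=D$, so multiplication by it defines an injection $H^0(\delta,\mathcal{O}_{\delta}(2))\hookrightarrow H^0(\delta,L)$ between spaces of the same dimension $6$, hence an isomorphism. On the other hand the quadrics restrict onto $H^0(\delta,\mathcal{O}_{\delta}(2))$: the map $H^0(\mathbb{P}^3,\mathcal{O}(2))\to H^0(\pi,\mathcal{O}(2))$ is surjective, and $H^0(\pi,\mathcal{O}(2))\to H^0(\delta,\mathcal{O}_{\delta}(2))$ is an isomorphism because no conic of $\pi$ can contain the cubic $\delta$. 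Composing, the restrictions of the forms $s_0s_1s_2s_3\,Q$ already sweep out all of $H^0(\delta,L)$, whence $\dim\operatorname{im}r=6$ and $\dim\mathcal{X}=13-6=7$.

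The only delicate step is this surjectivity, and the device that makes it painless is the remark that on $\delta$ the distinguished section $\overline{s_0s_1s_2s_3}$ cuts out \emph{exactly} the double-point divisor $D$; dividing by it identifies the target $H^0(\delta,L)$ with the manifestly accessible space $H^0(\delta,\mathcal{O}_{\delta}(2))$ of conics, onto which the quadrics of $\mathbb{P}^3$ visibly surject. The upper bound, by contrast, is a routine multiplicity computation. The rank-$6$ claim can also be confirmed directly in Macaulay2 over $\mathbb{F}_n$, as elsewhere in the paper.
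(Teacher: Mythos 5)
Your proof is correct and follows essentially the same route as the paper: both compute $\dim\mathcal{X}$ as $13$ minus the rank of the restriction map to $\delta$, identify the image with a complete degree-$6$ linear system on the elliptic curve $\delta$ (hence $6$-dimensional by Riemann--Roch), and establish surjectivity via the subsystem with fixed part $T$ and movable part the quadrics of $\mathbb{P}^3$. Your write-up merely makes explicit the details (the divisor of $\overline{s_0s_1s_2s_3}$ on $\delta$ being exactly the double-point divisor, and the surjection of quadrics onto $H^0(\delta,\mathcal{O}_{\delta}(2))$ via the plane $\pi$) that the paper asserts more briefly.
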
 
\begin{proof}
The linear system $\mathcal{X}$ is a sublinear system of the linear system $\mathcal{S}$ of the sextic surfaces double along the six edges of $T$. In particular we have that $\mathcal{S}=|\mathcal{I}_{\gamma^2 | \mathbb{P}^3}(6)|$ and $\mathcal{X}=|\mathcal{I}_{\gamma^2 \cup \delta | \mathbb{P}^3}(6)|$, where $\gamma$ is the sextic reducible curve given by the union of the edges of $T$.
We also have that $\mathcal{S}$ cuts on $\delta$ a complete linear system $|\mathcal{O}_{\delta}(D)|$. Indeed, we recall that $\mathcal{S}$ contains a sublinear system whose fixed part is given by the tetrahedron and whose movable part, given by the quadric surfaces of $\mathbb{P}^3$, cuts a complete linear system on $\delta$. Hence we have the following exact sequence
$$0 \to H^0(\mathcal{I}_{\gamma^2\cup \delta | \mathbb{P}^3}(6)) \to H^0(\mathcal{I}_{\gamma^2 | \mathbb{P}^3}(6)) \to H^0(\mathcal{O}_{\delta}(D))\to 0.$$
Let $\Sigma$ be a general element of $\mathcal{S}$. The cubic plane curve $\delta$ intersects $\Sigma$, outside the base locus of $\mathcal{S}$, in $3\cdot 6 - 2\cdot 6 = 6$ points. Hence $\deg D =6$. We recall that $\dim H^0(\mathbb{P}^3, \mathcal{I}_{\gamma^2 | \mathbb{P}^3}(6)) = \dim \mathcal{S}+1 = 14$. Since $\deg D = 6 > 2p_g(\delta)-2 = 0$, then $\dim H^1(\delta, \mathcal{O}_{\delta} (D))=0$ (see \cite[Example 1.3.4]{Hart}) and we have $\dim H^0(\delta, \mathcal{O}_{\delta} (D)) = 6$
by Riemann-Roch. So the above exact seguence implies that 
$$\dim \mathcal{X} = \dim H^0(\mathbb{P}^3, \mathcal{I}_{\gamma^2\cup \delta | \mathbb{P}^3}(6)) -1= 14-6 -1= 7.$$
\end{proof}

\begin{remark}\label{rem:WF7projWF13}
Let $\nu_{\mathcal{S}} : \mathbb{P}^3 \dashrightarrow \mathbb{P}^{13}$ be the rational map defined by the linear system $\mathcal{S}$ of the sextic surfaces of $\mathbb{P}^3$ singular along the edges of $T$, whose image is the Enriques-Fano threefold $W_F^{13}$. Let $W_F^7$ be the image of $\mathbb{P}^3$ via the rational map defined by the linear system $\mathcal{X}$. Then $W_F^{7}$ is the projection of $W_F^{13}$ from the linear subspace of $\mathbb{P}^{13}$ spanned by the sextic elliptic curve $\nu_{\mathcal{S}}(\delta)$.  
\end{remark}

\begin{lemma}\label{lem:birationalitanu7}
The rational map $\nu_{\mathcal{X}} : \mathbb{P}^{3} \dashrightarrow \mathbb{P}^{7}$ defined by $\mathcal{X}$ is a birational map onto the image.
\end{lemma}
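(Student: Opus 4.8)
The plan is to follow the strategy of Lemma~\ref{lem:birationalitanu13}: since $\mathcal{X}$ is the system of hyperplane pullbacks under $\nu_{\mathcal{X}}$, a general member $\Sigma\in\mathcal{X}$ is the preimage of a general hyperplane section of the image, and the degree of $\nu_{\mathcal{X}}$ onto its image equals the degree of the restricted map $\nu_{\mathcal{X}}|_{\Sigma}$ onto its image. Hence it suffices to produce a sublinear system of $\mathcal{X}$ whose restriction to a general $\Sigma$ already defines a birational map.

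First I would exhibit such a sublinear system explicitly. Recall that $\delta$ lies in the fixed plane $\pi$, and write $\ell_{\pi}$ for a linear form defining $\pi$. Consider the sextic surfaces of the form $T\cup\pi\cup\pi'$, i.e. those cut out by $s_0s_1s_2s_3\cdot\ell_{\pi}\cdot\ell$ as $\ell$ ranges over all linear forms. Each such surface is double along the six edges of $T$ (the factor $s_0s_1s_2s_3$ vanishes to order two along every edge $l_{ij}=\{s_i=s_j=0\}$) and contains $\delta$ (because $\delta\subset\pi$), so it belongs to $\mathcal{X}$. These surfaces form a sublinear system $\overline{\mathcal{X}}\subset\mathcal{X}$ with fixed part $T\cup\pi$ and movable part equal to the complete system of planes $|\mathcal{O}_{\mathbb{P}^3}(1)|$.

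Next I would restrict to a general $\Sigma$. Since the fixed part $T\cup\pi$ is common to all members of $\overline{\mathcal{X}}$, the system $\overline{\mathcal{X}}|_{\Sigma}$ is the one cut out on $\Sigma$ by the planes of $\mathbb{P}^3$; its associated map is nothing but the restriction to $\Sigma$ of the identity of $\mathbb{P}^3$, that is the inclusion $\Sigma\hookrightarrow\mathbb{P}^3$, which is an isomorphism onto its image and hence birational. Because $\overline{\mathcal{X}}\subset\mathcal{X}$, the map defined by $\overline{\mathcal{X}}|_{\Sigma}$ is a linear projection of the map defined by $\mathcal{X}|_{\Sigma}$; a generically injective projection forces the original map to be generically injective as well, so $\mathcal{X}|_{\Sigma}$ defines a birational map. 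By the reduction above, $\nu_{\mathcal{X}}$ is birational onto its image.

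The only point requiring a little care is the verification that $\overline{\mathcal{X}}$ genuinely sits inside $\mathcal{X}$ --- that these reducible sextics have the prescribed double structure along the edges and pass through $\delta$ --- but this is immediate from the factorization $s_0s_1s_2s_3\,\ell_{\pi}\,\ell$ together with $\delta\subset\pi$, so no real obstacle remains. Alternatively, the same conclusion can be obtained from Remark~\ref{rem:WF7projWF13}: since $\nu_{\mathcal{S}}$ is birational by Lemma~\ref{lem:birationalitanu13}, it would be enough to check that the projection of $W_F^{13}$ from the linear span $\langle\nu_{\mathcal{S}}(\delta)\rangle$ is generically one-to-one, but the sublinear-system argument above avoids analyzing this projection directly.
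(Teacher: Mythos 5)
Your proposal is correct and follows exactly the paper's argument: the paper also reduces to checking birationality of $\mathcal{X}|_X$ on a general member $X$ and exhibits the sublinear system $\overline{\mathcal{X}}\subset\mathcal{X}$ with fixed part $T\cup\pi$ whose moving part cuts out the plane sections of $X$. The only difference is that you spell out the explicit factorization $s_0s_1s_2s_3\,\ell_{\pi}\,\ell$ and the double-point check along the edges, which the paper leaves implicit.
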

\begin{proof}
Let $X$ be a general element of $\mathcal{X}$.
The linear system $\mathcal{X}$ contains a sublinear system $\overline{\mathcal{X}} \subset \mathcal{X}$ 
whose fixed part is given by $T\cup \pi$ and such that $\overline{\mathcal{X}}|_{X}$ coincides with the linear system on $X$ cut out by the planes of $\mathbb{P}^3$. Then we obtain the birationality of the maps defined by $\mathcal{X}|_{X}$ and by $\mathcal{X}$.
\end{proof}

\begin{remark}\label{rem:veryample7}
The proof of Lemma~\ref{lem:birationalitanu7} tells us that the linear system $\mathcal{X}$ is very ample outside the tetrahedron $T$ and the plane $\pi$. So $\nu_{\mathcal{X}} : \mathbb{P}^{3} \dashrightarrow  \nu_{\mathcal{X}}(\mathbb{P}^3)\subset \mathbb{P}^{7}$ is an isomorphism outside $T\cup \pi$.
\end{remark}

Proposition~\ref{prop:dimX=7} proves the existence of sextic surfaces of $\mathbb{P}^3$ double along the edges of the tetrahedron $T$ (shortly, \textit{Enriques sextics}) and containing a given cubic plane curve $\delta$. However, a priori, these surfaces could have further singularities and their 
desingularizations could be not Enriques surfaces. Let us study the surfaces of $\mathcal{X}$.
Up to a change of coordinates, we can consider in $\mathbb{P}^3_{\left[ s_0:s_1:s_2:s_3 \right]}$ the tetrahedron $T=\{s_0s_1s_2s_3=0\}$ with faces $f_i = \{s_i=0\}$ for $0\le i\le 3$. Let us fix the cubic plane curve $\widehat{\delta}:=\{\sum_{i=0}^3s_i=0, s_1^2s_2 + s_1s_2^2 + s_1^2s_3 + s_1s_2s_3 + s_2^2s_3 + s_1s_3^2 + s_2s_3^2=0\}$, which intersects the edges of $T$ at one point each. 
Thanks to Macaulay2, we can construct the linear system $\widehat{\mathcal{X}}$ of the sextic surfaces of $\mathbb{P}^3$ which are singular along the edges of $T$ and which contain the curve $\widehat{\delta}$ (see Code~\ref{code:fano7} of Appendix~\ref{app:code}). Let us take $\widehat{X}:= \{s_0^2s_1^2s_2^2 - s_0^3s_1s_2s_3 - s_0s_1^3s_2s_3 - 2s_0s_1s_2^3s_3 + s_0^2s_1^2s_3^2 + 2s_0^2s_2^2s_3^2 + s_0s_1s_2^2s_3^2 + 2s_1^2s_2^2s_3^2 - 2s_0s_1s_2s_3^3=0\}\in \widehat{\mathcal{X}}$. By the computational analysis, we see that $\widehat{X}$ only has singular points along the edges of $T$. In particular the tangent cone to $\widehat{X}$ at a vertex of $T$ is given by the union of the three faces of $T$ containing that vertex; the tangent cone to $\widehat{X}$ at a point $p\in l_{ij}$, with $p\ne v_k$ and $p\ne v_h$, 
is the union of two planes containing $l_{ij}$, where $i,j,k,h$ are four distinct indices in $\{0,1,2,3\}$.
Then $\widehat{X}$ has ordinary singularities along the edges of $T$ (see Definition~\ref{def:ordinarysingularities}) and no further singularities. The same happens for a general surface of $\widehat{\mathcal{X}}$. Let $\mathcal{D}$ be the family of the cubic plane curves of $\mathbb{P}^3$ intersecting the edges of $T$ at one point each. We have that $\mathcal{D}$ is an irreducible variety of dimension $6$. Then what is true for the special cubic plane curve $\widehat{\delta}\in \mathcal{D}$ is also true for the general cubic plane curve $\delta\in \mathcal{D}$. Therefore there exist Enriques sextics in $\mathbb{P}^3$, with ordinary singularities along the edges of $T$ and no further singularities, that contain $\delta$. Let $X$ be such a general surface and let us take its minimal desingularization $n : X^{\nu} \to X$. It follows that $X^{\nu}$ is an Enriques surface (see \cite[p.275]{CoDo89}). Furthermore, let $E$ be the strict transform of $\delta$ on $X^{\nu}$. Then $E$ is an elliptic curve such that $E\cdot H = 3$, where $H$ is the pullback of the general hyperplane section of $X$. 
If $\delta$ moved in a linear system on $X$, then $E$ would move in an elliptic pencil on $X^{\nu}$. Thus, $E$ should be $2$-divisible
(see \cite[Lemma 17.1]{BPV84}) and $E\cdot H=3$ would be a contradiction.
So $\delta$ does not move in any linear system on $X$ and a compute of parameters allows us to see that the general Enriques sextic in $\mathbb{P}^3$ contains some cubic plane curve of $\mathcal{D}$.
These arguments prove that the image of a general $X\in \mathcal{X}$ via the rational map $\nu_{\mathcal{X}} : \mathbb{P}^3 \dashrightarrow \mathbb{P}^7$ is an Enriques surface. 
Finally one can prove that $W_F^{7}=\nu_{\mathcal{X}}(\mathbb{P}^3)\subset \mathbb{P}^{7}$ is not a cone over a general hyperplane section, as in the proof of Theorem~\ref{thm:WF13moderno}. 
Furthermore, $\deg W_F^7 = 12$ (see Code~\ref{code:fano7} of Appendix~\ref{app:code}). So 
we obtain the following theorem.

\begin{theorem}\cite[\S 4]{Fa38}\label{thm:WF7 isEF}
The image of $\mathbb{P}^{3}$ via the rational map defined by $\mathcal{X}$ is an Enriques-Fano threefold $W_{F}^{7}\subset \mathbb{P}^7$ of genus $p=7$.
\end{theorem}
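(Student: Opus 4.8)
The plan is to verify directly the defining properties of an Enriques-Fano threefold for $W_F^7\subset\mathbb{P}^7$: that it is non-degenerate, that its general hyperplane section is an Enriques surface, that it is not a cone over such a section, and that its genus equals $7$; all of this by assembling the facts already prepared above. First I would observe that by Lemma~\ref{lem:birationalitanu7} the map $\nu_{\mathcal{X}}$ is birational onto $W_F^7$, and since $\dim\mathcal{X}=7$ by Proposition~\ref{prop:dimX=7} and $\mathcal{X}$ is a complete linear system, the image is a non-degenerate threefold in $\mathbb{P}^7$. By the intrinsic characterization recalled in the introduction it then suffices to treat the hyperplane section and the cone condition.

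The central step is to show that the general hyperplane section $S'$ of $W_F^7$ is an Enriques surface. Since the hyperplane sections of $W_F^7$ pull back under $\nu_{\mathcal{X}}$ to the members of $\mathcal{X}$, the surface $S'$ is the image $\nu_{\mathcal{X}}(X)$ of a general Enriques sextic $X\in\mathcal{X}$ through $\delta$. I would first establish that such a general $X$ carries only ordinary singularities along the six edges of $T$ and no further singular points: this is checked explicitly on the sample surface $\widehat{X}$ with Macaulay2 (Code~\ref{code:fano7}), whose tangent cones at the vertices and along the edges are exactly those of Definition~\ref{def:ordinarysingularities}, and the property then passes to the general member by upper semicontinuity of the singularity type together with the irreducibility of the family $\mathcal{D}$ of admissible cubics. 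By \cite[p.275]{CoDo89} the minimal desingularization of such a sextic is an Enriques surface; and since $\nu_{\mathcal{X}}$ is an isomorphism outside $T\cup\pi$ (Remark~\ref{rem:veryample7}) and resolves the double curves along the edges, $\nu_{\mathcal{X}}(X)$ is identified with this minimal model, exactly as in the genus-$13$ case of Theorem~\ref{thm:WF13moderno}. Hence $S'$ is Enriques.

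The hard part will be guaranteeing that this construction is genuinely generic, i.e. that the general Enriques sextic of $\mathbb{P}^3$ actually contains a cubic plane curve of $\mathcal{D}$, so that the sublinear system $\mathcal{X}$ really does cut out an Enriques surface for a general choice of $\delta$. For this I would argue that $\delta$ cannot move in any linear system on $X$: if it did, its strict transform $E$ on the minimal desingularization $X^{\nu}$ would sweep out an elliptic pencil and would therefore be $2$-divisible by \cite[Lemma 17.1]{BPV84}, contradicting $E\cdot H=3$. A parameter count then shows that a member of $\mathcal{D}$ sits rigidly in the general Enriques sextic. As an independent sanity check I would also invoke Remark~\ref{rem:WF7projWF13}: $W_F^7$ is the projection of the Enriques-Fano threefold $W_F^{13}$ of Theorem~\ref{thm:WF13 isEF} from the span of the sextic elliptic curve $\nu_{\mathcal{S}}(\delta)$, and projecting from a curve lying on the Enriques hyperplane sections of $W_F^{13}$ preserves their birational type.

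Finally I would dispose of the two remaining points. Since $W_F^7$ is dominated by $\mathbb{P}^3$ it is rational, so the argument of Theorem~\ref{thm:WF13moderno} applies verbatim: were $W_F^7$ a cone over $S'$, it would be birational to $S'\times\mathbb{P}^1$, forcing the Enriques surface $S'$ to be unirational, a contradiction. For the genus, I would compute the degree $\deg W_F^7=(S')^3=12$ with Macaulay2 (Code~\ref{code:fano7}); since $\deg W_F^7=2p-2$, this gives $p=\frac{(S')^3}{2}+1=7$, which completes the verification that $W_F^7\subset\mathbb{P}^7$ is an Enriques-Fano threefold of genus $7$.
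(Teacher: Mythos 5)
Your proposal is correct and follows essentially the same route as the paper: birationality via Lemma~\ref{lem:birationalitanu7}, the Macaulay2 verification that the special member $\widehat{X}$ has only ordinary singularities along the edges of $T$ combined with the irreducibility of $\mathcal{D}$ to pass to the general $\delta$, the appeal to \cite[p.275]{CoDo89}, the non-mobility of $\delta$ via $2$-divisibility of elliptic pencils against $E\cdot H=3$, the cone exclusion as in Theorem~\ref{thm:WF13moderno}, and the degree computation $\deg W_F^7=12$ giving $p=7$. The paper likewise records the projection from $W_F^{13}$ (Remark~\ref{rem:WF7projWF13} together with \cite{CDGK20}) only as an alternative confirmation, exactly as you use it.
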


By Remark~\ref{rem:WF7projWF13}, the above theorem also follows by \cite[Lemma 4.4, Lemma 4.6]{CDGK20}. 


In order to describe the geometry and the singularities of $W_F^7$, we will use the techniques of the proof of Theorems~\ref{thm:WF13 isEF},~\ref{thm:WF9 isEF}.

\begin{remark}\label{rem:variabeleTC7}
Let $X$ be a general element of $\mathcal{X}$ and let us take four distinct indices $i,j,k,h\in\{0,1,2,3\}$. We have that
$TC_{v_i}X=f_j\cup f_k \cup f_h$ and,
if $p\in l_{ij}$ with $p\ne v_k$ and $p\ne v_h$, we have that $TC_{p}X$ is the union of two variable planes $\pi_{p,X}, \pi_{p,X}'\in |\mathcal{I}_{l_{ij}|\mathbb{P}^3}(1)|$ depending on the choice of $p$ and of $X$ and which can also coincide.
In particular if $p=p_{ij}$, then one of the two planes of $TC_{p_{ij}}X$ is tangent to $\delta$ at $p_{ij}$ and we will denote this plane by $\pi_{ij}$. 
\end{remark}

Let us blow-up $\mathbb{P}^3$ at the vertices of $T$ and at the six points $p_{ij}$, for $0\le i<j\le 3$. We obtain a smooth threefold $Y'$ and a birational morphism $bl' : Y '\to \mathbb{P}^{3}$ with exceptional divisors $E_i := (bl')^{-1}(v_i)$, $E_{ij} := (bl')^{-1}(p_{ij})$.
Let $\mathcal{X}'$ be the strict transform of $\mathcal{X}$ and let us denote by $H$ the pullback on $Y'$ of the hyperplane class on $\mathbb{P}^{3}$. Then an 
element of $\mathcal{X}'$ is linearly equivalent to $6H-3\sum_{i=0}^{3}E_i-2\sum_{0\le i<j\le 3}E_{ij}$.
Let $\widetilde{f}_i$ be the strict transform of the face $f_i$ and let $\widetilde{\pi}_{ij}$ be the strict transform of the plane $\pi_{ij}$ defined in Remark~\ref{rem:variabeleTC7}, for $0\le i<j\le 3$. We denote by $\gamma_{ki}:=E_k\cap \widetilde{f}_j$ the line cut out by $\widetilde{f}_i$ on $E_k$ and by $\lambda_{ij}:=E_{ij}\cap \widetilde{\pi}_{ij}$ the line cut out by $\widetilde{\pi}_{ij}$ on $E_{ij}$, for distinct indices $i,j,k\in\{0,1,2,3\}$ with $i< j$. We have that $\gamma_{ki}$ and $\lambda_{ij}$ are $(-1)$-curves respectively on $\widetilde{f}_i$ and $\widetilde{\pi}_{ij}$. 
Let $X'$ be the strict transform of a general $X\in\mathcal{X}$. By Remark~\ref{rem:variabeleTC7} 
we have that $X'\cap E_{k}=\bigcup_{\substack{i=0\\ i\ne k}}^3\gamma_{ki}$ for all $0\le k \le 3$.

\begin{remark}\label{rem:betap7}
We observe that $X'\cap E_{ij} = \lambda_{ij} \cup \beta_{ij,X}$, where $\beta_{ij,X}$ moves in the pencil of the lines of $E_{ij}$ through the point $E_{ij}\cap \widetilde{l}_{ij}$ and it depends on the choice of $X$, for all $0\le i<j\le 3$ (see Remark~\ref{rem:variabeleTC7}).
\end{remark}

Let us take the strict transforms $\widetilde{l}_{ij}$ of the six edges of $T$ and the strict transform $\widetilde{\delta}$ of the cubic plane curve $\delta$. The base locus of $\mathcal{X}'$ is given by the union of the six curves $\widetilde{l}_{ij}$ (along which a general $X'\in\mathcal{X}$' has double points), of the curve $\widetilde{\delta}$, of the twelve curves $\gamma_{ij}$ and the six curves $\lambda_{ij}$ (see Remark~\ref{rem:variabeleTC7}).
Let us blow-up the strict transforms of the edges of $T$ and of the cubic plane curve $\delta$. We obtain a smooth threefold $Y''$ and a birational morphism $bl'' : Y'' \to Y'$ with exceptional divisors 
$(bl'')^{-1}(\widetilde{\delta})=: F_{\delta} \cong \mathbb{P}(\mathcal{N}_{\widetilde{\delta}|Y'})$ and
$$(bl'')^{-1}(\widetilde{l}_{ij})=: F_{ij}\cong \mathbb{P}(\mathcal{N}_{\widetilde{l}_{ij}|Y'}) \cong \mathbb{P}(\mathcal{O}_{\mathbb{P}^1}(-2)\oplus \mathcal{O}_{\mathbb{P}^1}(-2))\cong \mathbb{F}_0,$$
for $0\le i<j\le 3$. 

\begin{remark}\label{rem:degNormaldelta}
The divisor $F_{\delta}$ is a smooth elliptic ruled surface, since it is a $\mathbb{P}^1$-bundle over the elliptic curve $\widetilde{\delta}$. We also have that $\deg (\mathcal{N}_{\widetilde{\delta}|Y'})=0$. Indeed, since $\delta$ is the complete intersection of the plane $\pi$ and of a cubic surface passing through the points $p_{ij}$, then we have $\mathcal{N}_{\widetilde{\delta}|Y'}\cong \mathcal{O}_{\widetilde{\delta}}(H-\sum_{0\le i<j \le 3}E_{ij})\oplus \mathcal{O}_{\widetilde{\delta}}(3H-\sum_{0\le i<j \le 3}E_{ij})$ and $\deg (\mathcal{N}_{\widetilde{\delta}|Y'})=(3-6)+(9-6)=0$.
\end{remark}

Since $bl'':Y'' \to Y'$ has no effect on $\widetilde{f}_i$, we will use the same symbols to indicate its strict transforms on $Y''$; furthermore, let us denote by $\widetilde{E}_i$ and $\widetilde{E}_{ij}$ respectively the strict transforms of $E_{i}$ and $E_{ij}$, for $0\le i<j\le 3$.

\begin{remark}\label{rem:selfalphap7}
Let us take the curves $\alpha_{kij}:=\widetilde{E}_k\cap F_{ij}$, $\alpha_{ij}:=\widetilde{E}_{ij}\cap F_{ij}$, $\alpha_{ij}':=\widetilde{E}_{ij}\cap F_{\delta}$, for distinct indices $i,j,k\in\{0,1,2,3\}$ with $i<j$.
We have that $\alpha_{kij}$ is a $(-1)$-curve on $\widetilde{E}_k$ and a fibre on $F_{ij}$; $\alpha_{ij}$ is a $(-1)$-curve on $\widetilde{E}_{ij}$ and a fibre on $F_{ij}$;
$\alpha_{ij}'$ is a $(-1)$-curve on $\widetilde{E}_{ij}$ and a fibre on $F_{\delta}$.
\end{remark}

Let $\mathcal{X}''$ be the strict transform of $\mathcal{X}'$ and let $X''$ be an 
element of $\mathcal{X}''$. Then 
$$X''\sim 6H-3\sum_{i=0}^3 \widetilde{E}_i-2\sum_{0\le i<j\le 3} E_{ij}- 2\sum_{0\le i < j \le 3} F_{ij}-F_{\delta},$$ 
where, by abuse of notation, $H$ denotes the pullback $bl''^* H$.
By Remark~\ref{rem:variabeleTC7}, 
we have that the base locus of $\mathcal{X}''$ is given by the disjoint union of the strict transforms $\widetilde{\gamma}_{ki}$ and $\widetilde{\lambda}_{ij}$ of the 18 lines $\gamma_{ki}$ and $\lambda_{ij}$, for distinct indices $i,j,k\in\{0,1,2,3\}$ and $i< j$.

\begin{remark}\label{rem:-1curvesX}
We have that 
$\widetilde{\gamma}_{ki}^2|_{\widetilde{E}_{k}}=-1$, $\widetilde{\gamma}_{ki}^2|_{\widetilde{f}_{i}}=-1$, $\widetilde{\lambda}_{ij}^2|_{\widetilde{E}_{ij}}=-1.$
Furthermore, if $X''$ is the strict transform of a general $X'\in\mathcal{X}'$, we also have that the twelve $\widetilde{\gamma}_{ki}$ are $(-1)$-curves on $X''$ for $i,k\in\{0,1,2,3\}$ and $i\ne k$ (see Remark~\ref{rem:-1curvesSigma}). Finally we want to show that the $6$ curves $\widetilde{\lambda}_{ij}$ are $(-1)$-curves on $X''$ too. We observe that $X''\cap \widetilde{E}_{ij}=\widetilde{\lambda}_{ij}\cup \widetilde{\beta}_{ij,X''}$, where $\widetilde{\beta}_{ij,X''}$ is the strict transform of the curve of Remark~\ref{rem:betap7}, which moves in a pencil and depends on $X''$.
Since $\widetilde{\lambda}_{ij}$ and $\widetilde{\beta}_{ij,X''}$ are disjoint, we have
$(\widetilde{\lambda}_{ij})^2|_{X''}+(\widetilde{\beta}_{ij,X''})^2|_{X''} = (X''\cap \widetilde{E}_{ij})^2|_{X''}=\widetilde{E}_{ij}^2\cdot X'' =\widetilde{E}_{ij}\cdot (\widetilde{E}_{ij}\cdot X'')=\widetilde{E}_{ij}\cdot \widetilde{\lambda}_{ij} +\widetilde{E}_{ij} \cdot \widetilde{\beta}_{ij,X''}.$
Hence $(\widetilde{\lambda}_{ij})^2|_{X''}= \widetilde{\lambda}_{ij}\cdot \widetilde{E}_{ij} = \widetilde{\pi}_{ij}\cdot \widetilde{E}_{ij}^2 = -1$.
\end{remark}

Finally let us consider $bl''' : Y \to Y''$ the blow-up of $Y''$ along the above $18$ curves, with exceptional divisors $\Gamma_{ki}:=bl'''^{-1}(\widetilde{\gamma}_{ki})$, $\Lambda_{ij}:=bl'''^{-1}(\widetilde{\lambda}_{ij})$, for distinct indices $i,j,k\in\{0,1,2,3\}$ with $i<j$. We denote by $\mathcal{E}_{i}$ and $\mathcal{E}_{ij}$ respectively the strict transform of $\widetilde{E}_i$ and $\widetilde{E}_{ij}$; by $\mathcal{F}_{ij}$ the strict transform of $F_{ij}$; by $\mathcal{F}_{\delta}$ the strict transform of $F_{\delta}$; by $\mathcal{H}$ the pullback of $H$.

\begin{remark}\label{rem:Gamma^3p7}
We have that
$$\Gamma_{ki}=\mathbb{P}(\mathcal{N}_{\widetilde{\gamma}_{ki}|Y''})\cong \mathbb{P}(\mathcal{O}_{\widetilde{\gamma}_{ki}}(\widetilde{E}_k)\oplus \mathcal{O}_{\widetilde{\gamma}_{ki}}(\widetilde{f}_{i})) \cong \mathbb{P}(\mathcal{O}_{\mathbb{P}^1}(-1)\oplus \mathcal{O}_{\mathbb{P}^1}(-1))\cong \mathbb{F}_0,$$
$$\Lambda_{ij}=\mathbb{P}(\mathcal{N}_{\widetilde{\lambda}_{ij}|Y''})\cong \mathbb{P}(\mathcal{O}_{\widetilde{\lambda}_{ij}}(\widetilde{E}_{ij})\oplus \mathcal{O}_{\widetilde{\lambda}_{ij}}(\widetilde{\pi}_{ij})) \cong \mathbb{P}(\mathcal{O}_{\mathbb{P}^1}(-1)\oplus \mathcal{O}_{\mathbb{P}^1}(-1))\cong \mathbb{F}_0.$$
Furthermore, we have $\Gamma_{ki}^{3}=-\deg (\mathcal{N}_{\widetilde{\gamma}_{ki}|Y''})=2$, and 
$\Lambda_{ij}^{3}=-\deg (\mathcal{N}_{\widetilde{\lambda}_{ij}|Y''})=2$
(see \cite[Chap 4, \S 6]{GH} and \cite[Lemma 2.2.14]{IsPro99}).
\end{remark}

\begin{remark}\label{rem:intersectionYp7}
Let us take distinct indices $i,j,k\in\{0,1,2,3\}$ with $i< j$. The divisor $\mathcal{F}_{ij}$ intersects $\Gamma_{ki}$, $\Gamma_{kj}$, $\Lambda_{ij}$ each along a $\mathbb{P}^1$ which is a $(-1)$-curve on $\mathcal{F}_{ij}$ and a fibre on $\Gamma_{ki}$, $\Gamma_{kj}$, $\Lambda_{ij}$. Similarly we have $\Lambda_{ij}^2\cdot\mathcal{F}_{\delta}=-1$ and $\Lambda_{ij}\cdot\mathcal{F}_{\delta}^2=0.$
Let us consider the strict transforms $\widetilde{\alpha}_{kij}$, $\widetilde{\alpha}_{ij}$, $\widetilde{\alpha}_{ij}'$ of the curves defined in Remark~\ref{rem:selfalphap7}. Then we have 
$$\widetilde{\alpha}_{kij}^2|_{\mathcal{E}_k}=\mathcal{F}_{ij}^2\cdot \mathcal{E}_k=-1, \quad \widetilde{\alpha}_{ij}^2|_{\mathcal{E}_{ij}}={\mathcal{F}_{ij}}^2\cdot \mathcal{E}_{ij}=-1, \quad 
\widetilde{\alpha}_{ij}'^2|_{\mathcal{E}_{ij}'}=\mathcal{F}_{\delta}^2\cdot\mathcal{E}_{ij}=-1,$$
$$\widetilde{\alpha}_{kij}^2|_{\mathcal{F}_{ij}}=\mathcal{E}_k^2\cdot \mathcal{F}_{ij}=-2,\quad
\widetilde{\alpha}_{ij}^2|_{\mathcal{F}_{ij}}=\mathcal{E}_{ij}^2\cdot \mathcal{F}_{ij}=-1,\quad \widetilde{\alpha}_{ij}^2|_{\mathcal{F}_{\delta}^2}=\mathcal{E}_{ij}^2\cdot \mathcal{F}_{\delta}=-1.$$
Finally we recall that a general line of $\mathbb{P}^3$ does not intersect the edges of $T$ and the curve $\delta$; instead a general plane of $\mathbb{P}^3$ intersects each edge of $T$ at one point and the curve $\delta$ at $3$ points. Hence we have $\mathcal{H}^2\cdot \mathcal{F}_{ij}=\mathcal{H}^2\cdot \mathcal{F}_{\delta}=0$, $\mathcal{F}_{ij}^2\cdot\mathcal{H}=-1$ and $\mathcal{F}_{\delta}^2\cdot\mathcal{H}=-3$.
\end{remark}

\begin{remark}\label{rem:Ek^3p7}
We recall that by construction we have
${bl'''}^*(E_k) = \mathcal{E}_k +\sum_{\substack{t=0\\ t\ne k}}^3 \Gamma_{kt}$
and ${bl'''}^*(E_{ij}) = \mathcal{E}_{ij} + \Lambda_{ij}$, 
and, by abuse of notation, we denote $\mathcal{E}_{k}\cap \Gamma_{ki}$ and $\mathcal{E}_{ij}\cap \Lambda_{ij}$ by $\widetilde{\gamma}_{ki}$ and $\widetilde{\lambda}_{ij}$, for distinct $i,j,k\in\{1,2,3\}$ with $i<j$. 
Let us denote by $\mathcal{L}_{ki}$ and $\mathcal{L}_{ij}$ respectively the strict transform on $Y$ of a general line of $E_{k}$ and $E_{ij}$.   
By using similar arguments to the ones in Remark~\ref{rem:Ek^3p9} we obtain that $\mathcal{E}_{k}^3=4$ and $\mathcal{E}_{ij}^3=2$, since we have
$$\mathcal{E}_k|_{\mathcal{E}_k} \sim - ( \mathcal{L}_k + \sum_{\substack{i=0 \\ i\ne k}}^3\widetilde{\gamma}_{ki} ) \sim -(4\mathcal{L}_k-2\sum_{0\le i<j\le 3}\widetilde{\alpha}_{ij}),$$
$$\mathcal{E}_{ij}|_{\mathcal{E}_{ij}} \sim - ( \mathcal{L}_{ij} + \widetilde{\lambda}_{ij}) \sim -(2\mathcal{L}_{ij}-\widetilde{\alpha}_{ij}-\widetilde{\alpha}_{ij}').$$
\end{remark}

\begin{remark}\label{rem:Fij^3p7}
By using similar arguments to the ones in Remark~\ref{rem:Fij^3p13} we have $\mathcal{F}_{ij}^3=-\deg (\mathcal{N}_{\widetilde{l}_{ij}|Y'})=4$, for $0\le i<j\le3$, and $\mathcal{F}_{\delta}^3=-\deg (\mathcal{N}_{\widetilde{\delta}|Y'})=0$ (see Remark~\ref{rem:degNormaldelta}).
\end{remark}

Let $\widetilde{X}$ be the strict transform on $Y$ of an 
element of $\mathcal{X}''$: then
$$\widetilde{X} \sim 6\mathcal{H}-\sum_{k=0}^3 3\mathcal{E}_k-2\sum_{0\le i<j \le 3}\mathcal{E}_{ij}-2\sum_{0\le i<j \le 3}\mathcal{F}_{ij}-\mathcal{F}_{\delta}-\sum_{ \substack{ i,k=0 \\ i\ne k } }^3  4\Gamma_{ki}-\sum_{0\le i<j \le 3}3\Lambda_{ij}.$$
Let us take the linear system $\widetilde{\mathcal{X}}:=|\mathcal{O}_{Y}(\widetilde{X})|$ on $Y$. It is base point free and it defines a birational morphism $\nu_{\widetilde{\mathcal{X}}}: Y \to W_F^{7} \subset \mathbb{P}^{7}$. 
Furthermore, we have the following diagram:

$$\begin{tikzcd}
Y \arrow[d, "bl'''"] \arrow[drrr, "\nu_{\widetilde{\mathcal{X}}}"] & & & \\
Y''  \arrow{r}{bl''} & Y' \arrow{r}{bl'} & \mathbb{P}^3 \arrow[dashrightarrow]{r}{\nu_{\mathcal{X}}} & W_F^{7} \subset \mathbb{P}^{7}.
\end{tikzcd}$$

\begin{remark}\label{rem:contractionInpoints7}
The divisors $\mathcal{E}_i$ and the strict transforms $\widetilde{f}_{i}$ on $Y$ of the faces of $T$ are contracted by $\nu_{\widetilde{\mathcal{X}}} : Y \to W_F^{7} \subset \mathbb{P}^{7}$ to points of $W_F^{7}$, for $0\le i \le 3$, since $\widetilde{X}\cdot \mathcal{E}_i=\widetilde{X}\cdot \widetilde{f}_i=0$ for a general $\widetilde{X}\in\widetilde{\mathcal{X}}$.
\end{remark}

\begin{remark}\label{rem:nu7blowdown}
The 18 exceptional divisors of $bl''': Y \to Y''$ and the six divisors $\mathcal{E}_{ij}$, for $0\le i,j\le 3$, are contracted by the morphism $\nu_{\widetilde{\mathcal{X}}} : Y \to W_{F}^{7} \subset \mathbb{P}^{7}$ to curves of $W_{F}^{7}$.
This follows by the fact that $\widetilde{X}\cdot \Gamma_{ki}$, $\widetilde{X}\cdot \Lambda_{ij}$, $\widetilde{X}\cdot \mathcal{E}_{ij} \ne 0$ and
$\widetilde{X}^2\cdot \Gamma_{ki} = \widetilde{X}^2\cdot \Lambda_{ij} = \widetilde{X}^2\cdot \mathcal{E}_{ij} = 0$ for distinct $i,j,k\in\{0,1,2,3\}$ and $i< j$ (use Remarks~\ref{rem:Gamma^3p7},~\ref{rem:intersectionYp7}).
\end{remark}

\begin{remark}\label{rem:noContractionSpigoli7}
Let us fix $0\le i<j\le 3$ and let $\widetilde{X}$ be a general element of $\widetilde{\mathcal{X}}$. Since 
$\widetilde{X}^2\cdot \mathcal{F}_{ij}=3>0$ and $\widetilde{X}^2\cdot \mathcal{F}_{\delta}=6>0$ (use Remarks~\ref{rem:intersectionYp7},~\ref{rem:Fij^3p7}), then the curves $\widetilde{X}\cap \mathcal{F}_{ij}$ and $\widetilde{X}\cap \mathcal{F}_{\delta}$ are not contracted by the rational map defined by $\widetilde{\mathcal{X}}|_{\widetilde{X}}$. 
\end{remark}

We still define $P_{i+1} : = \nu_{\widetilde{\mathcal{X}}}(\mathcal{E}_i)$ and $P_{i+1}' : = \nu_{\widetilde{\mathcal{X}}}(\widetilde{f}_{i})$, as in \S~\ref{subsec:Fano13}. They are quadruple points of $W_F^{7}$ whose tangent cone is a cone over a Veronese surface. The proof is similar to the one of Proposition~\ref{prop:quadruplePointsp13}.
We recall that $\nu_{\mathcal{X}} : \mathbb{P}^{3} \dashrightarrow  W_F^7 \subset \mathbb{P}^{7}$ is an isomorphism outside $T\cup \pi$ (see Remark~\ref{rem:veryample7}). Then $P_1$, $P_2$, $P_3$, $P_4$, $P_1'$, $P_2'$, $P_3'$ and $P_4'$ are the only singular points of $W_F^{7}$ (see Remarks~\ref{rem:contractionInpoints7},~\ref{rem:nu7blowdown},~\ref{rem:noContractionSpigoli7}).

\begin{lemma}\label{lem:6linesp7}
The six divisors $\mathcal{E}_{ij}$, with $0\le i<j\le 3$, are mapped by the morphism $\nu_{\widetilde{\mathcal{X}}} : Y \to W_F^{7} \subset \mathbb{P}^{7}$ to lines of $W_F^{7}$. In particular we have $\nu_{\widetilde{\mathcal{X}}}(\mathcal{E}_{ij}) = \left\langle P_{i+1}', P_{j+1}' \right\rangle.$
\end{lemma}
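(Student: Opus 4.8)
The plan is to adapt the argument of Theorem~\ref{thm:associatedo13}: I will first show that $\nu_{\widetilde{\mathcal{X}}}$ contracts each $\mathcal{E}_{ij}$ through a pencil, so that its image is a line, and then pin the line down by exhibiting two of the quadruple points lying on it.

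First I would restrict a general $\widetilde{X}\in\widetilde{\mathcal{X}}$ to $\mathcal{E}_{ij}$. Here $\mathcal{E}_{ij}$ is the blow-up of $E_{ij}\cong\mathbb{P}^2$ at the two points $E_{ij}\cap\widetilde{l}_{ij}$ and $E_{ij}\cap\widetilde{\delta}$, with exceptional curves $\widetilde{\alpha}_{ij}$ and $\widetilde{\alpha}_{ij}'$, and by Remark~\ref{rem:Ek^3p7} one has $\widetilde{\lambda}_{ij}\sim\mathcal{L}_{ij}-\widetilde{\alpha}_{ij}-\widetilde{\alpha}_{ij}'$. Using the self-intersections and incidences recorded in Remarks~\ref{rem:Ek^3p7},~\ref{rem:intersectionYp7} (only $\mathcal{E}_{ij}$, $\mathcal{F}_{ij}$, $\mathcal{F}_{\delta}$ and $\Lambda_{ij}$ meet $\mathcal{E}_{ij}$), a direct computation gives, after cancellation, $\widetilde{X}|_{\mathcal{E}_{ij}}\sim\mathcal{L}_{ij}-\widetilde{\alpha}_{ij}$. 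This is exactly the class of the strict transform of the variable line $\beta_{ij,X}$ of Remark~\ref{rem:betap7}, which sweeps out the pencil of lines of $E_{ij}\cong\mathbb{P}^2$ through the point blown up by $\widetilde{\alpha}_{ij}$. Since $(\mathcal{L}_{ij}-\widetilde{\alpha}_{ij})^2=0$ and this class moves in a pencil, $\widetilde{\mathcal{X}}|_{\mathcal{E}_{ij}}\cong\mathbb{P}^1$; as Remark~\ref{rem:nu7blowdown} already shows that $\mathcal{E}_{ij}$ is contracted to a curve and not to a point, the image $\nu_{\widetilde{\mathcal{X}}}(\mathcal{E}_{ij})$ spans a $\mathbb{P}^1\subset\mathbb{P}^7$ and is therefore a line.

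To identify this line, I would use that $p_{ij}\in l_{ij}\subset f_i\cap f_j$. Hence the strict transform $\widetilde{f}_i$ meets $E_{ij}$ along the line $\widetilde{f}_i\cap E_{ij}$, which passes through the point blown up by $\widetilde{\alpha}_{ij}$ (the direction of $l_{ij}$) and is therefore a member of the above pencil, of class $\mathcal{L}_{ij}-\widetilde{\alpha}_{ij}$. A short computation shows this line is disjoint from the centre $\widetilde{\lambda}_{ij}$ of $bl'''$ — two distinct lines of $E_{ij}$ through the same point become disjoint once that point is blown up, giving intersection number $(\mathcal{L}_{ij}-\widetilde{\alpha}_{ij})\cdot(\mathcal{L}_{ij}-\widetilde{\alpha}_{ij}-\widetilde{\alpha}_{ij}')=0$ — so the intersection survives on $Y$ and $\widetilde{f}_i\cap\mathcal{E}_{ij}\ne\emptyset$; the same argument applies to $\widetilde{f}_j$. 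Consequently the line $\nu_{\widetilde{\mathcal{X}}}(\mathcal{E}_{ij})$ contains both $P_{i+1}'=\nu_{\widetilde{\mathcal{X}}}(\widetilde{f}_i)$ and $P_{j+1}'=\nu_{\widetilde{\mathcal{X}}}(\widetilde{f}_j)$, which are two distinct singular points of $W_F^7$, and hence coincides with $\langle P_{i+1}',P_{j+1}'\rangle$.

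The step I expect to be the main obstacle is the restriction computation: carrying the class $\widetilde{X}|_{\mathcal{E}_{ij}}$ correctly through the three blow-ups and verifying that it collapses to $\mathcal{L}_{ij}-\widetilde{\alpha}_{ij}$, i.e.\ that the restricted system is genuinely a pencil and not larger. By contrast, the incidence argument in the second step is the same bookkeeping used in Theorems~\ref{thm:associatedo13},~\ref{thm:associatedo9}, the only delicate point being to confirm that $\widetilde{f}_i$ and $\widetilde{f}_j$ still meet $\mathcal{E}_{ij}$ after the final blow-up $bl'''$.
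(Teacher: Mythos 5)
Your proposal is correct and follows essentially the same route as the paper: the paper likewise invokes Remark~\ref{rem:nu7blowdown} for contraction to a curve, identifies $\widetilde{\mathcal{X}}|_{\mathcal{E}_{ij}}\cong|\mathcal{O}_{\mathcal{E}_{ij}}(\widetilde{\beta}_{ij,\widetilde{X}})|\cong\mathbb{P}^1$ via Remark~\ref{rem:betap7} to conclude the image is a line, and then uses $\mathcal{E}_{ij}\cap\widetilde{f}_i\ne\emptyset$, $\mathcal{E}_{ij}\cap\widetilde{f}_j\ne\emptyset$ to pin the line down as $\langle P_{i+1}',P_{j+1}'\rangle$. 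You merely spell out the restriction computation and the survival of the incidences under $bl'''$ more explicitly than the paper does.
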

\begin{proof}
We know that the above $6$ divisors are mapped by $\nu_{\widetilde{\mathcal{X}}} : Y \to W_F^{7}\subset \mathbb{P}^{7}$ to curves (see Remark~\ref{rem:nu7blowdown}). Let us show that these curves are lines. Let $\widetilde{X}$ be a general element of $\widetilde{\mathcal{X}}$ and let us consider the divisor $\mathcal{E}_{ij}$ for a fixed pair of indices $0\le i<j\le 3$. We observe that $\widetilde{\mathcal{X}}|_{\mathcal{E}_{ij}}\cong |\mathcal{O}_{\mathcal{E}_{ij}}(\widetilde{\beta}_{ij,\widetilde{X}})|\cong \mathbb{P}^1$ (see Remark~\ref{rem:betap7}), so $\nu_{\widetilde{\mathcal{X}}}(\mathcal{E}_{ij})\subset W_F^{7}$ is a line. Since $\mathcal{E}_{ij}\cap \widetilde{f}_i \ne \emptyset$ and $\mathcal{E}_{ij}\cap \widetilde{f}_j\ne \emptyset$, then $\nu_{\widetilde{\mathcal{X}}}(\mathcal{E}_{ij})$ is the line joining the points $P_{i+1}'$ and $P_{j+1}'$.
\end{proof} 

\begin{theorem}\label{thm:associatedo7}
Each of the eight points $P_1$, $P_2$, $P_3$, $P_4$, $P_1'$, $P_2'$, $P_3'$, $P_4'$ is associated with $m=6$ of the others.
\end{theorem}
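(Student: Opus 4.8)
The plan is to exhibit, for each of the eight points, six lines of $W_F^7$ joining it to six of the remaining singular points, organised into three families according to the type of the endpoints. Throughout I use that $\nu_{\widetilde{\mathcal{X}}}(\mathcal{E}_i)=P_{i+1}$ and $\nu_{\widetilde{\mathcal{X}}}(\widetilde{f}_i)=P_{i+1}'$, so that the $P_{i+1}$ correspond to the vertices $v_i$ of $T$ and the $P_{i+1}'$ to the faces $f_i$.

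First I would treat the vertex--face incidences exactly as in Theorem~\ref{thm:associatedo13}. For distinct $k,i$ the exceptional divisor $\Gamma_{ki}$ is a copy of $\mathbb{F}_0$ (Remark~\ref{rem:Gamma^3p7}) on which a general $\widetilde{X}\in\widetilde{\mathcal{X}}$ cuts one ruling, so $\widetilde{\mathcal{X}}|_{\Gamma_{ki}}\cong\mathbb{P}^1$ and $\nu_{\widetilde{\mathcal{X}}}(\Gamma_{ki})$ is a line; since $\Gamma_{ki}\cap\mathcal{E}_k\neq\emptyset$ and $\Gamma_{ki}\cap\widetilde{f}_i\neq\emptyset$, this line is $\langle P_{k+1},P_{i+1}'\rangle$. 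This produces the $12$ lines joining each vertex $v_k$ to the three faces $f_i$ through it ($i\neq k$), and dually each face to its three incident vertices. Next, Lemma~\ref{lem:6linesp7} already supplies the $6$ lines $\nu_{\widetilde{\mathcal{X}}}(\mathcal{E}_{ij})=\langle P_{i+1}',P_{j+1}'\rangle$ joining the faces pairwise. At this stage every face $P_{i+1}'$ is joined to the three other faces and to the three vertices $v_k$ with $k\neq i$, hence to six points, while every vertex $P_{k+1}$ is so far joined only to the three faces through it.

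It remains to join the vertices pairwise, and this is the heart of the matter, as well as the point where the genus-$7$ case departs from the genus-$9$ one. The unique line of $\mathbb{P}^3$ through two vertices $v_k,v_h$ is the edge $l_{ij}$ (with $\{i,j,k,h\}=\{0,1,2,3\}$), which is a double curve of the sextics and therefore lies in the blown-up base locus, so its strict transform is not available on $Y$ as it was for $l_{vv'}$ in Theorem~\ref{thm:associatedo9}. Instead I would work on the exceptional scroll $\mathcal{F}_{ij}$ over $\widetilde{l}_{ij}$: its two fibres over $\widetilde{l}_{ij}\cap E_k$ and $\widetilde{l}_{ij}\cap E_h$ lie on $\mathcal{E}_k$ and $\mathcal{E}_h$, hence are contracted by $\nu_{\widetilde{\mathcal{X}}}$ to $P_{k+1}$ and $P_{h+1}$ (Remarks~\ref{rem:intersectionYp7},~\ref{rem:contractionInpoints7}); consequently the image of \emph{any} section of $\mathcal{F}_{ij}\to\widetilde{l}_{ij}$ passes through both $P_{k+1}$ and $P_{h+1}$. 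The plan is to select the section $\sigma_{ij}$ through the fibre over $\widetilde{l}_{ij}\cap E_{ij}$ and to verify, using Remarks~\ref{rem:intersectionYp7},~\ref{rem:Fij^3p7}, that its image is a line, i.e. that $\widetilde{X}\cdot\sigma_{ij}=1$, so that $\nu_{\widetilde{\mathcal{X}}}(\sigma_{ij})=\langle P_{k+1},P_{h+1}\rangle\subset W_F^7$. Equivalently, one may argue through Remark~\ref{rem:WF7projWF13}: since $\delta$ meets every edge $l_{ij}$ at $p_{ij}$, the image of $l_{ij}$ in $W_F^{13}$ is a conic scroll meeting the centre $\langle\nu_{\mathcal{S}}(\delta)\rangle$ of the projection, and the conic section of this scroll through $Q_k:=\nu_{\mathcal{S}}(v_k)$ and $Q_h$ that meets the centre once drops, after projecting, to the line $\langle P_{k+1},P_{h+1}\rangle$.

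These six vertex--vertex lines give each vertex $P_{k+1}$ three further incidences, so that it too is joined to six of the other points; the only pair not produced is $\{P_{k+1},P_{k+1}'\}$, a vertex together with its opposite face, which is consistent with the value being $m=6$ rather than $7$. The main obstacle is precisely the vertex--vertex step: one must check that the chosen section $\sigma_{ij}$ (equivalently, the projected conic) genuinely has image of degree $1$ while meeting both $\mathcal{E}_k$ and $\mathcal{E}_h$. This is where the interaction of the edge blow-up $\mathcal{F}_{ij}$ with the point $p_{ij}\in\delta$ — the feature absent from the genus-$13$ threefold, whose vertices are not pairwise associated — must be exploited, and it is the only computation that is not a direct transcription of Theorems~\ref{thm:associatedo13} and~\ref{thm:associatedo9}.
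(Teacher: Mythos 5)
Your first two families of lines coincide with the paper's: the twelve lines $\nu_{\widetilde{\mathcal{X}}}(\Gamma_{ki})=\left\langle P_{k+1},P_{i+1}'\right\rangle$ obtained exactly as in Theorem~\ref{thm:associatedo13}, and the six lines $\nu_{\widetilde{\mathcal{X}}}(\mathcal{E}_{ij})=\left\langle P_{i+1}',P_{j+1}'\right\rangle$ from Lemma~\ref{lem:6linesp7}. The divergence is in the remaining step, and there your argument has a genuine gap. For the six vertex--vertex lines you only announce a plan: you name a candidate section $\sigma_{ij}$ of $\mathcal{F}_{ij}\to\widetilde{l}_{ij}$ and say one must ``verify that $\widetilde{X}\cdot\sigma_{ij}=1$,'' but you never carry out the verification, and you yourself flag it as the main obstacle. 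The computation is not automatic: a minimal section of $F_{ij}\cong\mathbb{F}_0$ through a general point satisfies $X''\cdot\sigma=2$, so its image is a conic through $P_{k+1}$ and $P_{h+1}$; the degree drops to $1$ only for the strict transform of the particular section through the point $F_{ij}\cap\widetilde{\lambda}_{ij}$ (the trace of the blown-up base curve over $p_{ij}$), and one must still check that this section avoids the four points $F_{ij}\cap\widetilde{\gamma}_{ki}$, $F_{ij}\cap\widetilde{\gamma}_{kj}$, $F_{ij}\cap\widetilde{\gamma}_{hi}$, $F_{ij}\cap\widetilde{\gamma}_{hj}$ and that its strict transform still meets $\mathcal{E}_k$ and $\mathcal{E}_h$ rather than only the $\Gamma$'s. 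None of this appears in your text. The paper does not attempt this geometric route at all: it establishes $\left\langle P_a,P_b\right\rangle\subset W_F^7$ for all vertex pairs by a direct Macaulay2 computation (Code~\ref{code:fano7}). Your approach, if completed, would be a nice conceptual complement to the paper's computational one — it explains why projecting $W_F^{13}$ from the span of $\nu_{\mathcal{S}}(\delta)$ creates precisely these new lines — but as written it is a sketch, not a proof.

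There is a second omission: the theorem asserts association with $m=6$ of the other points, and the paper's proof accordingly also verifies that the four lines $\left\langle P_{k+1},P_{k+1}'\right\rangle$ (a vertex together with its opposite face) are \emph{not} contained in $W_F^7$, again by Macaulay2. You only observe that your construction does not produce these four lines, which proves nothing: a line can lie on $W_F^7$ without arising as the image of one of the divisors you listed, so ``not produced'' does not imply ``not associated.'' Some argument for this non-containment is indispensable to get exactly six rather than at least six, and it is entirely missing from your proposal.
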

\begin{proof}
The $12$ divisors $\Gamma_{ki}$, for $i,k\in\{0,1,2,3\}$ and $i\ne k$, are mapped by $\nu_{\widetilde{\mathcal{X}}} : Y \to W_F^{7}\subset \mathbb{P}^{7}$ to lines of $W_F^{7}$ joining the points $P_1$, $P_2$, $P_3$, $P_4$, $P_1'$, $P_2'$, $P_3'$, $P_4'$ as in the proof of Theorem~\ref{thm:associatedo13}.  We also recall that the six lines $\nu_{\widetilde{\mathcal{X}}}(\mathcal{E}_{ij})$ joins the points $P_1'$, $P_2'$, $P_3'$, $P_4'$ two by two (see Lemma~\ref{lem:6linesp7}). It remains to show that
$\left\langle P_1, P_2 \right\rangle$, $\left\langle P_1, P_3 \right\rangle$, $\left\langle P_1, P_4 \right\rangle$, $\left\langle P_2, P_3 \right\rangle$, $\left\langle P_2, P_4 \right\rangle$, $\left\langle P_3, P_4 \right\rangle \subset W_F^7$ and that
$\left\langle P_1, P_1' \right\rangle$, $\left\langle P_2, P_2' \right\rangle$, $\left\langle P_3, P_3' \right\rangle$, $\left\langle P_4, P_4' \right\rangle, \not\subset W_F^7$. This follows by a computational analysis with Macaulay2 (see Code~\ref{code:fano7} of Appendix~\ref{app:code}).
\end{proof}

\section{The Enriques-Fano threefold of genus 6}\label{subsec:Fano6}


Let us consider five general points $q_1$, $q_2$, $q_3$, $q_4$, $q_5$ in $\mathbb{P}^{3}$. We have the following result.

\begin{theorem}\label{thm:C1C2C3}
There are three twisted cubics $C_1$, $C_2$ and $C_3$, three quadric surfaces $Q_6$, $Q_7$ and $Q_8$ of $\mathbb{P}^3$ and three lines $r_1$, $r_2$, and $r_3$ such that $Q_6$ and $Q_7$ are smooth and
$$C_1\cap C_2\cap C_3 = \{q_1,\dots q_5\},\,\, Q_6\cap Q_7 = C_1\cup r_1,\,\, Q_6\cap Q_8 = C_2\cup r_2,\,\, Q_7\cap Q_8 = C_3\cup r_3,$$
where $r_i$ intersects $C_i$ at two
points $a_i'$ and $a_i''$, for $1\le i \le 3$. Furthermore, by taking three distinct indices $i,j,k\in\{1,2,3\}$ with $i<j$, we have the following three possibilities:
\begin{itemize}
\item[(i)] $Q_8$ is smooth and the three lines $r_1$, $r_2$ and $r_3$ intersect pairwise at three distinct points $b_{ij}:= r_i\cap r_j$ such that $b_{ij} = r_i\cap C_k = r_j\cap C_k$;
\item[(ii)] $Q_8$ is smooth and the three lines $r_1$, $r_2$, $r_3$ intersect at a same point $b$; moreover, up to renaming the points of $r_k\cap C_k$ and of $C_1\cap C_2\cap C_3$, we have that $r_k\cap C_i = r_k\cap C_j = a_k'' = q_k$;
\item[(iii)] $Q_8$ is a cone and the three lines $r_1$, $r_2$, $r_3$ intersect at the vertex $v$ of $Q_8$; moreover, up to renaming the points of $r_k\cap C_k$ and of $C_1\cap C_2\cap C_3$, we have that $v=q_1=a_k'' = r_k\cap C_i = r_k \cap C_j$.
\end{itemize}
\end{theorem}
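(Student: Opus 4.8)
The plan is to realize the three twisted cubics as members of the family of twisted cubics through $q_1,\dots,q_5$ and to obtain $Q_6,Q_7,Q_8$ as the unique quadrics containing each pair of them; the lines $r_i$ will then appear automatically as residual intersections. First I would record two counts. A twisted cubic moves in a $12$-dimensional family and passing through a general point is $2$ conditions, so the twisted cubics through $q_1,\dots,q_5$ form a $2$-dimensional family $\mathcal{T}$. On the other hand $h^0(\mathbb{P}^3,\mathcal{I}_{\{q_1,\dots,q_5\}}(2))=10-5=5$, while any twisted cubic $C$ satisfies $h^0(\mathbb{P}^3,\mathcal{I}_{C}(2))=3$. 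Hence for $C,C'\in\mathcal{T}$ the two three-dimensional spaces $H^0(\mathcal{I}_C(2))$ and $H^0(\mathcal{I}_{C'}(2))$ sit inside the five-dimensional space $H^0(\mathcal{I}_{\{q_1,\dots,q_5\}}(2))$ and must meet in dimension at least $3+3-5=1$; for general $C,C'$ this intersection is exactly one-dimensional, that is, $C\cup C'$ lies on a \emph{unique} quadric (a pencil would force a degree-$4$ complete intersection to contain the degree-$6$ curve $C\cup C'$). So I would take $C_1,C_2,C_3\in\mathcal{T}$ general and let $Q_6,Q_7,Q_8$ be the unique quadrics through $C_1\cup C_2$, $C_1\cup C_3$, $C_2\cup C_3$.

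Next I would extract the lines and establish smoothness. Since $Q_6\cap Q_7$ has degree $4$ and contains $C_1$ of degree $3$, the residual $r_1$ is a line, and likewise for $r_2,r_3$; after relabeling I may assume the possibly singular quadric is $Q_8$, so that $Q_6,Q_7$ are smooth (the cones form a proper closed subset of the relevant family). On a smooth quadric $Q_6\cong\mathbb{P}^1\times\mathbb{P}^1$ the cubics $C_1,C_2$ are smooth rational curves of bidegree summing to $3$, hence of type $(1,2)$ or $(2,1)$; as they share the five points one gets $C_1\cdot C_2=5$, which forces opposite types, say $C_1\sim(1,2)$ and $C_2\sim(2,1)$. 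Then the residuals in the $(2,2)$-intersections are $r_1\sim(1,0)$ and $r_2\sim(0,1)$, so $r_1\cdot C_1=(1,0)\cdot(1,2)=2$ produces the two points $a_1',a_1''=r_1\cap C_1$, and the same computation on the other quadrics yields $a_i',a_i''$ for every $i$. Moreover $r_1\cdot r_2=(1,0)\cdot(0,1)=1$ on $Q_6$, so each pair of residual lines meets in exactly one point.

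The trichotomy then comes from the position of the three pairwise-meeting lines together with the nature of $Q_8$. When $Q_8$ is smooth, three lines meeting pairwise are either coplanar, forming a triangle (case (i)), or concurrent (case (ii)). In case (i) I would prove the incidence $b_{ij}=r_i\cap C_k=r_j\cap C_k$ by noting that $b_{ij}=r_i\cap r_j$ lies on $Q_7\cap Q_8=C_3\cup r_3$ (for $\{i,j,k\}=\{1,2,3\}$), ruling out $b_{ij}\in r_3$ since that would force concurrency, whence $b_{ij}\in C_k$; the bidegree count $r_i\cdot C_k=1$ then identifies this point uniquely. Case (iii) is the degeneration in which the quadric through $C_2\cup C_3$ acquires a node: one redoes the intersection theory on the quadric cone $Q_8$, where the residual lines become rulings through the vertex $v$, forcing concurrency at $v$; the renaming $a_k''=q_k$ records the collision of a residual contact point with a base point in this degenerate situation, and similarly for the common point $b$ in case (ii).

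The main obstacle is the careful bookkeeping in the two degenerate cases (ii) and (iii): proving that concurrency of $r_1,r_2,r_3$ really forces the coincidences $r_k\cap C_i=r_k\cap C_j=a_k''=q_k$ (respectively the identification with the vertex of the cone), and that $Q_8$ passes through $v$ with the claimed incidences. Verifying that $Q_6,Q_7$ remain smooth while only $Q_8$ may degenerate, and that exactly these three configurations occur, is most cleanly handled by a dimension and degeneration argument inside $\mathcal{T}\times\mathcal{T}\times\mathcal{T}$, supplemented if necessary by the explicit Macaulay2 verification used elsewhere in the paper. By contrast, the existence statement and the generic case (i) follow directly from the two cohomological dimension counts and the bidegree computations on $\mathbb{P}^1\times\mathbb{P}^1$ above.
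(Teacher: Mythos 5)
Your construction of the configuration and your treatment of the generic case are sound, and in places cleaner than the paper's: you get $Q_{i+j+3}$ directly as the unique quadric through $C_i\cup C_j$ by intersecting the two $3$-dimensional subspaces $H^0(\mathcal{I}_{C_i}(2))$, $H^0(\mathcal{I}_{C_j}(2))$ inside the $5$-dimensional $H^0(\mathcal{I}_{\{q_1,\dots,q_5\}}(2))$ (the paper instead fixes $C_1$, picks two general smooth quadrics through it, and defines $C_2,C_3$ as the residual $(2,1)$-cubics). Your bidegree computations on $\mathbb{P}^1\times\mathbb{P}^1$ giving $r_i\cdot C_i=2$, $r_i\cdot r_j=1$, and your identification of $b_{ij}$ with $r_i\cap C_k=r_j\cap C_k$ in case (i) via $b_{ij}\in Q_{j+k+3}\cap Q_{i+k+3}=C_k\cup r_k$, are correct and match the paper's Lemma on the triangle case in substance.

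However, there is a genuine gap exactly where you flag ``the main obstacle'': cases (ii) and (iii), and the exclusion of the fourth combination ($Q_8$ a cone with the three lines forming a triangle), are asserted but not proved, and the ``dimension and degeneration argument inside $\mathcal{T}\times\mathcal{T}\times\mathcal{T}$'' you gesture at is not carried out. The paper closes these gaps with one small but essential lemma that is absent from your proposal: \emph{a twisted cubic contained in a quadric cone must pass through the vertex} (proved by projecting from the vertex, since otherwise a plane cubic would sit inside a conic). This is what forces, when $Q_8$ is a cone with vertex $v$, that $C_2,C_3,r_2,r_3$ all pass through $v$, hence $v\in C_2\cap C_3=\{q_1,\dots,q_5\}$ and $v\in r_k\cap C_k$, which yields all the coincidences of case (iii); combined with the two computations $r_1\cap Q_8=r_1\cap(C_2\cup r_2)=\{a_{12},b_{12}\}$ and $r_1\cap Q_8=r_1\cap(C_3\cup r_3)=\{a_{13},b_{13}\}$ it also produces the contradiction that rules out ``cone $+$ triangle.'' Likewise in case (ii) the identity $r_k\cap Q_{i+j+3}=\{a_{ki},b\}=\{a_{kj},b\}$ forces $a_{ki}=a_{kj}\in C_i\cap C_j=\{q_1,\dots,q_5\}\subset C_k$, hence this point lies in $r_k\cap C_k=\{a_k',a_k''\}$, giving $r_k\cap C_i=r_k\cap C_j=a_k''=q_k$. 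Your remark that on a cone ``the residual lines become rulings through $v$, forcing concurrency'' only accounts for $r_2\cap r_3=v$; it does not show that $r_1$ (which does not lie on $Q_8$) passes through $v$, which is precisely what the double computation of $r_1\cap Q_8$ above supplies. Without these arguments the trichotomy and its stated incidences remain unproved.
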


\begin{proof}
The proof is divided into 6 steps, given by the Lemmas~\ref{lem:C1C2C3}, ~\ref{lem:riCHORD}, ~\ref{lem:twistedCubicInCone}, ~\ref{lem:Q8notcone}, ~\ref{lem:Q8smoothb}, ~\ref{lem:Q8coneb} below.

\begin{lemma}\label{lem:C1C2C3}
There are three twisted cubics $C_1$, $C_2$ and $C_3$ passing through the five general points $q_1,\dots ,q_5$ and there are three quadric surfaces $Q_6$, $Q_7$ and $Q_8$ and three lines $r_1$, $r_2$, and $r_3$ such that $Q_6$ and $Q_7$ are smooth and $Q_6\cap Q_7 = C_1\cup r_1$, $Q_6\cap Q_8 = C_2\cup r_2$, $Q_7\cap Q_8 = C_3\cup r_3.$ 
\end{lemma}
\begin{proof}
Let us consider the two-dimensional family $\mathcal{C}$
of the twisted cubics passing through $q_1,\dots , q_5$.
Let us take a general twisted cubic $C_1\in \mathcal{C}$ and two smooth quadric surfaces $Q_6$, $Q_7$ containing $C_1$, i.e. two general elements $Q_6,Q_7 \in |\mathcal{I}_{C_1 | \mathbb{P}^3}(2)|\cong \mathbb{P}^2$. It is known that there exists a line $r_1$ such that
$Q_6 \cap Q_7 = C_1 \cup r_1$ (see \cite[Example 1.11]{Harris}).
Since $Q_6$ is a smooth quadric surface in $\mathbb{P}^3$, then 
a quadric section of $Q_6\subset \mathbb{P}^3$ is linearly equivalent to $2f_1+2f_2$, where $f_1$ and $f_2$ represent the two rulings of $Q_6$ and satisfy the relations $f_1^2 = 0=f_2^2$ and $f_1\cdot f_2=1$.
In particular, since $Q_6\cap Q_7 \sim 2f_1+2f_2$, we can suppose that $r_1\sim f_1$ and $C_1 \sim f_1+2f_2.$
We also have that $|\mathcal{O}_{Q_6}(f_1+2f_2)|\cong \mathbb{P}^5$.
By the generality of $q_1, \dots , q_5$ we may assume that $C_1$ is the unique twisted cubic in $|\mathcal{O}_{Q_6}(f_1+2f_2)|$ through $q_1, \dots , q_5$. 
Similarly let us take the unique twisted cubic $C_2$ through $q_1, \dots , q_5$ in $|\mathcal{O}_{Q_6}(2f_1+f_2)|$. Therefore,
each smooth quadric surface passing through $q_1, \dots , q_5$ contains exactly two twisted cubics passing through them: let $C_3$ be the other twisted cubic in $Q_7$ through $q_1, \dots , q_5$.
Let us define $\Lambda_{C_i} := |\mathcal{I}_{C_i | \mathbb{P}^3}(2)| \cong \mathbb{P}^2$ for $i=1,2,3$. Since $\Lambda_{C_i} \subset |\mathcal{I}_{\{q_1,\dots , q_5\} | \mathbb{P}^3}(2)|\cong \mathbb{P}^4$ for $i=1,2,3$, then $\dim \Lambda_{C_2} \cap \Lambda_{C_3} \ge 0$. So there exists a quadric surface $Q_8\in \Lambda_{C_2} \cap \Lambda_{C_3}$ such that $C_2 \subset Q_6\cap Q_8$ and $C_3 \subset Q_7 \cap Q_8$, and there are two lines $r_2$ and $r_3$ such $Q_6\cap Q_8=C_2\cup r_2$ and $Q_7 \cap Q_8 = C_3\cup r_3$.
\end{proof}

Let us fix now three twisted cubics, three lines and three quadric surfaces as in Lemma~\ref{lem:C1C2C3}. It must be $r_1\not\subset Q_8$, $r_2\not\subset Q_7$ and $r_3\not\subset Q_6$.
Let us take $i,j,k\in\{1,2,3\}$ with $k\ne i$ and $i<j$. By construction we have $r_i\cdot C_i = 2$, so we let $\{a_i', a_i''\}:= r_i\cap C_i$. 
Furthemore $r_i\cdot C_k = 1$, so we define $a_{ik}:=r_i\cap C_k$.

\begin{lemma}\label{lem:riCHORD}
The line $r_i$ intersects the line $r_j$ for all $1\le i< j\le 3$.
\end{lemma}
\begin{proof}
By construction we have that $r_1\cap r_2 \ne \emptyset$ and $r_1\cap r_3 \ne \emptyset$. Furthermore, it must be $r_2\cap r_3 \ne \emptyset$. Indeed, if $Q_8$ is a cone, then $r_2$ and $r_3$ intersect at the vertex; if $Q_8$ is smooth, then $C_2$ and $C_3$ are not linearly equivalent and $r_2$ and $r_3$ belong to different rulings.
\end{proof}

By Lemma~\ref{lem:riCHORD} we have two possibilities: the three lines $r_1$, $r_2$ and $r_3$ intersect pairwise at $3$ distinct points $\{r_1\cap r_2, r_1\cap r_2, r_2\cap r_3\}$ or they intersect at a same point $r_1\cap r_2 \cap r_3$.

\begin{lemma}\label{lem:twistedCubicInCone}
Let $Q\subset \mathbb{P}^3$ be a quadric cone with vertex $v$. If $C$ is a twisted cubic contained in $Q$, then $v\in C$.
\end{lemma}
\begin{proof}
Let us suppose that $v\not\in C$. Let $H$ be a general plane of $\mathbb{P}^3$ such that $v\not\in H$ and let us take the projection map $\pi_{v} : \mathbb{P}^3 \dashrightarrow H\cong \mathbb{P}^2$ from the point $v$ to the plane $H$. Since $v\not\in C$, then $\pi_{v}(C)$ is a cubic plane curve. Furthermore, $\pi_{v}(C)$ has to be contained in $\pi_{v}(Q)$, which is a conic. So we have a contradiction.
\end{proof}

\begin{lemma}\label{lem:Q8notcone}
Let us suppose that the three lines $r_1$, $r_2$ and $r_3$ intersect pairwise at $3$ distinct points and let us denote them by $b_{ij}:=r_i\cap r_j$ for $1\le i<j\le 3$. Then the quadric surface $Q_8\subset \mathbb{P}^3$ is smooth and we have $b_{ij} = r_i\cap C_k = r_j\cap C_k$ for all $1\le k \le 3$ such that $k\ne i$ and $k\ne j$.
\end{lemma}
\begin{proof}
Let us suppose that $Q_8$ is a cone with vertex $v$. Then $r_2$, $r_3$, $C_2$ and $C_3$ must pass through $v$ (see Lemma~\ref{lem:twistedCubicInCone}). In particular there exists a point in $r_2\cap C_2$, a point in $r_3\cap C_3$ and a point in $\{q_1,\dots , q_5\}$, for example $a_2''$, $a_3''$ and $q_5$, such that $v=a_2''=a_3''=b_{23}=a_{21}=a_{31}=a_{23}=a_{32}=q_{5}$. 
Moreover, we have that $r_1\cap Q_8 = (r_1\cap Q_6)\cap Q_8 = r_1\cap (C_2\cup r_2) = \{a_{12},b_{12}\}$, since $r_1\subset Q_6$. Similarly $r_1\cap Q_8= \{a_{13},b_{13}\}$, since $r_1\subset Q_7$ (see Lemma~\ref{lem:C1C2C3}). Since $b_{12}\neq b_{13}$ by hypothesis, it must be $b_{12}=a_{13}$ and $b_{13}=a_{12}$. This implies $b_{12}=r_{2}\cap C_3 =v$ and $b_{13}=r_{3}\cap C_2 =v$, which is a contradiction. Hence $Q_8$ is a smooth quadric surface of $\mathbb{P}^3$.
Finally we observe that 
$r_1\cap Q_8 = \{b_{13},a_{13}\}=\{ b_{12}, a_{12}\}$, $r_2\cap Q_7 = \{b_{12},a_{21}\}=\{b_{23},a_{23}\}$ and $r_3\cap Q_6 = \{b_{13}, a_{31}\} = \{b_{23},a_{32}\}$. Since $b_{12}$, $b_{13}$, $b_{23}$ are three distinct points by hypothesis, then it must be 
$b_{13}=a_{12}=a_{32}$, $b_{12}=a_{13}=a_{23}$ and $b_{23}=a_{21}=a_{31}$.
\end{proof}

\begin{lemma}\label{lem:Q8smoothb}
Let us suppose that the three lines $r_1$, $r_2$ and $r_3$ intersect at the same point $b$. If $Q_8$ is smooth, then we obtain the assertion (ii) of Theorem~\ref{thm:C1C2C3}.
\end{lemma}
\begin{proof}
Let $i,j,k$ be three distinct indices in $\{1,2,3\}$. Since $r_{k}\subset Q_{k+i+3}$, we have that $r_{k}\cap Q_{i+j+3}= (r_k\cap Q_{k+i+3}) \cap Q_{i+j+3}= r_k\cap (C_i\cup r_i) = \{a_{ki},b\}$ (see Lemma~\ref{lem:C1C2C3}). Similarly we have that $r_{k}\cap Q_{i+j+3} = \{a_{kj},b\}$, since $r_k\subset Q_{k+j+3}$. So it must be $a_{ki}=a_{kj}$. This implies that there are three points in $\{q_1,\dots , q_5\}$, namely $q_1$, $q_2$, $q_3$, and there exists a point in $r_k\cap C_k$, for example $a_k''$, such that $a_{ki}=a_{kj}=a_k''=q_k$.
\end{proof}

\begin{lemma}\label{lem:Q8coneb}
Let us suppose that the three lines $r_1$, $r_2$ and $r_3$ intersect at the same point $b$. If $Q_8$ is a cone, then we obtain the assertion (iii) of Theorem~\ref{thm:C1C2C3}.
\end{lemma}
\begin{proof}
If $Q_8$ is a cone with vertex $v$, then $v=r_2\cap r_3 = r_1\cap r_2\cap r_3=b$. Since $C_2, C_3 \subset Q_8$, then $v \in C_2\cap C_3 = C_1\cap C_2\cap C_3=\{q_1, \dots , q_5\}$ (see Lemma~\ref{lem:twistedCubicInCone}). 
Thus, we have $v=a_{12}=a_{13}=a_{21}=a_{23}=a_{31}=a_{32}$. Furthermore, there exist a point in each $r_1\cap C_1$, $r_2\cap C_2$, $r_3\cap C_3$, for example $a_1''$, $a_2''$, $a_3''$, such that $v=a_{1}''=a_{2}''=a_{3}''$. 
\end{proof} 

\end{proof}

Let us see that by choosing sufficiently general objects, we can exclude the cases (ii) and (iii) of Theorem~\ref{thm:C1C2C3}. 
Let us take the two-dimensional family $\mathcal{C}_{q_1, \dots ,q_5}$ of the twisted cubics of $\mathbb{P}^3$ passing through the fixed points $q_1,\dots , q_5$. For all $C\in \mathcal{C}_{q_1, \dots , q_5}$ we define $\Lambda_C := |\mathcal{I}_{C|\mathbb{P}^3}(2)|\cong \mathbb{P}^2$, which is a plane in $|\mathcal{I}_{\{q_1, \dots , q_5\} | \mathbb{P}^3}(2)|\cong \mathbb{P}^4$. 
We recall that, if we fix a general $C \in \mathcal{C}_{q_1, \dots , q_5}$ and if $Q\in \Lambda_C$ is general, then $Q$ is smooth and contains exactly two twisted cubics $C,C'\in C_{q_1, \dots , q_5}$ (see proof of Lemma~\ref{lem:C1C2C3}). We can consider the map $\varphi_{C} : \Lambda_{C} \to \mathcal{C}_{q_1, \dots , q_5}$ which sends a general $Q\in \Lambda_{C}$ to the other twisted cubic $C'$ in $Q$ passing through $q_1, \dots ,q_5$. This map is well defined and it has fibres of dimension $0$: indeed, by Bezout's Theorem we have that two quadric surfaces of $\mathbb{P}^3$ intersecting along $C\cup C'$ have to coincide, since $C\cup C'$ is a curve of degree $6$. Hence $\varphi_{C}$ is a birational map. In other words, the correspondence $C' \leftrightarrow Q$ is $1:1$ between an open set of $\mathcal{C}_{q_1, \dots ,q_5}$ and an open set of $\Lambda_{C}$.
Let us \textit{fix} now a \textit{general} $C_1\in \mathcal{C}_{q_1, \dots ,q_5}$ and a \textit{general} smooth quadric surface $Q_6\in \Lambda_{C_1}$. Then $C_2:=\varphi_{C_1}(Q_6)\in \mathcal{C}_{q_1, \dots , q_5}$ is fixed too, since it is uniquely determined by $Q_6$. Let us take another \textit{general} $Q_7\in \Lambda_{C_1}$, which is another smooth quadric surface of $\mathbb{P}^3$ containing $C_1$. 
We may assume that $Q_7$ is sufficiently general in order to have that $Q_7$ intersects $Q_6$ along the union of $C_1$ and a line $r_1$ not passing through $q_1, \dots , q_5$.  
Let us define $C_3:=\varphi_{C_1}(Q_7)\in \mathcal{C}_{q_1, \dots , q_5}$.
Then $\dim_{\mathbb{P}^4} \Lambda_{C_2}\cap \Lambda_{C_3} \ge 0$ and, by Bezout's theorem, if $Q_8\in \Lambda_{C_2}\cap \Lambda_{C_3}$ then $Q_8$ is \textit{unique}. In particular $Q_8$ is uniquely determined by $C_3$ which is uniquely determined by $Q_7$.
Let $r_2$, $r_3$ be the lines such that $Q_6 \cap Q_8 = C_2\cup r_2$ and $Q_7 \cap Q_8 = C_3\cup r_3$.
Since $\{q_1, \dots , q_5\}\cap r_1=\emptyset$ by construction, 
we may suppose to fix three twisted cubics $C_1$, $C_2$, $C_3$, three lines $r_1$, $r_2$, $r_3$ and three smooth quadric surfaces $Q_6$, $Q_7$, $Q_8$ in $\mathbb{P}^3$ satisfying the property (i) of Theorem~\ref{thm:C1C2C3}.
By the generality of $Q_6, Q_7\in \Lambda_{C_1}\cong \mathbb{P}^2$, we may also assume that $r_i$ is a \textit{chord} of $C_i$ for $i=1,2$, i.e. $a_1'\ne a_1''$ and $a_2'\ne a_2''$. Let us explain this.
We recall that $C_1$ and $C_2$ are the only twisted cubics in $Q_6$ through $q_1, \dots , q_5$. In particular, if $f_1$ and $f_2$ represent the two ruling of $Q_6$, then we have that $C_1\sim_{Q_6} f_1+2f_2$ and $C_2\sim_{Q_6} 2f_1+f_2$. For any choice of $R_1\in |\mathcal{O}_{Q_6}(f_1)|$ and $R_2\in |\mathcal{O}_{Q_6}(f_2)|$ we have that $h^0(\mathcal{I}_{R_1\cup C_1 |\mathbb{P}^3}(2))=h^0(\mathcal{I}_{R_2\cup C_2 | \mathbb{P}^3}(2))=2$. 
Furthermore, by the Hurwitz formula applied to $|\mathcal{O}_{C_i}(f_i)|$ for $i=1,2$, there exist only two lines $R_{i,1}, R_{i,2}\in |\mathcal{O}_{Q_6}(f_i)|$ which are tangent to $C_i$.
Let us consider the four lines
$L_{1,1} = |\mathcal{I}_{R_{1,1}\cup C_1}(2)|$, $L_{1,2} = |\mathcal{I}_{R_{1,2}\cup C_1}(2)|$, $L_{2,1} = |\mathcal{I}_{R_{2,1}\cup C_2}(2)|$, $L_{2,2} = |\mathcal{I}_{R_{2,2}\cup C_2}(2)|.$
By the generality of $Q_7\in\Lambda_{C_1}\cong \mathbb{P}^2$ and by using the fact the $Q_8$ is uniquely determined by $Q_7$, we may assume that $Q_7\not\in L_{1,1} \cup L_{1,2}$ and $Q_8\not \in L_{2,1} \cup L_{2,2}$. So $Q_7\cap Q_6=C_1\cup r_1$ with $r_1$ transverse to $C_1$ and $Q_8\cap Q_6 = C_2 \cup r_2$ with $r_2$ transverse to $C_2$. 
Instead we cannot exclude the case where $r_3$ is \textit{tangent} to $C_3$, i.e. $r_3\cap C_3=\{a_3',a_3''\}$ with $a_3'=a_3''$. 
Despite the interest that studying the afore-mentioned case would catch, we will devote our analysis to the case in which $r_3$ is a \textit{chord} of $C_3$, considering the latter was the case Fano used in order to describe his Enriques-Fano threefold of genus $6$ (see \cite[\S 3]{Fa38}).

So let us fix now three twisted cubics $C_1$, $C_2$, $C_3$, three chords $r_1$, $r_2$, $r_3$ and three smooth quadric surfaces $Q_6$, $Q_7$, $Q_8$ satisfying (i) of Theorem~\ref{thm:C1C2C3}. Let $\mathcal{P}$ be the linear system of the septic surfaces of $\mathbb{P}^{3}$ double along the three twisted cubics $C_1$, $C_2$ and $C_3$ passing through $q_1$, $q_2$, $q_3$, $q_4$, $q_5$.

\begin{remark}\label{rem:unicityQ6Q7Q8}
The surface $Q_{i+j+3}$ is the unique quadric surface of $\mathbb{P}^3$ containing $C_i\cup C_j\cup r_i\cup r_j $ for all $1\le i<j \le 3$. 
Indeed, we have $h^0(\mathcal{I}_{C_i\cup C_j\cup r_i\cup r_j | \mathbb{P}^3}(2))=1$, by the smoothness of the three quadric surfaces and by following exact sequence
$$0 \to \mathcal{I}_{Q_{i+j+3}|\mathbb{P}^3}(2) \to \mathcal{I}_{C_i\cup C_j\cup r_i\cup r_j | \mathbb{P}^3}(2) \to \mathcal{I}_{C_i\cup C_j\cup r_i\cup r_j | Q_{i+j+3}} (2) \to 0.$$
\end{remark}

\begin{remark}\label{rem:ri contained in P}
An element $P\in \mathcal{P}$ contains the lines $r_1$, $r_2$, $r_3$. Assume the contrary. Let us fix three distinct indices $i,j,k\in\{1,2,3\}$. By Bezout's Theorem, $P\cap r_{i}$ is given by $7$ points. Furthermore, $r_{i}$ is a line through four double points of $P$, i.e. $r_{i}\cap C_j$, $r_i\cap C_k$, $a_i'$ and $a_i''$. So we obtain that $P\cap r_{i}$ contains at least $8$ points, counted with multiplicity, which is a contradiction. It must be $r_{i} \subset P$. 
\end{remark}

%

Let $g_{i+j+3}:=g_{i+j+3}(s_0,s_1,s_2,s_3)$ be the quadratic homogeneous polynomial defining the smooth quadric surface $Q_{i+j+3}\subset \mathbb{P}^3_{\left[s_0,\dots ,s_3\right]}$ for $1\le i<j \le 3$.

\begin{lemma}\label{lem:equationP}
The linear system $\mathcal{P}$ has equation $g_6g_7f_{8}+g_6g_8f_{7}+g_7g_8f_{6}=0$, where $f_{i+j+3}\in H^0(\mathbb{P}^3, \mathcal{I}_{C_i\cup C_j | \mathbb{P}^3}(3))$, for $1\le i<j\le 3$.
\end{lemma}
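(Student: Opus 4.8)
The plan is to prove equality of two linear systems: $\mathcal{P}$ on one side, and on the other the image $\mathcal{V}$ of the linear map $(f_6,f_7,f_8)\mapsto g_6g_7f_8+g_6g_8f_7+g_7g_8f_6$ defined on $H^0(\mathcal{I}_{C_1\cup C_2|\mathbb{P}^3}(3))\oplus H^0(\mathcal{I}_{C_1\cup C_3|\mathbb{P}^3}(3))\oplus H^0(\mathcal{I}_{C_2\cup C_3|\mathbb{P}^3}(3))$. The inclusion $\mathcal{V}\subseteq\mathcal{P}$ is a direct local computation: recalling that $g_{i+j+3}$ vanishes to order one along $C_i$ and along $C_j$ and that $f_{i+j+3}$ vanishes along $C_i\cup C_j$, one checks that for each fixed $k\in\{1,2,3\}$ every one of the three monomials $g_6g_7f_8$, $g_6g_8f_7$, $g_7g_8f_6$ has at least two factors vanishing along $C_k$; hence the sum is double along $C_1$, $C_2$ and $C_3$ and lies in $\mathcal{P}$.

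The content is the reverse inclusion $\mathcal{P}\subseteq\mathcal{V}$, which I would establish by a peeling argument restricting $P\in\mathcal{P}$ successively to the smooth quadrics $Q_6$ and $Q_8$. The decisive point is the rigidity of $P|_{Q_6}$: since $P$ is double along $C_1,C_2\subset Q_6$ and contains the lines $r_1,r_2\subset Q_6$ by Remark~\ref{rem:ri contained in P}, we have $P|_{Q_6}\ge 2C_1+2C_2+r_1+r_2$; as on $Q_6\cong\mathbb{P}^1\times\mathbb{P}^1$ one has $\mathcal{O}_{\mathbb{P}^3}(7)|_{Q_6}\sim 2C_1+2C_2+r_1+r_2$ (using $C_1\sim(1,2)$, $C_2\sim(2,1)$, $r_1\sim(1,0)$, $r_2\sim(0,1)$), equality of classes forces $P|_{Q_6}=2C_1+2C_2+r_1+r_2$ exactly (or $P|_{Q_6}=0$ when $g_6\mid P$). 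On the other hand $g_7|_{Q_6}=C_1+r_1$, $g_8|_{Q_6}=C_2+r_2$ and $f_6|_{Q_6}=C_1+C_2$, so $(g_7g_8f_6)|_{Q_6}=2C_1+2C_2+r_1+r_2$ for any $f_6$ with $g_6\nmid f_6$. Two sections cutting the same effective divisor on the complete surface $Q_6$ are proportional, so after scaling $P-g_7g_8f_6$ vanishes on $Q_6$ and we may write $P=g_7g_8f_6+g_6A$ with $A$ a quintic.

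Next I would read the vanishing of $A$ off the relation $g_6A=P-g_7g_8f_6\in\mathcal{P}$: since $g_6$ vanishes to order one along $C_1,C_2$ and to order zero along $C_3$ and $r_3$, the quintic $A$ must vanish along $C_1\cup C_2$, be double along $C_3$, and contain $r_3$. Restricting $A$ to $Q_8$ and invoking the same rigidity gives $A|_{Q_8}=C_2+2C_3+r_3$; since $g_7|_{Q_8}=C_3+r_3$ and $f_8|_{Q_8}=C_2+C_3$ for any $f_8$ with $g_8\nmid f_8$, the section $(g_7f_8)|_{Q_8}$ cuts the same divisor, so $A-g_7f_8=g_8f_7$ for a cubic $f_7$. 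Substituting back yields $P=g_7g_8f_6+g_6g_7f_8+g_6g_8f_7$. Finally, from $g_6g_8f_7=P-g_7g_8f_6-g_6g_7f_8\in\mathcal{P}$ and the fact that along $C_1$ only $g_6$ vanishes while along $C_3$ only $g_8$ vanishes, one gets that $f_7$ vanishes along $C_1\cup C_3$, i.e. $f_7\in H^0(\mathcal{I}_{C_1\cup C_3|\mathbb{P}^3}(3))$, which is exactly the required form.

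I expect the main obstacle to be justifying the rigidity of the restrictions $P|_{Q_6}$ and $A|_{Q_8}$, namely that they carry no residual moduli: it is precisely this that permits the successive divisions by $g_6$ and $g_8$. The rigidity rests essentially on the fact, proved in Remark~\ref{rem:ri contained in P}, that every element of $\mathcal{P}$ automatically contains the three chords $r_1,r_2,r_3$, together with the numerical coincidences forced by the classes of the twisted cubics and the residual lines on each quadric; without the information $r_i\subset P$ the residual conics would vary and the restrictions of $P$ and of $g_7g_8f_6$ would fail to match. The degenerate cases in which some $Q_{i+j+3}$ is a component of $P$ are absorbed by allowing the corresponding $f_{i+j+3}$ to vanish (or to be a multiple of $g_{i+j+3}$), so they require no separate treatment.
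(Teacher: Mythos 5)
Your proposal is correct and follows essentially the same route as the paper: both arguments pin down the restriction $P|_{Q_{i+j+3}}=2C_i+2C_j+r_i+r_j$ by a class/degree count on the smooth quadric (relying on Remark~\ref{rem:ri contained in P}) and then peel off the quadrics by successive division by the $g$'s. The only differences are cosmetic: you divide out two quadrics instead of three (the paper's leftover term $g_6g_7g_8h_1$ is simply absorbed into the $f_{i+j+3}$'s), and you spell out both the rigidity justification and the easy inclusion of the span of the $g_6g_7f_8+g_6g_8f_7+g_7g_8f_6$ into $\mathcal{P}$, which the paper leaves implicit.
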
 
\begin{proof}
Let $F:=F(s_0,s_1,s_2,s_3)$ be the homogeneous polynomial of degree $7$ defining a general element $P$ of $\mathcal{P}$ in $\mathbb{P}^3_{\left[s_0,\dots ,s_3\right]}$.
We recall that the intersection of an irreducible septic surface of $\mathbb{P}^3$ with a quadric surface is a curve of degree $14$. In particular, $P$ intersects each quadric surface $Q_{i+j+3}$ along the curve of degree $14$ given by the two double twisted cubics $C_i$ and $C_j$ plus the two lines $r_{i}$ and $r_j$, for $1\le i<j \le 3$.  
This implies that it must be 
$$P\cap Q_{i+j+3}=\{g_{i+k+3}g_{j+k+3}f_{i+j+3}=0,\, g_{i+j+3} = 0\}=2C_{i}+2C_{j}+r_{i}+r_j$$ 
for some $f_{i+j+3}\in H^0(\mathbb{P}^3, \mathcal{I}_{C_i\cup C_j | \mathbb{P}^3}(3))$, where $1\le k \le 3$ with $k\ne i$ and $k\ne j$. 
Then it must be
$F = g_6 h_5+ g_7g_8f_6,$
where $h_5$ is a homogeneous polynomial of degree $5$ such that
$h_5 = g_7h_3+g_8f_7,$
where $h_3$ is a homogeneous polynomial of degree $3$ such that
$h_3 = g_8h_1+f_8,$
where $h_1$ is a homogeneous polynomial of degree $1$.
Thus, we have $F = g_6g_7g_8h_1 + g_6g_7f_{8}+g_6g_8f_{7}+g_7g_8f_{6}.$
Since $g_{i+j+3}h_1\in H^0(\mathbb{P}^3, \mathcal{I}_{C_i\cup C_j | \mathbb{P}^3}(3))$ for $1\le i<j \le 3$, we obtain that $F$ has the expression of the statement.
\end{proof}

\begin{lemma}\label{lem:cubiccontainingCiCj}
Let us take $1\le i<j\le 3$. Then $\dim H^0(\mathbb{P}^3, \mathcal{I}_{C_i\cup C_j | \mathbb{P}^3}(3))=5$ and a general element of $|\mathcal{I}_{C_i\cup C_j | \mathbb{P}^3}(3)|\cong \mathbb{P}^4$ corresponds to a smooth irreducible surface.
\end{lemma}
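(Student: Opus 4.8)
The plan is to treat the two assertions separately, exploiting throughout that $Z:=C_i\cup C_j$ lies on the smooth quadric $Q:=Q_{i+j+3}$ fixed in case (i) of Theorem~\ref{thm:C1C2C3}.

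For the dimension count I would restrict everything to $Q\cong \mathbb{P}^1\times\mathbb{P}^1$. On $Q$ the two twisted cubics lie in the classes $f_1+2f_2$ and $2f_1+f_2$ (as in the proof of Lemma~\ref{lem:C1C2C3}), so $Z\sim 3f_1+3f_2=3H|_Q$, where $H$ is the hyperplane class. Consequently the ideal sheaf of $Z$ inside $Q$ satisfies $\mathcal{I}_{Z/Q}(3)\cong \mathcal{O}_Q(3H-Z)\cong \mathcal{O}_Q$. Since $\mathcal{I}_{Q|\mathbb{P}^3}(3)\cong \mathcal{O}_{\mathbb{P}^3}(1)$, the restriction sequence becomes
$$0\to \mathcal{O}_{\mathbb{P}^3}(1)\to \mathcal{I}_{Z|\mathbb{P}^3}(3)\to \mathcal{O}_Q\to 0,$$
and from $H^1(\mathcal{O}_{\mathbb{P}^3}(1))=0$ I read off $h^0(\mathcal{I}_{Z|\mathbb{P}^3}(3))=4+1=5$, giving $|\mathcal{I}_{C_i\cup C_j|\mathbb{P}^3}(3)|\cong \mathbb{P}^4$.

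For smoothness I would first identify the base locus. The computation above produces a cubic $F_0$ not containing $Q$ whose image in $H^0(\mathcal{O}_Q)$ is nonzero, so $\{F_0=0\}\cap Q$ is an effective divisor in $|3H|$ containing $Z\in|3H|$, hence equal to the reduced curve $Z$. Writing a general member as $\{g\,\ell+\mu F_0=0\}$ with $g$ the quadratic form of $Q$, $\ell$ linear and $\mu\in\mathbb{C}$, its base locus is $\{g=0\}\cap\{F_0=0\}=Q\cap\{F_0=0\}=Z$. Bertini's theorem then yields smoothness of the general member away from $Z$, and a smooth cubic surface in $\mathbb{P}^3$ is automatically irreducible, since a reducible surface is singular along the (nonempty) intersection of its components.

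The main obstacle is smoothness along the base curve $Z$ itself. At a point $p\in Z\subset Q$ the differential of $F=g\,\ell+\mu F_0$ is $dF|_p=\ell(p)\,dg|_p+\mu\,dF_0|_p$, because $g(p)=0$; smoothness of $Q$ gives $dg|_p\neq 0$, so for $\mu=0$ and $\ell(p)\neq0$ the member is already smooth at $p$. To promote this pointwise fact to a single general member smooth along all of $Z$, I would bound the incidence variety $\mathcal{I}=\{(S,p)\in\mathbb{P}^4\times Z : S \text{ is singular at } p\}$ by studying the linear map $(\ell,\mu)\mapsto \ell(p)\,dg|_p+\mu\,dF_0|_p$. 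Since $\{F_0=0\}$ meets $Q$ transversally along $Z$ away from a finite set (this is exactly where the reducedness of $\{F_0=0\}\cap Q=Z$ enters), the covectors $dg|_p$ and $dF_0|_p$ are independent for general $p$, forcing the fibres of $\mathcal{I}\to Z$ to have projective dimension $2$ over the open part and at most $3$ over the finitely many remaining points; hence $\dim\mathcal{I}\le 3<4$ and the projection $\mathcal{I}\to\mathbb{P}^4$ cannot be dominant. The delicate point to pin down rigorously is precisely this transversality, i.e.\ that the base scheme is reduced along $Z$, as it is what controls the fibre dimensions in the count.
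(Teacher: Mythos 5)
Your proof is correct, and while the dimension count coincides with the paper's (both restrict to $Q:=Q_{i+j+3}$, note $\mathcal{I}_{C_i\cup C_j|Q}(3)\cong\mathcal{O}_Q$, and get $h^0=4+1=5$ from the residual sequence), the smoothness argument takes a genuinely different route. The paper shows the base scheme is $C_i\cup C_j$ by a second cohomology computation (adding a point off the curve), applies Bertini off the curve, and then handles the curve pointwise: at $p\in(C_i\cup C_j)\setminus\{q_1,\dots,q_5\}$ a member $S_3$ not containing $Q$ cuts $Q$ exactly along $C_i\cup C_j$, so a singularity of $S_3$ at $p$ would make that curve singular at $p$, a contradiction; at the five nodes $q_h$ it exhibits the reducible member $Q\cup H$ (with $H$ a plane avoiding the $q_h$), smooth there, and uses openness of smoothness at a fixed point. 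You instead write the system as $g\ell+\mu F_0$ and run an incidence-variety count over all of $Z$ at once. The point you flag as delicate is in fact fine and follows from what you already proved: since $\{F_0=0\}\cap Q=C_i+C_j$ is a reduced divisor, at any point where $Z$ is smooth the restriction $F_0|_Q$ is a local generator of the ideal of a smooth curve, so $d(F_0|_Q)(p)\neq0$ and hence $dg|_p$, $dF_0|_p$ are independent; the only non-transversal points are the five nodes, over which the fibre of $\mathcal{I}\to Z$ is a hyperplane $\mathbb{P}^3\subset\mathbb{P}^4$, so $\dim\mathcal{I}\le 3$ as you assert. What each approach buys: the paper's contradiction at smooth points of the curve requires no dimension count and applies to \emph{every} member not containing $Q$, while your argument treats all of $Z$ (nodes included) uniformly. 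One small ordering issue: you invoke ``smooth cubic surfaces are irreducible'' before smoothness along $Z$ is established; either postpone that deduction, or argue as the paper does that a reducible member must contain $Q$ (a twisted cubic lies on no plane, and $Q$ is the unique quadric through $C_i\cup C_j$ by Remark~\ref{rem:unicityQ6Q7Q8}), whereas the general member $g\ell+\mu F_0$ with $\mu\neq0$ does not.
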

\begin{proof}
Let us consider the following exact sequence
$$0 \to \mathcal{I}_{Q_{i+j+3} | \mathbb{P}^3}(3) \to \mathcal{I}_{C_i\cup C_j | \mathbb{P}^3}(3) \to \mathcal{I}_{C_i\cup C_j | Q_{i+j+3}}(3) \to 0.$$
Since $h^1(\mathcal{I}_{Q_{i+j+3| \mathbb{P}^3}}(3))=h^1(\mathcal{O}_{\mathbb{P}^3}(1))=0$, $h^0(\mathcal{I}_{Q_{i+j+3} | \mathbb{P}^3}(3))=h^0(\mathcal{O}_{\mathbb{P}^3}(1))=4$ and $h^0( \mathcal{I}_{C_i\cup C_j | Q_{i+j+3}}(3)) =h^0(\mathcal{O}_{Q_{i+j+3}})=1$, then we obtain
$h^0(\mathcal{I}_{C_i\cup C_j | \mathbb{P}^3}(3)) = 5$.
Let $S_3$ be now a general element of $|\mathcal{I}_{C_i\cup C_j | \mathbb{P}^3}(3)|$. We may assume that $Q_{i+j+3}\not\subset S_3$ and so that $S_3$ is irreducible, since $C_i\cup C_j$ is not degenerate and the only quadric surface containing this curve is $Q_{i+j+3}$ (see Remark~\ref{rem:unicityQ6Q7Q8}). We want to show that $S_3$ is smooth.
First let us see that $C_i\cup C_j$ is the 
base scheme
of $|\mathcal{I}_{C_i\cup C_j | \mathbb{P}^3}(3)|$: we have to show that 
$h^0(\mathcal{I}_{C_i\cup C_j\cup \{x\} | \mathbb{P}^3}(3)) = h^0(\mathcal{I}_{C_i\cup C_j | \mathbb{P}^3}(3))-1=4$
for a point $x\in \mathbb{P}^3\setminus (C_i\cup C_j)$.
This is exactly what happens: indeed, $x \not \in Q_{i+j+3}$ (otherwise $x\in S_3'\cap Q_{i+j+3} = C_i \cup C_j$ for $S_3'\in|\mathcal{I}_{C_i\cup C_j\cup \{x\} | \mathbb{P}^3}(3)|$, which is a contradiction) and so we have the following exact sequence
$$0 \to \mathcal{I}_{Q_{i+j+3}\cup \{x\} | \mathbb{P}^3}(3) \to \mathcal{I}_{C_i\cup C_j\cup \{x\} | \mathbb{P}^3}(3) \to \mathcal{I}_{C_i\cup C_j | Q_{i+j+3}}(3) \to 0$$
from which $h^0(\mathcal{I}_{C_i\cup C_j\cup \{x\} | \mathbb{P}^3}(3)) = 4$, since 
$h^1(\mathcal{I}_{Q_{i+j+3}\cup \{x\}| \mathbb{P}^3}(3))=h^1(\mathcal{I}_{\{x\}|\mathbb{P}^3}(1))=0$ and $h^0(\mathcal{I}_{Q_{i+j+3}\cup \{x\} | \mathbb{P}^3}(3))=h^0(\mathcal{I}_{\{x\}| \mathbb{P}^3}(1))=3$.
Let $p\in (C_i\cup C_j) \setminus \{q_1, \dots , q_5\}$. If $S_3$ were singular at $p$, then $S_3 \cap Q_{i+j+3}=C_i\cup C_j$ would be singular at $p$, which is a contradiction.
Let $p\in \{q_1, \dots , q_5\}$. If $H$ is a plane such that $q_h\not\in H$ for all $1\le h \le 5$, then $Q_{i+j+3}\cup H \in |\mathcal{I}_{C_i\cup C_j | \mathbb{P}^3}(3)|$. Since $Q_{i+j+3}\cup H$ is smooth at $p$, then the general element of $|\mathcal{I}_{C_i\cup C_j | \mathbb{P}^3}(3)|$ is a cubic surface smooth at $p$. Thus, $S_3$ is smooth.
\end{proof}

\begin{remark}\label{rem:Q6Q7Q8independent}
There exists a septic surface in $\mathcal{P}$ containing $Q_6$ but not $Q_7$. By Lemmas~\ref{lem:equationP},~\ref{lem:cubiccontainingCiCj} it is sufficient to take a septic surface defined by the equation
$g_6g_7f_{8}+g_6g_8f_{7}+g_7g_8g_6h=0$
with $f_{8}\in H^0(\mathbb{P}^3, \mathcal{I}_{C_2\cup C_3 | \mathbb{P}^3}(3))$, $h\in H^0(\mathbb{P}^3, \mathcal{O}_{\mathbb{P}^3}(1))$ and where $f_{7}$ is a general (irreducible) element of $H^0(\mathbb{P}^3, \mathcal{I}_{C_1\cup C_3 | \mathbb{P}^3}(3))$. 
One can also construct a septic surface in $\mathcal{P}$ containing $Q_6$ and $Q_7$ but not $Q_8$. By Lemmas~\ref{lem:equationP},~\ref{lem:cubiccontainingCiCj} it is sufficient to take a septic surface with equation
$g_6g_7f_{8}+g_6g_8g_7h'+g_7g_8g_6h=0$ where $h,h'\in H^0(\mathbb{P}^3, \mathcal{O}_{\mathbb{P}^3}(1))$ and where $f_{8}$ is a general (irreducible) element of $H^0(\mathbb{P}^3, \mathcal{I}_{C_2\cup C_3 | \mathbb{P}^3}(3))$.
\end{remark}

A priori we have that $\dim \mathcal{P}\le 14$, since the equation of $\mathcal{P}$ depends by $15$ parameters which can be linearly dependent (see Lemmas~\ref{lem:equationP},~\ref{lem:cubiccontainingCiCj}). However we have the following result.

\begin{proposition}\label{prop:dimP=6}
The linear system $\mathcal{P}$ defined as above has $\dim \mathcal{P} = 6$.
\end{proposition}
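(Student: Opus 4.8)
The plan is to realize the vector space underlying $\mathcal{P}$ as the image of an explicit linear map and to reduce the computation of $\dim\mathcal{P}$ to that of a kernel. By Lemma~\ref{lem:equationP} every section defining an element of $\mathcal{P}$ has the shape $g_6g_7f_8+g_6g_8f_7+g_7g_8f_6$ with $f_{i+j+3}\in H^0(\mathbb{P}^3,\mathcal{I}_{C_i\cup C_j|\mathbb{P}^3}(3))$, and conversely each such section is double along $C_1,C_2,C_3$ and hence lies in $\mathcal{P}$. Thus the space of sections cutting out $\mathcal{P}$ is exactly the image of the linear map
\[
\Phi\colon H^0(\mathcal{I}_{C_1\cup C_2|\mathbb{P}^3}(3))\oplus H^0(\mathcal{I}_{C_1\cup C_3|\mathbb{P}^3}(3))\oplus H^0(\mathcal{I}_{C_2\cup C_3|\mathbb{P}^3}(3))\longrightarrow H^0(\mathcal{O}_{\mathbb{P}^3}(7)),
\]
\[
(f_6,f_7,f_8)\longmapsto g_6g_7f_8+g_6g_8f_7+g_7g_8f_6.
\]
By Lemma~\ref{lem:cubiccontainingCiCj} each summand of the source has dimension $5$, so $\dim\ker\Phi+\dim\operatorname{im}\Phi=15$, and therefore $\dim\mathcal{P}=\dim\operatorname{im}\Phi-1=14-\dim\ker\Phi$. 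It then suffices to prove $\dim\ker\Phi=8$.

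To compute $\ker\Phi$ I would use that $Q_6,Q_7,Q_8$ are three \emph{distinct smooth} quadrics (case (i) of Theorem~\ref{thm:C1C2C3}), so that $g_6,g_7,g_8$ are pairwise coprime irreducible quadratic forms. Given $(f_6,f_7,f_8)\in\ker\Phi$, rewriting the relation as $g_7g_8f_6=-g_6(g_7f_8+g_8f_7)$ shows $g_6\mid g_7g_8f_6$, whence $g_6\mid f_6$; by the symmetric manipulations $g_7\mid f_7$ and $g_8\mid f_8$. Writing $f_6=g_6\ell_6$, $f_7=g_7\ell_7$, $f_8=g_8\ell_8$ with $\ell_6,\ell_7,\ell_8\in H^0(\mathcal{O}_{\mathbb{P}^3}(1))$ and substituting, the relation collapses to $g_6g_7g_8(\ell_6+\ell_7+\ell_8)=0$, i.e.\ $\ell_6+\ell_7+\ell_8=0$. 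Conversely any such triple lies in $\ker\Phi$, the point being that $g_6\ell_6$ automatically belongs to $H^0(\mathcal{I}_{C_1\cup C_2|\mathbb{P}^3}(3))$ since $Q_6\supset C_1\cup C_2$, and likewise for the others (see Remark~\ref{rem:unicityQ6Q7Q8}). Hence the correspondence $(\ell_6,\ell_7,\ell_8)\leftrightarrow(g_6\ell_6,g_7\ell_7,g_8\ell_8)$ identifies $\ker\Phi$ with the space of triples of linear forms summing to zero.

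Finally, the evaluation $(\ell_6,\ell_7,\ell_8)\mapsto\ell_6+\ell_7+\ell_8$ is a surjection from the $12$-dimensional space $H^0(\mathcal{O}_{\mathbb{P}^3}(1))^{\oplus 3}$ onto the $4$-dimensional space $H^0(\mathcal{O}_{\mathbb{P}^3}(1))$, so its kernel has dimension $8$. This yields $\dim\ker\Phi=8$ and therefore $\dim\mathcal{P}=14-8=6$.

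The only genuinely delicate step is the divisibility claim $g_6\mid f_6$: it rests on the coprimality of $g_6$ with the product $g_7g_8$, which is precisely guaranteed by the smoothness and mutual distinctness of the three quadrics selected in case (i) of Theorem~\ref{thm:C1C2C3}. Once this algebraic input is in place, the remainder is a routine dimension count, and the a priori bound $\dim\mathcal{P}\le 14$ mentioned before the statement is sharpened to the exact value $6$ by the $8$ independent syzygies just exhibited.
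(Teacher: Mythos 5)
Your proof is correct, but it follows a genuinely different route from the paper's. The paper argues geometrically: it shows that containing each quadric $Q_{i+j+3}$ is a codimension-one condition on $\mathcal{P}$ (by restricting to $Q_{i+j+3}$ and observing that $\mathcal{P}|_{Q_{i+j+3}}$ reduces to the fixed divisor $2C_i+2C_j+r_i+r_j$), checks via Remark~\ref{rem:Q6Q7Q8independent} that the three conditions are independent, and then identifies the residual system $\{P\supset Q_6\cup Q_7\cup Q_8\}$ with the $3$-dimensional system of planes, giving $\dim\mathcal{P}=3+3=6$. You instead work directly with the presentation map $\Phi$ furnished by Lemma~\ref{lem:equationP} and compute its kernel, i.e.\ the syzygies among the $15$ parameters: the divisibility argument (valid because $g_6,g_7,g_8$ are pairwise coprime irreducible forms, as the quadrics are smooth and distinct in case (i) of Theorem~\ref{thm:C1C2C3}) forces $f_{i+j+3}=g_{i+j+3}\ell_{i+j+3}$ with $\ell_6+\ell_7+\ell_8=0$, and the converse inclusion of such triples in $\ker\Phi$ is immediate since $C_i\cup C_j\subset Q_{i+j+3}$. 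This yields the $8$-dimensional syzygy space and hence $\dim\mathcal{P}=14-8=6$, sharpening the a priori bound $\dim\mathcal{P}\le 14$ stated before the proposition in exactly the way the paper leaves implicit. Your approach buys an explicit description of all linear relations among the generators (useful if one wants the full structure of the presentation), while the paper's buys geometric information that it reuses immediately afterwards, namely the identification of the sublinear systems $\{P\supset Q_{i+j+3}\}$ and their movable parts, which feed into the analysis of the singular points $P_6,P_7,P_8$. Both arguments rest on Lemmas~\ref{lem:equationP} and~\ref{lem:cubiccontainingCiCj}, so no new input is needed for yours.
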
 
\begin{proof}
Let us consider the sublinear system of the septic surfaces of $\mathcal{P}$ containing $Q_{i+j+3}$ for $1\le i<j\le 3$. The movable part of this linear system is isomorphic to the linear system $\mathcal{T}$ of the quintic surfaces of $\mathbb{P}^3$ containing the two twisted cubics $C_i,C_j\subset Q_{i+j+3}$, containing the line $r_k$, and with double points along the twisted cubic $C_k$, where $1\le k\le 3$ and $k\ne i$ and $k\ne j$. We want to show that $\operatorname{codim}\left( \{P\in \mathcal{P}| P\supset Q_{i+j+3}\},\mathcal{P}\right) =1$. In order to do it, let $V\subset H^0(\mathcal{O}_{\mathbb{P}^{3}}(7))$ and $K\subset H^0(\mathcal{O}_{\mathbb{P}^{3}}(5))$ be the subspaces such that $\mathcal{P}=\mathbb{P}(V)$ and $\mathcal{T}=\mathbb{P}(K)$. From 
$$0\to \mathcal{O}_{\mathbb{P}^3}(5) \to \mathcal{O}_{\mathbb{P}^3}(7) \to \mathcal{O}_{Q_{i+j+3}}(7) \to 0,$$
we obtain
$$0\to H^0(\mathcal{O}_{\mathbb{P}^3}(5)) \to H^0(\mathcal{O}_{\mathbb{P}^3}(7)) \to H^0(\mathcal{O}_{Q_{i+j+3}}(7)) \to 0$$
$$\quad \quad \cup \quad \quad  \quad \quad \cup \quad \quad \quad \quad \quad \cup \quad \quad \quad \quad$$
$$0\to \quad K \quad \to \quad  V \quad \to \quad  V|_{Q_{i+j+3}} \quad \to 0.$$
We have to show that $\operatorname{codim}(K,V)=1$, which is equivalent to find $\dim V|_{Q_{i+j+3}}=1$. This follows by the fact that $\dim \mathcal{P}|_{Q_{i+j+3}}=0$, since $\mathcal{P}|_{Q_{i+j+3}}$ only has fixed part $2C_i+2C_j+r_i+r_j$. 
Then we have that $\operatorname{codim}\left( \{P\in \mathcal{P}| P\supset Q_{i+j+3}\},\mathcal{P}\right) =1$ and, since containing the three quadrics $Q_6$, $Q_7$ and $Q_8$ imposes independent conditions (see Remark~\ref{rem:Q6Q7Q8independent}), we also obtain $\operatorname{codim}(\{P\in \mathcal{P}| P\supset Q_6, Q_7, Q_8\},\mathcal{P})=3$.
Each element of $\{P\in \mathcal{P}| P\supset Q_6,Q_7,Q_8\}$ is of the form $Q_6\cup Q_7\cup Q_7\cup \pi$, where $\pi$ is a general plane of $\mathbb{P}^3$. Thus, we have $\dim \{P\in \mathcal{P}| P\supset Q_6,Q_7,Q_8\}= \dim |\mathcal{O}_{\mathbb{P}^3}(1)|=3$ and finally $\dim \mathcal{P}=3+3=6$.
\end{proof}

\begin{remark}\label{rem:TCcubic}
Let us fix $1\le i<j \le 3$ and let us consider the quadric surface $Q_{i+j+3}\subset \mathbb{P}^3$. Since $Q_{i+j+3}$ is smooth, then the tangent space to $Q_{i+j+3}$ at the point $p\in Q_{i+j+3}$ is a plane of $\mathbb{P}^3$, which is spanned by the two lines of $Q_{i+j+3}$ intersecting at $p$. 
Let us take the point $p=q_h$ for some $1\le h \le 5$. Since the twisted cubics $C_i$ and $C_j$ are contained in $Q_{i+j+3}$ and they pass through $q_h$, then the tangent plane to $Q_{i+j+3}$ at $q_h$ has to contain the tangent lines to $C_i$ and $C_j$ at $q_h$. We recall that $C_i \cdot C_j = (f_1+2f_2)\cdot (2f_1+f_2) = 5$, where $f_1$ and $f_2$ represent the two rulings of $Q_{i+j+3}$. Since $q_1, \dots ,q_5$ are distinct by construction, the intersection of $C_i$ and $C_j$ at each $q_h$ is transverse. Then we have $T_{q_h}C_{i}\ne T_{q_h}C_j$ and $t_{hij}:=T_{q_h}Q_{i+j+3}=\left\langle T_{q_h}C_{i}, T_{q_h}C_j \right\rangle$. 
In particular we have that $T_{q_h}Q_{6}=\left\langle T_{q_h}C_{1}, T_{q_h}C_2 \right\rangle$ and $T_{q_h}Q_{7}=\left\langle T_{q_h}C_{1}, T_{q_h}C_3 \right\rangle$. By the generality of $q_1, \dots , q_5$ and by the generality of $Q_6$ and $Q_7$, we may assume $T_{q_h}Q_{6}\cap T_{q_h}Q_{7} = T_{q_h}C_{1}$. Thus, $T_{q_h}C_{1}$, $T_{q_h}C_{2}$ and $T_{q_h}C_{3}$ are linearly independent.
\end{remark}

\begin{proposition}\label{prop:variabeleTC6}
Let $P$ be a general element of $\mathcal{P}$ and let us take $1\le h \le 5$ and three distinct indices $i,j,k\in\{1,2,3\}$ with $i<j$. Then we have that
\begin{itemize}
\item[(i)] $TC_{q_h}P=\bigcup_{1\le i<j \le 3}T_{q_h}Q_{i+j+3}=\bigcup_{1\le i<j \le 3}t_{hij}$ and $P$ has triple points at the five points $q_1, \dots , q_5$;
\item[(ii)] if $p\in C_{k}$ with $p\not\in\{q_1, \dots , q_5\}$, $p\not\in r_k\cap C_k$ and $p\ne b_{ij}$, then $TC_{p}P$ is the union of two variable planes $\pi_{p,P}$ and $\pi_{p,P}'$ containing $T_pC_k$ and depending on the choice of the point $p$ and of the surface $P$;
\item[(iii)] if $p\in r_k\cap C_{k}$, then $TC_{p}P=\pi_{p}\cup \pi_{p,P}$, where the plane $\pi_{p}:=T_pQ_{i+k+3}=T_pQ_{j+k+3}$ contains $T_pC_k$ and $r_k$, and where $\pi_{p, P}$ is a plane containing $T_pC_k$ and depending on the choice of $p\in\{a_k',a_k''\}$ and of $P$; 
\item[(iv)] $TC_{b_{ij}}P = \pi_{ij,i} \cup \pi_{ij,j}$, where the plane $\pi_{ij,i}:=T_pQ_{i+k+3}$ contains $r_{i}$ and $T_{b_{ij}}C_k$, and where the plane $\pi_{ij, j}:=T_pQ_{j+k+3}$ contains $r_{j}$ and $T_{b_{ij}}C_k$;
\item[(v)] if $p\in r_k$ with $p\not\in r_k\cap C_k$ and $p\ne r_k\cap r_i$, then $TC_{p}P$ is a variable plane depending on the choice of $p$ and $P$.
\end{itemize}
\end{proposition}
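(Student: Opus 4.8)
The plan is to work directly with the explicit equation of $\mathcal{P}$ from Lemma~\ref{lem:equationP}, namely $F=g_6g_7f_8+g_6g_8f_7+g_7g_8f_6$, and to read off each tangent cone as the initial form (the lowest-degree homogeneous part) of $F$ in local coordinates centered at the point in question. The whole argument reduces to two pieces of bookkeeping, repeated at the five kinds of points: first, deciding which of the six factors $g_6,g_7,g_8,f_6,f_7,f_8$ vanish at the point and to what order, which is dictated purely by incidence (recall $g_{i+j+3}$ cuts out the smooth quadric $Q_{i+j+3}\supset C_i\cup C_j\cup r_i\cup r_j$ while $f_{i+j+3}\in H^0(\mathcal{I}_{C_i\cup C_j|\mathbb{P}^3}(3))$); second, identifying the linear parts of the vanishing factors with the relevant tangent planes and factoring the resulting initial form. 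I write $\ell_{g_m}$, $\ell_{f_m}$ for the linear parts of the corresponding factors at the point under study.

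For case (i), at $q_h\in C_1\cap C_2\cap C_3$ all six factors vanish (each quadric and each cubic passes through $q_h$), so every monomial of $F$ vanishes to order $3$ and $q_h$ is at least triple. By Remark~\ref{rem:TCcubic} the lines $T_{q_h}C_1,T_{q_h}C_2,T_{q_h}C_3$ are independent, so in coordinates with $T_{q_h}Q_6=\{x_3=0\}$, $T_{q_h}Q_7=\{x_2=0\}$, $T_{q_h}Q_8=\{x_1=0\}$ one gets $\ell_{g_6},\ell_{f_6}\propto x_3$, $\ell_{g_7},\ell_{f_7}\propto x_2$, $\ell_{g_8},\ell_{f_8}\propto x_1$ (each $\{f_m=0\}$ is smooth at $q_h$ by Lemma~\ref{lem:cubiccontainingCiCj}, with tangent plane spanned by the two tangent lines it contains). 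The cubic initial form then collapses to a scalar multiple of $x_1x_2x_3$, i.e. $TC_{q_h}P=T_{q_h}Q_6\cup T_{q_h}Q_7\cup T_{q_h}Q_8$, once I check that the scalar is nonzero for general $P$, which is a single linear condition on the $f_m$ failing only on a proper subvariety. Cases (ii) and (iii) are the double-point computations along $C_k$: exactly the two quadrics and the two cubics through $C_k$ vanish, so each term of $F$ is order $2$ and the quadratic initial form is a binary form in the two coordinates transverse to $T_pC_k$ (all four vanishing factors contain $C_k$, so their linear parts vanish on $T_pC_k$); in case (ii) this factors as two variable planes through $T_pC_k$, while in case (iii), with $p\in r_k\cap C_k$, the two quadrics through $C_k$ also contain $r_k$ and hence share the tangent plane $\pi_p=\langle T_pC_k,r_k\rangle$, so the initial form factors as $\pi_p$ times a second, $P$-dependent plane through $T_pC_k$.

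The two cases requiring the most care are (iv) and (v). For (iv) the decisive observation is that $b_{ij}=r_i\cap r_j$ lies on \emph{all three} quadrics: on $Q_{i+k+3}$ and $Q_{j+k+3}$ because $b_{ij}\in C_k$, and on $Q_{i+j+3}$ because $r_i,r_j\subset Q_{i+j+3}$; hence $g_6,g_7,g_8$ all vanish, while among the cubics only $f_{i+k+3}$ and $f_{j+k+3}$ vanish (they contain $C_k\ni b_{ij}$) and $f_{i+j+3}(b_{ij})\neq0$ for general $P$. Thus two of the three monomials of $F$ vanish to order $3$ and only $g_{i+k+3}g_{j+k+3}f_{i+j+3}$ contributes in degree $2$, giving $TC_{b_{ij}}P=T_{b_{ij}}Q_{i+k+3}\cup T_{b_{ij}}Q_{j+k+3}$, the two planes being distinct by transversality of those quadrics along $C_k$ at $b_{ij}\notin r_k$. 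For (v), at a general $p\in r_k$ only the two quadrics $Q_{i+k+3},Q_{j+k+3}$ containing $r_k$ vanish, while all four relevant cubics and the third quadric are nonzero; two monomials are then order $1$ and the linear initial form $g_{i+j+3}(p)\big(f_{j+k+3}(p)\,\ell_{g_{i+k+3}}+f_{i+k+3}(p)\,\ell_{g_{j+k+3}}\big)$ is nonzero, so $P$ is smooth at $p$ with tangent plane varying in the pencil through $r_k$ as $p$ and $P$ vary.

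I expect the main obstacle throughout to be precisely this incidence-and-order bookkeeping, in particular recognizing that $q_h$ and $b_{ij}$ lie on all three quadrics, which is what pins down the multiplicities, together with the genericity checks needed to guarantee the stated form of the tangent cone for a \emph{general} $P$: nonvanishing of the scalar in (i), of $f_{i+j+3}(b_{ij})$ in (iv), and of the values $f_m(p)$ in (v), as well as the transversality of the appropriate pairs of quadrics along the $C_k$ used in (ii) and (iv).
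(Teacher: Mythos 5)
Your proposal is correct and takes essentially the same route as the paper's proof: both start from the explicit equation $g_6g_7f_{8}+g_6g_8f_{7}+g_7g_8f_{6}=0$ of Lemma~\ref{lem:equationP}, determine by incidence which of the six factors vanish (and to what order) at each type of point, and identify the tangent cone with the lowest-degree homogeneous part of the local Taylor expansion, using Remark~\ref{rem:TCcubic} and Lemma~\ref{lem:cubiccontainingCiCj} to recognize the linear parts as the tangent planes $T_pQ_{i+j+3}$. The only cosmetic differences are that in (iii) you obtain $T_pQ_{i+k+3}=T_pQ_{j+k+3}=\left\langle T_pC_k, r_k\right\rangle$ directly from the transversality of the chord, where the paper argues by contradiction from the singularity of $C_k\cup r_k$ at $p$, and that you make explicit the genericity checks (nonvanishing of the cubic coefficient in (i), of $f_{i+j+3}(b_{ij})$ in (iv), of the $f_m(p)$ in (v)) which the paper leaves implicit.
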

\begin{proof}
We may assume that $P$ has equation
$g_6g_7f_{8}+g_6g_8f_{7}+g_7g_8f_{6}=0$ for smooth irreducible $f_{i+j+3}\in H^0(\mathbb{P}^3, \mathcal{I}_{C_i\cup C_j | \mathbb{P}^3}(3))$ for $1\le i<j\le 3$ (see Lemmas~\ref{lem:equationP},~\ref{lem:cubiccontainingCiCj}). 
Let $p$ be a point of $P$ and let us consider an open affine set $U_p\cong \mathbb{A}^3 \subset \mathbb{P}^3$ containing $p$. By abuse of notation, let us denote by $F:=g_6g_7f_{8}+g_6g_8f_{7}+g_7g_8f_{6}$ the polynomial of degree $7$ defining $P\cap U_p$. In order to compute the tangent cone to $P$ at the point $p$, we have to take the minimal degree homogeneous part of the Taylor series of $F$ at $p$. In the following, if $h$ is a polynomial, then $h_d(p)$ will denote the homogeneous part of degree $d$ of the Taylor series of $h$ at $p$. By using this notation, if $p$ is a point of the quadric $Q_{i+j+3}$, we have that $T_pQ_{i+j+3} = \{g_{i+j+3,1}(p)=0\}$, for $1\le i<j\le 3$. Let us study $TC_p P$ case by case.
\begin{itemize}
\item[(i)] Let us take $p\in\{q_1, \dots , q_5\}$. Then $TC_{p}P$ has equation 
$g_{6,1}(p)g_{7,1}(p)f_{8,1}(p)+g_{6,1}(p)g_{8,1}(p)f_{7,1}(p)+g_{7,1}(p)g_{8,1}(p)f_{6,1}(p)=0$, where $\{f_{8,1}(p)=0\}=\left\langle T_pC_2, T_pC_3 \right\rangle$, $\{f_{7,1}(p)=0\}=\left\langle T_pC_1, T_pC_3 \right\rangle$, $\{f_{6,1}(p)=0\}=\left\langle T_pC_1, T_pC_2 \right\rangle$ (see Remark~\ref{rem:TCcubic}). So we obtain $TC_pP = \bigcup_{1\le i<j\le 3} T_p Q_{i+j+3}$.
\item[(ii)] Let us take $p\in C_k$ such that $p\not\in \{q_1, \dots , q_5\}$, $p\not\in r_k\cap C_k$ and $p\ne b_{ij}$ for distinct indices $i,j,k\in\{1,2,3\}$ with $i<j$. Let us suppose $k=1$. Then $TC_{p}P$ has equation $c_1g_{6,1}(p)g_{7,1}(p)+c_2g_{6,1}(p)f_{7,1}(p)+c_3g_{7,1}(p)f_{6,1}(p)=0$, where $c_1$, $c_2$, $c_3$ are constants (depending on the choice of $p$ and $P$). Since $\{g_{6,1}(p)g_{7,1}(p)=0\}$, $\{g_{6,1}(p)f_{7,1}(p)=0\}$ and $\{g_{7,1}(p)f_{6,1}(p)=0\}$ are three reducible quadric surfaces given by two planes containing the line $T_pC_{1}$, then $TC_pP$ is singular along $T_pC_1$ and so it is the union of two planes containing $T_pC_{1}$. Similarly for $k=2,3$.
\item[(iii)] Let us take $p\in r_k\cap C_k$ for $1\le k\le 3$. Let us suppose $k=1$. Then $TC_{p}P$ has equation $c_1g_{6,1}(p)g_{7,1}(p)+c_2g_{6,1}(p)f_{7,1}(p)+c_3g_{7,1}(p)f_{6,1}(p)=0$, where $c_1$, $c_2$, $c_3$ are constants (depending on the choice of $p$ and $P$) and where $\{g_{6,1}(p)=0\}=T_pQ_6\supset T_pC_1\cup r_1$, $\{g_{7,1}(p)=0\}=T_pQ_7\supset T_pC_1\cup r_1$, $\{f_{68,1}(p)=0\}\supset T_pC_1$ and $\{f_{78,1}(p)=0\}\supset T_pC_1$. In this case we also have $T_pQ_6 = T_pQ_7$, otherwise it would be 
$$1\le \dim T_p(Q_6\cap Q_7) \le \dim (T_pQ_6 \cap T_pQ_7) = 1$$ 
and $Q_6\cap Q_7 = r_1\cup C_1$ would be smooth at $p\in r_1\cap C_1$, which is a contradiction. Thus, $T_pP$ is the union of the plane $T_pQ_6(=TpQ_7)$, which contains $T_pC_1$ and $r_1$, and a plane containing $T_pC_1$. Similarly for $k=2,3$.
\item[(iv)] Let us take $p\in r_i\cap r_j\cap C_k$ for three distinct indices $i,j,k\in\{1,2,3\}$ and $i<j$. Let us suppose $i=1$, $j=2$, $k=3$. Then $TC_{p}P$ has equation $g_{7,1}(p)g_{8,1}(p)=0$, where $\{g_{7,1}(p)=0\}=T_pQ_7\supset T_pC_3\cup r_1$ and 

$\{g_{8,1}(p)=0\}\supset T_pC_3\cup r_2$. Thus, $TC_pP$ is the union of $T_pQ_7\cup T_pQ_8$. Similarly by taking $(i,j,k)\in \{(1,3,2),(2,3,1)\}$.
\item[(v)] Let us take $p\in r_k$ with $p\not\in r_k\cap C_k$ and $p\ne r_k\cap r_i$ for $1\le i,k\le 3$ and $i\ne k$. Let us suppose $k=1$. Then $TC_{p}P$ has equation $c_1g_{6,1}(p)+c_2g_{7,1}(p)=0$ where $c_1$ and $c_2$ are constants depending on the choice of $p$ and $P$. Similarly for $k=2,3$. 
\end{itemize}
\end{proof}

\begin{lemma}\label{lem:birationalitanu6}
The rational map $\nu_{\mathcal{P}} : \mathbb{P}^{3} \dashrightarrow \mathbb{P}^{6}$ defined by $\mathcal{P}$ is birational onto the image.
\end{lemma}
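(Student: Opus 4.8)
The plan is to mirror the proofs of Lemmas~\ref{lem:birationalitanu13},~\ref{lem:birationalitanu9} and~\ref{lem:birationalitanu7}. First I would reduce the claim to the behaviour of $\mathcal{P}$ on a general member: it suffices to show that $\nu_{\mathcal{P}}|_{P} : P \dashrightarrow \nu_{\mathcal{P}}(P)$ is birational for a general $P \in \mathcal{P}$. Indeed, a general point of the image lies on a general hyperplane section $\nu_{\mathcal{P}}(P)$, whose preimage is exactly $P$; hence the generic fibre of $\nu_{\mathcal{P}}$ coincides with the generic fibre of $\nu_{\mathcal{P}}|_{P}$, so the two maps have the same degree and birationality of the latter yields birationality of the former.

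To establish birationality on a general $P$, I would exhibit a convenient sublinear system $\overline{\mathcal{P}} \subset \mathcal{P}$. Since the members of $\mathcal{P}$ are septics and the three quadrics $Q_6$, $Q_7$, $Q_8$ have total degree $6$, the natural choice is the sublinear system of those septics containing $Q_6 \cup Q_7 \cup Q_8$. By the proof of Proposition~\ref{prop:dimP=6}, every such septic is of the form $Q_6 \cup Q_7 \cup Q_8 \cup \pi$ with $\pi$ a plane of $\mathbb{P}^3$; thus $\overline{\mathcal{P}}$ has fixed part $Q_6 \cup Q_7 \cup Q_8$ and movable part $|\mathcal{O}_{\mathbb{P}^3}(1)|$. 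Restricting to a general $P$, the fixed divisor $(Q_6 \cup Q_7 \cup Q_8) \cap P$ is common to all members, so the movable part of $\overline{\mathcal{P}}|_{P}$ coincides with the system of plane sections $|\mathcal{O}_{P}(1)|$. The latter defines the inclusion $P \hookrightarrow \mathbb{P}^3$, which is trivially birational onto its image. As $\overline{\mathcal{P}}|_{P} \subseteq \mathcal{P}|_{P}$, the generic fibres of $\nu_{\mathcal{P}}|_{P}$ are contained in those of the plane-section map and hence are singletons, so $\nu_{\mathcal{P}}|_{P}$ is birational.

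I do not anticipate a real obstacle, the argument being entirely parallel to the three earlier cases, with the sextic $Q_6 \cup Q_7 \cup Q_8$ now playing the role of the tetrahedron or of the two trihedra. The only step that is not purely formal is the verification that containing $Q_6 \cup Q_7 \cup Q_8$ forces a member of $\mathcal{P}$ to split off exactly one plane; but this is precisely what the proof of Proposition~\ref{prop:dimP=6} already supplies, so once that computation is invoked the birationality of the plane-section subsystem, and therefore of $\nu_{\mathcal{P}}$, is immediate.
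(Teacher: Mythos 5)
Your proposal is correct and follows essentially the same route as the paper: reduce to a general member $P\in\mathcal{P}$, then use the sublinear system $\overline{\mathcal{P}}\subset\mathcal{P}$ with fixed part $Q_6\cup Q_7\cup Q_8$ whose movable restriction to $P$ is the system of plane sections, which is birational. Your additional justifications (the reduction to a general member via generic fibres, and the appeal to the proof of Proposition~\ref{prop:dimP=6} to identify the residual system as $|\mathcal{O}_{\mathbb{P}^3}(1)|$) only make explicit what the paper leaves implicit.
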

\begin{proof}
It is sufficient to prove that the map defined by $\mathcal{P}$ on a general $P \in \mathcal{P}$ is birational onto the image. 
This actually happens because $\mathcal{P}|_{P}$ contains a sublinear system that defines a birational map. Indeed, $\mathcal{P}$ contains a sublinear system $\overline{\mathcal{P}} \subset \mathcal{P}$ 
whose fixed part is given by $Q_6\cup Q_7 \cup Q_8$ and such that $\overline{\mathcal{P}}|_{P}$ coincides with the linear system on $P$ cut out by the planes of $\mathbb{P}^3$.
\end{proof} 

\begin{remark}\label{rem:veryample6}
The proof of Lemma~\ref{lem:birationalitanu6} tells us that the linear system $\mathcal{P}$ is very ample outside the three quadric surfaces $Q_6$, $Q_7$, $Q_8$. So $\nu_{\mathcal{P}} : \mathbb{P}^{3} \dashrightarrow  \nu_{\mathcal{P}}(\mathbb{P}^3)\subset \mathbb{P}^{6}$ is an isomorphism outside $Q_6\cup Q_7\cup Q_8$.
\end{remark}

\begin{theorem}\cite[\S 3]{Fa38}\label{thm:WF6 isEF}
The image of $\mathbb{P}^{3}$ via the rational map defined by $\mathcal{P}$ is an Enriques-Fano threefold $W_{F}^{6}$ of genus $p=6$.
\end{theorem}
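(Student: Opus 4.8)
The plan is to follow the strategy of Theorems~\ref{thm:WF13 isEF},~\ref{thm:WF9 isEF} and~\ref{thm:WF7 isEF}: resolve the base locus of $\mathcal{P}$ by a sequence of blow-ups until one reaches a smooth rational threefold $Y$ carrying a base point free linear system $\widetilde{\mathcal{P}}$. By Lemma~\ref{lem:birationalitanu6}, $\widetilde{\mathcal{P}}$ then defines a morphism $\nu_{\widetilde{\mathcal{P}}} : Y \to W_F^6 \subset \mathbb{P}^6$ which is birational onto its image. To conclude that $W_F^6$ is an Enriques-Fano threefold of genus $6$ it remains to check three things: the general hyperplane section $S$ is an Enriques surface, $W_F^6$ is not a cone over $S$, and $\widetilde{P}^3 = 10 = 2p-2$, which forces $p=6$.

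First I would build the resolution, guided by the local description of the tangent cones in Proposition~\ref{prop:variabeleTC6}. The base locus of $\mathcal{P}$ consists of the three twisted cubics $C_1,C_2,C_3$ (the double locus of a general $P$), the three chords $r_1,r_2,r_3$, and the finitely many points where the tangent cone degenerates: the five triple points $q_1,\dots,q_5$, the chord endpoints $a_i',a_i''\in r_i\cap C_i$, and the three points $b_{ij}=r_i\cap r_j$. Accordingly I would first blow up $\mathbb{P}^3$ at these points, obtaining $bl':Y'\to\mathbb{P}^3$; then blow up the strict transforms of $C_1,C_2,C_3$ and $r_1,r_2,r_3$ via $bl'':Y''\to Y'$; and finally blow up the residual base curves cut out on the exceptional divisors by the strict transforms of the quadrics $Q_6,Q_7,Q_8$ and of the tangent planes appearing in Proposition~\ref{prop:variabeleTC6}, via $bl''':Y\to Y''$. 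As in the earlier sections, each divisor exceptional over a curve is an $\mathbb{F}_0$-bundle whose self-intersection is read off from the relevant normal bundle; the computation for the $C_i$ is the analogue of Remark~\ref{rem:degNormaldelta}, since the $C_i$ are twisted cubics rather than lines, so $\deg\mathcal{N}_{\widetilde{C}_i|Y'}$ must be obtained from the presentation of $C_i$ as a residual intersection of two of the quadrics.

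For the Enriques surface statement I would argue as in Theorem~\ref{thm:S9isEnriques}, since the septics of $\mathcal{P}$ are not classical Enriques sextics and so the reference used for $W_F^{13}$ and $W_F^7$ does not directly apply. The strict transform $\widetilde{P}$ of a general $P\in\mathcal{P}$ is smooth, being the blow-up of a surface with ordinary singularities along the $C_i$ (cf. Definition~\ref{def:ordinarysingularities} and Proposition~\ref{prop:variabeleTC6}), and one computes $p_a(\widetilde{P})=0$ from the adjunction sequence on $Y''$ as in Proposition~\ref{prop:tildeKSmoothePa0}. Then $q(\widetilde{P})=p_g(\widetilde{P})=0$ follows from Serre duality and Kawamata-Viehweg vanishing, since $\widetilde{P}$ is big and nef on the rational threefold $Y$. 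The crucial point is $2K_S\sim 0$: writing $K_Y$ and $\widetilde{P}$ in terms of the pullback of $H$ and the exceptional divisors and restricting $2(K_Y+\widetilde{P})$ to $\widetilde{P}$ by adjunction, one must recognize $2K_{\widetilde{P}}$ as a sum of the classes of the $(-1)$-curves contracted by $\nu_{\widetilde{\mathcal{P}}}|_{\widetilde{P}}$, which then push forward to zero on $S$. I expect this to be the main obstacle, since the bookkeeping is heavier than in the genus $9$ case: the double locus now comprises curves of higher degree with a richer pattern of special points, so producing the analogue of the relation $(\widetilde{T}+\widetilde{T}')|_{\widetilde{K}}\sim 0$ of Remark~\ref{rem:uniqueTT'} will require care. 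The natural candidate here is the strict transform of the sextic $Q_6\cup Q_7\cup Q_8$, whose pairwise intersections are precisely the curves $C_i\cup r_i$.

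Finally, non-coneness follows formally as in Theorem~\ref{thm:WF13moderno}: $Y$ is rational, hence so is $W_F^6$, and were it a cone over $S$ it would be birational to $S\times\mathbb{P}^1$, making $S$ unirational and contradicting that $S$ is an Enriques surface. The degree $\widetilde{P}^3=10$ can be obtained either by accumulating the intersection numbers from the normal-bundle and exceptional-divisor computations above or, as for $W_F^7$, by a direct check in Macaulay2; this gives $p=\widetilde{P}^3/2+1=6$ and completes the proof.
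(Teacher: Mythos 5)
Your proposal follows essentially the same route as the paper: the same three-stage resolution (the fourteen points $q_h$, $a_i'$, $a_i''$, $b_{ij}$, then the cubics and chords, then the residual lines on the exceptional divisors), the same computation of $q=p_g=0$ via $p_a(\widetilde{P})=0$ and Kawamata--Viehweg, the same identification of $\widetilde{Q}_6+\widetilde{Q}_7+\widetilde{Q}_8$ restricted to $\widetilde{P}$ as the key relation forcing $2K_S\sim 0$, and the same non-coneness and degree arguments. The only cosmetic deviation is that the paper obtains $\deg\mathcal{N}_{\widetilde{C}_i|Y'}$ from $\mathcal{N}_{C_i|\mathbb{P}^3}\cong\mathcal{O}_{\mathbb{P}^1}(5)\oplus\mathcal{O}_{\mathbb{P}^1}(5)$ via a citation rather than from the residual-intersection presentation, and it computes $\widetilde{P}^3=10$ purely from the intersection numbers rather than computationally.
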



\begin{proof}

We will prove the theorem by using the same techniques of the proof of Theorems~\ref{thm:WF13 isEF},~\ref{thm:WF9 isEF}. In particular the proof is divided into several steps, given by the Remarks~\ref{rem:betap6},~\ref{rem:selfalphap6}, the Proposition~\ref{prop:tildePSmoothePa0}, the Remarks~\ref{rem:-1curvesP},$\dots$,~\ref{rem:uniqueQQQp6} and the Theorem~\ref{thm:S6isEnriques} below.

First we blow-up $\mathbb{P}^3$ at the five points $q_1$, $q_2$, $q_3$, $q_4$, $q_5$, at the six points $a_1'$, $a_2'$, $a_3'$, $a_1''$, $a_2''$, $a_3''$ and at the three points $b_{12}$, $b_{13}$, $b_{23}$. We obtain a smooth threefold $Y'$ and a birational morphism $bl' : Y '\to \mathbb{P}^{3}$ with exceptional divisors $E_h := (bl')^{-1}(q_h)$, $E_{ij} = (bl')^{-1}(b_{ij})$, $E_i' := (bl')^{-1}(a_i')$, $E_i'' := (bl')^{-1}(a_i'')$, 
for $1\le h \le 5$ and $1\le i<j\le 3$.
Let $\mathcal{P}'$ be the strict transform of $\mathcal{P}$ and let us denote by $H$ the pullback on $Y'$ of the hyperplane class on $\mathbb{P}^{3}$. Then an 
element of $\mathcal{P}'$ is linearly equivalent to $7H-3\sum_{h=0}^{5}E_i-2\sum_{i=1}^3(E_{i}'+E_i'')-2\sum_{1\le i<j\le 3}E_{ij}$.
Let $\widetilde{t}_{hij}$, $\widetilde{\pi}_{a_i'}$, $\widetilde{\pi}_{a_i''}$, $\widetilde{\pi}_{ij,i}$, $\widetilde{\pi}_{ij,j}$ be the strict transforms of the planes defined in Remark~\ref{rem:TCcubic} and Proposition~\ref{prop:variabeleTC6}, for $1\le h \le 5$ and $1\le i<j\le 3$. Let us consider the following $27$ lines on $Y'$:
$$\gamma_{hij}:=E_h\cap \widetilde{t}_{hij},\,\, \lambda_{ij,i}:=E_{ij}\cap \widetilde{\pi}_{ij,i},\,\, \lambda_{ij,j}:=E_{ij}\cap \widetilde{\pi}_{ij,j},\,\, \lambda_{i}'= E_i' \cap \pi_{a_i'},\,\, \lambda_i'' = E_i'' \cap \pi_{a_i''}.$$ 
They are respectively $(-1)$-curves on $\widetilde{t}_{hij}$, $\widetilde{\pi}_{ij,i}$, $\widetilde{\pi}_{ij,j}$, $\widetilde{\pi}_{a_i'}$ and $\widetilde{\pi}_{a_i''}$. Let $P'$ be the strict transform of a general $P\in\mathcal{P}$. By Proposition~\ref{prop:variabeleTC6} (i) and (iv) we have that $P'\cap E_{h}=\bigcup_{\le i<j \le 3}\gamma_{hij}$ and $P'\cap E_{ij}=\lambda_{ij,i}\cup \lambda_{ij,j}$, for all $1\le h \le 5$ and $1\le i< j\le 3$.

\begin{remark}\label{rem:betap6}
Let us fix $1\le i \le 3$. We have that $P'\cap E_{i}' = \lambda_{i}' \cup \beta_{i,P}'$ and $P'\cap E_i'' = \lambda_i'' \cup \beta_{i,P}''$, where the curve $\beta_{i,P}'$ moves in the pencil of the lines of $E_{i}'$ through the point $E_{i}'\cap \widetilde{C}_{i}$ and the curve $\beta_{i,P}''$ moves in the pencil of the lines of $E_{i}''$ through the point $E_{i}''\cap \widetilde{C}_{i}$, and both lines depend on the choice of $P$ (see Proposition~\ref{prop:variabeleTC6} (iii)).
\end{remark}

Let us take the strict transforms $\widetilde{C}_{i}$ and $\widetilde{r}_i$ of the three twisted cubics and of their chords, for $1\le i \le 3$. The base locus of $\mathcal{P}'$ is given by the union of the three curves $\widetilde{C}_{i}$ (along which a general $P'\in\mathcal{P}'$ has double points), of the three curves $\widetilde{r}_i$, and of the $27$ curves $\gamma_{hij}$, $\lambda_{ij,i}$, $\lambda_{ij,j}$, $\lambda_i'$, $\lambda_{i}''$ defined above (see Proposition~\ref{prop:variabeleTC6}). 
Let us blow-up $Y'$ along the strict transforms of the three twisted cubics and of their chords. We obtain a smooth threefold $Y''$ and a birational morphism $bl'' : Y'' \to Y'$ with exceptional divisors 
$(bl'')^{-1}(\widetilde{C}_i):=F_i\cong \mathbb{P}(\mathcal{N}_{\widetilde{C}_i | Y'} )\cong \mathbb{P}( \mathcal{O}_{\mathbb{P}^1}(-3)\oplus \mathcal{O}_{\mathbb{P}^1}(-3) )\cong \mathbb{F}_0$ and
$(bl'')^{-1}(\widetilde{r}_i):=R_i \cong \mathbb{P}(\mathcal{N}_{\widetilde{r}_{i}|Y'}) \cong \mathbb{P}(\mathcal{O}_{\mathbb{P}^1}(-3)\oplus \mathcal{O}_{\mathbb{P}^1}(-3))\cong \mathbb{F}_0$
for $1\le i \le 3$, since $\mathcal{N}_{C_i|\mathbb{P}^3}\cong \mathcal{O}_{\mathbb{P}^1}(5)\oplus \mathcal{O}_{\mathbb{P}^1}(5)$ (see \cite[Proposition 6]{EV81}). Let us denote by $\widetilde{E}_h$, $\widetilde{E}_i'$, $\widetilde{E}_i''$ and $\widetilde{E}_{ij}$ respectively the strict transforms of $E_{h}$, $E_{i}'$, $E_{i}''$, $E_{ij}$, for $1\le h \le 5$ and $0\le i<j\le 3$.

\begin{remark}\label{rem:selfalphap6}
Let us consider the curves $\alpha_{hi}:=\widetilde{E}_h\cap F_{i}$, $\alpha_{i}':=\widetilde{E}_{i}'\cap F_{i}$, $\alpha_{i}'':=\widetilde{E}_{i}''\cap F_{i}$, $\alpha_{ijk}:=\widetilde{E}_{ij}\cap F_{k}$, $\rho_{i}':=\widetilde{E}_{i}'\cap R_{i}$, $\rho_{i}'':=\widetilde{E}_{i}''\cap R_{i}$, $\rho_{ij,i}:=\widetilde{E}_{ij}\cap R_{i}$, $\rho_{ij,j}:=\widetilde{E}_{ij}\cap R_{j}$ for $1\le h \le 5$ and distinct indices $i,j,k\in\{1,2,3\}$ with $i<j$. Each of these curves is a fibre on the exceptional divisor of $bl'':Y''\to Y'$ which contains it, and is a $(-1)$-curve on the strict transform of the exceptional divisor of $bl: Y' \to \mathbb{P}^3$ containing it.
\end{remark}

Let $\mathcal{P}''$ be the strict transform of $\mathcal{P}'$. If $P''$ is an 
element of $\mathcal{P}''$, then 
$$P''\sim 7H-3\sum_{h=1}^5 \widetilde{E}_h-2\sum_{i=1}^3 (\widetilde{E}_{i}'+\widetilde{E}_i'')- 2\sum_{1\le i<j\le 3}\widetilde{E}_{ij}- 2\sum_{i=1}^3 F_{i}-\sum_{i=1}^3R_{i},$$ 
where, by abuse of notation, $H$ denotes the pullback $bl''^* H$.

\begin{proposition}\label{prop:tildePSmoothePa0}
A general element $P'' \in \mathcal{P}''$ is a smooth surface with zero arithmetic genus $p_a(P'')=0$.
\end{proposition}
\begin{proof}
The smoothness of $P''$ is shown in \cite[p.620-621]{GH}, since $P''$ is the blow-up of a surface $P\in \mathcal{P}$ with ordinary singularities along its singular curves (see Definition~\ref{def:ordinarysingularities} and Proposition~\ref{prop:variabeleTC6}). We have to compute $p_a(P'')=\chi ( \mathcal{O}_{Y''}(K_{Y''}+P''))$ (see proof of Proposition~\ref{prop:tildeKSmoothePa0}). Since
$$K_{Y''}\sim -4H+2\sum_{h=1}^5 \widetilde{E}_h+2\sum_{i=1}^3 (\widetilde{E}_{i}'+\widetilde{E}_i'')+ 2\sum_{1\le i<j\le 3}\widetilde{E}_{ij}+ \sum_{i=1}^3 F_{i}+\sum_{i=1}^3R_{i}$$
(see \cite[p.187]{GH}), 
then $K_{Y''}+P'' \sim 3H -\sum_{h=1}^5 \widetilde{E}_h - \sum_{i=1}^3 F_{i}$.
By denoting the fibre class of $F_{i}$ by $f_{i}$ for $i=1,2,3$, we have the following two exact sequences:
$$0 \to \mathcal{O}_{Y''}(3H-\sum_{h=1}^5\widetilde{E}_h) \to \mathcal{O}_{Y''}(3H) \to \oplus_{h=1}^5\mathcal{O}_{\widetilde{E}_h} \to 0,$$
$$0 \to \mathcal{O}_{Y''}(K_{Y''}+P'') \to \mathcal{O}_{Y''}(3H-\sum_{h=1}^5\widetilde{E}_h) \to \oplus_{i=1}^3\mathcal{O}_{F_{i}}(4f_i)\to 0.$$
So we obtain
$\chi (\mathcal{O}_{Y''}(K_{Y''}+P'')) = \binom{6}{3}-5-3\cdot 5=0.$
\end{proof}

By Proposition~\ref{prop:variabeleTC6},
we have that the base locus of $\mathcal{P}''$ is given by the disjoint union of the strict transforms $\widetilde{\gamma}_{hij}$, $\widetilde{\lambda}_{ij,i}$, $\widetilde{\lambda}_{ij,j}$, $\widetilde{\lambda}_{i}'$ and $\widetilde{\lambda}_{i}''$ of the 27 lines $\gamma_{hij}$, $\lambda_{ij,x}$, $\lambda_i'$, $\lambda_{i}''$ for $1\le h \le 5$ and $1\le i<j\le 3$. 

\begin{remark}\label{rem:-1curvesP}
We observe that 
$\widetilde{\gamma}_{hij}^2|_{\widetilde{E}_{h}}=\widetilde{\lambda}_{ij,i}^2|_{\widetilde{E}_{ij}}=\widetilde{\lambda}_{ij,j}^2|_{\widetilde{E}_{ij}}=\widetilde{\lambda}_{i}'^2|_{\widetilde{E}_{i}'}=\widetilde{\lambda}_{i}''^2|_{\widetilde{E}_{i}''}=-1$.
Furthermore, by using similar arguments to the ones in Remark~\ref{rem:-1curvesSigma} and Remark~\ref{rem:-1curvesX}, we have that the $27$ curves $\widetilde{\gamma}_{hij}$, $\widetilde{\lambda}_{ij,i}$, $\widetilde{\lambda}_{ij,j}$, $\widetilde{\lambda}_{i}'$ and $\widetilde{\lambda}_{i}''$ are $(-1)$-curves on the strict transform $P''$ of a general $P'\in\mathcal{P}'$. Moreover $P''$ contains other $(-1)$-curves that depend on $P''$ itself: they are the strict transforms $\widetilde{\beta}_{i,P}'$ and $\widetilde{\beta}_{i,P}''$ of the curves defined in Remark~\ref{rem:betap6}.
\end{remark}

Finally let us consider $bl''' : Y \to Y''$ the blow-up of $Y''$ along the above $27$ curves, with exceptional divisors $\Gamma_{hij}:=bl'''^{-1}(\widetilde{\gamma}_{hij})$, $\Lambda_{ij,i}:=bl'''^{-1}(\widetilde{\lambda}_{ij,i})$, $\Lambda_{ij,j}:=bl'''^{-1}(\widetilde{\lambda}_{ij,j})$, $\Lambda_{i}':=bl'''^{-1}(\widetilde{\lambda}_{i}')$, $\Lambda_{i}'':=bl'''^{-1}(\widetilde{\lambda}_{i}'')$ for $1\le h \le 5$ and $1\le i<j \le 3$. We denote by $\mathcal{E}_{h}$, $\mathcal{E}_{i}'$, $\mathcal{E}_i''$ and $\mathcal{E}_{ij}$ respectively the strict transform of $\widetilde{E}_h$, $\widetilde{E}_i'$, $\widetilde{E}_i''$ and $\widetilde{E}_{ij}$; by $\mathcal{F}_{i}$ the strict transform of $F_{i}$; by $\mathcal{R}_{i}$ the strict transform of $R_{i}$; by $\mathcal{H}$ the pullback of $H$.

\begin{remark}\label{rem:Gamma^3p6}
We have that
$$\Gamma_{hij}=\mathbb{P}(\mathcal{N}_{\widetilde{\gamma}_{hij}|Y''})\cong \mathbb{P}(\mathcal{O}_{\widetilde{\gamma}_{hij}}(\widetilde{E}_h)\oplus \mathcal{O}_{\widetilde{\gamma}_{hij}}(\widetilde{t}_{hij})) \cong \mathbb{P}(\mathcal{O}_{\mathbb{P}^1}(-1)\oplus \mathcal{O}_{\mathbb{P}^1}(-1))\cong \mathbb{F}_0,$$
$$\Lambda_{ij,i}=\mathbb{P}(\mathcal{N}_{\widetilde{\lambda}_{ij,i}|Y''})\cong \mathbb{P}(\mathcal{O}_{\widetilde{\lambda}_{ij,i}}(\widetilde{E}_{ij})\oplus \mathcal{O}_{\widetilde{\lambda}_{ij,i}}(\widetilde{\pi}_{ij,i})) \cong \mathbb{P}(\mathcal{O}_{\mathbb{P}^1}(-1)\oplus \mathcal{O}_{\mathbb{P}^1}(-1))\cong \mathbb{F}_0,$$
$$\Lambda_{ij,j}=\mathbb{P}(\mathcal{N}_{\widetilde{\lambda}_{ij,j}|Y''})\cong \mathbb{P}(\mathcal{O}_{\widetilde{\lambda}_{ij,j}}(\widetilde{E}_{ij})\oplus \mathcal{O}_{\widetilde{\lambda}_{ij,j}}(\widetilde{\pi}_{ij,j})) \cong \mathbb{P}(\mathcal{O}_{\mathbb{P}^1}(-1)\oplus \mathcal{O}_{\mathbb{P}^1}(-1))\cong \mathbb{F}_0,$$
$$\Lambda_{i}'=\mathbb{P}(\mathcal{N}_{\widetilde{\lambda}_{i}'|Y''})\cong \mathbb{P}(\mathcal{O}_{\widetilde{\lambda}_{i}'}(\widetilde{E}_{i}')\oplus \mathcal{O}_{\widetilde{\lambda}_{i}'}(\widetilde{\pi}_{a_i'})) \cong \mathbb{P}(\mathcal{O}_{\mathbb{P}^1}(-1)\oplus \mathcal{O}_{\mathbb{P}^1}(-1))\cong \mathbb{F}_0,$$
$$\Lambda_{i}''=\mathbb{P}(\mathcal{N}_{\widetilde{\lambda}_{i}''|Y''})\cong \mathbb{P}(\mathcal{O}_{\widetilde{\lambda}_{i}''}(\widetilde{E}_{i}'')\oplus \mathcal{O}_{\widetilde{\lambda}_{i}''}(\widetilde{\pi}_{a_i''})) \cong \mathbb{P}(\mathcal{O}_{\mathbb{P}^1}(-1)\oplus \mathcal{O}_{\mathbb{P}^1}(-1))\cong \mathbb{F}_0,$$
Furthermore, we have $\Gamma_{hij}^{3}=-\deg (\mathcal{N}_{\widetilde{\gamma}_{hij}|Y''})=2$, 
$\Lambda_{ij,i}^{3}=-\deg (\mathcal{N}_{\widetilde{\lambda}_{ij,i}|Y''})=2$, $\Lambda_{ij,j}^{3}=-\deg (\mathcal{N}_{\widetilde{\lambda}_{ij,j}|Y''})=2$, $\Lambda_{i}'^{3}=-\deg (\mathcal{N}_{\widetilde{\lambda}_{i}'|Y''})=2$, $\Lambda_{i}''^{3}=-\deg (\mathcal{N}_{\widetilde{\lambda}_{i}''|Y''})=2$ (see \cite[Chap 4, \S 6]{GH} and \cite[Lemma 2.2.14]{IsPro99}).
\end{remark}

\begin{remark}\label{rem:intersectionYp6}
Let us take $1\le h \le 5$ and distinct indices $i,j,k\in\{1,2,3\}$ with $i< j$. The divisor $\mathcal{F}_{k}$ intersects $\Gamma_{hst}$, $\Lambda_{k}'$, $\Lambda_{k}''$, $\Lambda_{ij,i}$, $\Lambda_{ij,j}$ each along a $\mathbb{P}^1$ which is a $(-1)$-curve on $\mathcal{F}_{i}$ and a fibre on $\Gamma_{hst}$, $\Lambda_{k}'$, $\Lambda_{k}''$, $\Lambda_{ij,i}$, $\Lambda_{ij,j}$, where $1\le s<t \le 3$ and $k\in\{s,t\}$. Similarly we have $\Lambda_{i}'^2\cdot\mathcal{R}_{i}=\Lambda_{i}''^2\cdot\mathcal{R}_{i}=\Lambda_{ij,i}^2\cdot\mathcal{R}_{i}=\Lambda_{ij,j}^2\cdot\mathcal{R}_{j}=-1$ and $\Lambda_{i}'\cdot\mathcal{R}_{i}^2=\Lambda_{i}''\cdot\mathcal{R}_{i}^2=\Lambda_{ij,i}\cdot\mathcal{R}_{i}^2=\Lambda_{ij,j}^2\cdot\mathcal{R}_{j}=0$.
Let us consider the strict transforms $\widetilde{\alpha}_{hi}$, $\widetilde{\alpha}_{ijk}$, $\widetilde{\alpha}_{i}'$, $\widetilde{\alpha}_{i}''$, $\widetilde{\rho}_{ij,i}$, $\widetilde{\rho}_{ij,j}$, $\widetilde{\rho}_{i}'$, $\widetilde{\rho}_{i}''$ of the curves defined in Remark~\ref{rem:selfalphap6}. Then we have 
$$\widetilde{\alpha}_{hi}^2|_{\mathcal{E}_h}=\mathcal{F}_{i}^2\cdot \mathcal{E}_h=-1, \quad \widetilde{\alpha}_{hi}^2|_{\mathcal{F}_{i}}=\mathcal{E}_h^2\cdot \mathcal{F}_{i}=-2,$$ 
$$\widetilde{\alpha}_{ijk}^2|_{\mathcal{E}_{ij}}={\mathcal{F}_{k}}^2\cdot \mathcal{E}_{ij}=-1, \quad
\widetilde{\alpha}_{ijk}^2|_{\mathcal{F}_{k}}=\mathcal{E}_{ij}^2\cdot \mathcal{F}_{k}=-2,$$
$$\widetilde{\alpha}_{i}'^2|_{\mathcal{E}_{i}'}=\mathcal{F}_{i}^2\cdot\mathcal{E}_{i}'=-1,\quad \widetilde{\alpha}_{i}'^2|_{\mathcal{F}_{i}^2}=\mathcal{E}_{i}'^2\cdot \mathcal{F}_{i}=-1,$$
$$\widetilde{\alpha}_{i}''^2|_{\mathcal{E}_{i}''}=\mathcal{F}_{i}^2\cdot\mathcal{E}_{i}''=-1, \quad \widetilde{\alpha}_{i}''^2|_{\mathcal{F}_{i}^2}=\mathcal{E}_{i}''^2\cdot \mathcal{F}_{i}=-1,$$
$$\widetilde{\rho}_{ij,i}^2|_{\mathcal{E}_{ij}}={\mathcal{R}_{i}}^2\cdot \mathcal{E}_{ij}=-1, \quad \widetilde{\rho}_{ij,j}^2|_{\mathcal{E}_{ij}}={\mathcal{R}_{j}}^2\cdot \mathcal{E}_{ij}=-1,$$
$$\widetilde{\rho}_{ij,i}^2|_{\mathcal{R}_{i}}=\mathcal{E}_{ij}^2\cdot \mathcal{R}_{i}=-1, \quad \widetilde{\rho}_{ij,j}^2|_{\mathcal{R}_{j}}=\mathcal{E}_{ij}^2\cdot \mathcal{R}_{j}=-1,$$
$$\widetilde{\rho}_{i}'^2|_{\mathcal{E}_{i}'}=\mathcal{R}_{i}^2\cdot\mathcal{E}_{i}'=-1, \quad \widetilde{\rho}_{i}'^2|_{\mathcal{R}_{i}}=\mathcal{E}_{i}'^2\cdot \mathcal{R}_{i}=-1,$$
$$\widetilde{\rho}_{i}''^2|_{\mathcal{E}_{i}''}=\mathcal{R}_{i}^2\cdot\mathcal{E}_{i}''=-1, \quad \widetilde{\rho}_{i}''^2|_{\mathcal{R}_{i}}=\mathcal{E}_{i}''^2\cdot \mathcal{R}_{i}=-1.$$
Finally we recall that a general line of $\mathbb{P}^3$ does not intersect the three twisted cubics $C_1$, $C_2$, $C_3$ and their chords; instead a general plane of $\mathbb{P}^3$ intersects each twisted cubic at three points and each chord at one point. Hence we have $\mathcal{H}^2\cdot \mathcal{F}_{i}=\mathcal{H}^2\cdot \mathcal{R}_{i}=0$, $\mathcal{F}_{i}^2\cdot\mathcal{H}=-3$ and $\mathcal{R}_{i}^2\cdot\mathcal{H}=-1$.
\end{remark}

\begin{remark}\label{rem:Ek^3p6}
By construction we have
$${bl'''}^*(\widetilde{E}_h) = \mathcal{E}_h +\sum_{1\le i<j\le 3} \Gamma_{hij},
{bl'''}^*(\widetilde{E}_{ij}) = \mathcal{E}_{ij} + \Lambda_{ij,i}+\Lambda_{ij,j},$$ 
$${bl'''}^*(\widetilde{E}_{i}') = \mathcal{E}_{i}' + \Lambda_{i}', 
{bl'''}^*(\widetilde{E}_{i}'') = \mathcal{E}_{i}'' + \Lambda_{i}''.$$ 
By abuse of notation, we denote $\mathcal{E}_{h}\cap \Gamma_{hij}$, $\mathcal{E}_{ij}\cap \Lambda_{ij,x}$, $\mathcal{E}_{i}'\cap \Lambda_{i}'$ and $\mathcal{E}_{i}''\cap \Lambda_{i}''$ respectively  by $\widetilde{\gamma}_{hij}$, $\widetilde{\lambda}_{ij,i}$, $\widetilde{\lambda}_{ij,j}$, $\widetilde{\lambda}_i'$, $\widetilde{\lambda}_{i}''$ for $1\le h \le 5$ and $1\le i<j\le 3$. Let $\mathcal{L}_{h}$, $\mathcal{L}_{ij}$, $\mathcal{L}_i'$, $\mathcal{L}_{i}''$ be respectively the strict transform on $Y$ of a general line of $E_{h}$, $E_{ij}$, $E_{i}'$ and $E_{i}''$.   
By using similar arguments to the ones in Remark~\ref{rem:Ek^3p9} we obtain that $\mathcal{E}_{h}^3=4$, $\mathcal{E}_{ij}^3=3$ and $\mathcal{E}_i'^3 = \mathcal{E}_i''^3 = 2$, since we have
$\mathcal{E}_h|_{\mathcal{E}_h} \sim - ( \mathcal{L}_h + \sum_{1\le i<j \le 3}\widetilde{\gamma}_{hij} ) \sim -(4\mathcal{L}_h-2\sum_{i=1}^3\widetilde{\alpha}_{hi}),$
$\mathcal{E}_{ij}|_{\mathcal{E}_{ij}} \sim - ( \mathcal{L}_{ij} + \widetilde{\lambda}_{ij,i}+ \widetilde{\lambda}_{ij,j}) \sim -(3\mathcal{L}_{ij}-2\widetilde{\alpha}_{ijk}-\widetilde{\rho}_{ij,i}-\widetilde{\rho}_{ij,j}),$
$\mathcal{E}_{i}'|_{\mathcal{E}_{i}'} \sim - ( \mathcal{L}_{i}' + \widetilde{\lambda}_{i}') \sim -(2\mathcal{L}_{i}'-\widetilde{\alpha}_{i}'-\widetilde{\rho}_{i}')$ and 
$\mathcal{E}_{i}''|_{\mathcal{E}_{i}''} \sim - ( \mathcal{L}_{i}'' + \widetilde{\lambda}_{i}'') \sim -(2\mathcal{L}_{i}''-\widetilde{\alpha}_{i}''-\widetilde{\rho}_{i}'').$
\end{remark}

\begin{remark}\label{rem:Fij^3p6}
By using similar arguments to the ones in Remark~\ref{rem:Fij^3p13} we have $\mathcal{F}_{i}^3=-\deg (\mathcal{N}_{\widetilde{C}_{i}|Y'})=6$ and $\mathcal{R}_{i}^3=-\deg (\mathcal{N}_{\widetilde{r}_i|Y'})=6$ for $1\le i \le 3$.
\end{remark}

Let $\widetilde{P}$ be the strict transform on $Y$ of an 
element of $\mathcal{P}''$: then
$$\mathcal{P}\sim 7\mathcal{H}-3\sum_{h=1}^5 \mathcal{E}_h-2\sum_{i=1}^3 (\mathcal{E}_{i}'+\mathcal{E}_i'')- 2\sum_{1\le i<j\le 3}\mathcal{E}_{ij}- 2\sum_{i=1}^3 \mathcal{F}_{i}-\sum_{i=1}^3\mathcal{R}_{i}+$$ 
$$-4\sum_{\substack{h=1 \\ 1\le i<j\le 3}}^5 \Gamma_{hij}-3\sum_{i=1}^3 (\Lambda_{i}'+\Lambda_i'')- 3\sum_{1\le i<j\le 3}(\Lambda_{ij,i}+\Lambda_{ij,j}).$$ 
Let us take the linear system $\widetilde{\mathcal{P}}:=|\mathcal{O}_{Y}(\widetilde{P})|$ on $Y$. It is base point free and it defines a morphism $\nu_{\widetilde{\mathcal{P}}}: Y \to \mathbb{P}^{6}$ birational onto the image $W_F^{6}:= \nu_{\widetilde{\mathcal{P}}} (Y)$, which is a threefold of degree $\deg W_F^{6} = 10$. It follows by Lemma~\ref{lem:birationalitanu6} and by the fact that $\widetilde{P}^3=10$ (use Remarks~\ref{rem:Gamma^3p6},~\ref{rem:intersectionYp6},~\ref{rem:Ek^3p6},~\ref{rem:Fij^3p6}). Then we have the following diagram:

$$\begin{tikzcd}
Y \arrow[d, "bl'''"] \arrow[drrr, "\nu_{\widetilde{\mathcal{P}}}"] & & & \\
Y''  \arrow{r}{bl''} & Y' \arrow{r}{bl'} & \mathbb{P}^3 \arrow[dashrightarrow]{r}{\nu_{\mathcal{P}}} & W_F^{6} \subset \mathbb{P}^{6}.
\end{tikzcd}$$

It remains to show that the general hyperplane section of the threefold $W_F^6$ is an Enriques surface. 

\begin{remark}\label{rem:P|E=0}
Let $\widetilde{Q}_6$, $\widetilde{Q}_7$ and $\widetilde{Q}_8$ be the strict transforms on $Y$ of the quadric surfaces $Q_6$, $Q_7$, $Q_8$. By construction we have $\widetilde{P}\cdot \mathcal{E}_h = \widetilde{P}\cdot \mathcal{E}_{ij} = \widetilde{P}\cdot \widetilde{Q}_{i+j+3} = 0$ for a general $\widetilde{P}\in\widetilde{\mathcal{P}}$ and for all $1\le h \le 5$ and $1\le i<j \le 3$. 
\end{remark} 

\begin{remark}\label{rem:nu6blowdown}
The 27 exceptional divisors of $bl''': Y \to Y''$, the six divisors $\mathcal{E}_{i}$ and $\mathcal{E}_i'$, and the three divisors $\mathcal{R}_{i}$ are contracted by the morphism $\nu_{\widetilde{\mathcal{P}}} : Y \to W_{F}^{6} \subset \mathbb{P}^{6}$ to curves of $W_{F}^{6}$. This follows by the fact that $\widetilde{P}\cdot \Gamma_{hij}$, $\widetilde{P}\cdot \Lambda_{ij,i}$, $\widetilde{P}\cdot \Lambda_{ij,j}$, $\widetilde{P}\cdot \Lambda_{i}'$, $\widetilde{P}\cdot \Lambda_{i}''$, $\widetilde{P}\cdot \mathcal{E}_{i}'$, $\widetilde{P}\cdot \mathcal{E}_{i}''$, $\widetilde{P}\cdot \mathcal{R}_{i}\ne 0$ and $\widetilde{P}^2\cdot \Gamma_{hij} = \widetilde{P}^2\cdot \Lambda_{ij,i} = \widetilde{P}^2\cdot \Lambda_{ij,j} = \widetilde{P}^2\cdot \Lambda_{i}' = \widetilde{P}^2\cdot \Lambda_{i}'' = \widetilde{P}^2\cdot \mathcal{E}_{i}'= \widetilde{P}^2\cdot \mathcal{E}_{i}''= \widetilde{P}^2\cdot \mathcal{R}_{i}= 0$ for $1\le h \le 5$ and $1\le i<j\le 3$ (use Remarks~\ref{rem:Gamma^3p6},~\ref{rem:intersectionYp6}).
\end{remark}

\begin{remark}\label{rem:noContractionSpigoli6}
Let us fix $0\le i\le 3$ and let $\widetilde{P}$ be a general element of $\widetilde{\mathcal{P}}$. Since 
$\widetilde{P}^2\cdot \mathcal{F}_{i}=10>0$ (use Remarks~\ref{rem:intersectionYp6},~\ref{rem:Fij^3p6}), then the curve $\widetilde{P}\cap \mathcal{F}_{i}$ is not contracted by the rational map defined by $\widetilde{\mathcal{P}}|_{\widetilde{P}}$. 
\end{remark}

\begin{remark}\label{rem:-1curvenuove6}
Let us fix $1\le h \le 5$ and $1\le i<j\le 3$.
Let us consider a general element $\widetilde{P}\in \widetilde{\mathcal{P}}$ and let us take $S:=\nu_{\widetilde{\mathcal{P}}}(\widetilde{P})$ and $P'':=bl'''(\widetilde{P})\in\mathcal{P}''$. Since $bl''' : Y \to Y''$ has no effect on $P''$, then $\widetilde{P}\cap \Gamma_{hij}$, $\widetilde{P}\cap \Lambda_{ij,i}$, $\widetilde{P}\cap \Lambda_{ij,j}$, $\widetilde{P}\cap \Lambda_{i}'$, $\widetilde{P}\cap \Lambda_{i}''$, $\widetilde{P}\cap \mathcal{E}_{i}'\cong \widetilde{\beta}_{i,P}'$ and $\widetilde{P}\cap \mathcal{E}_{i}''\cong \widetilde{\beta}_{i,P}''$ are still $(-1)$-curves on $\widetilde{P}$ (see Remark~\ref{rem:-1curvesP}). 
We also have that $(\widetilde{P}\cap \mathcal{R}_{i})|_{\widetilde{P}}^2 = \mathcal{R}_{i}^2\cdot \widetilde{P} = -5$ (use Remarks~\ref{rem:intersectionYp6},~\ref{rem:Fij^3p6}). Furthermore, $\widetilde{P}\cap\mathcal{R}_{i}$ intersects the four curves $\widetilde{P}\cap \Lambda_{i}'$, $\widetilde{P}\cap \Lambda_{i}''$, $\widetilde{P}\cap \Lambda_{ij,i}$, $\widetilde{P}\cap \Lambda_{st,i}$ at one point each, for $1\le s<t \le 3$ and $i\in\{s,t\}$
(use Remark~\ref{rem:intersectionYp6}).
Thus, we can see the map $\nu_{\widetilde{\mathcal{P}}} |_{\widetilde{P}} : \widetilde{P} \to S$ as the blow-up of $S$ at the $21$ points $\nu_{\widetilde{\mathcal{P}}}(\widetilde{P}\cap \Gamma_{hij})$, $\nu_{\widetilde{\mathcal{P}}}(\widetilde{P}\cap \mathcal{E}_i')$, $\nu_{\widetilde{\mathcal{P}}}(\widetilde{P}\cap \mathcal{E}_i'')$, at the three points $\nu_{\widetilde{\mathcal{P}}}(\widetilde{P}\cap \mathcal{R}_{i})$ and at the four points $\nu_{\widetilde{\mathcal{P}}}(\widetilde{P}\cap \Lambda_{i}')$, $\nu_{\widetilde{\mathcal{P}}}(\widetilde{P}\cap \Lambda_{i}'')$, $\nu_{\widetilde{\mathcal{P}}}(\widetilde{P}\cap \Lambda_{ij,i})$, $\nu_{\widetilde{\mathcal{P}}}(\widetilde{P}\cap \Lambda_{st,i})$ which are infinitely near to each $\nu_{\widetilde{\mathcal{P}}}(\widetilde{P}\cap \mathcal{R}_{i})$ (see Remarks~\ref{rem:veryample6},~\ref{rem:P|E=0},~\ref{rem:nu6blowdown},~\ref{rem:noContractionSpigoli6}). Then $S$ is a smooth surface.
\end{remark}

\begin{remark}\label{rem:uniqueQQQp6}
The surface $Q_6\cup Q_7 \cup Q_8$ is the only sextic surface of $\mathbb{P}^3$ which is singular along the three twisted cubic $C_1$, $C_2$, $C_3$. Let us consider the strict transforms $\widetilde{Q}_6$, $\widetilde{Q}_7$ and $\widetilde{Q}_8$ on $Y$ of these quadric surfaces. Then we have
$$\widetilde{Q}_6+\widetilde{Q}_7+\widetilde{Q}_8\sim 6\mathcal{H}-\sum_{h=1}^5 3\mathcal{E}_h-\sum_{i=1}^3 2(\mathcal{E}_{i}'+\mathcal{E}_i'')-\sum_{1\le i<j \le 3} 3\mathcal{E}_{ij}- \sum_{i=1}^3 2\mathcal{F}_{i}-\sum_{i=1}^3 2\mathcal{R}_{i}+$$ 
$$-\sum_{\substack{h=1 \\ 1\le i<j\le 3}}^5 4\Gamma_{hij}-\sum_{i=1}^3 4(\Lambda_{i}'+\Lambda_i'')- \sum_{1\le i<j\le 3} 4(\Lambda_{ij,i}+\Lambda_{ij,j}).$$ 
If $\widetilde{P}$ is a general element of $\widetilde{\mathcal{P}}$, then  
\begin{center}
$0\sim (\widetilde{Q}_6+\widetilde{Q}_7+\widetilde{Q}_8)|_{\widetilde{P}} \sim \Big( 6\mathcal{H}-\sum_{i=1}^3 2(\mathcal{E}_{i}'+\mathcal{E}_i'')- \sum_{i=1}^3 2\mathcal{F}_{i}-\sum_{i=1}^3 2\mathcal{R}_{i}+$

$-4\sum_{\substack{h=1,\dots , 5 \\ 1\le i<j\le 3}} \Gamma_{hij}-4\sum_{i=1}^3 (\Lambda_{i}'+\Lambda_i'')- 4\sum_{1\le i<j\le 3}(\Lambda_{ij,i}+\Lambda_{ij,j}) \Big)|_{\widetilde{P}}.$
\end{center}
\end{remark}

\begin{theorem}\label{thm:S6isEnriques}
Let $S$ be a general hyperplane section of the threefold $W_F^{6}\subset \mathbb{P}^{6}$. Then $S$ is an Enriques surface.
\end{theorem} 
\begin{proof}
We recall that $S$ is the image of a general element $\widetilde{P}\in \widetilde{\mathcal{P}}$, via the birational morphism $\nu_{\widetilde{\mathcal{P}}} : Y \to W_{F}^{6} \subset \mathbb{P}^{6}$. Furthermore, $S$ is smooth (see Remark~\ref{rem:-1curvenuove6}).
By Proposition~\ref{prop:tildePSmoothePa0} and by using the arguments of Theorem~\ref{thm:S6isEnriques}, we have that $q(\widetilde{P})=p_g(\widetilde{P})=0$.
It remains to prove that $2K_{S} \sim 0$. Since by \cite[p.187]{GH} we have that
$$K_{Y} =  {bl'''}^*(K_{Y''})+\sum_{\substack{h=1 \\ 1\le i<j\le 3}}^5 \Gamma_{hij}+\sum_{i=1}^3 (\Lambda_{i}'+\Lambda_i'')+\sum_{1\le i<j\le 3}(\Lambda_{ij,i}+\Lambda_{ij,j}) \sim $$
$$\sim-4\mathcal{H}+\sum_{h=1}^5 2\mathcal{E}_h+\sum_{i=1}^3 2(\mathcal{E}_{i}'+\mathcal{E}_i'')+\sum_{1\le i<j \le 3} 2\mathcal{E}_{ij}+ \sum_{i=1}^3 \mathcal{F}_{i}+\sum_{i=1}^3 \mathcal{R}_{i}+$$ 
$$+3\sum_{\substack{h=1 \\ 1\le i<j\le 3}}^5 \Gamma_{hij}+3\sum_{i=1}^3 (\Lambda_{i}'+\Lambda_i'')+ 3\sum_{1\le i<j\le 3}(\Lambda_{ij,i}+\Lambda_{ij,j}),$$
then we obtain that
$2K_{\widetilde{P}} = 2(K_{Y}+\widetilde{P})|_{\widetilde{P}}\sim ( 6\mathcal{H}- \sum_{i=1}^3 2\mathcal{F}_{i}-2\sum_{\substack{h=1,\dots , 5 \\ 1\le i<j\le 3}} \Gamma_{hij} )|_{\widetilde{P}}$.
Furthermore, by Remark~\ref{rem:uniqueQQQp6}, we have that $2K_{\widetilde{P}}$ is linearly equivalent to
$$\Big(\sum_{i=1}^3 2(\mathcal{E}_{i}'+\mathcal{E}_i'')+\sum_{i=1}^3 2\mathcal{R}_{i}+\sum_{\substack{h=1 \\ 1\le i<j\le 3}}^5 2\Gamma_{hij} + \sum_{i=1}^3 4(\Lambda_{i}'+\Lambda_i'')+ \sum_{1\le i<j\le 3} 4(\Lambda_{ij,i}+\Lambda_{ij,j}) \Big)|_{\widetilde{P}}.$$
Finally, by Remark\ref{rem:-1curvenuove6}, we have
$2K_{S}\sim (\nu_{\widetilde{\mathcal{P}}})_{*}(2K_{\widetilde{P}})\sim 0$.
\end{proof}

One can prove that $W_F^{6}\subset \mathbb{P}^{6}$ is not a cone over a general hyperplane section, as in the proof of Theorem~\ref{thm:WF13moderno}. So $W_F^6\subset \mathbb{P}^6$ is an Enriques-Fano threefold of genus $p=\frac{S^3}{2}+1=\frac{\widetilde{P}^3}{2}+1=6$.
\end{proof}

It would be interesting to verify with modern techniques if the general hyperplane section of $W_{F}^6\subset \mathbb{P}^{6}$ actually is a Reye congruence, as stated by Fano in \cite[\S 3]{Fa38} (see also \cite[Proposition 3]{Co83}).


We recall that the divisors $\mathcal{E}_1$, $\mathcal{E}_2$, $\mathcal{E}_3$, $\mathcal{E}_4$, $\mathcal{E}_5$, $\widetilde{Q}_6$, $\widetilde{Q}_7$, $\widetilde{Q}_8$ are contracted by $\nu_{\widetilde{\mathcal{P}}} : Y \to W_F^{6} \subset \mathbb{P}^{6}$ to points of $W_F^{6}$ (see Remark~\ref{rem:P|E=0}).
Let us define $P_h : = \nu_{\widetilde{\mathcal{P}}}(\mathcal{E}_h)$ for $1\le h \le 5$ and $P_6 : = \nu_{\widetilde{\mathcal{P}}}(\widetilde{Q}_6)$, $P_7 : = \nu_{\widetilde{\mathcal{P}}}(\widetilde{Q}_7)$,  $P_8 : = \nu_{\widetilde{\mathcal{P}}}(\widetilde{Q}_8)$.

%
%

\begin{remark}\label{rem:3to3p6}
By Remark~\ref{rem:P|E=0} we have that $\nu_{\widetilde{\mathcal{P}}}(\mathcal{E}_{ij})$ is a point of $W_F^{6}$ for all $1\le i<j \le 3$. In particular we have $\nu_{\widetilde{\mathcal{P}}}(\mathcal{E}_{ij})= \nu_{\widetilde{\mathcal{P}}}(\widetilde{Q}_{i+j+3})$, since $\widetilde{Q}_{i+j+3}\cap \mathcal{E}_{ij}\ne \emptyset$. Indeed, one can verify that $\widetilde{Q}_{i+j+3}\cap \mathcal{E}_{ij}$ is the strict transform of the line of $E_{ij}$ joining the points $E_{ij}\cap \widetilde{r}_i$ and $E_{ij}\cap \widetilde{r}_j$.
\end{remark}

\begin{proposition}\label{prop:quadruplePointsp6}
The eight points $P_1, \dots , P_8$, defined as above, are quadruple points of $W_F^{6}$ whose tangent cone is a cone over a Veronese surface.
\end{proposition}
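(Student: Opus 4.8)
The plan is to realize the projectivized tangent cone $\mathbb{P}(TC_{P_m}W_F^6)$ as the image, under the linear system of hyperplane sections of $W_F^6\subset\mathbb{P}^6$ passing through $P_m$, of the divisors of $Y$ contracted to $P_m$; identifying this image with a Veronese surface proves at once that $P_m$ is a quadruple point (the Veronese has degree $4$) and that $TC_{P_m}W_F^6$ is a cone over a Veronese. The argument splits according to the two types of points, following respectively Proposition~\ref{prop:quadruplePointsp13} and Proposition~\ref{prop:quadruplePointsp9}.

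For the five points $P_h=\nu_{\widetilde{\mathcal{P}}}(\mathcal{E}_h)$, $1\le h\le 5$, I would argue exactly as in Proposition~\ref{prop:quadruplePointsp13}. The sublinear system $\widetilde{\mathcal{P}}-\mathcal{E}_h$ of divisors through $P_h$ restricts on $\mathcal{E}_h$ to $|{-}\mathcal{E}_h|_{\mathcal{E}_h}|$, which by Remark~\ref{rem:Ek^3p6} equals $|4\mathcal{L}_h-2\sum_{i=1}^3\widetilde{\alpha}_{hi}|$, the system of plane quartics with double points at the three distinct points in which $\widetilde{C}_1,\widetilde{C}_2,\widetilde{C}_3$ meet $E_h$ (distinctness by Remark~\ref{rem:TCcubic}). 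A quadratic transformation centred at these three points carries the system to $|\mathcal{O}_{\mathbb{P}^2}(2)|$, whose image is a Veronese surface; hence each $P_h$ is a quadruple point with tangent cone a cone over a Veronese.

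For the three points $P_{i+j+3}=\nu_{\widetilde{\mathcal{P}}}(\widetilde{Q}_{i+j+3})=\nu_{\widetilde{\mathcal{P}}}(\mathcal{E}_{ij})$ (see Remark~\ref{rem:3to3p6}) I would follow Proposition~\ref{prop:quadruplePointsp9}. The hyperplane sections through $P_{i+j+3}$ form the sublinear system $\widetilde{\mathcal{P}}_{i+j+3}:=\widetilde{\mathcal{P}}-\widetilde{Q}_{i+j+3}-\mathcal{E}_{ij}$, which I would restrict to each of the two contracted divisors. On $\mathcal{E}_{ij}$, combining $-\mathcal{E}_{ij}|_{\mathcal{E}_{ij}}\sim 3\mathcal{L}_{ij}-2\widetilde{\alpha}_{ijk}-\widetilde{\rho}_{ij,i}-\widetilde{\rho}_{ij,j}$ from Remark~\ref{rem:Ek^3p6} with $\widetilde{Q}_{i+j+3}|_{\mathcal{E}_{ij}}\sim\mathcal{L}_{ij}-\widetilde{\rho}_{ij,i}-\widetilde{\rho}_{ij,j}$ from Remark~\ref{rem:3to3p6} gives $\widetilde{\mathcal{P}}_{i+j+3}|_{\mathcal{E}_{ij}}\sim 2\mathcal{L}_{ij}-2\widetilde{\alpha}_{ijk}$, the conics with a double point at the $C_k$-direction, whose image is a conic. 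It then remains to restrict to $\widetilde{Q}_{i+j+3}$ and to show that its image is the full Veronese surface containing that conic.

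The hard part will be this last restriction. Unlike the genus $13$ and $9$ cases, the contracted divisor $\widetilde{Q}_{i+j+3}$ is not a blow-up of $\mathbb{P}^2$ but of the smooth quadric $Q_{i+j+3}\cong\mathbb{P}^1\times\mathbb{P}^1$: it is $Q_{i+j+3}$ blown up at the twelve centres of $bl'$ that lie on it, and it carries the curves $C_i,C_j,r_i,r_j$ together with the $\gamma$- and $\lambda$-lines blown up in $bl'',bl'''$. I would first compute the class of $\widetilde{Q}_{i+j+3}$ on $Y$ by tracking these multiplicities, using that $Q_{i+j+3}$ is smooth and passes with multiplicity one through the relevant points, and exploiting Remark~\ref{rem:intersectionYp6} and Remark~\ref{rem:uniqueQQQp6}, in order to extract $-\widetilde{Q}_{i+j+3}|_{\widetilde{Q}_{i+j+3}}$ and the contribution $\mathcal{E}_{ij}|_{\widetilde{Q}_{i+j+3}}$. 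The crux is then to exhibit an explicit birational morphism $\widetilde{Q}_{i+j+3}\to\mathbb{P}^2$ contracting the base-locus $(-1)$-curves, under which $\widetilde{\mathcal{P}}_{i+j+3}|_{\widetilde{Q}_{i+j+3}}$ becomes $|\mathcal{O}_{\mathbb{P}^2}(2)|$. Once this reduction is established, the image is a Veronese surface containing the conic coming from $\mathcal{E}_{ij}$, so $\mathbb{P}(TC_{P_{i+j+3}}W_F^6)$ is a Veronese and $P_{i+j+3}$ is a quadruple point, completing the proof.
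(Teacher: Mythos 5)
Your proposal follows essentially the same route as the paper: the five points $P_h$ are handled exactly as in Proposition~\ref{prop:quadruplePointsp13} via Remark~\ref{rem:Ek^3p6}, and for $P_6,P_7,P_8$ the paper likewise restricts $\widetilde{\mathcal{P}}_{ij}:=\widetilde{\mathcal{P}}-\widetilde{Q}_{i+j+3}-\mathcal{E}_{ij}$ to the two contracted divisors, obtaining $|\mathcal{O}_{\mathcal{E}_{ij}}(2\mathcal{L}_{ij}-2\widetilde{\alpha}_{ijk})|$ on $\mathcal{E}_{ij}$ (whose image is a conic, which must lie inside the Veronese since their union is $\mathbb{P}(TC_{P_{i+j+3}}W_F^6)$) and $|\mathcal{O}_{\widetilde{Q}_{i+j+3}}(2\mathcal{C}_{i+j+3}-2\epsilon_{ij})|$ on $\widetilde{Q}_{i+j+3}$, exactly as you compute. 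The step you flag as the crux is closed in the paper by explicitly writing out the class of $\widetilde{Q}_{i+j+3}$ on $Y$ to get $\widetilde{Q}_{i+j+3}|_{\widetilde{Q}_{i+j+3}}\sim -2\mathcal{C}_{i+j+3}+\epsilon_{ij}$, and then, rather than building a morphism $\widetilde{Q}_{i+j+3}\to\mathbb{P}^2$ directly, representing $Q_{i+j+3}$ as the image of the rational map $\psi:\mathbb{P}^2\dashrightarrow\mathbb{P}^3$ given by conics through two points $x_1,x_2$, so that the quadric sections nodal at $b_{ij}=r_i\cap r_j$ correspond to plane quartics nodal at $x_1,x_2$ and $x_3=\psi^{-1}(b_{ij})$, which a quadratic transformation carries to $|\mathcal{O}_{\mathbb{P}^2}(2)|$ --- precisely the birational reduction you anticipate.
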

\begin{proof}
The analysis of the points $P_1$, $P_2$, $P_3$, $P_4$ and $P_5$ follows by Remark~\ref{rem:Ek^3p6}, as in the proof of Proposition~\ref{prop:quadruplePointsp13}. Let us fix now three distinct indices $i,j,k\in\{1,2,3\}$ with $i<j$. 
The hyperplane sections of $W_F^{6}\subset \mathbb{P}^{6}$ passing through $P_{i+j+3}$ correspond to the elements of $\widetilde{\mathcal{P}}$ containing $\widetilde{Q}_{i+j+3}\cup \mathcal{E}_{ij}$ (see Remark~\ref{rem:3to3p6}). Let us take the sublinear system $\widetilde{\mathcal{P}}_{ij}:=\widetilde{\mathcal{P}}-\widetilde{Q}_{i+j+3}-\mathcal{E}_{ij}$ of $\widetilde{\mathcal{P}}$ defined by these elements. 
Let us study $\widetilde{\mathcal{P}}_{ij}|_{\widetilde{Q}_{i+j+3}} = |\mathcal{O}_{\widetilde{Q}_{i+j+3}}(-\widetilde{Q}_{i+j+3}-\mathcal{E}_{ij})|$. If we consider the case $(i,j)=(1,2)$, we have 
$$\widetilde{Q}_6\sim_{Y} 2\mathcal{H}-\sum_{h=1}^5 \mathcal{E}_h-\sum_{t=1,2} (\mathcal{E}_{t}'+\mathcal{E}_t'')-\sum_{1\le r<s \le 3} \mathcal{E}_{rs}- \sum_{t=1,2}\mathcal{F}_{t}-\sum_{t=1,2}\mathcal{R}_{t}+$$ 
$$-\sum_{h=1}^5 (2\Gamma_{h12}+\Gamma_{h13}+\Gamma_{h23})-\sum_{t=1,2} 2(\Lambda_{t}'+\Lambda_t'')- \sum_{1\le r<s\le 3}(\Lambda_{rs,r}+\Lambda_{rs,s}),$$ 
and so
$$\widetilde{Q}_6|_{\widetilde{Q}_6}\sim_{\widetilde{Q}_6} \Big( 2\mathcal{H}-\sum_{1\le r<s \le 3} \mathcal{E}_{rs}-\sum_{t=1,2} (\mathcal{F}_t+\mathcal{R}_t)-\sum_{h=1}^5 2\Gamma_{h12}-\sum_{t=1,2} 2(\Lambda_{t}'+\Lambda_t'') \Big)|_{\widetilde{Q}_6}.$$ 
Let $\mathcal{C}_6$ be the pullback on $\widetilde{Q}_6$ of the linear equivalence class of the hyperplane sections of $Q_6$. By abuse of notation, let us denote by $\widetilde{\gamma}_{h12}$, $\widetilde{\lambda}_{t}'$, $\widetilde{\lambda}_{t}''$ the $(-1)$-curves on $\widetilde{Q}_6$ given by $\Gamma_{h12}|_{\widetilde{Q}_6}$, $\Lambda_{t}'|_{\widetilde{Q}_6}$, $\Lambda_{t}''|_{\widetilde{Q}_6}$ for $1\le h\le 5$ and $t=1,2$. Let us also consider the $(-1)$-curves on $\widetilde{Q}_6$ defined by $\epsilon_{rs}:=\mathcal{E}_{rs}'|_{\widetilde{Q}_6}$ for $1\le r<s \le 3$.
Then
\begin{center}
$\widetilde{Q}_6|_{\widetilde{Q}_6}\sim_{\widetilde{Q}_6} 2\mathcal{C}_6-\sum_{1\le r<s \le 3} \epsilon_{rs}-(2\mathcal{C}_6-\sum_{h=1}^5 \widetilde{\gamma}_{h12}-\widetilde{\lambda}_{1}'-\widetilde{\lambda}_1''-\epsilon_{23}-\epsilon_{12})+$

$-(2\mathcal{C}_6-\sum_{h=1}^5 \widetilde{\gamma}_{h12}-\widetilde{\lambda}_{2}'-\widetilde{\lambda}_2''-\epsilon_{13}-\epsilon_{12} )-\sum_{h=1}^5 2\widetilde{\gamma}_{h12} -\sum_{t=1,2} (\widetilde{\gamma}_{t}'+\widetilde{\gamma}_t'') = -2\mathcal{C}_6+ \epsilon_{12}= -2\mathcal{C}_{i+j+3}+ \epsilon_{ij}.$ \end{center}
Similarly for $(i,j)\in\{(1,3),(2,3)\}$.
Thus, we have 
$$\widetilde{\mathcal{P}}_{ij}|_{\widetilde{Q}_{i+j+3}} = |\mathcal{O}_{\widetilde{Q}_{i+j+3}}(2\mathcal{C}_{i+j+3}-2\epsilon_{ij})|,$$ 
which is the linear system of the quadric sections of $Q_{i+j+3}$ with node at the point $r_i\cap r_j$.
It is known that $Q_{i+j+3}$ is image of the rational map $\psi : \mathbb{P}^2 \dashrightarrow \mathbb{P}^{3}$ defined by the linear system of the conics passing through two fixed points $x_1$ and $x_2$. 
The quadric sections of $Q_{i+j+3}$ with node at $r_i\cap r_j$ correspond to the quartic plane curves with node at the points $x_1$ and $x_2$ and at a third fixed point $x_3:=\psi^{-1}(r_i\cap r_j)$.
By applying a quadratic transformation, we obtain that $\widetilde{\mathcal{P}}_{ij}|_{\widetilde{Q}_{i+j+3}} \cong |\mathcal{O}_{\mathbb{P}^2}(2)|$, 
whose image is a Veronese surface $V_{ij}$.
Furthermore, we have that $\widetilde{\mathcal{P}}_{ij}|_{\mathcal{E}_{ij}} = |\mathcal{O}_{\mathcal{E}_{ij}}(-\widetilde{Q}_{i+j+3}-\mathcal{E}_{ij})|=|\mathcal{O}_{\mathcal{E}_{ij}}(2\mathcal{L}_{ij}-2\widetilde{\alpha}_{ijk})|\cong \mathbb{P}^2$ 
(see Remark~\ref{rem:Ek^3p6}). Since $\widetilde{\mathcal{P}}_{ij}|_{\mathcal{E}_{ij}}$ is isomorphic to the linear system of the conics of $E_{ij}$ with node at the point $E_{ij}\cap \widetilde{C}_{k}$, then its image is a conic $C_{ij}$. Since $V_{ij}\cup C_{ij} = \mathbb{P}(TC_{P_{i+j+3}}W_F^6)$, then it must be $C_{ij} \subset V_{ij} = \mathbb{P}(TC_{P_{i+j+3}}W_F^6$). Therefore $\widetilde{Q}_{i+j+3}$ is contracted by $\nu_{\widetilde{\mathcal{P}}}$ to the point $P_{i+j+3}$, which is a quadruple point whose tangent cone tangent is a cone over a Veronese surface, and the divisor $\mathcal{E}_{ij}$ is contracted in a conic contained in the Veronese surface given by the exceptional divisor of the minimal resolution of $P_{i+j+3}$. 
\end{proof}

We recall that $\nu_{\mathcal{P}} : \mathbb{P}^{3} \dashrightarrow  W_F^6 \subset \mathbb{P}^{6}$ is an isomorphism outside $Q_6\cup Q_7\cup Q_8$ (see Remark~\ref{rem:veryample6}). Then $P_1$, $P_2$, $P_3$, $P_4$, $P_5$, $P_6$, $P_7$ and $P_8$, are the only singular points of $W_F^{6}$ (see Remarks~\ref{rem:P|E=0},~\ref{rem:nu6blowdown},~\ref{rem:noContractionSpigoli6}). Furthermore, $\nu_{\widetilde{\mathcal{P}}} : Y \to W_F^6$ is a desingularization of $W_F^6$ but it is not the minimal one: indeed, the proof of Proposition~\ref{prop:quadruplePointsp6} says us that $\nu_{\widetilde{\mathcal{P}}} : Y \to W_F^6$ is the blow-up of the minimal desingularization of $W_F^6$ along curves (conics) contained in the minimal resolutions of $P_6$, $P_7$ and $P_8$. Finally we have the following result.

\begin{theorem}\label{thm:associatedo6}
The eight points $P_1$, $P_2$, $P_3$, $P_4$, $P_5$, $P_6$, $P_7$, $P_8$ are all associated with each other.
\end{theorem}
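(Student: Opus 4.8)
The plan is to produce, for each of the $\binom{8}{2}=28$ pairs of singular points, a line of $W_F^6$ joining the two points; since this forces $m=7$ it proves that the eight points are pairwise associated. I would organize the pairs into three families according to the two kinds of singular point, namely the five points $P_1,\dots ,P_5$ coming from the base points $q_1,\dots ,q_5$ and the three points $P_6,P_7,P_8$ coming from the quadrics $Q_6,Q_7,Q_8$: the $15$ ``mixed'' pairs $(P_h,P_{i+j+3})$, the $3$ pairs among $P_6,P_7,P_8$, and the $10$ pairs among $P_1,\dots ,P_5$. Exactly as in the proofs of Theorems~\ref{thm:associatedo13} and~\ref{thm:associatedo9}, each of the $27$ exceptional divisors of $bl''':Y\to Y''$ is mapped by $\nu_{\widetilde{\mathcal{P}}}$ to a line of $W_F^6$: each such divisor is an $\mathbb{F}_0$ by Remark~\ref{rem:Gamma^3p6}, the trace of a general $\widetilde{P}$ on it is a ruling, so $\widetilde{\mathcal{P}}$ restricts to $\mathbb{P}^1$ and the image has degree one. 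It then only remains to identify which two points each line joins, and to handle the last family directly.

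For the mixed pairs I would use the divisors $\Gamma_{hij}$. The local point is that $q_h$ is a smooth point of the smooth quadric $Q_{i+j+3}$, so the strict transform $\widetilde{Q}_{i+j+3}$ meets the exceptional $\mathbb{P}^2$ over $q_h$ precisely in the line cut out by the tangent plane $t_{hij}=T_{q_h}Q_{i+j+3}$, which is exactly $\widetilde{\gamma}_{hij}$. Hence $\widetilde{\gamma}_{hij}\subset \widetilde{E}_h\cap \widetilde{Q}_{i+j+3}$, so $\Gamma_{hij}$ meets both $\mathcal{E}_h$ and $\widetilde{Q}_{i+j+3}$ and therefore $\nu_{\widetilde{\mathcal{P}}}(\Gamma_{hij})=\left\langle P_h,P_{i+j+3}\right\rangle$. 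Letting $h$ run in $\{1,\dots ,5\}$ and $(i,j)$ in $\{(1,2),(1,3),(2,3)\}$ (so that $i+j+3$ runs through $6,7,8$) yields all $15$ mixed lines.

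For the three pairs among $P_6,P_7,P_8$ I would argue identically with the divisors $\Lambda_{ij,i}$ and $\Lambda_{ij,j}$. In case (i) each $b_{ij}=r_i\cap r_j$ lies on all three smooth quadrics, so by Proposition~\ref{prop:variabeleTC6}(iv) the planes $\pi_{ij,i}=T_{b_{ij}}Q_{i+k+3}$ and $\pi_{ij,j}=T_{b_{ij}}Q_{j+k+3}$ cut $E_{ij}$ in the lines $\widetilde{\lambda}_{ij,i}\subset \widetilde{Q}_{i+k+3}$ and $\widetilde{\lambda}_{ij,j}\subset \widetilde{Q}_{j+k+3}$. Since $\mathcal{E}_{ij}$ is contracted to $P_{i+j+3}$ (Remark~\ref{rem:3to3p6}), the divisor $\Lambda_{ij,i}$ gives the line $\left\langle P_{i+j+3},P_{i+k+3}\right\rangle$ and $\Lambda_{ij,j}$ the line $\left\langle P_{i+j+3},P_{j+k+3}\right\rangle$; for instance $\Lambda_{12,1},\Lambda_{12,2},\Lambda_{13,3}$ already produce $\left\langle P_6,P_7\right\rangle,\left\langle P_6,P_8\right\rangle,\left\langle P_7,P_8\right\rangle$.

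The remaining $10$ pairs, all among $P_1,\dots ,P_5$, are the delicate part and I would treat them in $\mathbb{P}^3$, since no exceptional divisor joins two of these points. For $h\ne h'$ take the line $\ell_{hh'}=\left\langle q_h,q_{h'}\right\rangle$: it is a chord of each twisted cubic $C_i$ meeting it only at $q_h,q_{h'}$ (a twisted cubic has no trisecants), and for general $q_1,\dots ,q_5$ it meets the base locus of $\mathcal{P}$ nowhere else, so $\ell_{hh'}\not\subset Q_6\cup Q_7\cup Q_8$ and $\nu_{\mathcal{P}}$ is defined along $\ell_{hh'}$ off $\{q_h,q_{h'}\}$ (Remark~\ref{rem:veryample6}). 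By Proposition~\ref{prop:variabeleTC6}(i) a general $P\in\mathcal{P}$ has an ordinary triple point at $q_h$ and at $q_{h'}$, with tangent cone the union of the three planes $t_{hij}$; for general points $\ell_{hh'}$ lies in none of these planes, hence is transverse to the tangent cone, so the local intersection multiplicity of $P$ and $\ell_{hh'}$ equals $3$ at each of $q_h,q_{h'}$, and by Bézout
$$\deg \nu_{\mathcal{P}}(\ell_{hh'})=7-3-3=1.$$
Thus $\nu_{\mathcal{P}}(\ell_{hh'})$ is a line of $W_F^6$ through $P_h=\nu_{\widetilde{\mathcal{P}}}(\mathcal{E}_h)$ and $P_{h'}=\nu_{\widetilde{\mathcal{P}}}(\mathcal{E}_{h'})$, i.e. $\left\langle P_h,P_{h'}\right\rangle\subset W_F^6$. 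Combining the three families covers all $28$ pairs, which is the assertion. The main obstacle is exactly this last degree count, which rests on the transversality of $\ell_{hh'}$ to the triple-point tangent cone, together with the index bookkeeping of the label $i+j+3$ needed to confirm that the first two families really exhaust the mixed and the quadric pairs.
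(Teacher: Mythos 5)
Your proposal is correct and follows essentially the same route as the paper: the mixed pairs are handled by the divisors $\Gamma_{hij}$ exactly as in the paper, the pairs among $P_6,P_7,P_8$ by divisors ($\Lambda_{ij,i}$, $\Lambda_{ij,j}$ in your version, $\mathcal{R}_i$ in the paper's) that the paper itself observes have the same images, and the pairs among $P_1,\dots,P_5$ by the lines $\left\langle q_h,q_{h'}\right\rangle$. The only cosmetic difference is that you compute $\deg \nu_{\mathcal{P}}(\ell_{hh'})=7-3-3=1$ downstairs by B\'ezout and transversality to the triple-point tangent cone, whereas the paper computes the same degree upstairs on $Y$ as $\widetilde{P}\cdot(\mathcal{H}-\mathcal{E}_h-\mathcal{E}_t-\sum\Gamma_{hij}-\sum\Gamma_{tij})^2=1$.
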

\begin{proof}
Let us fix $1\le h<t\le 5$ and $1\le i<j \le 3$. 
Let us consider the line $l_{ht} \subset \mathbb{P}^3$ joining the points $q_h$ and $q_t$. Let $\widetilde{l}_{ht}$ be its strict transform on $Y$. We obtain that $\nu_{\widetilde{\mathcal{P}}}(\widetilde{l}_{ht}) = \left\langle P_h, P_t \right\rangle \subset W_F^{6}$, since $\widetilde{l}_{ht} \cap \mathcal{E}_h \ne \emptyset$, $\widetilde{l}_{ht} \cap \mathcal{E}_t \ne \emptyset$ and $\deg (\nu_{\widetilde{\mathcal{P}}}(\widetilde{l}_{ht}))=\widetilde{P}\cdot (\mathcal{H}-\mathcal{E}_h-\mathcal{E}_t-\sum_{1\le i<j \le 3}\Gamma_{hij}-\sum_{1\le i<j \le 3}\Gamma_{tij})^2=1$. So $P_h$ is associated with $P_t$.
We recall now that $\Gamma_{hij}$, $\Lambda_{ij,i}$, $\Lambda_{ij,j}$, $\Lambda_{i}'$, $\Lambda_{i}''$ and $\mathcal{R}_i$ are mapped by $\nu_{\widetilde{\mathcal{P}}} : Y \to W_F^{6}\subset \mathbb{P}^{6}$ to curves of $W_F^{6}$ (see Remark~\ref{rem:nu6blowdown}). In particular $\Gamma_{hij}$, $\Lambda_{ij,i}$, $\Lambda_{ij,j}$, $\Lambda_{i}'$, $\Lambda_{i}''$ are mapped to lines of $W_F^{6}$ (use similar arguments of proof of Theorem~\ref{thm:associatedo13}). We also have that $\nu_{\widetilde{\mathcal{P}}}(\mathcal{R}_{i})= \nu_{\widetilde{\mathcal{P}}}(\Lambda_{i}')=\nu_{\widetilde{\mathcal{P}}}(\Lambda_{i}'')=\nu_{\widetilde{\mathcal{P}}}( \Lambda_{ij,i}) = \nu_{\widetilde{\mathcal{P}}}(\Lambda_{st,i})$ for $1\le s<t \le 3$ and $i\in\{s,t\}$ (see Remark~\ref{rem:-1curvenuove6}).
Since $\Gamma_{hij}\cap \mathcal{E}_{h} \ne \emptyset$ and $\Gamma_{hij}\cap \widetilde{Q}_{i+j+3} \ne \emptyset$, then $\left\langle P_h, P_{i+j+3} \right\rangle= \nu_{\widetilde{\mathcal{P}}}(\Gamma_{hij})$. So each $P_h$ is associated with each $P_{i+j+3}$.
Finally $P_6$, $P_7$ and $P_8$ are mutually associated, since $\nu_{\widetilde{\mathcal{P}}}(\mathcal{R}_1)=\left\langle P_6, P_7 \right\rangle$, $\nu_{\widetilde{\mathcal{P}}}(\mathcal{R}_2)=\left\langle P_6, P_8 \right\rangle$, $\nu_{\widetilde{\mathcal{P}}}(\mathcal{R}_3)=\left\langle P_7, P_8 \right\rangle$ (see Remark~\ref{rem:3to3p6}).
\end{proof}

\appendix
\section{Appendix: Macaulay2 codes}\label{app:code}

\subsection{Computational analysis of the Enriques-Fano threefold of genus 13}\label{code:fano13}
\begin{scriptsize}
\begin{verbatim}
Macaulay2, version 1.11
with packages: ConwayPolynomials, Elimination, IntegralClosure, InverseSystems,
               LLLBases, PrimaryDecomposition, ReesAlgebra, TangentCone
i1 : needsPackage "Cremona";
i2 : PP3 = ZZ/10000019[s_0..s_3];
i3 : PP13 = ZZ/10000019[w_0..w_13];
i4 : sextiesSigma = rationalMap map(PP3,PP13, matrix{{s_0*s_1^3*s_2*s_3, s_0^2*s_1^2*s_2^2,
      s_0^2*s_1^2*s_2*s_3, s_0^2*s_1^2*s_3^2, s_0^3*s_1*s_2*s_3, s_0*s_1^2*s_2^2*s_3, 
      s_0*s_1^2*s_2*s_3^2,  s_0^2*s_1*s_2^2*s_3, s_0^2*s_1*s_2*s_3^2, s_1^2*s_2^2*s_3^2,
      s_0*s_1*s_2^3*s_3, s_0*s_1*s_2^2*s_3^2, s_0*s_1*s_2*s_3^3, s_0^2*s_2^2*s_3^2}});
i5 : WF13 = image sextiesSigma;
i6 : (dim WF13 -1, degree WF13) == (3, 24)
i7 : -- singular points of WF13 
     P1 = ideal{w_0,w_1,w_2,w_3,w_5,w_6,w_7,w_8,w_9,w_10,w_11,w_12,w_13};
i8 : sub(WF13, {(gens PP13)_4=>1});
i9 : ConeP1 = tangentCone oo
i10 : degree oo == 4
i11 : P2 = ideal{w_1,w_2,w_3,w_4,w_5,w_6,w_7,w_8,w_9,w_10,w_11,w_12,w_13};
i12 : sub(WF13, {(gens PP13)_0=>1});
i13 : ConeP2 = tangentCone oo
i14 : degree oo == 4
i15 : P3 = ideal{w_0,w_1,w_2,w_3,w_4,w_5,w_6,w_7,w_8,w_9,w_11,w_12,w_13};
i16 : sub(WF13, {(gens PP13)_10=>1});
i17 : ConeP3 = tangentCone oo
i18 : degree oo == 4
i19 : P4 = ideal{w_0,w_1,w_2,w_3,w_4,w_5,w_6,w_7,w_8,w_9,w_10,w_11,w_13};
i20 : sub(WF13, {(gens PP13)_12=>1});
i21 : ConeP4 = tangentCone oo
i22 : degree oo == 4
i23 : P1' = ideal{w_0,w_1,w_2,w_3,w_5,w_4,w_6,w_7,w_8,w_10,w_11,w_12,w_13};
i24 : sub(WF13, {(gens PP13)_4=>1});
i25 : ConeP1' = tangentCone oo
i26 : degree oo == 4
i27 : P2' = ideal{w_0,w_1,w_2,w_3,w_4,w_5,w_6,w_7,w_8,w_9,w_10,w_11,w_12};
i28 : sub(WF13, {(gens PP13)_13=>1});
i29 : ConeP2' = tangentCone oo
i30 : degree oo == 4
i31 : P3' = ideal{w_0,w_1,w_2,w_4,w_5,w_6,w_7,w_8,w_9,w_10,w_11,w_12,w_13};
i32 : sub(WF13, {(gens PP13)_3=>1});
i33 : ConeP3' = tangentCone oo
i34 : degree oo == 4
i35 : P4' = ideal{w_0,w_2,w_3,w_4,w_5,w_6,w_7,w_8,w_9,w_10,w_11,w_12,w_13};
i36 : sub(WF13, {(gens PP13)_1=>1});
i37 : ConeP4' = tangentCone oo
i38 : degree oo == 4
i39 : -- let us see if the lines lij joining the points Pi and Pj
      -- are contained in the threefold Wf13
      l12 = ideal{(toMap(saturate(P1*P2),1,1)).matrix};
i40 : (l12 + WF13 == l12) == false
i41 : l13 = ideal{(toMap(saturate(P1*P3),1,1)).matrix};
i42 : (l13 + WF13 == l13) == false
i43 : l14 = ideal{(toMap(saturate(P1*P4),1,1)).matrix};
i44 : (l14 + WF13 == l14) == false
i45 : l11' = ideal{(toMap(saturate(P1*P1'),1,1)).matrix};
i46 : (l11' + WF13 == l11') == false
i47 : l12' = ideal{(toMap(saturate(P1*P2'),1,1)).matrix};
i48 : (l12' + WF13 == l12') == true
i49 : l13' = ideal{(toMap(saturate(P1*P3'),1,1)).matrix};
i50 : (l13' + WF13 == l13') == true
i51 : l14' = ideal{(toMap(saturate(P1*P4'),1,1)).matrix};
i52: (l14' + WF13 == l14') == true
i53 : -- etc..         
\end{verbatim}
\end{scriptsize}

\subsection{Computational analysis of the Enriques-Fano threefold of genus 9}\label{code:fano9}

\begin{scriptsize}
\begin{verbatim}
Macaulay2, version 1.11
with packages: ConwayPolynomials, Elimination, IntegralClosure, InverseSystems,
               LLLBases, PrimaryDecomposition, ReesAlgebra, TangentCone
i1 : -- let us construct the Enriques-Fano threefold described by Bayle
     -- in the section 6.1.4 of his paper of 1994 
     -- we will denote it by WB9 and we will see that it can be obtained 
     -- as the image of the map defined by the linear system of the septic
     -- surfaces of PP3 double along the edges of two trihedra T and T'
     -- so it is of the form of the Enriques-Fano threefold of genus 9
     -- discovered by Fano, and so it makes sense to study
     -- the way in which their singularities are associated
     needsPackage "Cremona";
i2 : PP5 = ZZ/10000019[x_0, x_1, x_2, y_3, y_4, y_5];
i3 : s1 = x_0^2-3*x_1^2+2*x_2^2;
i4 : s2 = 3*x_0^2-8*x_1^2+5*x_2^2;
i5 : r1 = 3*y_3^2-8*y_4^2+5*y_5^2;
i6 : r2 = y_3^2-3*y_4^2+2*y_5^2;
i7 : X = ideal{s1+r1, s2+r2};
i8 : (dim oo -1, degree oo) == (3,4)
i9 : PP11 = ZZ/10000019[Z_0..Z_11];
i10 : phi = rationalMap map(PP5, PP11, matrix(PP5,{{x_0^2, x_1^2, x_2^2, x_0*x_1, 
      x_0*x_2, x_1*x_2, y_3^2, y_4^2, y_5^2, y_3*y_4, y_3*y_5, y_4*y_5}}));
i11 : phi(X)
i12 : (dim oo -1, degree oo) == (3,16)
i13 : H9 = ideal{ooo_0, ooo_1}
i14 : phi(X) + H9 == phi(X)
i15 : PP9 = ZZ/10000019[w_0..w_9];
i16 : inclusion = rationalMap map(PP9,PP11, matrix(PP9,{{w_0+21*w_4-55*w_5+34*w_6, 
       w_0+8*w_4-21*w_5+13*w_6, w_0, w_1, w_2, w_3, w_4, w_5, w_6, w_7, w_8, w_9 }}));
i17 : image oo == H9
i18 : WB9 = inclusion^* (phi(X));
i19 : (dim oo -1, degree oo) == (3, 16)
i20 : rationalMap map(PP11,PP9, sub(matrix inverseMap(inclusion||H9), PP11))
i21 : pigreca = phi* oo
i23 : fixedPlanex = associatedPrimes (X+ideal{x_0,x_1,x_2});
i24 : fixedPlaney = associatedPrimes (X+ideal{y_3,y_4,y_5});
i25 : -- let us take the singular points of WB9
      P1 = inclusion^* phi(fixedPlaney#0);
i26 : P4 = inclusion^* phi(fixedPlaney#1);
i27 : P2 = inclusion^* phi(fixedPlaney#2);
i28 : P3 = inclusion^* phi(fixedPlaney#3);
i29 : P1' = inclusion^* phi(fixedPlanex#0);
i30 : P4' = inclusion^* phi(fixedPlanex#1);
i31 : P2' = inclusion^* phi(fixedPlanex#2);
i32 : P3' = inclusion^* phi(fixedPlanex#3);
i33 : -- let us see if the lines lij joining the points Pi and Pj
      -- are contained in the threefold WB9
      l12 = ideal{(toMap(saturate(P1*P2),1,1)).matrix};
i34 : (l12 + WB9 == l12) == false
i35 : l13 = ideal{(toMap(saturate(P1*P3),1,1)).matrix};
i36 : (l13 + WB9 == l13) == false
i37 : l14 = ideal{(toMap(saturate(P1*P4),1,1)).matrix};
i38 : (l14 + WB9 == l14) == false
i39 : l11' = ideal{(toMap(saturate(P1*P1'),1,1)).matrix};
i40 : (l11' + WB9 == l11') == true
i41 : l12' = ideal{(toMap(saturate(P1*P2'),1,1)).matrix};
i42 : (l12' + WB9 == l12') == true
i43 : l13' = ideal{(toMap(saturate(P1*P3'),1,1)).matrix};
i44 : (l13' + WB9 == l13') == true
i45 : l14' = ideal{(toMap(saturate(P1*P4'),1,1)).matrix};
i46 : (l14' + WB9 == l14') == true
i47 : -- etc..
      proj1 = rationalMap toMap(P2,1,1);
i48 : proj2 = rationalMap toMap(proj1(P3),1,1);
i49 : proj3 = rationalMap toMap(proj2(proj1(P4)),1,1);
i50 : proj4 = rationalMap toMap(proj3(proj2(proj1(P2'))),1,1);
i51 : proj5 = rationalMap toMap(proj4(proj3(proj2(proj1(P3')))),1,1);
i52 : proj6 = rationalMap toMap(proj5(proj4(proj3(proj2(proj1(P4'))))),1,1);
i53 : proj = proj1*proj2*proj3*proj4*proj5*proj6;
i54 : proj(WB9)
i55 : PP3 = ring oo;
i56 : isBirational( proj|WB9 )
i57 : septics = rationalMap map( PP3, PP9, matrix(inverseMap( proj|WB9 )));
i58 : time image oo == WB9
i59 : comp = associatedPrimes(ideal septics)
i60 : l3' = comp#0;
i61 : l2' = comp#1;
i62 : r21 = comp#2;
i63 : r11 = comp#3;
i64 : r31 = comp#4;
i65 : l1' = comp#5;
i66 : r23 = comp#6;
i67 : r13 = comp#7;
i68 : r33 = comp#8;
i69 : r22 = comp#9;
i70 : r12 = comp#10;
i71 : r32 = comp#11;
i72 : l1 = comp#12;
i73 : l2 = comp#13;
i74 : l3 = comp#14;
i75 : -- trihedron T' :
      f1' = ideal{(gens PP3)_3};
i76 : f2' = ideal{(gens PP3)_1+(gens PP3)_3};
i77 : f3' = ideal{(gens PP3)_2+(gens PP3)_3};
i78 : f1'+f2' == l3'
i79 : f1'+f3' == l2'
i80 : f2'+f3' == l1'
i81 : v'= saturate(f1'+f2'+f3')
i82 : -- trihedron T :
      f1 = ideal{(gens PP3)_0-55*(gens PP3)_1+34*(gens PP3)_2};
i83 : f2 = ideal{(gens PP3)_0  - 21*(gens PP3)_1  +13*(gens PP3)_2};
i84 : f3 = ideal{(gens PP3)_0};
i85 : f1+f2 == l3
i86 : f1+f3 == l2
i87 : f2+f3 == l1
i88 : v = saturate(l1+l2+l3)
i89 : r11 == f1+f1'
i90 : r12 == f1+f2'
i91 : r13 == f1+f3'
i92 : r21 == f2+f1'
i93 : r22 == f2+f2'
i94 : r23 == f2+f3'
i95 : r31 == f3+f1'
i96 : r32 == f3+f2'
i97 : r33 == f3+f3'
i98 : -- general septic surface of the linear system :       
      K = septics^* ideal{random(1,PP9)};
i99 : (dim oo -1, degree oo) == (2,7)
i100 : -- K has double point along l1,l2,l3,l1',l2',l3' :
       (minors(1,jacobian(K))+l1 == l1) == true
i101 : (minors(1,jacobian(K))+l2 == l2) == true
i102 : (minors(1,jacobian(K))+l3 == l3) == true
i103 : (minors(1,jacobian(K))+l1' == l1') == true
i104 : (minors(1,jacobian(K))+l2' == l2') == true
i105 : (minors(1,jacobian(K))+l3' == l3') == true
i106 : -- K has triple point at v and v' :
       (minors(1,jacobian(jacobian(K)))+minors(1,jacobian(K))+v == v) == true
i107 : (minors(1,jacobian(jacobian(K)))+minors(1,jacobian(K))+v' == v') == true
i108 : -- remark
       septics(f1) == P2
i109 : septics(f1') == P2'
i110 : septics(f2) == P3
i111 : septics(f2') == P3'
i112 : septics(f3) == P4
i113 : septics(f3') == P4'
\end{verbatim}
\end{scriptsize}

\subsection{Computational analysis of the Enriques-Fano threefold of genus 7}\label{code:fano7}
\begin{scriptsize}
\begin{verbatim}
Macaulay2, version 1.11
with packages: ConwayPolynomials, Elimination, IntegralClosure, InverseSystems,
               LLLBases, PrimaryDecomposition, ReesAlgebra, TangentCone
i1 : needsPackage "Cremona";
i2 : PP3 = ZZ/10000019[s_0..s_3];
i3 : -- edges of the trivial tetrahedon :
     l12 = ideal{(gens PP3)_1, (gens PP3)_2};
i4 : l13 = ideal{(gens PP3)_1, (gens PP3)_3};
i5 : l23 = ideal{(gens PP3)_2, (gens PP3)_3};
i6 : l01 = ideal{(gens PP3)_0, (gens PP3)_1};
i7 : l02 = ideal{(gens PP3)_0, (gens PP3)_2};
i8 : l03 = ideal{(gens PP3)_0, (gens PP3)_3};
i9 : PP13 = ZZ/10000019[w_0..w_13];
i10 : sextiesSigma = rationalMap map(PP3,PP13, matrix{{s_0*s_1^3*s_2*s_3, s_0^2*s_1^2*s_2^2,
      s_0^2*s_1^2*s_2*s_3, s_0^2*s_1^2*s_3^2, s_0^3*s_1*s_2*s_3, s_0*s_1^2*s_2^2*s_3, 
      s_0*s_1^2*s_2*s_3^2,  s_0^2*s_1*s_2^2*s_3, s_0^2*s_1*s_2*s_3^2, s_1^2*s_2^2*s_3^2,
      s_0*s_1*s_2^3*s_3, s_0*s_1*s_2^2*s_3^2, s_0*s_1*s_2*s_3^3, s_0^2*s_2^2*s_3^2}});
i11 : WF13 = image sextiesSigma;
i12 : (dim WF13 -1, degree WF13) == (3, 24)
i13 : -- singular points of WF13 
      q1 = ideal{w_0,w_1,w_2,w_3,w_5,w_6,w_7,w_8,w_9,w_10,w_11,w_12,w_13};
i14 : q2 = ideal{w_1,w_2,w_3,w_4,w_5,w_6,w_7,w_8,w_9,w_10,w_11,w_12,w_13};
i15 : q3 = ideal{w_0,w_1,w_2,w_3,w_4,w_5,w_6,w_7,w_8,w_9,w_11,w_12,w_13};
i16 : q4 = ideal{w_0,w_1,w_2,w_3,w_4,w_5,w_6,w_7,w_8,w_9,w_10,w_11,w_13};
i17 : q1' = ideal{w_0,w_1,w_2,w_3,w_5,w_4,w_6,w_7,w_8,w_10,w_11,w_12,w_13};
i18 : q2' = ideal{w_0,w_1,w_2,w_3,w_4,w_5,w_6,w_7,w_8,w_9,w_10,w_11,w_12};
i19 : q3' = ideal{w_0,w_1,w_2,w_4,w_5,w_6,w_7,w_8,w_9,w_10,w_11,w_12,w_13};
i20 : q4' = ideal{w_0,w_2,w_3,w_4,w_5,w_6,w_7,w_8,w_9,w_10,w_11,w_12,w_13};
i21 : -- let us take a general plane of PP3
      -- and the intersection points with the edges lij
      -- plane = ideal{random(1,PP3)}
      -- example:
      plane = ideal{s_0+s_1+s_2+s_3};
i22 : PP2 = ZZ/10000019[x_0,x_1,x_2];
i23 : inclusion = rationalMap map(PP2,PP3, matrix{{-(gens PP2)_0-(gens PP2)_1-(gens PP2)_2, 
      (gens PP2)_0, (gens PP2)_1, (gens PP2)_2}})
i24 : image oo == plane
i25 : p01 = saturate(plane+l01) -- [0:0:-1:1]
i26 : p02 = saturate(plane+l02) -- [0:-1:0:1]
i27 : p03 = saturate(plane+l03) -- [0:-1:1:0]
i28 : p12 = saturate(plane+l12) -- [-1:0:0:1]
i29 : p13 = saturate(plane+l13) -- [-1:0:1:0]
i30 : p23 = saturate(plane+l23) -- [-1:1:0:0]
i31 : a01=inclusion^*p01; -- [0:-1:1]
i32 : a02=inclusion^*p02; -- [-1:0:1]
i33 : a03=inclusion^*p03; -- [-1:1:0]
i34 : a12=inclusion^*p12; -- [0:0:1]
i35 : a13=inclusion^*p13; -- [0:1:0]
i36 : a23=inclusion^*p23; -- [1:0:0]
i37 : -- in the above plane, let us take 
      -- a general cubic curve through the six points pij
      -- rationalMap toMap(saturate(a01*a02*a03*a12*a13*a23),3,1);
      -- cubicThrough6points = oo^*(ideal{random(1,ring(image oo))})
      -- example:
      cubicThrough6points = ideal{(gens PP2)_1^2*(gens PP2)_2+(gens PP2)_1*(gens PP2)_2^2+
      (gens PP2)_0^2*(gens PP2)_2+(gens PP2)_0*(gens PP2)_2^2+(gens PP2)_0^2*(gens PP2)_1+
      (gens PP2)_0*(gens PP2)_1^2+(gens PP2)_0*(gens PP2)_1*(gens PP2)_2}
--              2        2    2               2        2      2
-- o37 = ideal(x x  + x x  + x x  + x x x  + x x  + x x  + x x )
--              0 1    0 1    0 2    0 1 2    1 2    0 2    1 2
i38 : (dim oo -1, degree oo, genus oo)==(1,3,1)
i39 : delta = inclusion(cubicThrough6points)
--                                  2        2    2               2        2      2
-- o39 = ideal (s  + s  + s  + s , s s  + s s  + s s  + s s s  + s s  + s s  + s s )
--               0    1    2    3   1 2    1 2    1 3    1 2 3    2 3    1 3    2 3
i40 : (dim oo -1, degree oo, genus oo)==(1,3,1)
i41 : (delta+l01==p01) == true
i42 : (delta+l02==p02) == true
i43 : (delta+l03==p03) == true
i44 : (delta+l12==p12) == true
i45 : (delta+l13==p13) == true
i46 : (delta+l23==p23) == true
i47 : nudelta=sextiesSigma(delta)
i48 : (dim oo -1, degree oo, genus oo)==(1,6,1)
i49 : (nudelta+q1 == q1) == false
i50 : (nudelta+q2 == q2) == false
i51 : (nudelta+q3 == q3) == false
i52 : (nudelta+q4 == q4) == false
i53 : (nudelta+q1' == q1') == false
i54 : (nudelta+q2' == q2') == false
i55 : (nudelta+q3' == q3') == false
i56 : (nudelta+q4' == q4') == false
i57 : spannudelta=ideal{nudelta_0,nudelta_1,nudelta_2,nudelta_3,nudelta_4,
      nudelta_5,nudelta_6,nudelta_7}
i58 : (dim oo -1, degree oo)==(5,1)
i59 : -- let us construct the EF 3-fold WF7
      -- as projection og WF13 from spannudelta
      proj = rationalMap toMap(spannudelta,1,1)
i60 : WF7 = proj(WF13)
i61 : (dim oo -1, degree oo)==(3,12)
i62 : -- let us see the configuration of
      -- the singular points of WF7:
      P1 = proj(q1);   -- [ 0: 0: 0: 0: 1:-1: 1: 1]
i63 : P2 = proj(q2);   -- [ 0: 0: 0: 0: 0: 0: 0: 1]
i64 : P3 = proj(q3);   -- [ 1: 0: 0:-2: 0: 0: 4: 2]
i65 : P4 = proj(q4);   -- [ 1: 0:-2: 0: 4: 0: 0: 2]
i66 : P1' = proj(q1'); -- [ 1: 0: 0: 0: 0: 0: 0: 0]
i67 : P2' = proj(q2'); -- [ 1: 2: 0:-2: 2:-2: 4: 4]
i68 : P3' = proj(q3'); -- [ 0: 0: 0: 0: 1: 0: 0: 0]
i69 : P4' = proj(q4'); -- [ 0: 0: 0: 0: 0: 0: 1: 0]
i70 : line12 = ideal{(toMap(saturate(P1*P2),1,1)).matrix};
i71 : (line12 + WF7 == line12) == true
i72 : line13 = ideal{(toMap(saturate(P1*P3),1,1)).matrix};
i73 : (line13 + WF7 == line13) == true
i74 : line14 = ideal{(toMap(saturate(P1*P4),1,1)).matrix};
i75 : (line14 + WF7 == line14) == true
i76 : line23 = ideal{(toMap(saturate(P2*P3),1,1)).matrix};
i77 : (line23 + WF7 == line23) == true
i78 : line24 = ideal{(toMap(saturate(P2*P4),1,1)).matrix};
i79 : (line24 + WF7 == line24) == true
i80 : line34 = ideal{(toMap(saturate(P3*P4),1,1)).matrix};
i81 : (line34 + WF7 == line34) == true
i82 : line11' = ideal{(toMap(saturate(P1*P1'),1,1)).matrix};
i83 : (line11' + WF7 == line11') == false
i84 : line22' = ideal{(toMap(saturate(P2*P2'),1,1)).matrix};
i85 : (line22' + WF7 == line22') == false
i86 : line33' = ideal{(toMap(saturate(P3*P3'),1,1)).matrix};
i87 : (line33' + WF7 == line33') == false
i88 : line44' = ideal{(toMap(saturate(P4*P4'),1,1)).matrix};
i89 : (line44' + WF7 == line44') == false
i90 : -- let us take the rational map
      -- defined by the linear system 
      -- of the sextics of PP3 double along the edges lij
      -- and containing the curve delta      
      sextiesX = sextiesSigma*proj
i91 : WF7 == image oo
i92 : -- base locus of sextiesX
      baseX = associatedPrimes(ideal sextiesX)
i93 : baseX#0 == delta
i94 : baseX#1 == l13
i95 : baseX#2 == l12
i96 : baseX#3 == l01
i97 : baseX#4 == l23
i98 : baseX#5 == l02
i99 : baseX#6 == l03
i100 : PP7 = ring WF7
i101 : -- let us take a general element X of the
       -- linear system of the sextics double
       -- along the edges of a trivial tetrahedron
       -- and containing the cubic curve delta:
       -- X = sextiesX^*(ideal{random(1,PP7)})
       -- for example let us take:       
       matrix sextiesX
i102 : X = ideal{2*oo_(0,0)+oo_(0,4)+oo_(0,6)-oo_(0,7)}
i103 : (dim oo -1, degree oo) == (2,6)
i104 : -- remark: its image via sextiesX is contains no
       -- singular points of WF7
       S = sextiesX(X);
i105 : (S+P1 == P1) == false
i106 : (S+P2 == P2) == false
i107 : (S+P3 == P3) == false
i108 : (S+P4 == P4) == false
i109 : (S+P1' == P1') == false
i110 : (S+P2' == P2') == false
i111 : (S+P3' == P3') == false
i112 : (S+P4' == P4') == false
i113 : -- as for a general Enriques sextic,
       -- the tangent cone to X at a vertex of the tetrahedron
       -- is the union of the three faces containing that vertex
       PP3' = ZZ/10000019[x_0..x_3];
i114 : Conev0 = tangentCone(sub(X, {(gens PP3)_0 => 1 }))
i115 : degree oo == 3
i116 : Conev1 = tangentCone(sub(X, {(gens PP3)_1 => 1 }))
i117 : degree oo == 3
i118 : Conev2 = tangentCone(sub(X, {(gens PP3)_2 => 1 }))
i119 : degree oo == 3
i120 : Conev3 = tangentCone(sub(X, {(gens PP3)_3 => 1 }))
i121 : degree oo == 3
i122 : -- the tangent cone to X at a point pij
       -- it the union of two planes containing lij:
       -- let us take a change of coordinates
       -- in order to see p01 as the point [0:0:0:1]
       sub(X, {(gens PP3)_0 => (gens PP3')_0, (gens PP3)_1 => (gens PP3')_1, 
       (gens PP3)_2 => (gens PP3')_2-(gens PP3')_3, (gens PP3)_3 => (gens PP3')_3 });
i123 : sub(oo, {(gens PP3')_3  => 1})
i124 : tangentCone oo
i125 : Conep01 = sub(oo, {(gens PP3')_0 => (gens PP3)_0, (gens PP3')_1 => (gens PP3)_1, 
       (gens PP3')_2 => (gens PP3)_2+(gens PP3)_3, (gens PP3')_3 => (gens PP3)_3 })
i126 : degree oo == 2
i127 : -- let us take a change of coordinates
       -- in order to see p02 as the point [0:0:0:1]
       sub(X, {(gens PP3)_0 => (gens PP3')_0, (gens PP3)_1 => (gens PP3')_1-(gens PP3')_3, 
       (gens PP3)_2 => (gens PP3')_2, (gens PP3)_3 => (gens PP3')_3 });
i128 : sub(oo, {(gens PP3')_3  => 1})
i129 : tangentCone oo
i130 : Conep02 = sub(oo, {(gens PP3')_0=>(gens PP3)_0, (gens PP3')_1=>(gens PP3)_1+(gens PP3)_3, 
       (gens PP3')_2=>(gens PP3)_2, (gens PP3')_3=>(gens PP3)_3 })
i131 : degree oo == 2
i132 : -- let us take a change of coordinates
       -- in order to see p03 as the point [0:0:1:0]
       sub(X, {(gens PP3)_0 => (gens PP3')_0, (gens PP3)_1 => (gens PP3')_1-(gens PP3')_2, 
       (gens PP3)_2 => (gens PP3')_2, (gens PP3)_3 => (gens PP3')_3 });
i133 : sub(oo, {(gens PP3')_2  => 1})
i134 : tangentCone oo
i135 : Conep03 = sub(oo, {(gens PP3')_0=>(gens PP3)_0, (gens PP3')_1=>(gens PP3)_1+(gens PP3)_2, 
       (gens PP3')_2=>(gens PP3)_2, (gens PP3')_3=>(gens PP3)_3 })
i136 : degree oo == 2
i137 : -- let us take a change of coordinates
       -- in order to see p12 as the point [0:0:0:1]
       sub(X, {(gens PP3)_0 => (gens PP3')_0-(gens PP3')_3, (gens PP3)_1 => (gens PP3')_1, 
       (gens PP3)_2 => (gens PP3')_2, (gens PP3)_3 => (gens PP3')_3 });
i138 : sub(oo, {(gens PP3')_3  => 1})
i139 : tangentCone oo
i140 : Conep12 = sub(oo, {(gens PP3')_0=>(gens PP3)_0+(gens PP3)_3, 
       (gens PP3')_1=>(gens PP3)_1, (gens PP3')_2=>(gens PP3)_2, (gens PP3')_3=>(gens PP3)_3 })
i141 : degree oo == 2
i142 : -- let us take a change of coordinates
       -- in order to see p13 as the point [0:0:1:0]
       sub(X, {(gens PP3)_0 => (gens PP3')_0-(gens PP3')_2, (gens PP3)_1 => (gens PP3')_1, 
       (gens PP3)_2 => (gens PP3')_2, (gens PP3)_3 => (gens PP3')_3 });
i143 : sub(oo, {(gens PP3')_2  => 1})
i144 : tangentCone oo
i145 : Conep13 = sub(oo, {(gens PP3')_0=>(gens PP3)_0+(gens PP3)_2, 
       (gens PP3')_1=>(gens PP3)_1, (gens PP3')_2=>(gens PP3)_2, (gens PP3')_3=>(gens PP3)_3 })
i146 : degree oo == 2
i147 : -- let us take a change of coordinates
       -- in order to see p23 as the point [0:1:0:0]
       sub(X, {(gens PP3)_0 => (gens PP3')_0-(gens PP3')_1, (gens PP3)_1 => (gens PP3')_1, 
       (gens PP3)_2 => (gens PP3')_2, (gens PP3)_3 => (gens PP3')_3 });
i148 : sub(oo, {(gens PP3')_1  => 1})
i149 : tangentCone oo
i150 : Conep23 = sub(oo, {(gens PP3')_0=>(gens PP3)_0+(gens PP3)_1, 
       (gens PP3')_1=>(gens PP3)_1, (gens PP3')_2=>(gens PP3)_2, (gens PP3')_3=>(gens PP3)_3 })
i151 : degree oo == 2
i152 : -- let us see that the tangent cone to X
       -- at a point of lij is a couple of planes
       -- containing lij
       
       -- let us take a point [x:0:0:y] of l12
       -- since we have already studied 
       -- the points v0=[1:0:0:0] and v3=[1:0:0:0]
       -- we can assume x and y not equal to zero
       -- so let us consider the point [a:0:0:1] 
       -- with a not equal to zero
       -- let us take a change of coordinates
       -- in order to see [a:0:0:1] as the point [0:0:0:1]
       A = ZZ/10000019[a];
i153 : R = A[s_0..s_3];
i154 : R' = A[r_0..r_3];
i155 : newX = sub(X,R)
i156 : sub(newX, {(gens R)_0 => (gens R')_0+(gens A)_0*(gens R')_3, (gens R)_1 => (gens R')_1, 
       (gens R)_2 => (gens R')_2, (gens R)_3 => (gens R')_3 })
i157 : sub(oo,(gens R')_3=>1)
i158 : sub(tangentCone oo, {(gens R')_0=>(gens R)_0-(gens A)_0*(gens R)_3, 
       (gens R')_1=>(gens R)_1, (gens R')_2 =>(gens R)_2, (gens R')_3=>(gens R)_3})
i159 : -- we obtain
       --          2 2       3               2 2
       --   ideal(a s  + (- a  - 2a)s s  + 2a s )
       --            1               1 2       2
       -- so the tangent cone to X at the point [a:0:0:1]
       -- is the union of two planes containing l12
       
       -- similarly for the points of l13,l23,l01,l02,l03
       
       -- let us study the singular locus of X
       -- in order to verify if X has other kinds of singularity
       JX = jacobian(X)
i160 : singX = minors(1,JX)+X
i161 : compSingX = associatedPrimes singX
i162 : compSingX#0 == l01
i163 : compSingX#1 == l02
i164 : compSingX#2 == l12
i165 : compSingX#3 == l23
i166 : compSingX#4 == l03
i167 : compSingX#5 == l13
i168 : -- compSingX#6 is the point [0:0:2:1]
       -- compSingX#7 is the point [0:0:1:2]       
       x' = compSingX#8 -- is a point of l01
i169 : x'' = compSingX#9 -- is the point of l01
i170 : -- remark (in order to understand x' and x''):
       sub(X, QQ[s_0..s_3])
i171 : minors(1,jacobian(oo))+oo
i172 : compSingX' = associatedPrimes oo
i173 : (associatedPrimes(sub(compSingX'#6,PP3)))#0 == x' 
i174 : (associatedPrimes(sub(compSingX'#6,PP3)))#1 == x''
       -- where compSingX'#6 is:       
       --                   2             2
       --  ideal (s , s , 2s  + 3s s  + 2s )
       --          1   0    2     2 3     3
i175 : 
       -- furthermore, if r = sqrt(2) we have that
       -- compSingX#10 are the points [r:0:0:1] and [-r:0:0:1]
       -- compSingX#11 are the points [r:0:1:0] and [-r:0:1:0]
       -- compSingX#12 are the points [0:r:0:1] and [0:-r:0:1]
       -- compSingX#13 is the point [1:1:0:0]        
       compSingX#14 == p23 
i176 : -- compSingX#15 are the points [0:r:1:0] and [0:r:1:0]              
       -- hence X has just the singularities described above       
\end{verbatim}
\end{scriptsize}

\end{document}